\numberwithin{equation}{section}
\newtheorem{theorem}{Theorem}[section]
\newtheorem{proposition}[theorem]{Proposition}
\newtheorem{lemma}[theorem]{Lemma}
\newtheorem{corollary}[theorem]{Corollary}
\theoremstyle{definition}
\newtheorem{definition}[theorem]{Definition}
\newtheorem{definitions}[theorem]{Definitions}
\theoremstyle{remark}
\newtheorem{remark}[theorem]{Remark}
\newtheorem{example}[theorem]{Example}
\newtheorem{examples}[theorem]{Examples}
\def\thm@space@setup{%
	\thm@preskip=\parsep \thm@postskip=0pt
}
\newcommand\G{\mathcal{G}}
\newcommand\Z{\mathcal{Z}}
\DeclareMathOperator{\Span}{span}
\DeclareMathOperator{\supp}{supp}
\DeclareMathOperator{\Iso}{Iso}
\DeclareMathOperator{\Char}{char}
\DeclareMathOperator{\card}{card}
\DeclareMathOperator{\domm}{dom}
\DeclareMathOperator{\codd}{cod}
\DeclareMathOperator{\Cl}{Cl}
\newcommand{\g}{\gamma}
\newcommand{\id}{\operatorname{id}}
\newcommand{\ep}{\varepsilon}
\newcommand{\ZZ}{\mathbb{Z}}
\newcommand{\im}{\operatorname{im}}
\newcommand{\dom}{\bm{d}}
\newcommand{\cod}{\bm{c}}
\newcommand{\mult}{\bm{m}}
\newcommand{\inv}{\bm{i}}
\title[The groupoid approach to Leavitt path algebras]{The groupoid approach to Leavitt path algebras}
\author[S. W. Rigby]{Simon W. Rigby}
\address{Department of Mathematics and
Applied Mathematics, University of Cape Town, South~Africa.\newline \newline
Department of Mathematics: Algebra and Geometry, Ghent University, Belgium.}
\email{simon.rigby@ugent.be}
\begin{document}

\begin{abstract}
	When the theory of Leavitt path algebras was already quite advanced, it was discovered that some of the more difficult questions were susceptible to a new approach using topological groupoids. The main result that makes this possible is that the Leavitt path algebra of a graph is graded isomorphic to the Steinberg algebra of the graph's boundary path groupoid. 
	
	This expository paper has three parts: Part 1 is on the Steinberg algebra of a groupoid, Part 2 is on the path space and boundary path groupoid of a graph, and Part 3 is on the Leavitt path algebra of a graph. It is a self-contained reference on these topics, intended to be useful to beginners and experts alike. While revisiting the fundamentals, we prove some results in greater generality than can be found elsewhere, including the uniqueness theorems for Leavitt path algebras.
\end{abstract}

\maketitle

Leavitt path algebras are $\ZZ$-graded algebras with involution, whose generators and relations are encoded in a directed graph. Steinberg algebras, on the other hand, are algebras of functions defined on a special kind of topological groupoid, called an ample groupoid. To understand how they are related, it is useful to weave together some historical threads. This historical overview might not be comprehensive, but it is intended to give some idea of the origins of our subject.
%
%
%
%
\subsection{Historical overview: Groupoids, graphs, and their algebras}

In the late 1950s and early 1960s, William G. Leavitt \cite{leavitt1962module,leavitt1965module} showed that there exist simple rings whose finite-rank free modules admit bases of different sizes. In a seemingly unrelated development, in 1977, Joachim Cuntz \cite{cuntz1977simplec} showed that there exist separable $C^*$-algebras that are simple and purely infinite. Cuntz's paper was one of the most influential in the history of operator theory. It provoked intense interest (that is still ongoing) in generalising, classifying, and probing the structure of various classes of $C^*$-algebras. One of the next landmarks was reached in 1980, when Jean Renault \cite{renault1980groupoid} defined groupoid $C^*$-algebras, taking inspiration from the $C^*$-algebras that had previously been associated to transformation groups. The Cuntz algebras were interpreted as groupoid $C^*$-algebras, and from that point onwards there was a new framework and some powerful results with which to pursue new and interesting examples.

In 1997, Kumjian, Pask, Raeburn, and Renault \cite{kumjian1997graphs} showed how to construct a Hausdorff ample groupoid (and hence a $C^*$-algebra) from a row- and column-finite directed graph with no sinks.  They showed that these $C^*$-algebras universally satisfy the Cuntz-Krieger relations from \cite{Cuntz1980}, which had become significant in the intervening years. Graph $C^*$-algebras were then studied in depth. Usually, they were conceptualised in terms of the partial isometries that generate them; direct methods, rather than groupoid methods, were used predominantly~\cite{bates2000c, raeburn2005graph}.
Meanwhile, the Cuntz algebras had also been interpreted as inverse semigroup $C^*$-algebras. Paterson~\cite{paterson1999groupoids,paterson2002graph}, at the turn of the 21st century, organised the situation a bit better. He showed that all graph $C^*$-algebras (of countable graphs, possibly with sinks, infinite emitters, and infinite receivers) are inverse semigroup $C^*$-algebras, and that all inverse semigroup $C^*$-algebras are groupoid $C^*$-algebras. The key innovation was defining the universal groupoid of an inverse semigroup, which is an ample but not necessarily Hausdorff topological groupoid.

It is unclear when the dots were first connected between Leavitt's algebras and Cuntz's $C^*$-algebras (probably in \cite{ara2002k}, a very long time after they first appeared). The Cuntz algebra $\mathcal{O}_n$ is the norm completion of the complex Leavitt algebra $L_{n,\mathbb{C}}$. Over any field $\mathbb{K}$, the ring $L_{n,\mathbb{K}}$ and the $C^*$-algebra $\mathcal{O}_n$ are purely infinite simple, and they have the same $K_0$ group (but these concepts have a different meaning for rings compared to $C^*$-algebras). This begins a process in which the algebraic community generalises, classifies, and probes the structure of various classes of rings in much the same way as the operator algebra community did with $C^*$-algebras.
Leavitt path algebras were introduced in \cite{abrams2005leavitt} and \cite{ara2007nonstable} as universal $\mathbb{K}$-algebras satisfying path algebra relations and Cuntz-Krieger relations. Generalising the relationship between the Leavitt and Cuntz algebras, the graph $C^*$-algebra of a graph $E$ is the norm completion of the complex Leavitt path algebra of $E$. The interplay with $C^*$-algebras is not the only connection between Leavitt path algebras and other, older, areas of mathematics -- see for instance \cite[\S1]{decade} and \cite{pardo}.

Knowing what we know now, the next step was very natural. Is there a way of defining a ``groupoid $\mathbb{K}$-algebra" in such a way that:
\begin{itemize}
	\item When the input is the universal groupoid of an inverse semigroup $S$, the output is the (discrete) inverse semigroup algebra $\mathbb{K}S$;
	\item When the input is a graph groupoid $\G_E$, the output is the Leavitt path algebra $L_{\mathbb{K}}(E)$?
\end{itemize}

This question was asked and answered by Steinberg \cite{steinberg2010groupoid}, and Clark, Farthing, Sims, and Tomforde~\cite{clark2014groupoid}. Consistent with previous experiences of converting operator algebra constructions into $\mathbb{K}$-algebra constructions, they found that the groupoid $C^*$-algebra is the norm completion of the groupoid $\mathbb{C}$-algebra.  It is also worth noting that Steinberg chose a broad scope and defined groupoid $R$-algebras over any commutative ring $R$, rather than just fields. We call these groupoid algebras \textit{Steinberg algebras}.

Each of the three parts in this paper can be read separately. However, we work towards Leavitt path algebras as the eventual subject of interest, and this influences the rest of the text. For example, in Part \ref{CHAPTER: STEINBERG ALGEBRAS} we try not to impose the Hausdorff assumption on groupoids if it is not necessary, but there are no examples here of non-Hausdorff groupoids. Throughout, we use the graph theory notation and terminology that is conventional in Leavitt path algebras. (In most of the $C^*$-algebra literature, the orientation of paths is reversed.) And in Part \ref{CHAPTER: GRAPHS AND GROUPOIDS}, we ignore some topics like amenability that would be important if we were intending to study the $C^*$-algebras of boundary path groupoids.

A standing assumption throughout the paper is that $R$ is a commutative ring with 1. We rarely need to draw attention to it or require it to be anything special.

\subsection{Background: Leavitt path algebras}
For an arbitrary graph $E$, there is an $R$-algebra, $L_R(E)$, called the Leavitt path algebra of $E$. The role of the graph may seem unclear at the outset, because all it does is serve as a kind of notational device for the generators and relations that define $L_R(E)$. Surprisingly, it turns out that many of the ring-theoretic properties of $L_R(E)$ are controlled by graphical properties of $E$. For example, the Leavitt path algebra has some special properties if the graph is acyclic, cofinal, downward-directed, has no cycles without exits, etc.

%

Since 2005, there has been an abundance of research on Leavitt path algebras. One of the main goals has been to characterise their internal properties, ideals, substructures, and modules. As a result, we have a rich supply of algebras with ``interesting and extreme properties" \cite{alahmedi2013structure}. This is useful for generating counterexamples to reasonable-sounding conjectures, e.g.\! \cite{abrams2014prime,hazrat2018baer}, or for supporting other long-standing conjectures by showing they hold within this varied class, e.g.\! \cite{ambily2018simple,pino2012regular}.

Another goal has been finding invariants that determine Leavitt path algebras up to isomorphism, or Morita equivalence. This enterprise is known as the classification question for Leavitt path algebras. Of course, something that is easier than classifying \textit{all} Leavitt path algebras is classifying those that have a certain property (like purely infinite simplicity), or classifying the Leavitt path algebras of small graphs. This has led to interesting developments in  $K$-theory (see \cite[\S6.3]{LPAbook} and \cite{hazrat2013graded})
and has motivated the study of substructures of Leavitt path algebras, like the socle \cite{pino2010socle} and invariant ideals \cite{kanuni2017classification}.

A third goal is to explain why graph $C^*$-algebras and Leavitt path algebras have so much in common. One expects \textit{a priori} that these two different structures would have little to do with one another. But in fact, many theorems about Leavitt path algebras resemble theorems about graph $C^*$-algebras  \cite[Appendix~1]{decade}. For instance, the graphs whose $C^*$-algebras are $C^*$-simple are exactly the same graphs whose Leavitt path algebras are simple (over any base field).   One conjecture in this general direction is the Isomorphism Conjecture for Graph Algebras \cite{abrams2011isomorphism}: if $E$ and $F$ are two graphs such that $L_\mathbb{C}(E) \cong L_\mathbb{C}(F)$ as rings, then $C^*(E) \cong C^*(F)$ as $*$-algebras. In the unital case, an affirmative answer has been given in \cite[Theorem~14.7]{eilers2016complete}.

\subsection{Background: Steinberg algebras}

An ample groupoid is a special kind of locally compact topological groupoid. The Steinberg algebra of such a groupoid is an $R$-module of functions defined on it. It becomes an associative $R$-algebra once it is equipped with a generally noncommutative operation called the convolution (generalising the multiplicative operation on a group algebra). If the groupoid $\G$ is Hausdorff, one can characterise its Steinberg algebra quite succinctly as the convolution algebra of locally constant, compactly supported functions  $f:~\G \to R$. Steinberg algebras first appeared independently in \cite{steinberg2010groupoid} and~\cite{clark2014groupoid}.
The primary motivation was to generalise other classes of algebras, especially inverse semigroup algebras and Leavitt path algebras.

Steinberg algebras do not only unify and generalise some seemingly disparate classes of algebras, but they also provide an entirely new approach to studying them. Many theorems about Leavitt path algebras and inverse semigroup algebras have since been recovered as specialisations of more general theorems about Steinberg algebras. For example, various papers \cite{orloff2016using,steinberg2016simplicity,steinberg2018chain,steinberg2018prime} have used groupoid techniques to characterise, in terms of the underlying graph or inverse semigroup, when a Leavitt path algebra or inverse semigroup algebra is (semi)prime, indecomposable, (semi)primitive, noetherian, or artinian.

Simplicity theorems play a very important role in graph algebras and some related classes of algebras. (In contrast, inverse semigroup algebras are never simple.) This theme goes right back to the beginning, when Leavitt proved in \cite{leavitt1965module} that the Leavitt algebras $L_{n,\mathbb{K}}$ ($n \ge 2$) are all simple. Likewise, Cuntz proved in \cite{cuntz1977simplec} that the Cuntz algebras $\mathcal{O}_n$ ($n \ge 2$) are $C^*$-simple in the sense that they have no closed two-sided ideals. When Leavitt path algebras were introduced, in the very first paper on the subject, Abrams and Aranda Pino \cite{abrams2005leavitt} wrote the simplicity theorem for Leavitt path algebras of row-finite graphs. It was extended to Leavitt path algebras of arbitrary graphs, as soon as these were defined in \cite{abrams2008leavitt}. 

Once Steinberg algebras appeared on the scene, Brown, Clark, Farthing, and Sims \cite{brown2014simplicity} proved a simplicity theorem for Steinberg algebras of Hausdorff ample groupoids over $\mathbb{C}$. That effort led them to unlock a remarkable piece of research in which they derived a simplicity theorem for the $C^*$-algebras of second-countable, locally compact, Hausdorff \'etale groupoids. It speaks to the significance of these new ideas, that they were put to use in solving a problem that was open for many decades. The effort has recently been repeated for non-Hausdorff groupoids, in \cite{clark2018simplicity}, where it is said that ``We view Steinberg algebras as a laboratory for finding conditions to characterize $C^*$-simplicity for groupoid $C^*$-algebras."

Besides the ones we have already discussed, there are many  interesting classes of algebras that appear as special cases of Steinberg algebras. These include partial skew group rings associated to topological partial dynamical systems \cite{beuter2017interplay}, and Kumjian-Pask algebras associated to higher-rank graphs \cite{clark2017kumjian}. In quite a different application, Nekrashevych  \cite{nekrashevych2016growth} has produced Steinberg algebras with prescribed growth properties, including the first examples of simple algebras of arbitrary Gelfand-Kirrilov dimension.

\subsection{Background: Graph groupoids}
There are actually a few ways to associate a groupoid to a graph $E$; see for example \cite[p. 511]{kumjian1997graphs}, \cite[pp. 156--159]{paterson1999groupoids}, and \cite[Example 5.4]{clark2017cohn}. The one that we are interested in is called the boundary path groupoid, $\G_E$. Its unit space is the set of all paths that are either infinite or end at a sink or an infinite emitter (i.e., boundary paths). This groupoid was introduced in its earliest form, for row- and column-finite graphs without sinks, by Kumjian, Pask, Raeburn, and Renault~\cite{kumjian1997graphs}. It bears a resemblance to a groupoid studied a few years earlier by Deaconu \cite{deaconu1995groupoids}. The construction was later generalised in a number of different directions, taking a route through inverse semigroup theory \cite{paterson2002graph}, and going as far as topological higher-rank graphs (e.g. \cite{kumjian2000higher,renault2018uniqueness, yeend2007groupoid}).

The boundary path groupoid is an intermediate step towards proving that all Leavitt path algebras are Steinberg algebras, and it becomes an important tool for the analysis of Leavitt path algebras. For an arbitrary graph $E$, there is a $\mathbb{Z}$-graded isomorphism $A_R(\G_E)\cong L_R(E)$, where $A_R(\G_E)$ is the Steinberg algebra of $\G_E$ and $L_R(E)$ is the Leavitt path algebra of $E$.  Consequently, if we understand some property of Steinberg algebras (for example, the centre \cite{clark2016using}) then we can understand that property of Leavitt path algebras by translating groupoid terms into graphical terms and applying the isomorphism $A_R(\G_E) \cong L_R(E)$.
Similarly, there is an isometric $*$-isomorphism $C^*(\G_E) \cong C^*(E)$, where $C^*(\G_E)$ is the full groupoid $C^*$-algebra of $\G_E$ and $C^*(E)$ is the graph $C^*$-algebra of $E$.
 
The diagonal subalgebra of a Steinberg algebra (resp., groupoid $C^*$-algebra) is the commutative subalgebra (resp., $C^*$-subalgebra) generated by functions supported on the unit space.
If two ample groupoids $\mathcal{F}$ and $\mathcal{G}$ are topologically isomorphic, it is immediate that $A_R(\mathcal{F}) \cong A_R(\mathcal{G})$ and the isomorphism sends the diagonal to the diagonal. The converse is a very interesting and current research topic called ``groupoid reconstruction". It was shown in \cite{ara2017reconstruction} that if $\mathcal{F}$ and $\mathcal{\G}$ are topologically principal, and $R$ is an integral domain, then $A_R(\mathcal{F}) \cong A_R(\mathcal{G})$ with an isomorphism that preserves diagonals if and only if $\mathcal{\mathcal{F}} \cong \mathcal{\G}$. This was generalised in \cite{carlsen2017diagonal} and \cite{steinberg2017diagonal}. For $C^*$-algebras, there are results of a similar flavour \cite{renault2008cartan} .

For boundary path groupoids, groupoid reconstruction is essentially the question: if $E$ and $F$ are graphs such that $L_R(E) \cong L_R(F)$, does it imply $\G_E \cong \G_F$? 
Many mathematicians \cite{ara2017reconstruction, brown2017diagonal,carlsen2018isomorphism,  steinberg2017diagonal} have been working on this and they have given positive answers after imposing various assumptions on the graphs, the ring $R$, or the type of isomorphism between the Leavitt path algebras. It seems likely that more results will emerge. It is already known from \cite[Theorem 5.1]{brownlowe2017graph} that if there exists a diagonal-preserving isomorphism of graph $C^*$-algebras $C^*(E) \cong C^*(F)$, then $\G_E \cong \G_F$. It is plausible that the groupoid reconstruction programme for graph groupoids could eventually prove the general Isomorphism Conjecture for Graph Algebras~\cite{abrams2011isomorphism}.

\section{The Steinberg algebra of a groupoid} \label{CHAPTER: STEINBERG ALGEBRAS}

Part 1 is structured as follows. It begins, in \S\ref{grp concepts} by providing some background on groupoids. In \S\ref{top grp concepts}, we develop some facts about topological groupoids and almost immediately specialise to \'etale and ample groupoids. We give a very brief treatment of inverse semigroups and their role in the subject. In \S\ref{stein alg}, we introduce the {Steinberg algebra} of an ample groupoid, describing it in a few different ways to make the definition more transparent. We develop the basic theory in a self-contained way, paying attention to what can and cannot be said about non-Hausdorff groupoids. In \S\ref{properties}, we investigate some important properties, showing that these algebras are locally unital and enjoy a kind of symmetry that comes from an involution (in other words, they are $*$-algebras). In \S\ref{first examples}, we investigate the effects of groupoid-combining operations like products, disjoint unions, and directed unions, and find applications with finite-dimensional Steinberg algebras and the Steinberg algebras of approximately finite groupoids.  In \S\ref{gr grp}, we discuss graded groupoids and graded Steinberg algebras.


\subsection{Groupoids} \label{grp concepts}

This classical definition of a groupoid is modified from \cite{renault1980groupoid}. We have chosen to paint a complete picture; indeed, some parts of the definition can be derived from other parts.

\begin{definition} \label{groupoid def}
	\index{$\G$, $\G^{(0)}$, $\G^{(2)}$, $\dom$, $\cod$, $\mult$, $\inv$}
	A \textbf{groupoid} is a system $(\G,\G^{(0)}, \dom, \cod, \mult, \inv)$ such that:
	\begin{enumerate}[(G1)]
		\item $\G$ and $\G^{(0)}$ are nonempty sets, called the \textit{underlying set} and \textit{unit space}, respectively;
		\item $\dom, \cod$ are maps $\G \to \G^{(0)}$, called \textit{domain} and \textit{codomain};
		\item $\mult$ is a partially defined binary operation on $\G$ called \textit{composition}: specifically, it is a map from the set of \textit{composable pairs} 
		\[
		\G^{(2)} = \big\{
		(g,h) \in \G \times \G \mid \dom(g)=\cod(h)
		\big\}
		\] onto $\G$, written as $\mult(g,h) = gh$, with the properties:
		\begin{itemize}
			\item $\dom(gh) = \dom(h)$ and $\cod(gh) = \cod(g)$ whenever the composition $gh$ is defined;
			\item $(gh)k=g(hk)$ whenever either side is defined;
		\end{itemize}
		\item For every $x \in \G^{(0)}$ there is a unique identity $1_x \in \G$ such that $1_xg=g$ whenever $\cod(g) = x$, and $h1_x = h$ whenever $\dom(h)=x$;
		\item $\inv: \G \to \G$ is a map called \textit{inversion}, written as $\inv(g) = g^{-1}$, such that $g^{-1}g = 1_{\cod(g)}$, $gg^{-1} = 1_{\dom(g)}$, and $(g^{-1})^{-1} = g$.
	\end{enumerate}
\end{definition}

The definition can be summarised by saying: {a \textit{groupoid} is a small category in which every morphism is invertible}. Having said this, the elements of $\G$ will usually be called morphisms.

\begin{remark} \label{identify}
	We always identify $x \in \G^{(0)}$ with $1_x \in \G$, so $\G^{(0)}$ is considered a subset of $\G$. The elements of $\G^{(0)}$ are called \textit{units}.
\end{remark}

Many authors write $\bm s$ (source) and $\bm r$ (range) instead of $\dom$ and $\cod$ in the definition of a groupoid. Our notation is chosen to avoid confusion in the context of graphs, where $s$ and $r$ refer to the source and range, respectively, of edges and directed paths.

A \textit{homomorphism} between groupoids $\G$ and $\mathcal{H}$ is a functor $F: \G \to \mathcal{H}$; that is, a map sending units of $\G$ to units of $\mathcal{H}$ and mapping all the morphisms in $\G$ to morphisms in $\mathcal{H}$ in a way that respects the structure.
A \textit{subgroupoid} is a subset $\mathcal{S} \subseteq \mathcal{G}$ that is a groupoid with the structure that it inherits from $\G$. For $x \in \G^{(0)}$, we use the notation ${x\G} = \cod^{-1}(x)$, $\G x = \dom^{-1}(x)$, and ${ x\G y}= \cod^{-1}(x) \cap \dom^{-1}(y)$.  The set $x\G x$ is a group, called the \textit{isotropy group} based at $x$, and the set $\Iso(\G)=\bigcup_{x \in \G^{(0)}} {x\G x}$ is a subgroupoid, called the \textit{isotropy subgroupoid} of $\G$. \index{$\operatorname{Iso}(\G)$} If $\Iso(\G) = \G^{(0)}$ then $\G$ is called \textit{principal}. We say that $\G$ is \textit{transitive} if for every pair of units $x, y \in \G^{(0)}$ there is at least one morphism in $x\G y$.

The \textit{conjugacy class} of $g\in \Iso(\G)$ is the set $
\Cl_\G(g) = \big\{hgh^{-1} \mid h \in {\G{\cod(g)}}  \big\}$. 
The set of conjugacy classes partitions $\Iso(\G)$. The conjugacy class of a unit is called an \textit{orbit}, and the set of orbits partitions $\G^{(0)}$. Equivalently,  the orbit of $x \in \G^{(0)}$ is $\Cl_\G(x)  = \cod(\dom^{-1}(x)) = \dom(\cod^{-1}(x))$, or the unit space of the maximal transitive subgroupoid containing $x$. A subset $U\subseteq \G^{(0)}$ is \textit{invariant} if for all $g \in \G$, $\dom(g) \in U$ implies $\cod(g) \in U$, which is to say that $U$ is a union of orbits. If $x,y \in \G^{(0)}$ belong to the same orbit, then the isotropy groups $x\G x$ and $y\G y$ are isomorphic. In fact, there can be many isomorphisms $x\G x \to {y\G y}$. For every $g \in {y\G x}$ there is an ``inner" isomorphism $x\G x \to {y\G y}$ given by $x \mapsto gxg^{-1}$. This allows us to speak of the isotropy group of an orbit.

\begin{examples} \label{groupoid ex}
	Many familiar mathematical objects are essentially groupoids:
	
	\begin{enumerate}[(a)]
		\item Any \textbf{group} $G$ with identity $\varepsilon$ can be viewed as a groupoid with unit space $\{\varepsilon\}$. Conjugacy classes are conjugacy classes in the usual sense.
		\item If $\{G_i \mid i \in I \}$ is a family of groups with identities $\{\varepsilon_i\mid i \in I\}$, then the disjoint union $\bigsqcup_{i \in I} {G}_i$ has a groupoid structure with $\dom(g) = \cod(g) = \varepsilon_{i}$ for every $g \in G_i$. The composition, defined only for pairs $(g, h)\in \bigsqcup_{i \in I}G_i \times G_i$, is just the relevant group law. This is known as a \textbf{bundle of groups}. The isotropy subgroupoid of any groupoid is a bundle of groups.
		\item \label{pairs} Let $X$ be a set with an equivalence relation $\sim$. We define the \textbf{groupoid of pairs} $\G_X =  \{(x, y) \in X \times X \mid x \sim y\}$ with unit space $X$, and view $(x,y)$ as a morphism with domain $y$, codomain $x$, and inverse $(x,y)^{-1} = (y,x)$.  A pair of morphisms $(x,y), (w, z)$ is composable if and only if $y = w$, and composition is defined as $(x,y)(y,z) = (x,z)$. Every principal groupoid is isomorphic to a groupoid of pairs. If $\sim$ is the indiscrete equivalence relation (where $x \sim y$ for all $x, y \in X$) then $\G_X$ is called the \textit{transitive principal groupoid on $X$}.
		\item \label{transformation groupoid} Let $G$ be a group with a left action on a set $X$. There is a groupoid structure on $G \times X$, where the unit space is $\{\varepsilon\} \times X$, or simply just $X$. We understand that the morphism $(g, x)$ has domain $g^{-1}x$ and codomain $x$. Composition is defined as $(g, x)(h, g^{-1}x) = (g h, x)$, and inversion as $(g, x)^{-1} = (g^{-1}, g^{-1} x)$. The isotropy group at $x$ is isomorphic to the stabiliser subgroup associated to $x$. Orbits are orbits in the usual sense, and the groupoid is transitive if and only if the action is transitive.  This is called the \textbf{transformation groupoid} associated to the action of $G$ on $X$. 
		\item The \textbf{fundamental groupoid} of a topological space $X$ is the set of homotopy path classes on $X$. The unit space of this groupoid is $X$ itself, and the isotropy group at $x \in X$ is the fundamental group $\pi_1(X,x)$. The groupoid is transitive if and only if $X$ is path-connected, and it is principal if and only if every path component is simply connected.
	\end{enumerate}
	
\end{examples}

\subsection{Topological groupoids} \label{top grp concepts}

Briefly, here are some of our topological conventions.
We use the word \textit{base} to mean a collection of open sets, called \textit{basic open sets}, that generates a topology by taking unions. A neighbourhood base is a filter for the set of neighbourhoods of a point. In this paper, the word \textit{basis} is reserved  for linear algebra. A \textit{compact} topological space is one in which every open cover has a finite subcover, and a \textit{locally compact} topological space is one in which every point has a neighbourhood base of compact sets. If $X$ and $Y$ are topological spaces, a \textit{local homeomorphism} is a map $f: X \to Y$ with the property: every point in $X$ has an open neighbourhood $U$ such that $f|_U$ is a homeomorphism onto an open subset of $Y$. Every local homeomorphism is open and continuous.

The definition of a topological groupoid is straightforward, but there is some inconsistency in the literature on what it means for a groupoid to be \'etale or locally compact. While some papers require germane conditions, our definitions are chosen to be classical and minimally restrictive. 
We are mainly concerned with \'etale and ample groupoids. Roughly speaking, \'etale groupoids are topological groupoids whose topology is locally determined by the unit space.

\begin{definition} A groupoid $\G$ is
	\begin{enumerate}[(a)]
		\item a \textbf{topological groupoid} if its underlying set has a topology, and the maps $\mult$ and $\inv$ are continuous, with the understanding that $\G^{(2)}$ inherits its topology from $\G\times \G$;
		\item an \textbf{\'etale groupoid} if it is a topological groupoid and $\dom$ is a local homeomorphism.
	\end{enumerate}
\end{definition}


Some pleasant consequences follow from these two definitions. In any topological groupoid, $\inv$ is a homeomorphism because it is a continuous involution, and $\dom$ and $\cod$ are both continuous because $\dom(g) = \mult(\inv(g),g)$ and $\cod = \dom \inv$. If $\G$ is \'etale, then $\dom$, $\cod$, and $\mult$ are local homeomorphisms, and $\G^{(0)}$ is open in $\G$ (the openness of $\G^{(0)}$ is proved from first principles in \cite[Proposition 3.2]{exel2008inverse}). If $\G$ is a Hausdorff topological groupoid, then $\G^{(0)}$ is closed. Indeed (and this neat proof is from \cite{sims2017etale}) if $(x_i)_{i \in I}$ is a net in $\G^{(0)}$ with $x_i \to g \in \G$, then $x_i = \cod(x_i) \to \cod(g)$ because $\cod$ is continuous, so $g = \cod(g) \in \G^{(0)}$ by uniqueness of limits. If $\G$ is any topological groupoid, the maps $\dom \times \cod : \G \times \G \to \G^{(0)} \times \G^{(0)}$ and $(\dom, \cod): \G \to \G^{(0)} \times \G^{(0)}$ are both continuous. If $\G^{(0)}$ is Hausdorff, the diagonal $\Delta = \{(x,x)\mid x \in \G^{(0)} \}$ is closed in $\G^{(0)} \times \G^{(0)}$; consequently, $\G^{(2)} = (\dom \times \cod) ^{-1}(\Delta)$ is closed in $\G\times \G$ and $\Iso(\G) = (\dom, \cod)^{-1}(\Delta)$ is closed in $\G$.

Let $\G$ be a topological groupoid. If  $U \subseteq \G$ is an open set such that $\cod{|_U}$ and $\dom{|_U}$ are homeomorphisms onto open subsets of $\G^{(0)}$, then $U$ is called an \textit{open bisection}.
If $\G$ is \'etale and $U \subseteq \G$ is open, the restrictions $\cod|_U$ and $\dom|_U$ are continuous open maps, so they need only be injective for $U$ to be an open bisection.
An equivalent definition of an \textit{\'etale groupoid} is a topological groupoid that has a base of open bisections. If $\G$ is \'etale and $\G^{(0)}$ is Hausdorff, then $\G$ is locally Hausdorff, because all the open bisections are homeomorphic to subspaces of $\G^{(0)}$.
Another property of \'etale groupoids is that for any $x \in \G^{(0)}$, the fibres $x\G$ and $\G x$ are discrete spaces. Consequently, a groupoid with only one unit (i.e., a group) is \'etale if and only if it has the discrete topology.

\begin{definition}
	An \textbf{ample groupoid} is a topological groupoid with Hausdorff unit space and a base of compact open bisections.
\end{definition}

If $\G$ is an ample groupoid, the notation $B^{\rm co}(\G)$ \index{$B^{\rm co}(\G)$, $\mathcal{B}(\G^{(0)})$} stands for the set of all nonempty compact open bisections in $\G$, and $\mathcal{B}(\G^{(0)})$ stands for the set of nonempty compact open subsets of $\G^{(0)}$.

Recall that a topological space is said to be \textit{totally disconnected} if the only nonempty connected subsets are singletons, and \textit{$0$-dimensional} if every point has a neighbourhood base of clopen (i.e., closed and open) sets. These two notions are equivalent if the space is locally compact and Hausdorff \cite[Theorems 29.5 \& 29.7]{willard1970general}. The following proposition is similar to \cite[Proposition 4.1]{exel2010reconstructing}. It is useful for reconciling slightly different definitions in the literature (e.g., \cite{clark2014groupoid}) and for checking when an \'etale groupoid is ample.

\begin{proposition} \label{tot disc} 
	Let $\G$ be an \'etale groupoid such that $\G^{(0)}$ is Hausdorff. Then the following are equivalent:
	\begin{enumerate}[\rm (1)]
		\item \label{tot disc 1} $\G$ is an ample groupoid;
		\item \label{tot disc 2} $\G^{(0)}$ is locally compact and totally disconnected;
		\item \label{tot disc 3} Every open bisection is locally compact and totally disconnected.
	\end{enumerate}
\end{proposition}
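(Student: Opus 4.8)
The plan is to prove the cycle of implications $(\ref{tot disc 1}) \Rightarrow (\ref{tot disc 3}) \Rightarrow (\ref{tot disc 2}) \Rightarrow (\ref{tot disc 1})$, using repeatedly the facts established just above: in an \'etale groupoid with Hausdorff unit space, $\G^{(0)}$ is open in $\G$, every open bisection is homeomorphic (via $\dom$, or via $\cod$) to an open subset of $\G^{(0)}$, and the cited equivalence from \cite{willard1970general} that ``totally disconnected'' $=$ ``$0$-dimensional'' for locally compact Hausdorff spaces (which applies to $\G^{(0)}$ and, since they embed as open subspaces of $\G^{(0)}$, to all open bisections as well).

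For $(\ref{tot disc 1}) \Rightarrow (\ref{tot disc 3})$: assume $\G$ is ample, so it has a base $\mathcal{B}$ of compact open bisections. Let $U$ be any open bisection. Since $\dom|_U : U \to \dom(U)$ is a homeomorphism onto an open subset of $\G^{(0)}$, and $\G^{(0)}$ is Hausdorff, $U$ is Hausdorff; it therefore suffices to show $U$ is $0$-dimensional and locally compact. Given $g \in U$, pick $B \in \mathcal{B}$ with $g \in B$; then $g \in B \cap U$, which is an open bisection containing $g$ with compact closure issues handled by intersecting with a compact open bisection. More precisely, $B$ is compact open and Hausdorff (being a bisection with Hausdorff unit-space image), hence $0$-dimensional, so inside $B$ there is a clopen-in-$B$ compact neighbourhood $V \subseteq B \cap U$ of $g$; since $B$ is open in $\G$, $V$ is open in $\G$, so $V$ is a compact open neighbourhood of $g$ contained in $U$. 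As $g \in U$ was arbitrary, $U$ is locally compact, and the same neighbourhoods (being compact open bisections homeomorphic to clopen subsets of $\G^{(0)}$) witness $0$-dimensionality, hence total disconnectedness.

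The implication $(\ref{tot disc 3}) \Rightarrow (\ref{tot disc 2})$ is essentially immediate: since $\G$ is \'etale, $\G^{(0)}$ is itself an open bisection (the identity map is a homeomorphism onto an open set), so it inherits local compactness and total disconnectedness from $(\ref{tot disc 3})$. Finally, for $(\ref{tot disc 2}) \Rightarrow (\ref{tot disc 1})$: we must produce a base of compact open bisections. Start from a base of open bisections (which exists because $\G$ is \'etale). Given $g \in \G$ and an open bisection $W \ni g$, the set $\dom(W)$ is an open subset of $\G^{(0)}$, which is locally compact Hausdorff and totally disconnected, hence $0$-dimensional; so there is a compact open set $K \subseteq \dom(W)$ with $\dom(g) \in K$. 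Then $(\dom|_W)^{-1}(K)$ is a compact open bisection containing $g$ and contained in $W$. This shows compact open bisections form a base, and since $\G^{(0)}$ is Hausdorff by hypothesis, $\G$ is ample.

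The only mildly delicate point — and the step I would treat most carefully — is the first implication: one has to be slightly careful that ``open in the subspace $B$'' upgrades to ``open in $\G$'' (which uses that $B$ is open in $\G$, itself a consequence of ampleness giving a base, not just a subbase, of compact open bisections) and that compactness is an absolute property so no confusion arises between compactness in $B$, in $U$, and in $\G$. Everything else is a routine transfer of topological properties across the local homeomorphisms $\dom$ and $\cod$.
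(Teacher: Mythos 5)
Your reverse cycle $(1) \Rightarrow (3) \Rightarrow (2) \Rightarrow (1)$ is a viable route, and your steps $(3) \Rightarrow (2)$ and $(2) \Rightarrow (1)$ are correct; they essentially mirror the paper's transfer along $\dom$ and its construction of compact open bisections from compact clopen neighbourhoods in the unit space (the paper runs the cycle the other way, $(1)\Rightarrow(2)\Rightarrow(3)\Rightarrow(1)$). However, there is a genuine flaw in your $(1) \Rightarrow (3)$, at exactly the point you flag as delicate: you justify the $0$-dimensionality of a basic compact open bisection $B$ by saying it is ``compact open and Hausdorff, hence $0$-dimensional.'' That inference is false in general --- compact Hausdorff does not imply $0$-dimensional (the interval $[0,1]$ is a counterexample) --- and the cited equivalence from Willard only converts \emph{totally disconnected} into $0$-dimensional, which is precisely what you have not yet established for $B$. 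The $0$-dimensionality of $B$ (equivalently, of its homeomorphic image $\dom(B) \subseteq \G^{(0)}$) is not a formal consequence of $B$ being a compact Hausdorff bisection; it has to come from ampleness itself.

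The repair is immediate and uses the same hypothesis you already have in hand: since the compact open bisections form a base for $\G$ and $B \cap U$ is open in $\G$, choose a compact open bisection $V$ with $g \in V \subseteq B \cap U \subseteq U$. Then $V$ is a compact neighbourhood of $g$ inside $U$, which gives local compactness of $U$; and since $U$ is Hausdorff, $V$ is closed in $U$, hence a compact clopen-in-$U$ neighbourhood of $g$. Such $V$ can be found inside any prescribed open neighbourhood of $g$ in $U$ (intersect it with $B \cap U$ first), so $U$ is $0$-dimensional and therefore totally disconnected. With this substitution --- bypassing any claim that $B$ itself is $0$-dimensional merely because it is compact and Hausdorff --- your argument goes through; note that this corrected step is essentially the paper's $(1) \Rightarrow (2)$ argument carried out inside an arbitrary open bisection rather than inside $\G^{(0)}$.
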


\begin{proof}
	$(\ref{tot disc 1}) \Rightarrow (\ref{tot disc 2})$ Let $U \subseteq \G^{(0)}$ be open. Since $\G$ is ample and $\G^{(0)}$ is open, for every $x \in U$ there is a compact open bisection $B$ such that $x \in B \subseteq U \subseteq \G^{(0)}$. Moreover, $\G^{(0)}$ is Hausdorff, so $B$ is closed. This shows that $\G^{(0)}$ is locally compact and 0-dimensional (hence totally disconnected).
	
	$(\ref{tot disc 2}) \Rightarrow (\ref{tot disc 3})$ Every open bisection is homeomorphic to an open subspace of $\G^{(0)}$, so it is totally disconnected and locally compact.
	
	$(\ref{tot disc 3}) \Rightarrow (\ref{tot disc 1})$ Let $U$ be open in $\G$, and $x \in U$. Since $\G$ is \'etale, it has a base of open bisections, so there is an open bisection $B$ with $x \in B \subseteq U$. Moreover, $B$ is Hausdorff, locally compact, and totally disconnected, so $x$ has a compact neighbourhood $W \subseteq B$ and a clopen neighbourhood $V \subseteq W$. Since $B$ is Hausdorff and $V$ is closed in $W$, it follows that $V$ is compact. Moreover, $V$ is an open bisection because $B$ is an open bisection. So, $V$ is a compact open bisection. This shows that $\G$ has a base of compact open bisections, so $\G$ is ample.
\end{proof}


\begin{remark} \label{subgroupoid}
	If $\G$ is a topological groupoid and $\mathcal{E}$ is a subgroupoid of $\G$, then $\mathcal{E}$ is automatically a topological groupoid with the topology it inherits from $\G$. 
	If $\G$ is \'etale, then so is $\mathcal{E}$.
	However, if $\G$ is ample, then it is not guaranteed that $\mathcal{E}$ is ample. Indeed, by Proposition \ref{tot disc}~(\ref{tot disc 2}),  a subgroupoid $\mathcal{E}$ of an ample groupoid $\G$ is ample if and only if $\mathcal{E}^{(0)}$ is locally compact. In particular, $\mathcal{E}$ is ample if $\G$ is ample and $\mathcal{E}^{(0)}$ is either open or closed in $\mathcal{G}^{(0)}$.
\end{remark}

The following lemma is similar to \cite[Proposition 2.2.4]{paterson1999groupoids}, but with slightly different assumptions.

\begin{lemma} \label{invsemigroup}
	Let $\G$ be an \'etale groupoid where $\G^{(0)}$ is Hausdorff. If $A,B,C \subseteq \G$ are compact open bisections, then
	\begin{enumerate}[\rm (1)]
		\item \label{invsemigroup 1} $A^{-1} = \{a^{-1} \mid a \in A\}$ and $AB = \{ab \mid (a, b) \in 
		(A \times B) \cap\G^{(2)} \}$ are compact open bisections.
		\item \label{invsemigroup 2} If $\G$ is Hausdorff, then $A \cap B$ is a compact open bisection.
	\end{enumerate}
\end{lemma}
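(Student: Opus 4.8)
The plan is to treat the three constructions in turn --- inversion $A\mapsto A^{-1}$, product $(A,B)\mapsto AB$, and intersection $(A,B)\mapsto A\cap B$ --- and for each to verify openness, compactness, and the bisection property. Throughout I will use the criterion recorded just before the lemma: in an \'etale groupoid, an open set $U$ on which both $\dom|_U$ and $\cod|_U$ are injective is automatically an open bisection. I will also use that $\inv$ is a homeomorphism of $\G$ and that, since $\G$ is \'etale, $\mult$ is a local homeomorphism, hence a continuous open map $\G^{(2)}\to\G$. Openness will be easy in all three cases; the one real subtlety is compactness, and in particular the compactness of $AB$, which is the only place the Hausdorff hypothesis on $\G^{(0)}$ is actually needed.

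For $A^{-1}=\inv(A)$: it is open and compact because $\inv$ is a homeomorphism and $A$ is open and compact. The identities $\dom\circ\inv=\cod$ and $\cod\circ\inv=\dom$ show that $\dom|_{A^{-1}}$ is $\cod|_A$ composed with the bijection $a^{-1}\mapsto a$ of $A^{-1}$ onto $A$, so it is injective, and symmetrically $\cod|_{A^{-1}}$ is injective; the criterion then applies. For $AB=\mult\big((A\times B)\cap\G^{(2)}\big)$: the set $(A\times B)\cap\G^{(2)}$ is open in $\G^{(2)}$, so its image $AB$ under the open map $\mult$ is open in $\G$. For the bisection property I would check injectivity of $\dom|_{AB}$ directly: if $a,a'\in A$ and $b,b'\in B$ give composable pairs with $\dom(ab)=\dom(a'b')$, then $\dom(b)=\dom(b')$, hence $b=b'$ since $B$ is a bisection, and then $\dom(a)=\cod(b)=\cod(b')=\dom(a')$, hence $a=a'$ since $A$ is a bisection, so $ab=a'b'$; injectivity of $\cod|_{AB}$ is proved the same way, working from the codomain. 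For compactness --- the step I expect to require the most care --- I would use that $\G^{(0)}$ Hausdorff makes $\G^{(2)}=(\dom\times\cod)^{-1}(\Delta)$ closed in $\G\times\G$, so that $(A\times B)\cap\G^{(2)}$ is a closed subset of the compact space $A\times B$, hence compact, and therefore so is its continuous image $AB$. With the criterion, $AB$ is a compact open bisection.

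Finally, for $A\cap B$ under the extra assumption that $\G$ is Hausdorff: it is open as a finite intersection of open sets, and $\dom$ and $\cod$ restrict injectively to it simply because they already do so to the larger bisection $A$, so it is an open bisection by the criterion. For compactness, the Hausdorff hypothesis on $\G$ makes the compact set $B$ closed, so $A\cap B$ is closed in the compact set $A$ and hence compact. (This is precisely the point where part (2) can fail when $\G$ is not Hausdorff.) Altogether $A^{-1}$, $AB$, and --- when $\G$ is Hausdorff --- $A\cap B$ are compact open bisections, as claimed.
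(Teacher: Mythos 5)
Your proposal is correct and follows essentially the same route as the paper: $\inv$ a homeomorphism for $A^{-1}$; $AB$ as the image under the open local homeomorphism $\mult$ of $(A\times B)\cap\G^{(2)}$, which is compact because $\G^{(2)}$ is closed (using $\G^{(0)}$ Hausdorff), with the same direct injectivity check for $\dom|_{AB}$ and $\cod|_{AB}$; and closedness of compact sets in a Hausdorff $\G$ for $A\cap B$. Your write-up merely spells out a few steps the paper leaves as "clearly", so nothing needs to change.
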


\begin{proof}
	(\ref{invsemigroup 1}) Firstly, $A^{-1} = \inv(A)$ is compact and open because $\inv$ is a homeomorphism. Clearly, $A^{-1}$ is an open bisection. Secondly, note that $AB$ might be empty, in which case it is trivially a compact open bisection. Otherwise, $(A\times B) \cap \G^{(2)}$ is compact because $\G^{(2)}$ is closed in $\G \times \G$, and $AB = \mult\big((A \times B) \cap \G^{(2)}\big)$ is compact because $\mult$ is continuous. Since $\mult$ is a local homeomorphism, it is an open map, and $AB = \mult\big((A \times B) \cap \G^{(2)}\big)$ is open. To prove that it is a bisection, suppose $(a, b)$ is a composable pair in $A \times B$ and $\dom(ab) = x$. Since $A$ and $B$ are bisections, $b$ is the unique element in $B$ having $\dom(b) = x$, and $a$ is the unique element of $A$ having $\dom(a) = \cod(b)$. So, $\dom{|_{AB}}$ is injective. Similarly, $\cod{|_{AB}}$ is injective.
	
	(\ref{invsemigroup 2}) It is trivial that $A\cap B$ is an open bisection. The Hausdorff property on $\G$ implies $A$ and $B$ are closed, so $A \cap B$ is closed, hence compact.
\end{proof}

Lemma \ref{invsemigroup} remains true if the words ``compact" or ``open", or both, are removed throughout the statement. Using Lemma \ref{invsemigroup}~(\ref{invsemigroup 2}) with mathematical induction shows that when an ample groupoid is Hausdorff, its set of compact open bisections is closed under finite intersections. The converse to this statement is also true: an ample groupoid is Hausdorff  if the set of compact open bisections is closed under finite intersections (see \cite[Proposition 3.7]{steinberg2010groupoid}).

The main takeaway from Lemma \ref{invsemigroup}~(\ref{invsemigroup 1}) is that the compact open bisections in an ample groupoid are important for two reasons: they generate the topology, and they can be multiplied and inverted in a way that is consistent with an algebraic structure called an inverse semigroup.
An \textit{inverse semigroup} is a semigroup $S$ such that every $s \in S$ has a unique \textit{inverse} $s^*\in S$ with the property $ss^*s = s$ and $s^*ss^* = s^*$.

\begin{example}
	If $X$ is a set, a \textit{partial symmetry} of $X$ is a bijection $s: \domm(s) \to \codd(s)$ where $\domm(s)$ and $\codd(s)$ are (possibly empty) subsets of $X$. Two partial symmetries $s$ and $t$ are composed in the way that binary relations are composed, so that $st: \domm(st) \to \codd(st)$ is the map $st(x) = s(t(x))$ for all $x \in X$ such that $s(t(x))$ makes sense. It is \textit{not} necessary to have $\domm(s) = \codd(t)$ in order to compose $s$ and $t$. The semigroup $\mathcal{I}_X$ of partial symmetries on $X$ is called the \textbf{symmetric inverse semigroup} on $X$. The Wagner-Preston Theorem is an analogue of Cayley's Theorem for groups: every inverse semigroup $S$ has an embedding into $\mathcal{I}_S$.
\end{example}

The following result is an adaptation of \cite[Proposition 2.2.3]{paterson1999groupoids}.

\begin{proposition} \label{inv-semigroup prop}
	If $\G$ is an ample groupoid, $B^{\rm co}(\G)$ is an inverse semigroup with the inversion and composition rules displayed in Lemma \ref{invsemigroup}~(\ref{invsemigroup 1}).
\end{proposition}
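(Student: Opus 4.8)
The plan is to show that $B^{\rm co}(\G)$, equipped with the product $(A,B)\mapsto AB$ and the involution $A\mapsto A^{-1}$ of Lemma~\ref{invsemigroup}~(\ref{invsemigroup 1}), is a \emph{regular} semigroup whose idempotents commute; by the classical characterisation of inverse semigroups (a semigroup is an inverse semigroup precisely when it is regular and its idempotents commute), this gives the statement, and the unique inverse of $A$ is then forced to be the $A^{-1}$ produced by the lemma. One should regard $\emptyset$ as a (zero) element of $B^{\rm co}(\G)$, since a product $AB$ may be empty; this is harmless and does not affect the argument.

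First I would verify that $B^{\rm co}(\G)$ is a semigroup. Closure under both operations is exactly Lemma~\ref{invsemigroup}~(\ref{invsemigroup 1}). Associativity is a general fact about setwise products in a groupoid: writing $AB=\{ab\mid (a,b)\in(A\times B)\cap\G^{(2)}\}$, one uses $\dom(ab)=\dom(b)$ to see that $(a,b)$ and $(ab,c)$ are composable iff $(a,b)$ and $(b,c)$ are composable, and in that case $(ab)c=a(bc)$ by associativity in $\G$; hence $(AB)C$ and $A(BC)$ both equal $\{abc\mid a\in A,\ b\in B,\ c\in C,\ (a,b),(b,c)\in\G^{(2)}\}$.

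Next, regularity. The key computation is that $AA^{-1}=\{aa^{-1}\mid a\in A\}$ is contained in $\G^{(0)}$: if $a,b\in A$ with $(a,b^{-1})\in\G^{(2)}$ then $\dom(a)=\cod(b^{-1})=\dom(b)$, so $a=b$ because $\dom|_A$ is injective. Under the identification $\G^{(0)}\subseteq\G$ this says $AA^{-1}=\cod(A)$, which is a compact open subset of $\G^{(0)}$ because $\cod|_A$ is a homeomorphism onto an open set and $A$ is compact; thus $AA^{-1}\in\mathcal{B}(\G^{(0)})\subseteq B^{\rm co}(\G)$. Since $aa^{-1}=1_{\cod(a)}$ and $1_{\cod(a)}a=a$, it follows that $AA^{-1}A=A$, and applying this with $A^{-1}$ in place of $A$ gives $A^{-1}AA^{-1}=A^{-1}$. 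So $A^{-1}$ is an inverse of $A$, and $B^{\rm co}(\G)$ is regular.

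It remains to identify the idempotents and show they commute. I claim they are exactly the elements of $\mathcal{B}(\G^{(0)})$. Each $U\in\mathcal{B}(\G^{(0)})$ is idempotent, and $UV=U\cap V=VU$ for $U,V\in\mathcal{B}(\G^{(0)})$, so these commute. Conversely, if $E\in B^{\rm co}(\G)$ satisfies $EE=E$, then comparing the $\cod$-images of $EE$ and $E$ and using injectivity of $\cod|_E$ gives $\dom(E)\subseteq\cod(E)$; so for $e\in E$ there is $e'\in E$ with $\cod(e')=\dom(e)$, and then $ee'\in EE=E$ shares the codomain of $e$, forcing $ee'=e$, whence $e'=1_{\dom(e)}\in E$, and injectivity of $\dom|_E$ gives $e=1_{\dom(e)}\in\G^{(0)}$; thus $E\subseteq\G^{(0)}$. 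This last step is where I expect the main obstacle to lie, since it is the only place the bisection hypothesis (injectivity of $\dom$ and $\cod$ on each member of $B^{\rm co}(\G)$) is used in an essential way, and it takes a little care; the remainder is bookkeeping, the one real hazard being consistency of the domain/codomain conventions. (Alternatively one can bypass the semigroup characterisation: if $B$ satisfies $ABA=A$ and $BAB=B$, then $AB$ and $BA$ are idempotents lying in $\G^{(0)}$, from which $B=A^{-1}$ follows.)
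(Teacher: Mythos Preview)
Your proof is correct, but it takes a different route from the paper's. The paper establishes regularity exactly as you do (showing $AA^{-1}=\cod(A)$, hence $AA^{-1}A=A$ and $A^{-1}AA^{-1}=A^{-1}$), but then verifies uniqueness of inverses directly by element-chasing: if $ABA=A$ and $BAB=B$, then for each $a\in A$ there exist $b\in B$ and $a'\in A$ with $aba'=a$, whence $b=a^{-1}$ (using the bisection property), so $A^{-1}\subseteq B$; symmetrically $B\subseteq A^{-1}$. This is the alternative you sketch in your final parenthetical remark. Your main argument instead invokes the classical characterisation of inverse semigroups as regular semigroups with commuting idempotents, which obliges you to identify the idempotents of $B^{\rm co}(\G)$ as precisely $\mathcal{B}(\G^{(0)})$. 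That identification is correct---your argument that $EE=E$ forces $E\subseteq\G^{(0)}$ is sound, though the phrase ``comparing the $\cod$-images'' could be sharpened: what you really use is that $e\in E=EE$ gives $e=e_1e_2$, and $\cod(e)=\cod(e_1)$ forces $e_1=e$ by injectivity of $\cod|_E$, so $e_2\in E$ has $\cod(e_2)=\dom(e)$. The paper's approach is shorter; yours yields the extra structural information about idempotents, which is independently useful. Your remark about adjoining $\emptyset$ as a zero is also well taken, since the paper defines $B^{\rm co}(\G)$ to contain only nonempty bisections while products $AB$ may be empty.
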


\begin{proof}
	Lemma \ref{invsemigroup}~(\ref{invsemigroup 1}) proves that $B^{\rm co}(\G)$ is a semigroup and that $A \in B^{\rm co}(\G)$ implies $A^{-1} \in B^{\rm co}(\G)$. If $A \in B^{\rm co}(\G)$ then $AA^{-1} = \cod(A)$ because all composable pairs in $A \times A^{-1}$ are of the form $(a,a^{-1})$ for some $a \in A$. Therefore $AA^{-1}A = \cod(A)A = A$ and $A^{-1}AA^{-1} = A^{-1}\cod(A) = A^{-1}\dom(A^{-1}) = A^{-1}$. To show that the inverses are unique, suppose $B \in B^{\rm co}(\G)$ satisfies $ABA = A$ and $BAB = B$. Then for all $a \in A$ there exists $b \in B$ such that $aba = a$. But then $b = a^{-1}aa^{-1} = a^{-1}$. This shows $A^{-1} \subseteq B$. Similarly, $BAB=B$ implies $B^{-1} \subseteq A$ and consequently $B \subseteq A^{-1}$. Therefore $B = A^{-1}$. 
\end{proof}

The proposition above has shown how to associate an inverse semigroup to an ample groupoid. The connections between ample groupoids and inverse semigroups run much deeper than this. There are at least two ways to associate an ample groupoid $\G$ to an inverse semigroup $S$. The first is the \textit{underlying groupoid} $\G_S$, where the underlying set is $S$, the topology is discrete, the unit space is the set of idempotents in $S$, and $\dom(s) = s^*s$ while $\cod(s) = ss^*$, for every $s \in S$. Composition in $\G_S$ is the binary operation from $S$, just restricted to composable pairs. The second way to associate an ample groupoid to an inverse semigroup $S$ is more complicated. It is called the \textit{universal groupoid} of $S$, and it only differs from the underlying groupoid when $S$ is large (i.e., fails to have some finiteness conditions). The universal groupoid has a topology that makes it ample but not necessarily Hausdorff. The universal groupoid of $S$ is quite powerful (as shown in \cite{steinberg2010groupoid}) because its Steinberg algebra $A_R(\G(S))$ is isomorphic to the inverse semigroup algebra $RS$. This takes us beyond our scope and, after all, we still need to define Steinberg algebras.

\subsection{Introducing Steinberg algebras} \label{stein alg}
The purpose of this section is to define and characterise the Steinberg algebra of an ample groupoid over a unital commutative ring $R$. 
Throughout this section, assume $\G$ is an ample groupoid.
In order to make sense of continuity for $R$-valued functions, assume $R$ has the discrete topology.
The \textit{support} of a function $f: X \to R$ is defined as the set $\supp f  = \{x \in X \mid f(x) \ne 0\}$.\index{$\supp$} When $X$ has a topology, we say that $f$ is \textit{compactly supported} if $\supp f $ is compact. If every point $x \in X$ has an open neighbourhood $N$ such that $f{|_{N}}$ is constant, then $f$ is called \textit{locally constant}. It is easy to prove that $f: X \to R$ is locally constant if and only if it is continuous. We use the following notation for the \textit{characteristic function} of a subset $U$ of $\G$:
\begin{align*} \index{$\bm{1}_U$}
&\bm{1}_U: \G \to R; && \bm{1}_U(g)=
\begin{cases}
1 & \text{if } g \in U \\
0 & \text{if } g \notin U.
\end{cases}
\end{align*}
Let $R^\G$ be the set of all functions $f: \G \to R$. Canonically, $R^\G$ has the structure of an $R$-module with operations defined pointwise.

\begin{definition}[The Steinberg algebra] \index{$A_R(\G)$}
	Let $A_R(\G)$ be the $R$-submodule of $R^\G$ generated by the set:
	\begin{equation*} \label{The Definition}
	\{\bm{1}_U  \mid U \text{ is a Hausdorff compact open subset of }\G\}.
	\end{equation*}
	The \textit{convolution} of $f, g \in A_R(\G)$ is defined as 
	\begin{align} \label{convolution}
	f*g(x)= \sum_{\substack{y \in \G\\ \dom(y) = \dom(x)}}f(xy^{-1})g(y) = \sum_{\substack{(z, y)\in \G^{(2)} \\ zy  = x}} f(z)g(y) && \text{for all } x \in \G.
	\end{align}
	The $R$-module $A_R(\G)$,  with the convolution, is called the \textbf{Steinberg algebra} of $\G$ over $R$.
\end{definition}

\begin{example}
	If $\Gamma$ is a discrete group, then $A_R(\Gamma)$ is isomorphic to $R\Gamma$, the usual \textbf{group algebra} of $\Gamma$ with coefficients in $R$.
\end{example}

We have yet to justify the definition of the convolution in (\ref{convolution}). The two sums in the formula are equal, by substituting $z = xy^{-1}$. But it should not be taken for granted that the sum is finite, that $*$ is associative, or even that $A_R(\G)$ is closed under~$*$. These facts will be proved later. First, we prove the following result (inspired by \cite{steinberg2010groupoid}) that leads to some alternative descriptions of $A_R(\G)$ as an $R$-module.

\begin{proposition} \label{spann}
	Let $\mathcal{B}$ be a base for $\G$ consisting of Hausdorff compact open sets, with the property:
	\begin{equation*} \label{intersections}
	\left\{\bigcap_{i = 1}^n B_i \mid B_i \in \mathcal{B},\ \bigcup_{i = 1}^n B_i \emph{ is Hausdorff }\right\} \subseteq \mathcal{B} \cup \{\emptyset \}.
	\end{equation*}
	Then $A_R(\G) = \Span_R\{\bm{1}_B\mid B \in \mathcal{B}\}$.
\end{proposition}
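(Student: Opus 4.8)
The plan is to prove the two inclusions separately. The inclusion $\Span_R\{\bm 1_B \mid B\in\mathcal B\}\subseteq A_R(\G)$ is immediate, because every member of $\mathcal B$ is itself a Hausdorff compact open subset of $\G$. For the reverse inclusion, since the functions $\bm 1_U$ with $U$ a Hausdorff compact open subset of $\G$ generate $A_R(\G)$ as an $R$-module, it suffices to show that each such $\bm 1_U$ lies in $\Span_R\{\bm 1_B\mid B\in\mathcal B\}$.

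So I would fix a Hausdorff compact open set $U\subseteq\G$. Because $\mathcal B$ is a base, $U$ is the union of those members of $\mathcal B$ that it contains, and compactness yields a finite subcover $U=B_1\cup\dots\cup B_n$ with $B_1,\dots,B_n\in\mathcal B$ and each $B_i\subseteq U$. The crucial observation is that every sub-union $\bigcup_{i\in S}B_i$ (for $\emptyset\ne S\subseteq\{1,\dots,n\}$) is contained in the Hausdorff space $U$, hence is Hausdorff, so the standing hypothesis on $\mathcal B$ forces $\bigcap_{i\in S}B_i\in\mathcal B\cup\{\emptyset\}$; in either case $\bm 1_{\bigcap_{i\in S}B_i}$ is either of the form $\bm 1_B$ with $B\in\mathcal B$ or is the zero function. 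Then I would invoke the inclusion--exclusion identity
\[
\bm 1_U=\sum_{\emptyset\ne S\subseteq\{1,\dots,n\}}(-1)^{|S|+1}\,\bm 1_{\bigcap_{i\in S}B_i},
\]
which holds pointwise for $\ZZ$-valued, hence $R$-valued, functions, to express $\bm 1_U$ as an $R$-linear combination of characteristic functions of sets in $\mathcal B$. This completes the argument.

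I do not expect a genuine obstacle here; the proof is essentially bookkeeping. The one subtlety worth flagging is that the closure hypothesis on $\mathcal B$ is stated conditionally on a union being Hausdorff, so one must check that the relevant sub-unions really are Hausdorff --- which is automatic, since they all lie inside $U$ and subspaces of Hausdorff spaces are Hausdorff. As an alternative to the explicit inclusion--exclusion formula, one could instead induct on the number $n$ of sets in the finite subcover, using only $\bm 1_{A\cup C}=\bm 1_A+\bm 1_C-\bm 1_{A\cap C}$ together with the set identity $(B_1\cup\dots\cup B_{n-1})\cap B_n=\bigcup_{i<n}(B_i\cap B_n)$; all intermediate sets remain Hausdorff, compact and open, so the inductive hypothesis keeps applying.
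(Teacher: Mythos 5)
Your proposal is correct and follows the same route as the paper's proof: extract a finite subcover $U=B_1\cup\dots\cup B_n$ from the base, note that all relevant sub-unions lie in the Hausdorff set $U$ so the closure hypothesis applies, and conclude by inclusion--exclusion. The only difference is that you spell out the Hausdorffness check explicitly, which the paper leaves implicit.
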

\begin{proof}
	Let $A = \Span_R\{\bm{1}_B \mid B \in \mathcal{B}\}$. From the definition of $A_R(\G)$, we have $A \subseteq A_R(\G)$. To prove the other containment, suppose $U$ is a Hausdorff compact open subset of $\G$. It is sufficient to prove that $\bm{1}_U$ is an $R$-linear combination of finitely many $\bm{1}_{B_i}$, where each $B_i \in \mathcal{B}$. Since $\mathcal{B}$ is a base for the topology on $\G$, we can write $U$ as a union of sets in $\mathcal{B}$, and use the compactness of $U$ to reduce it to a finite union $U = B_1 \cup \dots \cup B_n$, where $B_1, \dots, B_n \in \mathcal{B}$. By the principle of inclusion-exclusion:
	\[
	\bm{1}_U = \sum_{k = 1}^n (-1)^{k-1} \sum_{\substack{I \subseteq \{1, \dots, n\} \\ |I|=k}}\bm{1}_{\cap_{i \in I}B_i}.
	\]
	The main assumption ensures that the sets $\cap_{i \in I}B_i$ on the right hand side are either empty or in $\mathcal{B}$. Therefore $A_R(\G) \subseteq A $.
\end{proof}

\begin{corollary} \label{cos span}
	If $\G$ is Hausdorff and $\mathcal{B}$ is a base of compact open sets that is closed under finite intersections, then $A_R(\G) = \Span_{R}\{\bm{1}_B  \mid  B \in \mathcal{B} \}$.
\end{corollary}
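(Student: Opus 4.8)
The plan is to derive Corollary \ref{cos span} as a direct special case of Proposition \ref{spann}, by verifying that the hypotheses of the proposition are satisfied under the assumptions of the corollary. The key observation is that when $\G$ is Hausdorff, \emph{every} subset of $\G$ is Hausdorff, so the condition ``$\bigcup_{i=1}^n B_i$ is Hausdorff'' in the statement of Proposition \ref{spann} is automatically true, and the intersection-closure hypothesis reduces to: $\mathcal{B}$ is closed under finite intersections (up to adjoining $\emptyset$). This is exactly what the corollary assumes.

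First I would observe that a base of compact open sets $\mathcal{B}$ for a Hausdorff ample groupoid $\G$ automatically consists of Hausdorff compact open sets (since subspaces of Hausdorff spaces are Hausdorff), so $\mathcal{B}$ meets the first requirement in Proposition \ref{spann}. Next I would check the displayed intersection condition: given $B_1, \dots, B_n \in \mathcal{B}$, the union $\bigcup_{i=1}^n B_i$ is a subspace of the Hausdorff space $\G$, hence Hausdorff, so the set on the left-hand side of the inclusion is simply $\{\bigcap_{i=1}^n B_i \mid B_i \in \mathcal{B}\}$, with no side condition. By the hypothesis that $\mathcal{B}$ is closed under finite intersections, every such intersection lies in $\mathcal{B}$ (or is empty), so the required inclusion $\{\bigcap_{i=1}^n B_i \mid \dots\} \subseteq \mathcal{B} \cup \{\emptyset\}$ holds. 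Having verified both hypotheses, Proposition \ref{spann} applies and yields $A_R(\G) = \Span_R\{\bm{1}_B \mid B \in \mathcal{B}\}$, which is the conclusion of the corollary.

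There is essentially no obstacle here; the corollary is a routine specialization. The only point that needs a moment's care is the phrasing of the intersection condition in Proposition \ref{spann}, namely recognizing that in the Hausdorff setting the qualifier about the union being Hausdorff is vacuous, so that ``closed under finite intersections'' for $\mathcal{B}$ is precisely the needed hypothesis. One might also note — though it is not strictly necessary — that by Lemma \ref{invsemigroup}(\ref{invsemigroup 2}) and induction, the set $B^{\mathrm{co}}(\G)$ of all nonempty compact open bisections of a Hausdorff ample groupoid is itself closed under finite intersections, so a canonical choice of $\mathcal{B}$ always exists; but the corollary is stated for an arbitrary such base, and the proof above handles that generality directly.
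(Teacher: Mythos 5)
Your proposal is correct and matches the paper's (implicit) argument: the corollary is stated as an immediate consequence of Proposition \ref{spann}, with exactly the observation you make — in a Hausdorff groupoid every compact open set is Hausdorff and the condition on unions is vacuous, so closure of $\mathcal{B}$ under finite intersections is precisely the hypothesis needed. Nothing further is required.
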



We remarked after Lemma \ref{invsemigroup} that if $\G$ is non-Hausdorff, $B^{\rm co}(\G)$ is \textit{not} closed under finite intersections. Strange things can happen in non-Hausdorff spaces and the problem lies in the fact that compact sets are not always closed, and the intersection of two compact sets is not always compact. However, $B^{\rm co}(\G)$ does satisfy the hypothesis of Proposition \ref{spann}.

\begin{corollary} \label{cob span}
	\cite[Proposition 4.3]{steinberg2010groupoid} \label{bco generates} The Steinberg algebra is generated as an $R$-module by characteristic functions of compact open bisections. That is,
	$
	A_R(\G) = \Span_R\{\bm{1}_B \mid B \in B^{\rm co}(\G)\}$.
\end{corollary}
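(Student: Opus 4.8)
The plan is to obtain this as an immediate consequence of Proposition~\ref{spann}, applied with the base $\mathcal{B} = B^{\rm co}(\G)$. Thus the entire task reduces to verifying that the collection of all nonempty compact open bisections of $\G$ satisfies the two hypotheses of that proposition: that it is a base for $\G$ consisting of Hausdorff compact open sets, and that it satisfies the intersection condition, i.e.\ any finite intersection of members whose union is Hausdorff is again a member (or empty).

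The first hypothesis is essentially definitional. Since $\G$ is ample it has a base of compact open bisections, so \emph{a fortiori} the collection $B^{\rm co}(\G)$ of \emph{all} of them is a base. Each $B \in B^{\rm co}(\G)$ is open in $\G$ and compact, and it is Hausdorff because $\dom|_B$ is a homeomorphism of $B$ onto an open subspace of $\G^{(0)}$, which is Hausdorff by the definition of an ample groupoid. For the intersection condition, suppose $B_1, \dots, B_n \in B^{\rm co}(\G)$ and that $U := B_1 \cup \dots \cup B_n$ is Hausdorff; I must show $V := B_1 \cap \dots \cap B_n$ lies in $B^{\rm co}(\G) \cup \{\emptyset\}$. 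The key point is that each $B_i$ is a \emph{closed} subset of $U$, since compact subsets of a Hausdorff space are closed; hence $V$ is closed in $U$, and being contained in the compact set $B_1$ it is compact. It is open in $\G$, being a finite intersection of sets open in $\G$. Finally it is a bisection: $V$ is an open subset of $\G$ on which $\dom$ and $\cod$ restrict to injections (restrictions of the injections $\dom|_{B_1}$, $\cod|_{B_1}$), so by the characterisation of open bisections in an \'etale groupoid recorded before Lemma~\ref{invsemigroup}, $V$ is an open bisection. Therefore $V \in B^{\rm co}(\G)$ unless $V = \emptyset$.

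Both hypotheses of Proposition~\ref{spann} now hold for $\mathcal{B} = B^{\rm co}(\G)$, so that proposition yields $A_R(\G) = \Span_R\{\bm{1}_B \mid B \in B^{\rm co}(\G)\}$, as claimed. The only step that is not pure unwinding of definitions is the appeal to ``compact $\Rightarrow$ closed'' in a Hausdorff space to control the intersections; this is exactly where the Hausdorff hypothesis appearing in Proposition~\ref{spann} does its work, and it is the reason the result cannot be obtained directly from Corollary~\ref{cos span} when $\G$ itself is non-Hausdorff.
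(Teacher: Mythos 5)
Your proposal is correct and follows essentially the same route as the paper: both verify that $B^{\rm co}(\G)$ satisfies the hypotheses of Proposition~\ref{spann}, with the key step being that when $\bigcup_i B_i$ is Hausdorff each compact $B_i$ is closed in it, so $\bigcap_i B_i$ is a closed subset of the compact set $B_1$ and hence a compact open bisection. Your write-up is merely more explicit about the base and Hausdorffness of each bisection, which the paper takes as given.
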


\begin{proof}
	If $B_1, \dots, B_n \in B^{\rm co}(\G)$, and $U = \cup_i B_i$ is Hausdorff, then each $B_i$ is closed in $U$ because $U$ is compact, so $\cap_i B_i$ is closed in $U$. And, $B_1$ is a compact set containing the closed set $\cap_i B_i$, so $\cap_i B_i$ is compact. Clearly $\cap_i B_i$ is an open bisection, so $\cap_i B_i \in B^{\rm co}(\G)$.
\end{proof}

\begin{remark} \label{subgroupoid2}
	If $\mathcal{G}$ is an ample groupoid and $\mathcal{E}$ is an open subgroupoid, then $\mathcal{E}$ is also ample (see Remark \ref{subgroupoid}). Let $\iota: \mathcal{E} \hookrightarrow \mathcal{G}$ be the inclusion homomorphism. There is a canonical monomorphism $m: A_R(\mathcal{E}) \hookrightarrow A_R(\G)$, linearly extended from $\bm{1}_U \mapsto \bm{1}_{\iota(U)}$ for every Hausdorff compact open set $U \subseteq \mathcal{E}$.
	If $\mathcal{E}$ is closed, $m$ has a left inverse $e: A_R(\G) \twoheadrightarrow A_R(\mathcal{E})$,  linearly extended from $\bm{1}_U \mapsto \bm{1}_{U \cap \mathcal{E}}$ for every Hausdorff compact open set $U \subseteq \G$.
\end{remark}

We still owe a proof that the convolution, from equation (\ref{convolution}), is well-defined and gives an $R$-algebra structure to $A_R(\G)$. The next two results are similar to \cite[Propositions 4.5 \& 4.6]{steinberg2010groupoid}.

\begin{lemma} \label{char conv}
	Let $A, B, C \in B^{\rm co}(\G)$ and $r,s \in R$. Then:
	\begin{enumerate}[{\rm (1)}]
		\item \label{char conv 1}
		$\bm{1}_{A^{-1}}(x) = \bm{1}_{A}(x^{-1})$ for all $x \in \G$;
		\item \label{char conv 2}
		$\bm{1}_A * \bm{1}_B = \bm{1}_{AB}$;
	\end{enumerate}
\end{lemma}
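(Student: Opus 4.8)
The plan is to verify both identities by direct pointwise evaluation, using only the definition of the convolution together with the fact that on a bisection the maps $\dom$ and $\cod$ are injective. Part (\ref{char conv 1}) is just a reformulation of $\inv$ being an involution: since $A^{-1} = \{a^{-1} \mid a \in A\}$, we have $x \in A^{-1}$ if and only if $x^{-1} \in A$, so $\bm{1}_{A^{-1}}(x) = 1 \iff x^{-1} \in A \iff \bm{1}_A(x^{-1}) = 1$, and both functions take the value $0$ at $x$ in all remaining cases.

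For part (\ref{char conv 2}), fix $x \in \G$ and use the second expression for the convolution:
\[
\bm{1}_A * \bm{1}_B(x) = \sum_{\substack{(z,y) \in \G^{(2)} \\ zy = x}} \bm{1}_A(z)\,\bm{1}_B(y).
\]
A term of this sum is nonzero — and then equal to $1$ — exactly when $z \in A$, $y \in B$, and $(z,y)$ is a composable pair with $zy = x$. I claim there is at most one such pair. Indeed, if $x = zy$ with $z \in A$, $y \in B$ composable, then $\dom(y) = \dom(zy) = \dom(x)$, so by injectivity of $\dom|_B$ the element $y$ is the unique member of $B$ with domain $\dom(x)$; once $y$ is pinned down, $\dom(z) = \cod(y)$ is forced, and by injectivity of $\dom|_A$ the element $z$ is the unique member of $A$ with domain $\cod(y)$. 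Hence the sum has at most one nonzero term, so it is a well-defined finite sum taking the value $0$ or $1$, and it equals $1$ precisely when $x$ admits a factorisation $x = zy$ with $z \in A$, $y \in B$ composable — that is, precisely when $x \in AB$, by the definition of $AB$ in Lemma \ref{invsemigroup}(\ref{invsemigroup 1}). Therefore $\bm{1}_A * \bm{1}_B = \bm{1}_{AB}$, and by the same lemma the right-hand side does lie in $A_R(\G)$.

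I expect no real obstacle here. The one point that could look delicate — whether the defining sum for $\ast$ even makes sense — evaporates under the bisection hypothesis, since the argument above shows the sum has at most one nonzero term irrespective of any general finiteness statement; the genuine finiteness and associativity of the convolution (which do need more work) are not required for this lemma. The only bookkeeping is to remember the identities $\dom(zy) = \dom(y)$ and $\cod(zy) = \cod(z)$ forced by composability, so that one can determine $y$ first from $B$ and then $z$ from $A$.
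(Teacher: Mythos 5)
Your proof is correct and follows essentially the same route as the paper: part (1) is the same one-line observation, and for part (2) the paper likewise evaluates the convolution pointwise and uses that $B$ is a bisection to see that at most one term (indexed by the unique $y \in B$ with $\dom(y)=\dom(x)$) survives, splitting into the cases $x \in AB$ and $x \notin AB$. Your use of the composable-pairs form of the sum rather than the $y$-indexed form is an immaterial variation.
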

\begin{proof}
	(\ref{char conv 1}) We have $x \in A^{-1}$ if and only if $x^{-1} \in A$.
	
	(\ref{char conv 2}) Let $x \in \G$. By definition:
	\begin{equation}\label{char conv eq1}
	\bm{1}_A*\bm{1}_{B}(x) = \sum_{\substack{y \in \G \\ \dom(y) = \dom(x)}}\bm{1}_A(xy^{-1})\bm{1}_B(y) = \sum_{\substack{y \in B \\ \dom(y) = \dom(x)}}\bm{1}_A(xy^{-1})
	\end{equation}
	Assume $x$ is of the form $x = ab$ where $a \in A$ and $b \in B$. Since $B$ is a bisection, $b$ is the only element of $B$ having $\dom(b) = \dom(x)$, and it follows that
	\[\ \bm{1}_A * \bm{1}_B (x) =  \bm{1}_A(xb^{-1}) = \bm{1}_A(a) = 1.\]
	On the other hand, assume $x \notin AB$. If there is $y \in B$ such that $\dom(y) = \dom(x)$, then $xy^{-1} \notin A$, for if it were, then $xy^{-1}y = x$ would be in $AB$. Therefore (\ref{char conv eq1}) yields $\bm{1}_A * \bm{1}_B (x) = 0$.
\end{proof}

Lemma \ref{char conv}~(\ref{char conv 2}) implies that characteristic functions of compact open subsets of the unit space can be multiplied pointwise. That is, if $V, W \in \mathcal{B}(\G^{(0)})$ then $VW = V\cap W = WV$ and $\bm{1}_V * \bm{1}_W (x) = \bm{1}_V(x)\bm{1}_W(x)$  for all $x \in \G$. As $\G^{(0)}$ is open in any ample groupoid $\G$, by Remark \ref{subgroupoid2}, there is a commutative subalgebra $A_R(\G^{(0)}) \hookrightarrow A_R(\G)$.

The ingredients of an \textit{$R$-algebra} are an $R$-module $A$ and a binary operation $A \times A \to A$. The binary operation should be $R$-linear in the first and second arguments (that is, bilinear), and it should be associative. There does not need to be a multiplicative identity. It is tedious to prove that $*$ is associative from its definition in (\ref{convolution}), so a proof was omitted in \cite{steinberg2010groupoid}.

\begin{proposition}
	The $R$-module $A_R(\G)$, equipped with the convolution, is an $R$-algebra.
\end{proposition}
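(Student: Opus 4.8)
The plan is to verify the three defining requirements of an $R$-algebra — that $f*g$ is a well-defined element of $A_R(\G)$, that $*$ is bilinear, and that $*$ is associative — by reducing everything to characteristic functions of compact open bisections via Corollary \ref{cob span}, Lemma \ref{invsemigroup}~(\ref{invsemigroup 1}), and Lemma \ref{char conv}~(\ref{char conv 2}).

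First I would show that for $f, g \in A_R(\G)$ and $x \in \G$, the sum defining $f*g(x)$ in (\ref{convolution}) has only finitely many nonzero terms, so that $f*g$ is a well-defined element of $R^\G$. Writing $g = \sum_{j=1}^m s_j\,\bm{1}_{B_j}$ with $B_j \in B^{\rm co}(\G)$ (possible by Corollary \ref{cob span}), the set $\{y \in \G \mid \dom(y) = \dom(x) \text{ and } g(y) \ne 0\}$ is contained in $\bigcup_{j}\{y \in B_j \mid \dom(y) = \dom(x)\}$, and each set in this union is empty or a singleton since $\dom|_{B_j}$ is injective; hence the collection of indices contributing a nonzero term is finite. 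With this finiteness in hand, bilinearity of $*$ as a map $A_R(\G) \times A_R(\G) \to R^\G$ is a routine manipulation of the defining sum: for fixed $x$, the indices contributing a nonzero term to $(rf_1 + f_2)*g(x)$ lie in a finite union of finite sets, so the sum splits and the scalar factors out, and symmetrically in the second variable.

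Next I would deduce closure and associativity. By Lemma \ref{char conv}~(\ref{char conv 2}), $\bm{1}_A * \bm{1}_B = \bm{1}_{AB}$ for $A, B \in B^{\rm co}(\G)$, and $AB \in B^{\rm co}(\G)$ by Lemma \ref{invsemigroup}~(\ref{invsemigroup 1}), so $\bm{1}_A * \bm{1}_B \in A_R(\G)$; since $A_R(\G)$ is spanned by such characteristic functions and $*$ is bilinear, $f*g \in A_R(\G)$ for all $f, g \in A_R(\G)$, and $*$ restricts to a genuine binary operation on $A_R(\G)$. For associativity, observe that $(f*g)*h$ and $f*(g*h)$ are both $R$-trilinear in $(f,g,h)$, so by Corollary \ref{cob span} it suffices to verify $(\bm{1}_A * \bm{1}_B)*\bm{1}_C = \bm{1}_A *(\bm{1}_B * \bm{1}_C)$ for $A, B, C \in B^{\rm co}(\G)$. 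Applying Lemma \ref{char conv}~(\ref{char conv 2}) twice to each side reduces this to the set identity $(AB)C = A(BC)$ for the pointwise product of subsets of $\G$, which follows from associativity of composition in $\G$ once one checks that the relevant composability conditions match up on the two sides.

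The only step requiring genuine care is the finiteness argument in the second paragraph; once that is secure, bilinearity is formal, and the passage through compact open bisections — which is precisely what Corollary \ref{cob span} and Lemma \ref{char conv} are for — makes closure and associativity immediate. I do not expect any essential difficulty beyond that bookkeeping.
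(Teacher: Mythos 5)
Your proposal is correct and follows essentially the same route as the paper: decompose $f,g,h$ into $R$-linear combinations of $\bm{1}_B$ with $B \in B^{\rm co}(\G)$ via Corollary \ref{cob span}, use $\bm{1}_A * \bm{1}_B = \bm{1}_{AB}$ from Lemma \ref{char conv}~(\ref{char conv 2}) together with Lemma \ref{invsemigroup}~(\ref{invsemigroup 1}) for closure, and reduce associativity by (bi/tri)linearity to the set identity $(AB)C = A(BC)$. Your explicit check that the convolution sum has only finitely many nonzero terms (each bisection meets $\dom^{-1}(\dom(x))$ in at most one point) is a welcome bit of bookkeeping that the paper's proof leaves implicit, but it is not a different approach.
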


\begin{proof}
	We need to show that the image of $*: A_R(\G) \times A_R(\G) \to R^\G$ is contained in $A_R(\G)$, and that $*$ is associative and bilinear. Bilinearity can be proved quite easily from formula (\ref{convolution}).
	Recall from Corollary \ref{cob span} that the elements of $A_R(\G)$ are $R$-linear combinations of characteristic functions of compact open bisections. If $f = \sum_{i} a_i \bm{1}_{A_i}$, $g = \sum_{j} b_j \bm{1}_{B_j}$, and $h = \sum_{k}c_k \bm{1}_{C_k}$, where the sums are finite, and  $A_i, B_j, C_k \in B^{\rm co}(\G)$ while $a_i, b_j, c_k \in R$ for all $i,j,k$, then 
	\[
	(f*g)*h = \sum_i \sum_j \sum_k a_i b_j c_k \bm{1}_{(A_i B_j)C_k} = \sum_i \sum_j \sum_k a_i b_j c_k \bm{1}_{A_i(B_j C_k)} = f*(g*h),
	\]
	using Lemma \ref{char conv}~(\ref{char conv 2}) and the bilinearity of $*$. This proves $*$ is associative. Evidently, $f*g = \sum_{i,j} a_i b_j \bm{1}_{A_iB_j} \in A_R(\G)$, so $A_R(\G)$ is closed under $*$.
\end{proof}

It is often useful to think of $*$ simply as the extension of the rule $\bm{1}_A * \bm{1}_B = \bm{1}_{AB}$ for all pairs $A,B \in B^{\rm co}(\G)$, rather than the more complicated-looking expression (\ref{convolution}) that we first defined it with. Moreover, one can infer from it that $A_R(\G)$ is a homomorphic image of the semigroup algebra of $B^{\rm co}(\G)$ with coefficients in $R$.

\begin{proposition} \label{ARG char}
	If $\G$ is Hausdorff and ample, then
	\begin{equation}\label{cont functions}
	A_R(\G) = \{f: \G \to R \mid f \text{ is locally constant, compactly supported}\big\}.
	\end{equation}
	Moreover, if $\mathcal{B}$ is a base for $\G$ consisting of compact open sets, such that $\mathcal{B}$ is closed under finite intersections and relative complements, then every nonzero $f \in A_R(\G)$ is of the form $f = \sum_{i = 1}^m r_i \bm{1}_{B_i}$, where $r_1, \dots, r_n \in R \setminus \{0\}$ and $B_1, \dots, B_n \in \mathcal{B}$ are mutually disjoint.
\end{proposition}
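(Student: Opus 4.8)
The plan is to prove the set equality in (\ref{cont functions}) by the two evident inclusions, and then to extract the refinement involving $\mathcal{B}$ from the proof of the harder one. For the inclusion ``$\subseteq$'': since $\G$ is Hausdorff, every compact open subset $U$ of $\G$ is clopen, so $\bm{1}_U$ is locally constant (equivalently continuous, as $R$ is discrete) and compactly supported. A finite $R$-linear combination $f$ of such functions is again locally constant, and $\supp f$ is closed — being the complement of the zero set of $f$, which is open by local constancy — and is contained in the compact union of the sets involved, hence compact. Because $\G$ is Hausdorff, ``Hausdorff compact open'' just means ``compact open'', so by definition $A_R(\G)$ consists precisely of such combinations, and the inclusion follows.

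For the inclusion ``$\supseteq$'': let $f\colon \G \to R$ be locally constant and compactly supported, and set $K = \supp f$. Then $K$ is open (as $f$ is continuous into the discrete set $R$), closed (the zero set of a locally constant function is open), and compact by hypothesis. The image $f(K)$ is a compact subset of the discrete set $R$, hence finite, say $\{r_1, \dots, r_m\}$ with each $r_i \neq 0$. Each level set $K_i = f^{-1}(\{r_i\})$ is clopen and, being closed in the compact set $K$, is compact; the $K_i$ are pairwise disjoint and $f = \sum_{i=1}^m r_i \bm{1}_{K_i}$. Since each $K_i$ is a compact open (so Hausdorff compact open) subset of $\G$, this displays $f$ as an element of $A_R(\G)$, completing the proof of (\ref{cont functions}).

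For the refinement: given $0 \neq f \in A_R(\G)$, use the previous paragraph to write $f = \sum_{i=1}^m r_i \bm{1}_{K_i}$ with the $r_i$ nonzero and the $K_i$ pairwise disjoint, nonempty, compact, and open. Each $K_i$ is a finite union $B_{i,1} \cup \dots \cup B_{i,k_i}$ of members of $\mathcal{B}$ (express it as a union of basic sets and use compactness). Disjointify within each $K_i$ in the standard way, setting $D_{i,j} = B_{i,j} \setminus (B_{i,1} \cup \dots \cup B_{i,j-1})$: building $D_{i,j}$ from $B_{i,j}$ by removing $B_{i,1}, \dots, B_{i,j-1}$ one at a time and invoking closure of $\mathcal{B}$ under relative complements, an induction gives $D_{i,j} \in \mathcal{B} \cup \{\emptyset\}$. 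After discarding the empty ones, the surviving $D_{i,j}$ (over all $i$ and $j$) are pairwise disjoint — within a fixed $i$ by construction, across distinct $i$ because $D_{i,j} \subseteq K_i$ and the $K_i$ are disjoint — and $\bigcup_j D_{i,j} = K_i$, so $\bm{1}_{K_i} = \sum_j \bm{1}_{D_{i,j}}$ and hence $f = \sum_{i,j} r_i \bm{1}_{D_{i,j}}$, which has the required form.

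The only point needing real care is the disjointification: one must confirm that the iterated relative complements never leave $\mathcal{B} \cup \{\emptyset\}$ (this is exactly where the closure hypotheses on $\mathcal{B}$ are used) and that disjointness of the $D_{i,j}$ persists across the different level sets, not merely within one of them. The conceptual engine behind everything is the elementary fact that a continuous map from a compact space into a discrete space has finite image, which converts a compactly supported locally constant function into a finite linear combination of characteristic functions of its clopen, compact level sets.
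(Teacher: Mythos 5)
Your proof is correct, and while the inclusion $A \subseteq A_R(\G)$ (finite image on a compact support, clopen compact level sets) is the same as the paper's, the rest takes a genuinely different route. For $A_R(\G) \subseteq A$ you argue directly that a finite linear combination of characteristic functions of compact open (hence clopen) sets is locally constant with closed support inside a compact union; the paper instead takes a spanning representation $f = \sum_i s_i \bm{1}_{D_i}$ with $D_i \in \mathcal{B}$ and atomizes it, writing each level set $f^{-1}(s)$ as a union of the ``Venn atoms'' $B_I = \bigcap_{i \in I,\, j \notin I} D_i \setminus D_j$, which lie in $\mathcal{B}$ by the closure hypotheses. That single atomization does double duty: it proves local constancy and compactness of the support \emph{and} immediately yields the ``moreover'' decomposition, since distinct atoms $B_I$ are disjoint. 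You instead obtain the refinement by first invoking the just-proved analytic characterization to write $f = \sum_i r_i \bm{1}_{K_i}$ over its disjoint level sets, covering each compact open $K_i$ by finitely many members of $\mathcal{B}$, and disjointifying by telescoping relative complements $D_{i,j} = B_{i,j} \setminus (B_{i,1} \cup \dots \cup B_{i,j-1})$, with disjointness across different $i$ coming for free from $D_{i,j} \subseteq K_i$. Your route is more modular and avoids the exponential family of atoms; it also uses only closure under relative complements for the refinement (finite intersections are not actually needed), whereas the paper's one-pass argument extracts the disjoint decomposition directly from an arbitrary spanning representation without first passing through the function-theoretic description. The only delicate point in your version, which you flag, is that iterated relative complements stay in $\mathcal{B} \cup \{\emptyset\}$; this is the same implicit convention about empty sets that the paper itself uses when it says each nonempty $B_I$ lies in $\mathcal{B}$.
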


\begin{proof}
	Let $A$ be the set of locally constant, compactly supported $R$-valued functions on $\G$. Let $\mathcal{B}$ be a base of compact open sets for $\G$, such that $\mathcal{B}$ is closed under finite intersections and relative complements. (A worthy candidate for $\mathcal{B}$ is $B^{\rm co}(\G)$.) If $0 \ne f \in A_R(\G)$ then according to Corollary \ref{cos span}, $f = \sum_{i = 1}^n s_i \bm{1}_{D_i}$ for some basic open sets $D_i \in \mathcal{B}$ and nonzero scalars $s_i \in R$. We aim to rewrite it as a linear combination of characteristic functions of \textit{disjoint} open sets. If $s \in \im f \setminus \{0\}$, then we have the expression:
	\begin{align} \label{beast}
	f^{-1}(s) &= \bigcup_{\substack{I \subseteq \{1, \dots, n\} \\ s = \sum_{i \in I}{s_i}}}\  B_I, & \text{where } \qquad B_I &= \bigcap_{\substack{i\in I \\ j \notin I}} D_i \setminus D_j.
	\end{align}
	By assumption, each nonempty $B_I$ in the expression is an element of $\mathcal{B}$; in particular, each $B_I$ is compact and open. Finite unions preserve openness and compactness, so $f^{-1}(s)$ is open and compact for every nonzero $s \in \im f$. It follows that $f^{-1}(0) = \G \setminus \left(\bigcup_{s \in \im f \setminus \{0\}} f^{-1}(s)\right)$ is open. Therefore $f$ is locally constant. As $f$ is a linear combination of $n$ characteristic functions, it is clear that $|\im f\setminus\{0\}| \le 2^n$. Being a finite union of compact sets, $\supp f  = \bigcup_{s \in \im f \setminus \{0\}}f^{-1}(s)$ is compact. Thus $f \in A$, and this shows $A_R(\G) \subseteq A$.
	To prove the other containment, that $A \subseteq A_R(\G)$, suppose $f \in A$. As $f$ is continuous and $\supp f $ is compact, $f(\supp f) = \im f \setminus \{0\}$ is compact in $R$, so it must be finite. Let $\im f \setminus \{0\} = \{r_1, \dots, r_n\}$. Then each set $U_i = f^{-1}(r_i)$ is clopen because $f$ is continuous, and compact because $U_i \subseteq \supp f $. Hence $f = \sum_{i = 1}^n r_i \bm{1}_{U_i} \in A_R(\G)$, and this shows $A \subseteq A_R(\G)$.
	
	To prove the ``moreover" part, we look again at (\ref{beast}). 	If $I, J \subseteq \{1, \dots, n\}$ and $I \ne J$ then $B_I \cap B_J = \emptyset$. Therefore, $f \in A_R(\G)$ can be written as an $R$-linear combination of characteristic functions of disjoint basic open sets in $\mathcal{B}$:
	\phantom\qedhere
	\[
	\pushQED{\qed}
	f = \sum_{s \in \im f \setminus \{0\}} s \bm{1}_{f^{-1}(s)} = \sum_{s \in \im f \setminus \{0\}} \sum_{\substack{I \subseteq \{1, \dots, n\} \\ s = \sum_{i \in I}{s_i}}} s \bm{1}_{B_I}.
	\qedhere
	\popQED
	\]
\end{proof}

\subsection{Properties of Steinberg algebras} \label{properties}

It is useful to know when $A_R(\G)$ is unital or has some property that is nearly as good. The answer is quite easy, and we show it below. We use the definition that a ring (or $R$-algebra) $A$ is \textit{locally unital} if there is a set of commuting idempotents $E \subseteq A$, called \textit{local units}, with the property: for every finite subset $\{a_1, \dots, a_n\} \subseteq A$, there is a local unit $e \in E$ with $e a_i = a_i = a_i e$ for every $1 \le i \le n$. Equivalently, $A$ is the direct limit of unital subrings: $A =\underset{\longrightarrow}{\lim_{e \in E}}\,eAe$. The directed system is facilitated by the partial order, $e \le e'$ if $ee' = e = e'e$, and the connecting homomorphisms (which need not be unit-preserving) are the inclusions $eAe \hookrightarrow e'Ae'$ for $e \le e'$.

In many respects, working with locally unital rings is like working with unital rings. Every locally unital ring $A$ is idempotent (i.e., $A^2 = A$) and if $I \subseteq A$ is an ideal, then $AI = I = IA$. If $A$ is an $R$-algebra with local units, then the ring ideals of $A$ are always $R$-algebra ideals (which, by definition, should be $R$-submodules of $A$). These facts are not true in general for arbitrary non-unital rings. Locally unital rings and algebras are always \textit{homologically unital}, in the sense of \cite[Definition 1.4.6]{loday}, which essentially means that they have well-behaved homology. The classical Morita Theorems, with slight adjustments, are valid for rings with local units (see \cite{anh1987morita}).

\begin{proposition} \cite[Proposition 4.11]{steinberg2010groupoid} \label{units}, \cite[Lemma 2.6]{clark2018generalized}.
	Let $\G$ be an ample groupoid. Then $A_R(\G)$ is locally unital. Moreover, $A_R(\G)$ is unital if and only if $\G^{(0)}$ is compact.
\end{proposition}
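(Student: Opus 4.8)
The plan is to establish local unitality by constructing, for any finite list of elements of $A_R(\G)$, an idempotent that acts as a two-sided identity on all of them; the natural candidates are characteristic functions $\bm{1}_V$ with $V \in \mathcal{B}(\G^{(0)})$. First I would record the key computational fact: if $B \in B^{\rm co}(\G)$ and $V \in \mathcal{B}(\G^{(0)})$, then by Lemma \ref{char conv}~(\ref{char conv 2}) we have $\bm{1}_V * \bm{1}_B = \bm{1}_{VB}$ and $\bm{1}_B * \bm{1}_V = \bm{1}_{BV}$; moreover $VB = B$ precisely when $\cod(B) \subseteq V$, and $BV = B$ precisely when $\dom(B) \subseteq V$. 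Since $\cod$ and $\dom$ restrict to homeomorphisms on $B$, the sets $\cod(B)$ and $\dom(B)$ are compact open subsets of $\G^{(0)}$, hence lie in $\mathcal{B}(\G^{(0)})$.

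Next I would take an arbitrary finite subset $\{f_1, \dots, f_n\} \subseteq A_R(\G)$. By Corollary \ref{cob span}, each $f_i$ is a finite $R$-linear combination of characteristic functions $\bm{1}_{B}$ with $B \in B^{\rm co}(\G)$; collect all the bisections appearing, call them $B_1, \dots, B_m$, and set $V = \bigcup_{k=1}^m \big(\cod(B_k) \cup \dom(B_k)\big)$. This is a finite union of compact open subsets of the Hausdorff space $\G^{(0)}$, so $V \in \mathcal{B}(\G^{(0)})$. Then $\bm{1}_V$ is idempotent (as $V = V \cap V$, and characteristic functions of compact open subsets of the unit space multiply pointwise), and by the previous paragraph $\bm{1}_V * \bm{1}_{B_k} = \bm{1}_{B_k} = \bm{1}_{B_k} * \bm{1}_V$ for every $k$; by bilinearity of the convolution, $\bm{1}_V * f_i = f_i = f_i * \bm{1}_V$ for every $i$. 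Finally, the set $E = \{\bm{1}_V \mid V \in \mathcal{B}(\G^{(0)})\}$ consists of commuting idempotents (again because they multiply pointwise), so $A_R(\G)$ is locally unital.

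For the ``moreover'' part, if $\G^{(0)}$ is compact then $\G^{(0)} \in \mathcal{B}(\G^{(0)})$ and the argument above shows $\bm{1}_{\G^{(0)}}$ is a two-sided identity, using that $\cod(B), \dom(B) \subseteq \G^{(0)}$ automatically. Conversely, suppose $A_R(\G)$ has an identity $u$. Write $u = \sum_{k=1}^m r_k \bm{1}_{B_k}$ with $B_k \in B^{\rm co}(\G)$, and set $V = \bigcup_k \cod(B_k)$, a compact open subset of $\G^{(0)}$. For any $x \in \G^{(0)}$ we would need $u * \bm{1}_{\{x\}} = \bm{1}_{\{x\}}$ — but a single unit $\{x\}$ is a compact open bisection since $\G^{(0)}$ is open and Hausdorff (hence $0$-dimensional by Proposition \ref{tot disc}), so $\bm{1}_{\{x\}} \in A_R(\G)$. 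Evaluating $u * \bm{1}_{\{x\}}$ at $x$ forces $x \in B_k$ for some $k$ with $r_k$ summing appropriately, and a short computation shows $\cod(\supp u) \supseteq \G^{(0)}$, forcing $\G^{(0)} \subseteq V$, hence $\G^{(0)} = V$ is compact.

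The main obstacle is the converse direction: one must argue carefully that an abstract identity element $u$, a priori just some locally constant compactly supported function (or finite combination of bisection indicators), actually forces $\G^{(0)}$ to be covered by a compact set. The cleanest route is to test $u$ against $\bm{1}_{\{x\}}$ for each unit $x$ and extract, from $u * \bm{1}_{\{x\}} = \bm{1}_{\{x\}}$, the conclusion that $x$ lies in $\cod(\supp u)$; since $\supp u$ is compact and $\cod$ is continuous, $\cod(\supp u)$ is compact, and it contains all of $\G^{(0)}$, so $\G^{(0)}$ is compact. One should double-check that $\{x\}$ is indeed open — this is exactly where ampleness (via Proposition \ref{tot disc}) is used, and it is worth stating explicitly.
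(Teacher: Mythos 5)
Your local-unit construction and the forward direction of the ``moreover'' statement are correct and essentially identical to the paper's argument: write the given finite family in terms of $\bm{1}_{B}$ with $B \in B^{\rm co}(\G)$, take $V$ to be the union of the domains and codomains of the bisections involved, and use $\bm{1}_A * \bm{1}_B = \bm{1}_{AB}$ together with bilinearity.

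The converse, however, has a genuine gap. You test the putative identity $u$ against $\bm{1}_{\{x\}}$, justifying $\bm{1}_{\{x\}} \in A_R(\G)$ by the claim that $\{x\}$ is open because $\G^{(0)}$ is $0$-dimensional. That is false: $0$-dimensionality (Proposition \ref{tot disc}) gives each point a neighbourhood base of compact open sets, not isolated points. For instance, the unit space of the boundary path groupoid of the rose with two petals is the Cantor set, where no singleton is open, so $\bm{1}_{\{x\}}$ is not even an element of $A_R(\G)$ and the whole test-function computation is unavailable. The repair is exactly the paper's route: test $u$ against $\bm{1}_V$ for a compact open neighbourhood $V \subseteq \G^{(0)}$ of the unit $x$ (and, for $x \notin \G^{(0)}$, of $\dom(x)$). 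Evaluating $u * \bm{1}_V$ at $x$ shows $u(x) = 1$ for every $x \in \G^{(0)}$ and $u(x) = 0$ off the unit space, i.e.\ $u = \bm{1}_{\G^{(0)}}$; then writing $\bm{1}_{\G^{(0)}}$ as a finite $R$-linear combination of characteristic functions of compact open sets $U_1, \dots, U_n$ gives $\G^{(0)} = \dom(U_1) \cup \dots \cup \dom(U_n)$, a finite union of compact sets, so $\G^{(0)}$ is compact. Your closing remark that one ``should double-check that $\{x\}$ is indeed open'' identifies the right pressure point, but the check fails, so the argument as written does not go through without this modification.
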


\begin{proof}
	We prove the ``moreover" part first. If $\G^{(0)}$ is compact, then it is a compact open bisection, and $\bm{1}_{\G^{(0)}} \in A_R(\G)$. Following Lemma \ref{char conv}~(\ref{char conv 2}), $\bm{1}_{\G^{(0)}}*\bm{1}_{B} = \bm{1}_{\G^{(0)}B} = \bm{1}_B =  \bm{1}_{B \G^{(0)}} = \bm{1}_{B}*\bm{1}_{\G^{(0)}}$, for every $B \in B^{\rm co}(\G)$. Since $\{\bm{1}_B \mid B \in B^{\rm co}(\G)\}$ spans $A_R(\G)$, it follows by linearity that $\bm{1}_{\G^{(0)}} * f = f = f * \bm{1}_{\G^{(0)}}$ for every $f \in A_R(\G)$. This proves that $\bm{1}_{\G^{(0)}}$ is the multiplicative identity in $A_R(\G)$.
	
	Conversely, suppose $A_R(\G)$ has a multiplicative identity called $\xi$. The first step is to show that $\xi = \bm{1}_{\G^{(0)}}$. Let $x \in \G$ and let $V \subseteq \G^{(0)}$ be a compact open set containing $\dom(x)$. Then $V$ must be Hausdorff because $\G^{(0)}$ is, so $\bm{1}_V \in A_R(\G)$. If $x \notin \G^{(0)}$, then
	\[
	0 = \bm{1}_V(x) = \xi* \bm{1}_V (x) =  \sum_{y \in  \G{\dom(x)}}\xi(xy^{-1})\bm{1}_V(y) = \sum_{y \in V \cap \G{\dom(x)}} \xi(xy^{-1}) = \xi(x)
	\]
	because $V \cap \G{\dom(x)}  = \{\dom(x)\}$.
	Similarly, if $x \in \G^{(0)}$ then $x = \dom(x) \in V$ and
	\[
	1 = \bm{1}_V(x) = \xi* \bm{1}_V(x) = \xi(x).
	\]
	This shows that $\xi = \bm{1}_{\G^{(0)}}$. The second step is to show that $\bm{1}_{\G^{(0)}} \in A_R(\G)$ implies $\G^{(0)}$ is compact. By the definition of $A_R(\G)$, there exist scalars $r_1, \dots, r_n \in R \setminus \{0\}$ and compact open sets $U_1, \dots, U_n \subseteq \G$ such that $\bm{1}_{\G^{(0)}} = r_1 \bm{1}_{U_1} + \dots + r_n\bm{1}_{U_n}$. Then $\G^{(0)} \subseteq U_1 \cup \dots \cup U_n$ and consequently $\G^{(0)} = \dom(U_1) \cup \dots \cup \dom(U_n)$. Each of the sets $\dom(U_1), \dots, \dom(U_n)$ is compact (because $\dom$ is continuous), so $\G^{(0)}$ is compact.
	
	To show that $A_R(\G)$ is locally unital for all ample groupoids $\G$, suppose $F = \{f_1, \dots, f_m\}$ is a finite subset of $A_R(\G)$. Since $A_R(\G)$ is spanned by $\{\bm{1}_B \mid B \in B^{\rm co}(\G)\}$, there exist finite subsets $\{B_1, \dots, B_n\} \subseteq B^{\rm co}(\G)$ and $\{r_{i,j} \mid 1 \le i \le n, 1 \le j \le m \}\subseteq R$ such that $f_j = r_{1,j}\bm{1}_{B_1} + \dots + r_{n,j}\bm{1}_{B_n}$ for all $1 \le j \le m$. Let $X = \dom(B_1) \cup \dots \cup \dom(B_n) \cup \cod(B_1) \cup \dots \cup \cod(B_n)$. Then $X \subseteq G^{(0)}$ is compact and open because it is a finite union of compact open sets, and $X$ is Hausdorff because it is a subset of $\G^{(0)}$, so $\bm{1}_X \in A_R(\G)$. Clearly, $XB_i = B_i = B_iX$, so $\bm{1}_X *\bm{1}_{B_i} = \bm{1}_{B_i} = \bm{1}_{B_i}* \bm{1}_X$, for all $1 \le i \le n$. By linearity, $\bm{1}_X * f_j = f_j = f_j* \bm{1}_X$ for all $1 \le j \le m$. The conclusion is that $E = \{\bm{1}_X \mid X \in \mathcal{B}(\G^{(0)})\}$ is a set of local units for $A_R(\G)$.
\end{proof}


%

The \textit{characteristic} of a ring $A$, written $\Char A$, is defined as the least positive integer $n$ such that $n \cdot a = 0$ for all $a \in A$, or 0 if no such $n$ exists. If $A$ has a set of local units $E$, the characteristic of $A$ can be defined as the least $n$ such that $n \cdot e = 0$ for all $e \in E$, or 0 if no such $n$ exists. 

\begin{proposition}
	For any ample groupoid $\G$, $\Char A_R(\G) = \Char R$.
\end{proposition}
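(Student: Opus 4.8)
The plan is to leverage the computation already done in Proposition~\ref{units}: the algebra $A_R(\G)$ has the set of local units $E = \{\bm{1}_X \mid X \in \mathcal{B}(\G^{(0)})\}$, so by the description of characteristic for rings with local units given just above the statement, $\Char A_R(\G)$ is the least positive integer $n$ with $n\cdot \bm{1}_X = 0$ for every $X \in \mathcal{B}(\G^{(0)})$, or $0$ if no such $n$ exists. Since $A_R(\G)$ is a submodule of $R^\G$ with pointwise operations, the single observation $(n\cdot f)(x) = (n\cdot 1_R)\,f(x)$ for all $x \in \G$, $n \in \ZZ$, $f \in R^\G$ does essentially all the work.

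First I would note that $\mathcal{B}(\G^{(0)})$ is nonempty: $\G^{(0)}$ is a nonempty open subset of the ample groupoid $\G$, so any point of it lies in some compact open bisection $B$ with $B \subseteq \G^{(0)}$, and this $B$ is a nonempty compact open subset of $\G^{(0)}$. Now fix such a nonempty $B \in \mathcal{B}(\G^{(0)}) \subseteq E$. For a positive integer $n$ we have $(n\cdot\bm{1}_B)(x) = n\cdot 1_R$ for every $x \in B$ and $0$ elsewhere; since $B \neq \emptyset$, this shows $n\cdot\bm{1}_B = 0$ in $A_R(\G)$ if and only if $n\cdot 1_R = 0$ in $R$. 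Consequently, if $\Char R = 0$ then no positive $n$ annihilates $\bm{1}_B$, whence $\Char A_R(\G) = 0$; and if $\Char R = n_0 > 0$ then $\Char A_R(\G)$, being the least positive integer annihilating all of $E$, is at least $n_0$.

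Conversely, when $\Char R = n_0 > 0$ the identity $(n_0\cdot f)(x) = (n_0\cdot 1_R)f(x) = 0$ shows $n_0\cdot f = 0$ for every $f \in A_R(\G)$, so $\Char A_R(\G)$ divides $n_0$; combined with the previous paragraph this forces $\Char A_R(\G) = n_0$. Together with the characteristic-zero case this yields $\Char A_R(\G) = \Char R$. There is no genuinely hard step here; the only point requiring a moment's care is guaranteeing that a nonempty compact open subset of $\G^{(0)}$ exists, so that $\bm{1}_B$ actually detects the characteristic of $R$ — this is exactly where ampleness of $\G$ and the standing convention that $\G^{(0)}$ is nonempty are used.
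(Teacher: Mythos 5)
Your proof is correct and follows essentially the same route as the paper: both use the local-unit description of the characteristic together with the observation that for a nonempty $U \in \mathcal{B}(\G^{(0)})$ one has $n\cdot\bm{1}_U = 0$ if and only if $n\cdot 1_R = 0$. Your extra care about the nonemptiness of $\mathcal{B}(\G^{(0)})$ (built into the paper's notation, which only admits nonempty compact open sets) and the divisibility bookkeeping are fine but add nothing beyond the paper's one-line argument.
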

\begin{proof}
	If $n$ is a positive integer, $n \cdot \bm{1}_U = 0$ for all $U \in \mathcal{B}(\G^{(0)})$ if and only if $n \cdot 1 = 0$.
\end{proof}

Given a topological groupoid $(\G, \G^{(0)}, \dom, \cod, \mult, \inv)$, the \textit{opposite groupoid} is:
\[
\G^{\rm op} =(\G, \G^{(0)}, \dom^{\rm op}, \cod^{\rm op}, \mult^{\rm op}, \inv)
\]
where $\dom^{\rm op} = \cod$, $\cod^{\rm op} = \dom$, and $\mult^{\rm op}(x,y) = \mult(y,x)$ for any $x, y$ with $\cod(x) = \dom(y)$. We call the opposite groupoid $\G^{\rm op}$ to distinguish it from $\G$, even though they have the same underlying sets. We assume $\G^{\rm op}$ has the same topology as $\G$. Naturally, the inversion map $\inv: \G \to \G^{\rm op}$ is an isomorphism of topological groupoids.

If $A$ is a ring, an \textit{involution} on $A$ is an additive, anti-multiplicative map $\tau: A \to A$ such that $\tau^2 = \id_A$. If $A$ has an involution, it is called an \textit{involutive ring} or \textit{$*$-ring}. If $\G$ is an ample groupoid, $f \mapsto f \circ \inv$ is a canonical involution on $A_R(\G)$ that makes it a $*$-algebra. More generally, if there is an involution $\overline{\phantom{x}}: R \to R$, written as $r \mapsto \overline{r}$, then $f \mapsto \overline{f \circ \inv}$ is an involution on $A_R(\G)$. To summarise:

\begin{proposition} \label{involution proposition}
	Let $\G$ be an ample groupoid. There are canonical isomorphisms $\G \cong \G^{\rm op}$ and $A_R(\G) \cong A_R(\G^{\rm op}) \cong A_R(\G)^{\rm op}$. Moreover, to each involution $\overline{\phantom{x}}: R \to R$ is associated a canonical involution on $A_R(\G)$, namely $f \mapsto \overline{f \circ \inv}$ for all $f \in A_R(\G)$.
\end{proposition}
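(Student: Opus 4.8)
The plan is to verify each of the three asserted isomorphisms in turn, all of which are ultimately bookkeeping on top of Lemma \ref{char conv} and Corollary \ref{cob span}. First I would record the groupoid isomorphism $\inv\colon \G\to\G^{\rm op}$: the map $g\mapsto g^{-1}$ is a bijection of underlying sets, fixes $\G^{(0)}$, is continuous (indeed a homeomorphism, being a continuous involution) by the topological-groupoid axioms, and intertwines the structure maps because $\dom^{\rm op}(g^{-1})=\cod(g^{-1})=\dom(g)$, $\cod^{\rm op}(g^{-1})=\dom(g^{-1})=\cod(g)$, and $\mult^{\rm op}(g^{-1},h^{-1})=\mult(h^{-1},g^{-1})=(gh)^{-1}$ whenever $gh$ is defined. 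In particular $\G$ and $\G^{\rm op}$ are both ample (same topology, same unit space).

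Next I would treat $A_R(\G^{\rm op})\cong A_R(\G)^{\rm op}$. Since $\G$ and $\G^{\rm op}$ have the same topology and the same Hausdorff compact open subsets, the underlying $R$-modules $A_R(\G^{\rm op})$ and $A_R(\G)$ literally coincide; the content is that convolution in $\G^{\rm op}$ is the opposite of convolution in $\G$. For this it suffices to check on characteristic functions of compact open bisections, which span by Corollary \ref{cob span}. If $B^{-1}$ denotes inversion in $\G$ and $B\cdot_{\rm op} C$ the product bisection computed in $\G^{\rm op}$, then a composable pair for $\mult^{\rm op}$ in $B\times C$ is a pair $(b,c)$ with $\cod(b)=\dom(c)$, i.e.\ a composable pair for $\mult$ in $C\times B$, and $\mult^{\rm op}(b,c)=cb$; hence $B\cdot_{\rm op}C = CB$ (product in $\G$), so by Lemma \ref{char conv}(\ref{char conv 2}) applied in both groupoids, $\bm{1}_B *_{\G^{\rm op}} \bm{1}_C = \bm{1}_{CB} = \bm{1}_C *_\G \bm{1}_B$. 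Bilinearity then gives $f *_{\G^{\rm op}} g = g *_\G f$ for all $f,g$, which is exactly the statement that the identity map is an $R$-algebra isomorphism $A_R(\G^{\rm op})\to A_R(\G)^{\rm op}$. Combining with the previous paragraph, functoriality of $A_R(-)$ on the topological isomorphism $\inv$ yields $A_R(\G)\cong A_R(\G^{\rm op})$, and hence $A_R(\G)\cong A_R(\G^{\rm op})\cong A_R(\G)^{\rm op}$.

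For the final clause, given an involution $\overline{\phantom{x}}\colon R\to R$ I would define $\tau\colon A_R(\G)\to A_R(\G)$ by $\tau(f)=\overline{f\circ\inv}$, i.e.\ $\tau(f)(g)=\overline{f(g^{-1})}$. Additivity is immediate since $\overline{\phantom{x}}$ is additive and composition with $\inv$ is $R$-linear on functions; $\tau^2=\id$ follows from $(g^{-1})^{-1}=g$ and $\overline{\overline{r}}=r$. That $\tau$ maps $A_R(\G)$ into itself: for a Hausdorff compact open $U$, $\tau(\bm{1}_U)=\bm{1}_{U^{-1}}$ since $\overline 1 =1$, $\overline 0 = 0$, and $g\in U^{-1}\iff g^{-1}\in U$, and $U^{-1}=\inv(U)$ is again Hausdorff compact open because $\inv$ is a homeomorphism; extend by linearity. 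Anti-multiplicativity is the one point requiring care, and it is the main (mild) obstacle: by bilinearity and Corollary \ref{cob span} it is enough to check $\tau(\bm{1}_A * \bm{1}_B) = \tau(\bm{1}_B)*\tau(\bm{1}_A)$ for $A,B\in B^{\rm co}(\G)$, i.e.\ $\bm{1}_{(AB)^{-1}} = \bm{1}_{B^{-1}}*\bm{1}_{A^{-1}} = \bm{1}_{B^{-1}A^{-1}}$, using Lemma \ref{char conv}(\ref{char conv 1}) and (\ref{char conv 2}); and this reduces to the set identity $(AB)^{-1}=B^{-1}A^{-1}$, which is immediate from the definition $AB=\mult((A\times B)\cap\G^{(2)})$ since $(ab)^{-1}=b^{-1}a^{-1}$ and $(a,b)\in\G^{(2)}\iff (b^{-1},a^{-1})\in\G^{(2)}$. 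The untwisted case $\overline{r}=r$ gives the canonical involution $f\mapsto f\circ\inv$ making $A_R(\G)$ a $*$-algebra, completing the proof.
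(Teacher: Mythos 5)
Your proposal is correct and follows exactly the route the paper intends: the paper records the inversion map $\inv\colon \G\to\G^{\rm op}$ as the canonical topological isomorphism and asserts the involution $f\mapsto\overline{f\circ\inv}$ without spelling out the verifications, which you supply by checking everything on the spanning set $\{\bm{1}_B\mid B\in B^{\rm co}(\G)\}$ via Lemma \ref{char conv} and the identity $(AB)^{-1}=B^{-1}A^{-1}$. No gaps; your write-up simply makes the routine checks explicit.
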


This kind of symmetry is very nice to work with. It implies, for example, that the category of left $A_R(\G)$-modules is isomorphic to the category of right $A_R(\G)$-modules, and the lattice of left ideals in $A_R(\G)$ is isomorphic to the lattice of right ideals. Many important notions, like left and right primitivity, are equivalent for involutive algebras (or more generally, self-opposite algebras).

\subsection{First examples} \label{first examples}

One or two of the results in this section will be useful later on, but mostly they are just interesting in their own right. Presumably, most of this content is already known, but we do not adhere closely to any references.

Given two groupoids $(\G_1, \dom_1, \cod_1, \mult_1, \inv_1)$ and $(\G_2, \dom_2, \cod_2, \mult_2, \inv_2)$, their \textit{disjoint union} $\G_1 \sqcup \G_2$ has the structure of a groupoid with unit space $\G_1^{(0)} \sqcup \G_2^{(0)}$, set of composable pairs $\G_1^{(2)} \sqcup \G_2^{(2)}$, and the following structure maps: for all $x_1, y_1 \in \G_1$ and $x_2, y_2 \in \G_2$,
\begin{align*}
\dom(x_i) &= \dom_i(x_i), & \cod(x_i) &= \cod_i(x_i), & \inv(x_i) &= \inv_i(x_i), & \mult(x_i,y_i) &= \mult_i(x_i, y_i).
\end{align*}
The \textit{product} $\G_1 \times \G_2$ also has the structure of a groupoid with unit space $\G_1^{(0)}\times \G_2^{(0)}$, and the following structure maps: for all $x_1, y_1 \in \G_1$ and $x_2, y_2 \in \G_2$,
\begin{align*}
\dom(x_1, x_2) &= (\dom_1(x_1),\dom_2(x_2)), & \cod(x_1,x_2) &= (\cod_1(x_1),\cod_2(x_2)), \\
\inv(x_1, x_2) &= (\inv_1(x_1), \inv_2(x_2)), & \mult((x_1, x_2),(y_1, y_2)) &= (\mult_1(x_1,y_1),\mult_2(x_2,y_2)).
\end{align*}
These constructions work just as well for the disjoint union or product of arbitrarily many (even infinitely many) groupoids. If $\G_1$ and $\G_2$ are topological groupoids, then $\G_1 \sqcup \G_2$ (with the coproduct topology) and $\G_1 \times \G_2$ (with the product topology) are again topological groupoids. The properties of being \'etale or ample are preserved by arbitrary disjoint unions and finite products.

\begin{proposition} \label{coprod}
	Let $\G_1$ and $\G_2$ be ample groupoids. The Steinberg algebra of $\G_1 \sqcup \G_2$ is a direct sum of two ideals:
	$A_R(\G_1 \sqcup \G_2) \cong A_R(\G_1)\oplus A_R(\G_2)$.
\end{proposition}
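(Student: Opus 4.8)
The plan is to exhibit the isomorphism $A_R(\G_1 \sqcup \G_2) \cong A_R(\G_1) \oplus A_R(\G_2)$ concretely, using the description of Steinberg algebras as spans of characteristic functions of compact open bisections (Corollary \ref{cob span}). First I would observe that in the disjoint union $\G = \G_1 \sqcup \G_2$, the two pieces $\G_1$ and $\G_2$ are both open (and closed) subgroupoids, so by Remark \ref{subgroupoid2} each inclusion $\iota_k : \G_k \hookrightarrow \G$ induces a monomorphism $m_k : A_R(\G_k) \hookrightarrow A_R(\G)$, extended linearly from $\bm{1}_U \mapsto \bm{1}_{\iota_k(U)}$. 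The key structural fact is that every compact open bisection $B \subseteq \G$ decomposes as $B = (B \cap \G_1) \sqcup (B \cap \G_2)$ with each $B \cap \G_k$ a compact open bisection of $\G_k$ (compactness because each $\G_k$ is clopen, so $B \cap \G_k$ is clopen in the compact set $B$), whence $\bm{1}_B = \bm{1}_{B \cap \G_1} + \bm{1}_{B \cap \G_2}$. This shows $A_R(\G) = \im m_1 + \im m_2$, and since $\G_1 \cap \G_2 = \emptyset$ the supports are disjoint, giving $\im m_1 \cap \im m_2 = 0$, so the sum is direct as $R$-modules.

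Next I would check that $\im m_1$ and $\im m_2$ are two-sided ideals and that the module direct sum is in fact an algebra direct sum. The crucial point is that there are no composable pairs straddling the two components: if $(a,b) \in \G^{(2)}$ then $\dom(a) = \cod(b)$ lies in a single $\G_k^{(0)}$, forcing $a, b$ both into $\G_k$. Consequently, for compact open bisections $A \subseteq \G_1$ and $B \subseteq \G_2$ we get $AB = \emptyset$ and hence $\bm{1}_A * \bm{1}_B = \bm{1}_{AB} = 0$ by Lemma \ref{char conv}~(\ref{char conv 2}); likewise $\bm{1}_B * \bm{1}_A = 0$. By bilinearity of the convolution this gives $m_1(f) * m_2(g) = 0 = m_2(g) * m_1(f)$ for all $f \in A_R(\G_1)$, $g \in A_R(\G_2)$. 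The same computation, applied to $A, B$ both inside $\G_k$, shows that each $m_k$ is multiplicative (since $\iota_k(A)\iota_k(B) = \iota_k(AB)$), so each $m_k$ is an algebra embedding and $\im m_k$ is a two-sided ideal whose internal multiplication agrees with that of $A_R(\G_k)$.

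Putting these together, the map $A_R(\G_1) \oplus A_R(\G_2) \to A_R(\G)$, $(f,g) \mapsto m_1(f) + m_2(g)$, is an $R$-module isomorphism by the first paragraph, and it is multiplicative because the cross terms vanish and each $m_k$ is an algebra homomorphism:
\[
(m_1(f) + m_2(g)) * (m_1(f') + m_2(g')) = m_1(f * f') + m_2(g * g').
\]
I do not anticipate a serious obstacle here; the construction generalizes verbatim to arbitrary disjoint unions $\bigsqcup_{i \in I} \G_i$, yielding $A_R(\bigsqcup_i \G_i) \cong \bigoplus_i A_R(\G_i)$, the one mild subtlety being that for infinite $I$ one should note a single characteristic function of a compact open bisection $B$ still meets only finitely many $\G_i$ (because $B$ is compact and the $\G_i$ form an open cover), so the image genuinely lands in the algebraic direct sum rather than the product. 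The only place requiring a little care is verifying compactness of $B \cap \G_k$, which I would state explicitly as relying on $\G_k$ being clopen in $\G$.
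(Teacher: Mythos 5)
Your proof is correct and follows essentially the same route as the paper: both decompose $A_R(\G_1\sqcup\G_2)$ into the two ideals of functions supported in $\G_1$ and in $\G_2$, and both rest on the single key fact that no composable pair straddles the components, so $\G_1\G_2=\emptyset$ and the cross convolutions vanish. The paper phrases this via $\supp(f_1*f_2)\subseteq\supp(f_1)\supp(f_2)$ for general functions rather than via spanning characteristic functions, but that is only a stylistic difference; your extra remarks on clopenness and the infinite case are fine and match Proposition \ref{infinities}.
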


\begin{proof}
	Let $I_1 = \{f_1 \in A_R(\G_1 \sqcup \G_2) \mid \supp f_1 \subseteq \G_1\}$ and $I_2 = \{f_2 \in A_R(\G_1 \sqcup \G_2)\mid \supp f_2 \subseteq \G_2\}$. Recall from Remark \ref{subgroupoid2} that $I_1 \cong A_R(\G_1)$ and $I_2 \cong A_R(\G_2)$.
	Every $f \in A_R(\G_1 \sqcup \G_2)$ decomposes as $f = f_1 + f_2$ where $f_i \in I_i$ are defined as:
	\[
	f_i(x) = \begin{cases} 
	f(x)  &\text{if } x \in \G_i\\
	0 & \text{if } x \notin \G_i
	\end{cases}
	\]
	for $i = 1,2$. We claim $I_1$ and $I_2$ are orthogonal ideals (that is, $I_1 * I_2 = 0$). For all $f_1 \in I_1$, $f_2 \in I_2$, and $x \in \G_1 \sqcup \G_2$, $f_1*f_2(x) = \sum_{ab = x}f_1(a)f_2(b)$. So, $\supp(f_1 * f_2) \subseteq \supp(f_1)\supp(f_2) \subseteq \G_1 \G_2 = \emptyset$. This implies $I_1$ and $I_2$ are ideals, and $A_R(\G_1 \sqcup \G_2) = I_1 \oplus I_2 \cong A_R(\G_1) \oplus A_R(\G_2)$.
\end{proof}

By mathematical induction, the Steinberg algebra of a finite disjoint union of ample groupoids is isomorphic to the direct sum of their respective Steinberg algebras.

Like in \cite[Notation 2.6.3]{LPAbook}, we have reasons to consider matrix rings of a slightly more general nature than usual.

\begin{definition}[Matrix rings] \index{$M_n(A)$, $M_\Lambda(A)$} Let $A$ be a ring (not necessarily commutative or unital). If $n$ is a positive integer, we write $M_n(A)$ for the ring of $n \times n$ matrices with entries in $A$. If $\Lambda$ is a set (not necessarily finite) we define $M_\Lambda(A)$ to be the ring of square matrices, with rows and columns indexed by $\Lambda$, having entries in $A$ and only finitely many nonzero entries.
\end{definition}

Note that $M_\Lambda(A)$ is the direct limit of the finite-sized matrix rings associated to finite subsets of $\Lambda$. Also, $M_\Lambda(A)$ is unital if and only if $A$ is unital and $\Lambda$ is finite. The notation $[a_{ij}]$ stands for the matrix in $M_n(A)$, or $M_\Lambda(A)$, with $a_{ij}$ in its $(i,j)$-entry.
Let $\mathcal{N} = \{1, \dots, n\}^2$ be the transitive principal groupoid on $n$ elements, with the discrete topology, as seen in Example \ref{groupoid ex}~(\ref{pairs}).

\begin{proposition} \label{matrix algebra}
	If $\G$ is a Hausdorff ample groupoid, then $A_R(\mathcal{N} \times \G) \cong M_n(A_R(\G)))$.
\end{proposition}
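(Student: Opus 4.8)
The plan is to exhibit an explicit $R$-linear isomorphism $\Phi: A_R(\mathcal{N}\times\G) \to M_n(A_R(\G))$ and check it respects convolution. Since $\mathcal{N} = \{1,\dots,n\}^2$ carries the discrete topology and is finite, we have $\mathcal{N}\times\G = \bigsqcup_{(i,j)\in\mathcal{N}} \{(i,j)\}\times\G$ as a topological groupoid; each slice $\{(i,j)\}\times\G$ is an open (and closed) subset homeomorphic to $\G$. A compact open bisection of $\mathcal{N}\times\G$ is contained in a single slice (by compactness and disconnectedness of $\mathcal{N}$ it meets only finitely many slices, and being a bisection forces it into one slice, since two distinct pairs $(i,j),(i',j')$ in $\mathcal{N}$ with a common first or second coordinate would violate injectivity of $\dom|_U$ or $\cod|_U$ unless the $\G$-parts are disjoint — so in fact a bisection can meet several slices, but we instead decompose any $f\in A_R(\mathcal{N}\times\G)$ slice-by-slice). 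More cleanly: since $\mathcal{N}\times\G$ is a disjoint union of the $n^2$ ample groupoids $\{(i,j)\}\times\G \cong \G$, Remark \ref{subgroupoid2} gives $A_R(\mathcal{N}\times\G) = \bigoplus_{(i,j)} A_R(\{(i,j)\}\times\G)$ as $R$-modules, and each summand is canonically $A_R(\G)$. So define $\Phi(f) = [f_{ij}]$ where $f_{ij}\in A_R(\G)$ is $f_{ij}(g) = f((i,j),g)$. This is clearly an $R$-module isomorphism onto $M_n(A_R(\G))$ (only finitely many — here exactly $n^2$ — entries, all potentially nonzero, which is fine for finite $n$).

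Next I would verify multiplicativity. It suffices to check $\Phi$ is multiplicative on characteristic functions of compact open bisections, since these span and $\Phi$ is $R$-linear; alternatively, check directly on the decomposition. Given compact open bisections, it is cleanest to work with "elementary" generators: for $A\in B^{\rm co}(\G)$ and $(i,j)\in\mathcal{N}$, the set $\{(i,j)\}\times A$ is a compact open bisection of $\mathcal{N}\times\G$, and $\Phi(\bm{1}_{\{(i,j)\}\times A})$ is the matrix with $\bm{1}_A$ in the $(i,j)$-entry and $0$ elsewhere. These generate $A_R(\mathcal{N}\times\G)$ as an $R$-module (by Corollary \ref{cob span}, after splitting an arbitrary compact open bisection into its finitely many slice-pieces, each of which is a compact open bisection of a slice). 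Now compute: in $\mathcal{N}\times\G$, the product $(\{(i,j)\}\times A)(\{(k,l)\}\times B)$ consists of composable pairs $((i,j),a)\cdot((k,l),b)$, which requires $(j,b\text{-domain-stuff})$ to match $(k,\dots)$ — precisely, composability in $\mathcal{N}$ forces $j=k$, and then the product is $\{(i,l)\}\times AB$. So $\bm{1}_{\{(i,j)\}\times A} * \bm{1}_{\{(k,l)\}\times B} = \delta_{jk}\,\bm{1}_{\{(i,l)\}\times AB}$, using Lemma \ref{char conv}~(\ref{char conv 2}). Applying $\Phi$ and using Lemma \ref{char conv}~(\ref{char conv 2}) again inside $A_R(\G)$, this is exactly the matrix-unit multiplication rule $E_{ij}(\bm{1}_A)\cdot E_{kl}(\bm{1}_B) = \delta_{jk} E_{il}(\bm{1}_A *\bm{1}_B)$ in $M_n(A_R(\G))$. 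By $R$-bilinearity of both convolution and matrix multiplication, $\Phi(f*g) = \Phi(f)\Phi(g)$ for all $f,g$.

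Finally I would note the isomorphism is canonical (natural in $\G$) and record that it also respects the involution of Proposition \ref{involution proposition}, matching $f\mapsto \overline{f\circ\inv}$ on the left with the conjugate-transpose on the right, since $\inv$ on $\mathcal{N}\times\G$ swaps $(i,j)\mapsto(j,i)$ and acts as $\inv$ on $\G$.

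I do not expect a serious obstacle here — the argument is essentially bookkeeping. The one point requiring a little care is the reduction to the "elementary" generators $\bm{1}_{\{(i,j)\}\times A}$: one must argue that an arbitrary compact open bisection $U\subseteq\mathcal{N}\times\G$ decomposes as a finite disjoint union $U = \bigsqcup_{(i,j)} U_{ij}$ with $U_{ij}\subseteq\{(i,j)\}\times\G$ a compact open bisection, so that $\bm{1}_U = \sum \bm{1}_{U_{ij}}$. This follows because each slice is clopen in $\mathcal{N}\times\G$, so $U_{ij} = U\cap(\{(i,j)\}\times\G)$ is open; it is closed in $U$ hence compact (as $U$ is compact — here we use that $\G$, hence $\mathcal{N}\times\G$, is Hausdorff, so compact subsets are closed); and it is a bisection as a subset of the bisection $U$. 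With that in hand, everything reduces to Lemma \ref{char conv} and the definition of matrix multiplication. If one prefers to avoid the Hausdorff hypothesis in that step, one can instead phrase the whole proof via the $R$-module decomposition $A_R(\mathcal{N}\times\G)\cong\bigoplus_{(i,j)}A_R(\G)$ from Remark \ref{subgroupoid2} (which needs only openness of the slices) and verify the multiplication rule on the corresponding direct-sum coordinates — but since the proposition already assumes $\G$ Hausdorff, the generator approach above is the most transparent.
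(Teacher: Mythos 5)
Your map is the same one the paper uses ($f \mapsto [f_{ij}]$ with $f_{ij}(x) = f((i,j),x)$), but your verification of multiplicativity goes a different way: the paper computes $(f*g)_{ij}$ directly from the convolution formula (\ref{convolution}), regrouping the sum over $\mathcal{N}\times\G$ as a sum over $k$ of convolutions in $\G$, whereas you reduce to the spanning set of characteristic functions $\bm{1}_{\{(i,j)\}\times A}$, $A \in B^{\rm co}(\G)$, and invoke Lemma \ref{char conv}~(\ref{char conv 2}) on both sides together with bilinearity. Both are correct; your route trades the one-line index manipulation for the bookkeeping of showing the elementary generators span (splitting a compact open bisection $U$ into the slice pieces $U \cap (\{(i,j)\}\times\G)$, which you justify properly using that the slices are clopen and the ambient space Hausdorff), and it has the mild advantage of making the matrix-unit structure, and the compatibility with the involution, completely transparent. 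One small correction: your ``more cleanly'' appeal to Remark \ref{subgroupoid2} does not literally apply, because for $i \ne j$ the slice $\{(i,j)\}\times\G$ is \emph{not} a subgroupoid of $\mathcal{N}\times\G$ (it contains no units and is not closed under composition); the $R$-module decomposition you want follows instead from Proposition \ref{ARG char} (restriction of a locally constant, compactly supported function to a clopen slice homeomorphic to $\G$ lands in $A_R(\G)$), which is exactly how the paper argues that each $f_{ij}$ lies in $A_R(\G)$, or from your own slice-by-slice splitting of compact open bisections, so the gap is cosmetic rather than substantive.
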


\begin{proof}
	Define the map $F: A_R(\mathcal{N} \times \G) \to M_n(A_R(\G))$:
	\begin{align*}
	&F(f) = [f_{ij}], &&\text{where } f_{ij}(x) = f\big((i,j),x\big)\text{ for all } f \in A_R(\mathcal{N} \times \G), (i,j) \in \mathcal{N}, \text{ and } x \in \G.
	\end{align*}
	If $f \in A_R(\mathcal{N} \times \G)$, then $f$ is compactly supported and locally constant. The restriction of $f$ to a clopen subset, such as $\{(i,j)\}\times \G$ for some $(i, j) \in \mathcal{N}$, is also compactly supported and locally constant. Therefore $f_{i,j} \in A_R(\G)$ for all $(i,j) \in \mathcal{N}$. Clearly, $F$ is bijective.
	Now, let $f, g \in A_R(\mathcal{N}\times \G)$. For all $(i,j) \in \mathcal{N}$ and $x \in \G$, the convolution formula yields
	\begin{align*}
	(f*g)_{ij}(x) = f*g \big((i,j),x\big)
	&= \sum_{\substack{(k,\ell,y)\in \mathcal{N} \times \G \\ (\ell,\dom(y)) = (j,\dom(x))}} f\big[((i,j),x)((k,\ell),y)^{-1}\big]g((k,\ell),y)\\
	&= \sum_{1 \le k \le N}\sum_{\substack{y \in \G \\ \dom(y)=\dom(x)}}
	f((i,k),xy^{-1})g((k,j),y) = \sum_{1 \le k \le n}f_{ik}* g_{kj}(x)
	\end{align*}
	This shows $F(f*g) =  F(f)F(g)$, so $F$ is an isomorphism.
\end{proof}

\begin{remark}
	As a specialisation of Proposition \ref{matrix algebra}, we obtain $A_R(\mathcal{N}) \cong M_n(R)$. It is well-known that when $A$ is an $R$-algebra, $M_n(A) \cong M_n(R) \otimes_R A$ (see \cite[Example 4.22]{brevsar2014introduction}). It is also well-known (see \cite[Example 4.20]{brevsar2014introduction}) that if $G$ and $H$ are groups, then $R(G \times H) \cong RG \otimes_R RH$. One can show using the standard techniques that when $\G_1$ and $\G_2$ are arbitrary ample groupoids, there is a surjective homomorphism $A_R(\G_1)\otimes_R A_R(\G_2) \to A_R(\G_1 \times \G_2)$. An interesting question is: under what circumstances is it an isomorphism?
\end{remark}

Suppose $\G$ is a topological groupoid and $\{\G_i\}_{i \in I}$ is a family of open subgroupoids indexed by a directed set $(I, \le)$, such that $\G = \bigcup_{i \in I} \G_i$ and $\G_i \subseteq \G_j$ whenever $i \le j$ in $I$. If this happens, we say that $\G$ is the \textit{directed union} of the subgroupoids $\{\G_i\}_{i \in I}$.

\begin{proposition} \label{directed union}
	If a Hausdorff ample groupoid $\G$ is the directed union of a family of open subgroupoids $\{\G_i\}_{i \in I}$, then $A_R(\G)$ is the direct limit of subalgebras $\{A_R(\G_i)\}_{i \in I}$.
\end{proposition}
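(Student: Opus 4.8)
The plan is to exhibit $A_R(\G)$ as the direct limit of the system $\{A_R(\G_i)\}_{i \in I}$ under the connecting maps supplied by Remark \ref{subgroupoid2}. First I would recall that each $\G_i$ is an open subgroupoid of the ample groupoid $\G$, hence ample by Remark \ref{subgroupoid}, and that $\G_i^{(0)} = \G_i \cap \G^{(0)}$ is Hausdorff, so each $A_R(\G_i)$ makes sense. For $i \le j$, openness of $\G_i$ in $\G_j$ gives, via Remark \ref{subgroupoid2}, a monomorphism $m_{ij}: A_R(\G_i) \hookrightarrow A_R(\G_j)$, linearly extended from $\bm{1}_U \mapsto \bm{1}_U$ for $U$ a compact open subset of $\G_i$ (it is compact open in $\G_j$ too). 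I would check $m_{jk} \circ m_{ij} = m_{ik}$ and $m_{ii} = \id$ on the generators $\bm{1}_U$, so $(\{A_R(\G_i)\}, \{m_{ij}\})$ is a directed system of $R$-algebras; similarly each inclusion $\G_i \hookrightarrow \G$ yields a monomorphism $m_i: A_R(\G_i) \hookrightarrow A_R(\G)$ compatible with the $m_{ij}$, so there is a canonical map $\Phi: \varinjlim A_R(\G_i) \to A_R(\G)$.

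Next I would show $\Phi$ is surjective and injective. For surjectivity: by Corollary \ref{cob span}, $A_R(\G)$ is spanned by $\{\bm{1}_B \mid B \in B^{\rm co}(\G)\}$, so it suffices to hit each such $\bm{1}_B$. Given $B \in B^{\rm co}(\G)$, the sets $\{\G_i\}_{i\in I}$ form an open cover of the compact set $B$, and since $I$ is directed the cover is directed under inclusion, so $B \subseteq \G_i$ for some single $i$; then $B \in B^{\rm co}(\G_i)$ and $\bm{1}_B$ lies in the image of $m_i$. For injectivity: since each $m_i$ is injective and the $m_{ij}$ are injective, the induced map on the direct limit is injective — concretely, if $\Phi$ kills the class of some $f \in A_R(\G_i)$, then $m_i(f) = 0$ in $A_R(\G)$, and because $m_i$ is a monomorphism (Remark \ref{subgroupoid2}) we get $f = 0$ already in $A_R(\G_i)$, hence the class is zero. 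I would also note that the $m_i$ are algebra maps (they respect convolution, since $\bm{1}_B * \bm{1}_{B'} = \bm{1}_{BB'}$ by Lemma \ref{char conv}(\ref{char conv 2}) and products of compact open bisections in $\G_i$ agree with those computed in $\G$, as $\G_i$ is a subgroupoid), so $\Phi$ is an isomorphism of $R$-algebras.

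The only genuinely substantive point is the compactness argument in the surjectivity step: every compact open bisection of $\G$ is already contained in some $\G_i$. This uses both the compactness of $B$ and, crucially, the directedness of $I$ (a finite subcover $\{\G_{i_1}, \dots, \G_{i_n}\}$ of $B$ has an upper bound $i$, and then $B \subseteq \G_{i_1} \cup \dots \cup \G_{i_n} \subseteq \G_i$). Everything else is bookkeeping: checking functoriality of the Remark \ref{subgroupoid2} construction on generators, and checking that convolution in $A_R(\G_i)$ agrees with convolution in $A_R(\G)$ restricted to functions supported in $\G_i$ — which is immediate from the convolution formula (\ref{convolution}), since for $x \in \G_i$ the composable pairs $(z,y)$ with $zy = x$ all have $z, y \in \G_i$ by the subgroupoid property. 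I would present the argument in that order: set up the system, build $\Phi$, prove surjectivity via the compactness/directedness argument, then dispatch injectivity and the algebra-homomorphism check.
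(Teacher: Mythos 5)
Your argument is correct and takes essentially the same route as the paper: the paper likewise combines compactness (of $\supp f$, rather than of spanning bisections) with directedness of $I$ to show $A_R(\G) = \bigcup_{i} m_i(A_R(\G_i))$, and then identifies the direct limit via its universal property, which is only cosmetically different from your bijective map out of the colimit. One small slip worth fixing: a factorisation $zy = x$ of $x \in \G_i$ need not have $z, y \in \G_i$ (the subgroupoid property goes the other way), but the convolution comparison still holds because only pairs with $z \in \supp f \subseteq \G_i$ and $y \in \supp g \subseteq \G_i$ contribute to the sum in (\ref{convolution}) -- or one can simply argue, as you also do, via Lemma \ref{char conv}~(\ref{char conv 2}) applied to bisections contained in $\G_i$.
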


\begin{proof}
	For all $i \le j$ in $I$, let $\varphi_{ij}: A_R(\G_i) \hookrightarrow A_R(\G_j)$ and $m_i: A_R(\G_i)\hookrightarrow A_R(\G)$ be the canonical embeddings (see Remark \ref{subgroupoid2}). We claim that for every $f \in A_R(\G)$, there exists $j \in I$ such that $f \in m_j(A_R(\G_j))$. If $f \in A_R(\G)$ then $\supp f$ is compact and open. Thus, there is a finite subcover of $\{\G_i\}_{i \in I}$ that covers $\supp f$. If $\supp f \subseteq \G_{i_1}\cup \dots \cup \G_{i_n}$, then there exists $j \in I$ with $i_1, \dots, i_n \le j$, using the fact that $(I, \le)$ is directed. Thus, $\supp f \subseteq \G_j$, and $f|_{\G_j}$ is compactly supported and locally constant, whereby $f|_{\G_j} \in A_R(\G_j)$. Finally, this shows $f = m_j(f|_{\G_j}) \in m_j(A_R(\G_j))$.
	
	Now assume $B$ is an $R$-algebra and $\{\beta_i\}_{i \in I}$ is a family of $R$-homomorphisms $\beta_i: A_R(\G_i) \to B$, such that $\beta_i = \beta_j \varphi_{ij}$ for all $i \le j$. Then, since every $\varphi_{ij}: A_R(\G_i) \to A_R(\G_j)$ is injective, $\beta_j$ is an extension of $\beta_i$ whenever $i \le j$. Since $A_R(\G) = \bigcup_{i \in I} m_i(A_R(\G_i))$, it follows that there is a unique homomorphism $\beta: A_R(\G) \to B$ such that $\beta_i = \beta m_i$ for all $i \in I$. As such, $A_R(\G)$ has the universal property for the directed system $\{A_R(\G_i)\}_{i \in I}$, so we can conclude it is the direct limit of that system.
\end{proof}

We can now extend Propositions \ref{coprod} and \ref{matrix algebra} to allow infinite index sets. This could have been proved directly, mentioning that the functions in $A_R(\G)$ have compact supports, but it is nice to demonstrate direct limits.

\begin{proposition} \label{infinities} Let $\G$ be a Hausdorff ample groupoid, and let $\Lambda$ be an infinite set.
	\begin{enumerate}[\rm (1)]
		\item \label{inf matrix} If $\mathcal{D} = \Lambda^2$ is the transitive principal groupoid on $\Lambda$, equipped with the discrete topology, then $A_R(\mathcal{D} \times \G) \cong M_\Lambda(A_R(\G))$.
		\item \label{inf coprod} If $\G = \bigsqcup_{\lambda \in \Lambda}\G_\lambda$ is the disjoint union of an infinite family of clopen subgroupoids $\{\G_\lambda\}_{\lambda \in \Lambda}$, then $A_R(\G) \cong \bigoplus_{\lambda \in \Lambda} A_R(\G_\lambda)$.
	\end{enumerate}
\end{proposition}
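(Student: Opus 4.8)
The plan is to bootstrap both statements from their finite counterparts, Propositions~\ref{coprod} and~\ref{matrix algebra}, by realising each side as a direct limit over the finite subsets of $\Lambda$ and then invoking Proposition~\ref{directed union}; this is precisely the strategy alluded to in the paragraph preceding the statement.

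For part~(\ref{inf matrix}), I would first note that $\mathcal{D}\times\G$ is a Hausdorff ample groupoid, being a product of two ample groupoids, $\mathcal{D}$ (discrete) and $\G$, both Hausdorff. Ordering the finite subsets of $\Lambda$ by inclusion, write $\mathcal{D}\times\G = \bigcup_{F}(F^2\times\G)$, where $F^2$ denotes the transitive principal groupoid on $F$; since $\mathcal{D}$ is discrete, each $F^2\times\G$ is an \emph{open} subgroupoid, so this exhibits $\mathcal{D}\times\G$ as a directed union of open subgroupoids. Proposition~\ref{directed union} then gives $A_R(\mathcal{D}\times\G) = \varinjlim_F A_R(F^2\times\G)$, with connecting maps the canonical embeddings of Remark~\ref{subgroupoid2}. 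Rerunning the proof of Proposition~\ref{matrix algebra} with the index set $F$ in place of $\{1,\dots,n\}$ produces isomorphisms $\Phi_F\colon A_R(F^2\times\G)\xrightarrow{\ \sim\ }M_F(A_R(\G))$, $f\mapsto[f_{ij}]$ with $f_{ij}(x)=f((i,j),x)$. On the other hand, as recorded after the definition of matrix rings, $M_\Lambda(A_R(\G)) = \varinjlim_F M_F(A_R(\G))$, the connecting maps being the block inclusions. So it remains to check that for $F\subseteq F'$ the square formed by $\Phi_F$, $\Phi_{F'}$, the block inclusion $M_F(A_R(\G))\hookrightarrow M_{F'}(A_R(\G))$, and the canonical embedding $A_R(F^2\times\G)\hookrightarrow A_R((F')^2\times\G)$ commutes; this is immediate, since both vertical maps amount to extending a function by zero, which on the matrix side is exactly padding with zero rows and columns. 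The universal property of direct limits then yields $A_R(\mathcal{D}\times\G)\cong M_\Lambda(A_R(\G))$.

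For part~(\ref{inf coprod}), the same scheme works with Proposition~\ref{coprod} in place of Proposition~\ref{matrix algebra}. For finite $F\subseteq\Lambda$, the partial disjoint union $\bigsqcup_{\lambda\in F}\G_\lambda$ is a clopen, hence open, subgroupoid of $\G$, and $\G$ is the directed union of these, so $A_R(\G) = \varinjlim_F A_R\bigl(\bigsqcup_{\lambda\in F}\G_\lambda\bigr)$ by Proposition~\ref{directed union}. By Proposition~\ref{coprod} and induction, $A_R\bigl(\bigsqcup_{\lambda\in F}\G_\lambda\bigr)\cong\bigoplus_{\lambda\in F}A_R(\G_\lambda)$, while $\bigoplus_{\lambda\in\Lambda}A_R(\G_\lambda) = \varinjlim_F\bigoplus_{\lambda\in F}A_R(\G_\lambda)$ with the obvious inclusions; once more the finite isomorphisms intertwine the connecting maps (on both sides, ``extend by zero'' / ``pad the tuple with zeros''), so the two direct limits coincide.

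The only thing that needs care, rather than any real idea, is this compatibility check: verifying that the isomorphisms supplied by Propositions~\ref{coprod} and~\ref{matrix algebra} are natural under enlarging the finite index set, so that they descend to the direct limit. Alternatively, one can dispense with direct limits altogether: any $f\in A_R(\mathcal{D}\times\G)$, respectively $f\in A_R(\G)$, has compact support, which therefore meets only finitely many of the clopen pieces $\{(i,j)\}\times\G$, respectively $\G_\lambda$, and one writes the isomorphism down explicitly on such $f$. The direct-limit argument is tidier and recycles the finite cases verbatim.
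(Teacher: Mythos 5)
Your proposal is correct and follows essentially the same route as the paper: both express $\mathcal{D}\times\G$ (resp.\ $\G$) as a directed union of the open subgroupoids indexed by finite subsets $F\subseteq\Lambda$, apply Propositions \ref{matrix algebra} and \ref{coprod} to the finite pieces, and pass to the direct limit via Proposition \ref{directed union}. Your explicit verification that the finite-stage isomorphisms intertwine the connecting maps is a detail the paper leaves implicit, but it is the same argument.
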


\begin{proof}
	(\ref{inf matrix}) Note that $\mathcal{D} \times \G$ is the directed union of the subgroupoids $\mathcal{D}_F \times \G$, where $\mathcal{D}_F = \{(d_1, d_2) \in \mathcal{D} \mid d_1, d_2 \in F\}$, as $F$ ranges over all the finite subsets of $\Lambda$ ordered by inclusion. By Propositions \ref{matrix algebra} and \ref{directed union}, $A_R(\mathcal{D} \times \G)$ is the direct limit of matrix algebras $A_R(\mathcal{D}_F \times \G) \cong M_F(A_R(\G))$, and this direct limit  is isomorphic to $M_\Lambda(A_R(\G))$.
	
	(\ref{inf coprod}) Note that $\G$ is the directed union of the subgroupoids $\G_F = \bigsqcup_{\lambda \in F}\G_\lambda$, as $F$ ranges over finite subsets of $\Lambda$ ordered by inclusion. By Propositions \ref{coprod} and \ref{directed union}, $A_R(\G)$ is the direct limit of the subalgebras $A_R(\G_F) \cong \bigoplus_{\lambda \in F}A_R(\G_\lambda)$, and this direct limit is isomorphic to $\bigoplus_{\lambda \in \Lambda} A_R(\G_\lambda)$.
\end{proof}

Here we describe a class of principal groupoids, called \textit{approximately finite} groupoids, that was defined by Renault in his influential monograph \cite{renault1980groupoid}.

\begin{example}  Let $X$ be a locally compact, totally disconnected Hausdorff space. Consider it as a groupoid with unit space $X$ and no morphisms outside the unit space. Then $A_R(X)$ is the commutative $R$-algebra of locally constant, compactly supported functions $f: X \to R$, with pointwise addition and multiplication. We adopt the notation $A_R(X) = C_R(X)$ and drop the $*$ notation for products, because this  serves as a reminder that $C_R(X)$ is commutative. \index{$C_R(X)$}
	An ample groupoid is called \textbf{elementary} if it is of the form $(\mathcal{N}_1\times X_1)\sqcup \dots \sqcup (\mathcal{N}_t \times X_t)$, where $\mathcal{N}_1, \dots \mathcal{N}_t$ are discrete, finite, transitive principal groupoids on $n_1, \dots, n_t$ elements, respectively, and $X_1, \dots, X_n$ are locally compact, totally disconnected, Hausdorff topological spaces. Using the results of this section:
	\begin{equation} \label{matricial}
	A_R\left(\bigsqcup_{i = 1}^n \left(\mathcal{N}_i \times X_i\right)\right) \cong \bigoplus_{i = 1}^t M_{n_i}\big(C_R(X_i)\big).
	\end{equation}
	A groupoid is called \textbf{approximately finite} if it is the directed union of an increasing sequence of elementary groupoids. The Steinberg algebra of an approximately finite groupoid is a direct limit of matricial algebras, each resembling (\ref{matricial}).
\end{example}

\begin{definition} \label{vN definition}
	A ring $A$ is called \textbf{von Neumann regular} if for every $x \in A$ there exists $y \in A$ such that $x = xyx$. 
\end{definition}

If $y \in A$ satisfies $x = xyx$ then $y$ is called a \textit{von Neumann inverse} of $x$. If $R$ is a commutative von Neumann regular ring, then for every $r \in R$ there exists a unique element $s \in R$ such that $r = r^2 s$ and $s = s^2 r$ (see \cite[Proposition 3.6]{goodearl}).

\begin{proposition} \label{vNr}
	If $\mathcal{F}$ is an approximately finite groupoid and $R$ is a von Neumann regular unital commutative ring, then $A_R(\mathcal{F})$ is von Neumann regular.
\end{proposition}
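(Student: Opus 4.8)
The plan is to exploit the structural description of $A_R(\mathcal{F})$ recalled just before the statement, together with two elementary permanence properties of von Neumann regularity. Since $\mathcal{F}$ is approximately finite, it is the directed union of an increasing sequence $\mathcal{E}_1 \subseteq \mathcal{E}_2 \subseteq \cdots$ of elementary groupoids, so by Proposition \ref{directed union} the algebra $A_R(\mathcal{F})$ is the direct limit of the subalgebras $A_R(\mathcal{E}_n)$, and by (\ref{matricial}) each $A_R(\mathcal{E}_n)$ is a finite direct sum of algebras of the form $M_m(C_R(X))$ with $X$ locally compact, totally disconnected and Hausdorff. Von Neumann regularity is inherited by direct limits (given $x$ in the limit, pull it back to an element $x_n$ of some $A_R(\mathcal{E}_n)$, choose a von Neumann inverse $y_n$ of $x_n$ there, and note that the image of $y_n$ is a von Neumann inverse of $x$) and by finite direct sums (work coordinatewise). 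Hence it suffices to prove that $M_m(C_R(X))$ is von Neumann regular for every such $X$ and every $m \ge 1$.

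First I would check that $C_R(X)$ itself is von Neumann regular. A nonzero $f \in C_R(X)$ is locally constant with compact support, so it takes finitely many nonzero values $r_1, \dots, r_k \in R$, on the mutually disjoint compact open sets $U_i = f^{-1}(r_i)$. Since $R$ is von Neumann regular, choose $s_i \in R$ with $r_i = r_i s_i r_i$, and put $g = \sum_{i=1}^{k} s_i \bm{1}_{U_i} \in C_R(X)$. A pointwise computation gives $fgf = \sum_{i=1}^{k} r_i s_i r_i \bm{1}_{U_i} = \sum_{i=1}^{k} r_i \bm{1}_{U_i} = f$, so $g$ is a von Neumann inverse of $f$. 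When $X$ is compact this shows $C_R(X)$ is a unital (identity $\bm{1}_X$) von Neumann regular ring; in general $C_R(X)$ is only locally unital, with local units $\{\bm{1}_K \mid K \subseteq X \text{ compact open}\}$.

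Next I would pass to matrix rings. If $X$ is compact, then $M_m(C_R(X))$ is von Neumann regular by the classical theorem \cite[Theorem 1.7]{goodearl}. For general $X$, I would realise $M_m(C_R(X))$ as a direct limit of such unital matrix rings: for $K \subseteq X$ compact open, $\bm{1}_K C_R(X)\bm{1}_K \cong C_R(K)$ is unital von Neumann regular, the idempotent $p_K = \operatorname{diag}(\bm{1}_K, \dots, \bm{1}_K) \in M_m(C_R(X))$ satisfies $p_K M_m(C_R(X)) p_K = M_m(\bm{1}_K C_R(X)) \cong M_m(C_R(K))$, and since any finite family of elements of $C_R(X)$ has supports contained in a common compact open $K$, the corners $p_K M_m(C_R(X)) p_K$ exhaust $M_m(C_R(X))$ as $K$ runs over the compact open subsets of $X$ ordered by inclusion. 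Thus $M_m(C_R(X))$ is the directed union, hence the direct limit, of the von Neumann regular rings $M_m(C_R(K))$, so it is von Neumann regular; this completes the proof.

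The two permanence statements and the verification for $C_R(X)$ are routine; the one point that needs a little care is the final step, since $C_R(X)$ need not be unital and so \cite[Theorem 1.7]{goodearl} does not apply directly. Writing $M_m(C_R(X))$ as a directed union of the unital corners $M_m(C_R(K))$ is the device that circumvents this, and it is the place I expect the argument to require the most attention, although it remains entirely elementary.
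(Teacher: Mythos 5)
Your argument is correct, and its skeleton is the same as the paper's: write $A_R(\mathcal{F})$ as a direct limit of elementary algebras, reduce via (\ref{matricial}) to finite direct sums of $M_m(C_R(X))$, verify regularity of $C_R(X)$, pass to matrix rings, and invoke permanence of von Neumann regularity under finite direct sums and direct limits. The two places where you diverge are minor but worth noting. For $C_R(X)$, the paper argues pointwise, using the fact (from \cite[Proposition 3.6]{goodearl}) that in a commutative regular ring each $r$ has a \emph{unique} $s$ with $r = r^2 s$ and $s = s^2 r$; that uniqueness is what guarantees the pointwise-defined quasi-inverse is again locally constant. Your finite-value decomposition $f = \sum_i r_i \bm{1}_{U_i}$ with $g = \sum_i s_i \bm{1}_{U_i}$ sidesteps the uniqueness statement entirely and is, if anything, slightly more self-contained. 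For the matrix step, the paper cites Kaplansky's elementary induction (\cite[Theorem 24]{kaplansky1972fields}), which proves regularity of $M_m(A)$ for a regular ring $A$ without needing a unit, so no extra device is required; you instead keep the classical unital theorem and recover the general case by exhausting $M_m(C_R(X))$ by the unital corners $p_K M_m(C_R(X)) p_K \cong M_m(C_R(K))$ over compact open $K$, then applying direct-limit permanence once more. Both routes are sound; yours costs one more limiting argument but uses only the standard unital matrix-ring theorem, while the paper's reference does the non-unital case in one stroke.
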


\begin{proof}
	Let $X$ be a locally compact, totally disconnected, Hausdorff topological space, and suppose $R$ is von Neumann regular. To verify that $C_R(X)$ is von Neumann regular, take $f \in C_R(X)$ and for every $x \in X$ define $g(x)$ to be the unique element of $R$ such that $f(x) = f(x)^2 g(x)$ and $g(x) = g(x)^2 f(x)$. Note that $g \in C_R(X)$ and $fgf = f$. Now, $C_R(X)$ being regular implies $M_n(C_R(X))$ is regular (this could be argued carefully with Morita equivalence, but one finds in \cite[Theorem 24]{kaplansky1972fields} a clever direct proof by induction). A direct sum of regular rings is regular, so any ring of the form (\ref{matricial}) is regular, provided $R$ is regular. A direct limit of regular rings is regular: each element in the direct limit must belong to a regular subring, and the von Neumann inverse can be chosen from that same subring. Therefore $A_R(\mathcal{F})$ is von Neumann regular.
\end{proof}

Note that we did not use the assumption that $\mathcal{F}$ is a countable directed union of elementary groupoids; any directed union will do.
It is an open problem to characterise von Neumann regularity for Steinberg algebras in groupoid terms; partial progress is achieved in \cite{ambily2018simple}.

This next result is a ``baby version" of \cite[Proposition 3.1]{steinberg2018chain}, with a new proof. In preparation for it, we briefly remark that every transitive groupoid $\G$ is (algebraically, but not necessarily topologically) isomorphic to the product of a transitive principal groupoid and a group. To construct such an isomorphism, fix a unit $b \in \G^{(0)}$. Let $\Gamma = { b\G b}$ be the isotropy group based at $b$, and let $\mathcal{P} = [\G^{(0)}]^2$ be the transitive principal groupoid on $\G^{(0)}$. Fix a morphism $h_{y} \in {{b}\G y}$ for every $y \in \G^{(0)}$, and define the groupoid isomorphisms:
\begin{align*}
F&: \G \to \mathcal{P} \times \Gamma,& &F(g) = \left(\big(\cod(g), \dom(g)\big), h_{\cod(g)} g h_{\dom(g)}^{-1}\right)&& \text{for all } g \in \G; \\
F^{-1}&: \mathcal{P} \times \Gamma \to \G,&  &F^{-1}\big((x,y), \gamma\big) = h_x^{-1} \gamma h_y & &\text{for all } x, y \in \G^{(0)}, \gamma \in \Gamma.
\end{align*}

\begin{proposition} \label{finite dim st}
	Let $\mathbb{K}$ be a field and $\G$ an ample groupoid. Then $A_{\mathbb{K}}(\G)$ is finite-dimensional if and only if $\G$ is finite and has the discrete topology. If $\mathcal{O}_1, \dots, \mathcal{O}_t$ are the orbits of $\G$, and $\Gamma_1, \dots, \Gamma_t$ are the corresponding isotropy groups, then
	\[
	A_{\mathbb{K}}(\G) \cong \bigoplus_{i = 1}^t M_{\mathcal{O}_i}(R\Gamma_i).
	\]
\end{proposition}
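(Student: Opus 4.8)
The plan is to prove the two implications of the biconditional, then refine the structural information by reducing to a disjoint union of transitive groupoids. For the forward direction, suppose $A_{\mathbb{K}}(\G)$ is finite-dimensional. The characteristic functions $\{\bm{1}_x \mid x \in \G^{(0)}\}$ are linearly independent elements of $A_{\mathbb{K}}(\G^{(0)}) \hookrightarrow A_{\mathbb{K}}(\G)$ (each singleton $\{x\}$ is compact open since $\G^{(0)}$ is ample and discreteness is forced locally — actually one should say: each $x$ has a compact open neighbourhood $V$, and if $\G^{(0)}$ were infinite we would get infinitely many linearly independent $\bm{1}_V$'s after shrinking to disjoint ones), so $\G^{(0)}$ is finite; being finite and Hausdorff it is discrete. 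Then $\G^{(0)}$ is open and closed, and for each $x,y \in \G^{(0)}$ the fibre $x\G y$ is a clopen discrete subset; since $\{\bm{1}_{\{g\}} \mid g \in \G\}$ would be linearly independent if $\G$ were infinite, $\G$ is finite, hence discrete. Conversely, if $\G$ is finite and discrete, then $B^{\rm co}(\G)$ is finite (every subset is compact open, but bisections are limited), and $A_{\mathbb{K}}(\G) = \Span_{\mathbb{K}}\{\bm{1}_B \mid B \in B^{\rm co}(\G)\}$ by Corollary \ref{cob span}; in fact $\{\bm{1}_{\{g\}} \mid g \in \G\}$ is a spanning set of size $|\G| < \infty$, so $A_{\mathbb{K}}(\G)$ is finite-dimensional (of dimension exactly $|\G|$).

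For the decomposition, observe that a finite discrete groupoid $\G$ is the disjoint union of its orbits: $\G = \bigsqcup_{i=1}^t \G|_{\mathcal{O}_i}$, where $\G|_{\mathcal{O}_i} = \cod^{-1}(\mathcal{O}_i)$ is a clopen subgroupoid (clopen because everything is clopen in a discrete space, and a subgroupoid because $\mathcal{O}_i$ is invariant). By Proposition \ref{coprod} (and its finite iteration), $A_{\mathbb{K}}(\G) \cong \bigoplus_{i=1}^t A_{\mathbb{K}}(\G|_{\mathcal{O}_i})$. So it remains to compute $A_{\mathbb{K}}(\H)$ for a finite discrete transitive groupoid $\H$ with orbit $\mathcal{O}$ and isotropy group $\Gamma$. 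Here I invoke the algebraic isomorphism $\H \cong \mathcal{P} \times \Gamma$ displayed just before the statement, where $\mathcal{P} = \mathcal{O}^2$ is the transitive principal groupoid on $\mathcal{O}$; since $\H$ is discrete, this is also a topological isomorphism ($\mathcal{P} \times \Gamma$ carrying the discrete topology). Then $A_{\mathbb{K}}(\H) \cong A_{\mathbb{K}}(\mathcal{P} \times \Gamma) \cong M_{\mathcal{O}}(A_{\mathbb{K}}(\Gamma)) \cong M_{\mathcal{O}}(\mathbb{K}\Gamma)$ by Proposition \ref{matrix algebra} (with $\mathcal{O}$ finite in place of $\{1,\dots,n\}$) together with the identification $A_{\mathbb{K}}(\Gamma) \cong \mathbb{K}\Gamma$ for the finite discrete group $\Gamma$. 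Assembling the pieces gives $A_{\mathbb{K}}(\G) \cong \bigoplus_{i=1}^t M_{\mathcal{O}_i}(\mathbb{K}\Gamma_i)$, as claimed. (Note the statement writes $R\Gamma_i$; over the field $\mathbb{K}$ this reads $\mathbb{K}\Gamma_i$.)

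The step I expect to be the main obstacle — or at least the one requiring the most care — is the forward implication: extracting from finite-dimensionality of $A_{\mathbb{K}}(\G)$ that $\G$ itself is \emph{finite} and \emph{discrete}, rather than merely that $\G^{(0)}$ is finite. One has to argue that an infinite ample groupoid, even one whose unit space is finite, still produces an infinite-dimensional Steinberg algebra; the cleanest route is to note that if some fibre $x\G$ is infinite then, being a discrete space (étale groupoids have discrete fibres), it contains infinitely many distinct points $g_1, g_2, \ldots$, and each $\{g_j\}$ is a compact open bisection, so the $\bm{1}_{\{g_j\}}$ are infinitely many linearly independent elements — contradiction. Once discreteness and finiteness are in hand, everything else is a bookkeeping assembly of the already-proved propositions on disjoint unions and matrix algebras, plus the purely algebraic structure theorem for transitive groupoids.
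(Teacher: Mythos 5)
Your proof is correct, and its second half (orbit decomposition, the algebraic isomorphism of a transitive groupoid with $\mathcal{P}\times\Gamma$, then Propositions \ref{coprod} and \ref{matrix algebra}) is exactly the paper's argument. Where you genuinely diverge is the forward implication that $\G^{(0)}$ is finite: you produce infinitely many linearly independent elements by finding infinitely many pairwise disjoint nonempty compact open subsets of an infinite unit space, whereas the paper runs a pigeonhole argument on a finite basis $f_1,\dots,f_n$ (each has finite image, so the basis can separate at most $M^n$ units, while compact open subsets of the locally compact, Hausdorff, totally disconnected space $\G^{(0)}$ separate any two distinct units). Both routes rest on the same separation property, but the paper's version needs only "two distinct units are separated by some compact open set," while yours additionally needs the fact that an infinite Hausdorff, locally compact, $0$-dimensional space admits an infinite pairwise disjoint family of nonempty compact open sets -- true, but your "after shrinking to disjoint ones" is left as an assertion and deserves a line of proof (e.g. infinitely many pairwise disjoint nonempty open sets exist in any infinite Hausdorff space, and each contains a nonempty compact open set by Proposition \ref{tot disc}). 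One further caution: in your closing paragraph you say that if some fibre $x\G$ is infinite then each singleton $\{g_j\}$ is a compact open bisection "because fibres are discrete"; discreteness of the fibre as a subspace does not by itself make its points open in $\G$, so this only works after $\G^{(0)}$ is known to be finite, hence discrete, so that $x\G y=\cod^{-1}(x)\cap\dom^{-1}(y)$ is clopen -- which is precisely the order your main argument (and the paper, via "étale over a discrete unit space is discrete") follows.
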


\begin{proof}
	First of all, if $\G$ is discrete, then $\dim_\mathbb{K} A_\mathbb{K}(\G) = |\G|$, because $\{\bm{1}_{\{g\}} \mid g \in \G\}$ is a basis for $A_\mathbb{K}(\G)$, by Corollary \ref{cos span}. Thus, $A_\mathbb{K}(\G)$ is finite-dimensional if $\G$ is finite and discrete. Conversely, suppose $A_\mathbb{K}(\G)$ is finite-dimensional, and let $\{f_1, \dots, f_n\}$ be a basis. The image of each $f_i$ is finite, so $|\im f_1 \cup \dots \cup \im f_n|$ is bounded by some $M < \infty$. If $|\G^{(0)}| > M^n$ then, by the pigeonhole principle, there exists $u \ne v$ in $\G^{(0)}$ such that $f_i(u)=f_i(v)$ for all $1 \le i \le n$, and thus $f(u) = f(v)$ for all $f \in A_\mathbb{K}(\G)$. But $\G^{(0)}$ is Hausdorff, locally compact, and totally disconnected, so there is a compact open subset $U \subseteq \G^{(0)}$ with $u \in U$ and $v \notin U$. Since $\G^{(0)}$ is open in $\G$, it follows that $U$ is a compact open bisection in $\G$, so $\bm{1}_U \in A_\mathbb{K}(\G)$. We arrive at a contradiction, because $\bm{1}_U(u) \ne \bm{1}_U(v)$. Therefore $|\G^{(0)}| \le M^n < \infty$. A finite Hausdorff space is discrete, so $\G^{(0)}$ is discrete. As $\G$ is \'etale, it must also be discrete. Thus $\dim_\mathbb{K} A_\mathbb{K}(\G) = n = |\G|$. Given that $\G$ is finite and discrete, it is isomorphic to a disjoint union of transitive groupoids (one for each orbit), each of which is isomorphic to the product of a transitive principal groupoid (with as many elements as the corresponding orbit), and a finite group (the isotropy group of that orbit). The expression giving the structure of $A_\mathbb{K}(\G)$ follows from Propositions \ref{coprod} and~\ref{matrix algebra}.
\end{proof}

\subsection{Graded groupoids and graded Steinberg algebras} \label{gr grp}

Just as the Steinberg algebra of a groupoid inherits an involution from the groupoid, so it can inherit a graded structure. Many well-studied examples of Steinberg algebras receive a canonical group-grading that comes from a grading on the groupoid itself. We first introduce the concepts and terminology of graded groupoids and graded algebras.

A standing assumption is that $\Gamma$ \index{$\Gamma$, $\varepsilon$} is a group with identity $\varepsilon$. A ring $A$ is called a \textit{$\Gamma$-graded ring} if it decomposes as a direct sum of additive subgroups $A = \bigoplus_{\gamma \in \Gamma} A_\gamma$ such that $A_\gamma A_\delta \subseteq A_{\gamma\delta}$ for every $\gamma, \delta \in \Gamma$. The meaning of $A_\gamma A_\delta$ is the additive subgroup generated by all products $a b$ where $a \in A_\gamma, b \in A_\delta$. The additive group $A_\gamma$ is called the \textit{$\gamma$-component} of $A$. 
The elements of $\bigcup_{\gamma \in \Gamma} A_{\gamma}$ in a graded ring $A$ are called \emph{homogeneous elements}. The nonzero elements of $A_\gamma$ are called \emph{$\gamma$-homogeneous}, and we write $\deg(a) = \gamma$ for $a \in
A_{\gamma}\setminus \{0\}.$
When it is clear from context that a ring $A$ is graded by the group $\Gamma$, we simply say that $A$ is a \emph{graded ring}. If $A$ is an $R$-algebra, then $A$ is called a \emph{graded algebra} if it  is a graded ring and each $A_{\gamma}$ is an $R$-submodule.

An ideal $I \subseteq A$ is a \textit{graded ideal} if $I \subseteq \sum_{\gamma \in \Gamma} I \cap A_\gamma$. Graded left ideals, graded right ideals, graded subrings, and graded subalgebras are defined in a similar manner. If $H$ is a set of homogeneous elements in $A$, the ideal generated by $H$ is a graded ideal. Likewise, the left and right ideals generated by $H$ are graded. A \textit{graded homomorphism} of $\Gamma$-graded rings is a homomorphism $f: A \to B$ such that $f(A_\g) \subseteq B_\g$ for every $\g \in \Gamma$.
Finally, we say that a $\Gamma$-graded ring $A$ has \textit{homogeneous local units} (or \textit{graded local units}) if $A$ is locally unital, and the set of local units can be chosen to be a subset of $A_\ep$.

A topological groupoid $\G$ is called \textit{$\Gamma$-graded} if it can be partitioned by  clopen subsets $\G = \bigsqcup_{\gamma \in \Gamma} \G_\gamma$, such that $\G_\gamma \G_\delta \subseteq \G_{\gamma \delta}$ for  every $\gamma, \delta \in \Gamma$.
Equivalently $\G$ is $\Gamma$-graded if there is a continuous homomorphism $\kappa: \G \to \Gamma$. We can show the definitions are equivalent by setting $\G_\gamma = \kappa^{-1}(\{\gamma\})$. If $\kappa:\G \to \Gamma$ defines the grading on $\G$, we call it the \textit{degree map}.
We use the notation $\G_\gamma x = \G_\gamma \cap \G x$ and ${ x\G_\gamma} = {x\G} \cap \G_\gamma$ for $x \in \G^{(0)}$ and $\gamma \in \Gamma$.

We say a subset $X\subseteq \G$ is  $\gamma$-homogeneous if $X\subseteq \G_\gamma$.  Obviously, the unit space is $\varepsilon$-homogeneous and if  $X$ is $\gamma$-homogeneous then $X^{-1}$ is $\gamma^{-1}$-homogeneous. Moreover, ${\G_\gamma}^{-1} = \G_{\gamma^{-1}}$ for all $\gamma \in \Gamma$. For a $\Gamma$-graded ample groupoid, we write $B^{\rm co}_{\gamma}(\G)$ \index{$B^{\rm co}_{\gamma}(\G)$, $B_{*}^{\rm co}(\G)$} for the set of all $\gamma$-homogeneous compact open bisections of $\G$. For the set of all homogeneous compact open bisections, we use the notation:
\[
B_{*}^{\rm co}(\G)=\bigcup_{\gamma\in\Gamma} B^{\rm co}_{\gamma}(\G) \subseteq B^{\rm co}(\G).
\]
In Proposition \ref{inv-semigroup prop}, we proved that $B^{\rm co}(\G)$ is an inverse semigroup, and it is readily apparent that $B^{\rm co}_*(\G)$ is an inverse subsemigroup of $B^{\rm co}(\G)$.
In addition, $B_*^{\rm co}(\G)$ is a base of compact open bisections for $\G$. Indeed, since $B^{\rm co}(\G)$ is a base for $\G$, it suffices to show that every $B\in B^{\rm co}(\G)$ is a union of sets in $B^{\rm co}_*(\G)$. This is almost trivial, for if $B \in B^{\rm co}(\G)$ then $B = \bigcup_{\gamma \in \Gamma} B \cap \G_\gamma$ and $B \cap \G_\gamma \in B_\gamma^{\rm co}(\G)$. The next two results are from \cite[Lemma 3.1]{clark2015equivalent}.

\begin{proposition} \label{grading}
	If $\G = \bigsqcup_{\gamma \in \Gamma}\G_\gamma$ is a $\Gamma$-graded ample groupoid, then $A_R(\G) = \bigoplus_{\gamma \in \Gamma}A_R(\G)_\gamma$ is a $\Gamma$-graded algebra with homogeneous local units, where: 
	\begin{align*}
	&A_R(\G)_\gamma = \left\{f \in A_R(\G)\mid \supp f  \subseteq \G_\gamma\right\} && \text{for all } \gamma \in \Gamma.
	\end{align*}
\end{proposition}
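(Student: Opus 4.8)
The plan is to check, in turn, the five assertions being made: that each $A_R(\G)_\gamma$ is an $R$-submodule; that $A_R(\G)=\sum_{\gamma\in\Gamma}A_R(\G)_\gamma$; that this sum is direct; that $A_R(\G)_\gamma * A_R(\G)_\delta \subseteq A_R(\G)_{\gamma\delta}$ for all $\gamma,\delta\in\Gamma$; and that the local units can be taken inside $A_R(\G)_\varepsilon$. The first point is immediate, because $\supp(f+g)\subseteq \supp f\cup\supp g$ and $\supp(rf)\subseteq \supp f$, so the condition $\supp f\subseteq \G_\gamma$ cuts out an $R$-submodule of $A_R(\G)$.

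For the spanning property I would start from Corollary \ref{cob span}, which writes an arbitrary $f\in A_R(\G)$ as a finite sum $\sum_i r_i\bm{1}_{B_i}$ with $B_i\in B^{\rm co}(\G)$. Since each $B_i$ is compact and $\{\G_\gamma\}_{\gamma\in\Gamma}$ is a clopen partition of $\G$, only finitely many $\G_\gamma$ meet $B_i$, so $B_i=\bigsqcup_\gamma (B_i\cap\G_\gamma)$ is a finite disjoint union; each $B_i\cap\G_\gamma$ is open, is a bisection (being a subset of the bisection $B_i$), and is compact (being closed in the compact set $B_i$), hence lies in $B^{\rm co}_\gamma(\G)$, and $\bm{1}_{B_i\cap\G_\gamma}\in A_R(\G)_\gamma$. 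Collecting terms of equal degree exhibits $f$ as a finite sum of homogeneous elements. (Alternatively one could apply Proposition \ref{spann} to the base $B^{\rm co}_*(\G)$, after checking that an intersection of finitely many homogeneous compact open bisections with Hausdorff union is either empty or again a homogeneous compact open bisection — empty unless all the factors share a common degree — but the argument just sketched is shorter.)

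Directness follows because $\G=\bigsqcup_\gamma \G_\gamma$ is a partition: given $f=\sum_\gamma f_\gamma$ with $f_\gamma\in A_R(\G)_\gamma$ and almost all $f_\gamma=0$, each $x\in\G$ lies in a unique $\G_\delta$, and then $f(x)=f_\delta(x)$ since $f_\gamma(x)=0$ for $\gamma\ne\delta$; hence $f_\gamma = f\cdot\bm{1}_{\G_\gamma}$ (pointwise product) is determined by $f$, and $f=0$ forces every $f_\gamma=0$. For multiplicative compatibility, recall that $A_R(\G)$ is already known to be closed under $*$; and if $f\in A_R(\G)_\gamma$, $g\in A_R(\G)_\delta$, and $(f*g)(x)=\sum_{ab=x}f(a)g(b)\ne 0$, then some composable pair $(a,b)$ has $ab=x$ with $a\in\supp f\subseteq\G_\gamma$ and $b\in\supp g\subseteq\G_\delta$, so $x\in\G_\gamma\G_\delta\subseteq\G_{\gamma\delta}$; thus $\supp(f*g)\subseteq\G_{\gamma\delta}$ and $f*g\in A_R(\G)_{\gamma\delta}$. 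Finally, the local units produced in Proposition \ref{units} are the functions $\bm{1}_X$ with $X\in\mathcal{B}(\G^{(0)})$, and since $\G^{(0)}\subseteq\G_\varepsilon$ these all lie in $A_R(\G)_\varepsilon$, giving a set of homogeneous local units.

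The only step with any real content is the spanning property, and there the point to watch is that each piece $B_i\cap\G_\gamma$ really is a \emph{compact} open bisection (so that its characteristic function is a legitimate generator of $A_R(\G)$) and that only finitely many of these pieces are nonempty; both follow at once from compactness of $B_i$ together with clopenness of $\G_\gamma$. Everything else is routine bookkeeping with supports.
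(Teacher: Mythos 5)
Your proof is correct and follows essentially the same route as the paper: homogeneous components spanned by characteristic functions of homogeneous compact open bisections, directness from disjointness of the $\G_\gamma$, multiplicativity via $\supp(f*g)\subseteq\G_\gamma\G_\delta\subseteq\G_{\gamma\delta}$, and homogeneous local units from Proposition \ref{units} together with $\G^{(0)}\subseteq\G_\varepsilon$. The only minor difference is in the spanning step, where you cut each $B_i\in B^{\rm co}(\G)$ along the clopen partition (using compactness of $B_i$ for finiteness), whereas the paper appeals directly to Proposition \ref{spann} with the base $B^{\rm co}_*(\G)$ --- the alternative you yourself note in passing.
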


\begin{proof}
	From Proposition \ref{spann}, it follows that
	\[
	A_R(\G) = \Span_R\{\bm{1}_B \mid B \in B^{\rm co}_*(\G) \} = \sum_{\gamma \in \Gamma} \Span_R\{\bm{1}_B \mid B \in B^{\rm co}_\gamma(\G)\} = \sum_{\gamma \in \Gamma}A_R(\G)_\gamma.
	\]
	It is clear that $A_R(\G)_\gamma \cap \big(\sum_{\delta \ne \gamma} A_R(\G)_\delta\big) = \{0\}$ for all $\gamma \in \Gamma$, so we have $A_R(\G) = \bigoplus_{\gamma \in \Gamma} A_R(\G)_\gamma$. Now for all $f \in A_R(\G)_\gamma$ and $g \in A_R(\G)_\delta$, we have $\supp (f*g) \subseteq \supp (f) \supp (g)  \subseteq \G_\gamma\G_\delta \subseteq \G_{\gamma \delta}$, and thus $f*g \in A_R(\G)_{\gamma\delta}$. Therefore $A_R(\G)_\gamma * A_R(\G)_\delta \subseteq A_R(\G)_{\gamma \delta}$. It follows from Proposition \ref{units}, and the fact that $\G^{(0)} \subseteq \G_\varepsilon$, that $A_R(\G)$ has homogeneous local units.
\end{proof}

\begin{lemma} \label{mut disj}
	If $\G$ is a $\Gamma$-graded Hausdorff ample groupoid, every $f \in A_R(\G)$ can be expressed as a finite sum $f = \sum_{i=1}^n r_i \bm{1}_{B_i}$, where $r_1, \dots, r_n \in R$, and $B_1, \dots, B_n \in B_*^{\rm co}(\G)$ are mutually disjoint.
\end{lemma}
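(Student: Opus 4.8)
The plan is to deduce this from the ``moreover'' part of Proposition \ref{ARG char} by feeding it the homogeneous base $B^{\rm co}_*(\G)$ in place of the generic base $\mathcal{B}$ used there. Recall that $B^{\rm co}_*(\G)$ was shown, just before Proposition \ref{grading}, to be a base of compact open bisections for $\G$; in particular it is a base for $\G$ consisting of compact open sets. So the only thing left to check is that $B^{\rm co}_*(\G)$ is closed under finite intersections and relative complements. Once that is done, Proposition \ref{ARG char} applied with $\mathcal{B} = B^{\rm co}_*(\G)$ yields, for every nonzero $f \in A_R(\G)$, an expression $f = \sum_{i=1}^n r_i \bm{1}_{B_i}$ with $r_i \in R \setminus \{0\}$ and $B_1, \dots, B_n \in B^{\rm co}_*(\G)$ mutually disjoint, which is stronger than required; the case $f = 0$ is the empty sum.

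For closure under intersections, I would take $B \in B^{\rm co}_\gamma(\G)$ and $C \in B^{\rm co}_\delta(\G)$. By Lemma \ref{invsemigroup}~(\ref{invsemigroup 2}), using that $\G$ is Hausdorff, $B \cap C$ is a compact open bisection, and since $B \cap C \subseteq \G_\gamma \cap \G_\delta$ it is empty when $\gamma \ne \delta$ and contained in $\G_\gamma$ when $\gamma = \delta$; either way $B \cap C \in B^{\rm co}_*(\G) \cup \{\emptyset\}$. For closure under relative complements, I would look at $B \setminus C = B \cap (\G \setminus C)$: since $\G$ is Hausdorff the compact set $C$ is closed, so $B \setminus C$ is open, and since $C$ is also open, $B \cap C$ is open in $B$, so $B \setminus C$ is closed in the compact set $B$ and hence compact; being a subset of the bisection $B$ it is a bisection; and it is contained in $\G_\gamma$, so $B \setminus C \in B^{\rm co}_\gamma(\G) \cup \{\emptyset\} \subseteq B^{\rm co}_*(\G) \cup \{\emptyset\}$. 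This completes the verification and hence the proof.

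The whole argument is routine once the right base is chosen, so there is no serious obstacle; the only point requiring a moment's care is that $B \setminus C$ is \emph{compact}, which uses simultaneously that $C$ is closed (from the Hausdorff hypothesis) and that $C$ is open. An alternative route, which I would mention only in passing, is to first split $f$ into its homogeneous components via Proposition \ref{grading}, write each component as a sum of characteristic functions of mutually disjoint bisections of a fixed degree $\gamma$ (using the closure properties of $B^{\rm co}_\gamma(\G) \cup \{\emptyset\}$ and the ``moreover'' part of Proposition \ref{ARG char}), and then observe that pieces of different degrees are automatically disjoint since the $\G_\gamma$ partition $\G$; this works equally well but requires slightly more bookkeeping than the single application of Proposition \ref{ARG char} above.
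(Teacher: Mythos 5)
Your proposal is correct and follows essentially the same route as the paper: the paper's proof likewise observes that, since $\G$ is Hausdorff, homogeneous compact open bisections are closed, so $B_*^{\rm co}(\G)$ is closed under finite intersections and relative complements (up to $\emptyset$), and then invokes Proposition \ref{ARG char}. Your version simply spells out the closure verifications (and the degree bookkeeping) that the paper leaves implicit.
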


\begin{proof}
	Since $\G$ is Hausdorff, every homogeneous compact open bisection is closed, so $B_*^{\rm co}(\G)$ is closed under finite intersections and relative complements. The statement now follows from Proposition \ref{ARG char}.
\end{proof}

\begin{example}
	Recall, from Example \ref{groupoid ex}~(\ref{transformation groupoid}), the definition of the transformation groupoid $G \times X$, associated to a group $G$ and a $G$-set $X$. Now assume that $X$ is a locally compact, totally disconnected, Hausdorff topological space, and for each $g \in G$ the map $\rho_g: X \to X$, $\rho_g(x) = g \cdot x$, is continuous. If we assign the discrete topology to $G$ and the product topology to $G \times X$, then $G \times X$ is an ample groupoid. It is easy to verify that this is a $G$-graded groupoid with homogeneous components
	$(G \times X)_g = \{g\} \times X$ for all $g \in G$.
	The Steinberg algebra of $G \times X$ turns out (see \cite{beuter2017interplay}) to be the \textbf{skew group ring} $C_R(X) \star G$, associated to a certain action of $G$ on $C_R(X)$, canonically induced by the action of $G$ on $X$.
\end{example}

One can generalise this example quite profitably, by replacing the group action with something more general called a \textit{partial group action} (see \cite[Definition 2.1]{exel2017partial}). In doing so, one obtains a class of algebras so general that it includes all Leavitt path algebras (see \cite[Theorem 3.3]{gonccalves2014leavitt}) and other interesting things, like the \textit{partial group algebras} that were studied in \cite{dokuchaev2000partial} and \cite{hazrat2017graded}.

\section{The path space and boundary path groupoid of a graph} \label{CHAPTER: GRAPHS AND GROUPOIDS}

Part 2 is structured as follows. In \S\ref{graph concepts}, we define directed graphs and introduce some terminology. In \S\ref{path space}, we introduce a topological space called the \textit{path space} of a graph. The path space of a graph is the set of all finite and infinite paths, with a topology described explicitly by a base of open sets. Generalising \cite[Theorem 2.1]{webster2014path}, we prove in Theorem \ref{path space topology} that for graphs of any cardinality, the path space is locally compact and Hausdorff. We also determine which graphs have a second-countable, first-countable, or $\sigma$-compact path space. In \S\ref{graph grpd}, we use the path space (or more precisely, a closed subspace called the boundary path space) to define the \textit{boundary path groupoid} associated to a graph. We prove it is ample and study its local structure from a topological and an algebraic point of view.

\begin{remark}
Perhaps as an artefact of its history, many fundamental properties of the boundary path groupoid were absorbed into folklore. Some proofs were never written, and others were written at a higher level of generality, and not all in one place, making them difficult to relate back to our present needs.
For instance, we could not find a complete proof that the boundary path groupoid is an ample groupoid, even though this fact was used in all the early papers  that pioneered the use of groupoid methods for Leavitt path algebras \cite{orloff2016using,clark2016using,clark2015equivalent}. The groupoid approach to Leavitt path algebras is particularly well-suited, compared to traditional, purely algebraic methods, for dealing with graphs of large cardinalities. Therefore, it is important to make sure that the theorems used to justify these methods can be proved without assuming graphs are countable. This is something that we achieve here, in Theorem \ref{path space topology} and Theorem \ref{weeks}.
\end{remark}

\subsection{Graphs} \label{graph concepts}

In this section, we introduce the necessary terminology and conventions pertaining to graphs. We always use the word graph to mean a \textit{directed} graph, defined as follows.

\begin{definition}
	A \textbf{graph} is a system $E = (E^0, E^1, r, s)$, \index{$E$, $E^0$, $E^1$, $r$, $s$} where $E^0$ is a set whose elements are called \textit{{vertices}}, $E^1$ is a set whose elements are called \textit{{edges}}, $r: E^1 \to E^0$ is a map that associates a \textit{{range}} to every edge, and $s: E^1 \to E^0$ is a map that associates a \textit{{source}} to every edge.
\end{definition}

A \textit{countable graph} is one where $E^0$ and $E^1$ are countable sets.
A \textit{row-finite} (resp., \textit{row-countable}) graph is one in which $s^{-1}(v)$ is finite (resp., countable) for every $v \in E^0$. 
If $e$ is an edge with $s(e) = v$ and $r(e) = w$ then we say that $v$ \textit{{emits}} $e$ and $w$ \textit{{receives}} $e$. A \textit{{sink}} is a vertex that emits no edges
and an \textit{infinite emitter} is a vertex that emits infinitely many edges. 	If $v \in E^0$ is either a sink or an infinite emitter (that is, $s^{-1}(v)$ is either empty or infinite) then $v$ is called \textit{singular}, and if $v$ is not singular then it is called \textit{regular}. A vertex that neither receives nor emits any edges is called an \textit{{isolated vertex}}.

A \textit{{finite path}} is a finite sequence of edges $\alpha = \alpha_1 \alpha_2 \dots \alpha_n$ such that $r(\alpha_i) = s(\alpha_{i+1})$ for all $i = 1, \dots, n-1$. The \textit{length} of the path $\alpha$ is $|\alpha|= n$. Reusing notation and terminology, we shall say that $s(\alpha) = s(\alpha_1)$ is the \textit{{source}} of the path, and $r(\alpha) = r(\alpha_n)$ is the \textit{{range}} of the path.
By convention, vertices $v \in E^0$ are regarded as finite paths of zero length, with $r(v) = s(v) = v$.	 If $v, w \in E^0$, we write $v \ge w$ \index{$v\ge w$} if there exists a finite path $\alpha$ with $s(\alpha) = v$ and $r(\alpha) = w$.   If a finite path $\alpha$ of positive length satisfies $r(\alpha) = s(\alpha) = v$, then $\alpha$ is called a \textit{{closed path}} based at $v$.
A closed path $\alpha$ with the property that none of the vertices $s(\alpha_1), \dots, s(\alpha_{|\alpha|})$ are repeated is called a \textit{{cycle}}, and a graph that has no cycles is called \textit{{acyclic}}. An \textit{exit} for a finite path $\alpha$ is an edge $f \in E^1$ with $s(f) = s(\alpha_i)$ for some $1 \le i \le |\alpha|$, but $f \ne \alpha_i$. 

An \textit{infinite path} is, predictably, an infinite sequence of edges $p = p_1 p_2 p_3 \dots$ such that $r(p_i) = s(p_{i+1})$ for $i = 1,2, \dots$. Again, $s(p) = s(p_1)$ is called the source of the infinite path $p$. We let $|p| = \infty$ if $p$ is an infinite path.
%
We use the notation $E^\star$ \index{$E^\star$, $E^\infty$} for the set of finite paths (including vertices), and $E^\infty$ for the set of infinite paths.

Paths can be concatenated if their range and source agree. If $\alpha, \beta \in E^\star$ have positive length and $r(\alpha) = s(\beta)$, then $\alpha\beta= \alpha_1\dots \alpha_{|\alpha|}\beta_1 \dots \beta_{|\beta|} \in E^\star$. If $p \in E^\infty$ has $r(\alpha) = s(p)$, then $\alpha p = \alpha_1\dots \alpha_{|\alpha|}p_1 p_2\ldots \in E^\infty$. If $v \in E^0$ and $x \in E^\star \cup E^\infty$ has $s(x) = v$, then $vx = x$ by convention. Likewise, if $\alpha \in E^\star$ has $r(\alpha) = v$ then $\alpha v = \alpha$. If $\alpha \in E^\star$, $x \in E^\star \cup E^\infty$, and  $x = \alpha x'$ for some $x' \in E^\star \cup E^\infty$, then we say that $\alpha$ is an \textit{initial subpath} of $x$. In particular, $s(\alpha)$ is considered an initial subpath of $\alpha$.

Let $E^0_{\rm sing} = \{v \in E^0 \mid v \text{ is singular}\}$  and $E^0_{\rm reg} = \{v \in E^0 \mid v \text{ is regular}\}$. Using the terminology of \cite{ webster2014path}, we define the set of \textit{boundary paths} as
\[
\partial E = E^\infty \cup \left\{\alpha \in E^\star\mid r(\alpha) \in E^0_{\rm sing}\right\}.
\]
We employ the following notation from now on: \index{$vE^1$, $vE^\star$, $vE^\infty$, $v\partial E$} \index{$E^\star \times_r E^\star$}
\begin{align*} 
&vE^1 = \{e \in E^1 \mid s(e) = v\},  &&vE^\star = \{\alpha \in E^\star \mid s(\alpha) = v\},\\ \notag
&vE^\infty = \{p \in E^\infty \mid s(p) = v\}, &&v\partial E = \{x \in \partial E \mid s(x) = v\}, \\ \notag
&E^\star \times_r E^\star = \big\{(\alpha, \beta) \in E^\star \times E^\star \mid r(\alpha) = r(\beta) \big\}.
\end{align*}

\subsection{The path space of a graph} \label{path space}

Throughout this section, assume $E = (E^0, E^1, r, s)$ is an arbitrary graph. The \textit{path space} of $E$ is $E^\star \cup E^\infty$, the set of all finite and infinite paths, and the \textit{boundary path space} is $\partial E$, the set of paths that are either infinite or end at a singular vertex. We now set out to define a suitable topology on the path space.
For a finite path $\alpha \in E^\star$, we define the \textit{cylinder} set \index{$C(\alpha)$, $C(\alpha, F)$}
\begin{align} \label{cylinder}
C(\alpha)= \big\{\alpha x \mid x \in E^\star \cup E^\infty, r(\alpha)=s(x)\big\} \subseteq E^\star \cup E^\infty.
\end{align}
It is easy to see that the intersection of two cylinders is either empty or a cylinder. Indeed, if $x \in C(\alpha) \cap 
C(\beta)$ then $x = \alpha y = \beta z$ for some $y,z \in E^\star \cup E^\infty$. If $|\alpha| \le |\beta|$ then $\alpha$ is an initial subpath of $\beta$, implying $C(\beta) \subseteq C(\alpha)$. In symbols:
\[
C(\alpha) \cap C(\beta)
= \begin{cases}
C(\beta) & \text{if }\alpha\text{ is an initial subpath of }\beta \\
C(\alpha) & \text{if }\beta\text{ is an initial subpath of }\alpha \\
\emptyset & \text{otherwise.}
\end{cases}
\]
This is all we need to conclude that the collection of cylinder sets is a base for a topology on $E^\star \cup E^\infty$. As the authors of \cite{kumjian1997graphs} have stated, the subspace $E^\infty \subseteq E^\star \cup E^\infty$ with the cylinder set topology is homeomorphic (in the canonical way) to a subspace of $\prod_{n = 1}^\infty E^1$, where $E^1$ is discrete and the product has the product topology.
In particular, the cylinder sets generate a Hausdorff topology on $E^\infty$, and if $E$ is row-finite, that topology is locally compact. However, the cylinder set topology generated by the sets (\ref{cylinder}) is not Hausdorff (or even $T_1$) on the whole set $E^\star \cup E^\infty$, because a finite path cannot be separated from a proper initial subpath. In order to have enough open sets in hand for a Hausdorff topology, we define a base of open sets called \textit{generalised cylinder} sets:
\begin{align}\label{C-basis}
&C(\alpha, F)= C(\alpha) \setminus \bigcup_{e \in F} C(\alpha e) ;& &\alpha \in E^\star,\ F\subseteq r(\alpha)E^1 \text{ is finite.}
\end{align}
We shall write $F \subseteq_{\rm finite} vE^1$ \index{$\subseteq_{\rm finite}$} to mean that $F$ is a finite subset of $vE^1$.
The next lemma (a generalisation of \cite[Lemma 2.1]{kumjian1997graphs}) shows that the collection of generalised cylinders is closed under intersections, so it is a base for a topology on $E^\star \cup E^\infty$. With the generalised cylinder set topology on $E^\star \cup E^\infty$, every finite path is an isolated point unless its range is an infinite emitter. 

\begin{lemma} \label{int lemma 0}
	If $\alpha, \beta \in E^\star$, $|\alpha| \le |\beta|$, $F \subseteq_{\rm finite} r(\alpha)E^1$, and $H\subseteq_{\rm finite} r(\beta)E^1$, then
	\[
	C(\alpha , F) \cap C(\beta , H)
	= \begin{cases}
	C(\beta , F \cup H) & \text{if } \beta = \alpha \\
	C(\beta , H) &\text{if } \exists\ \delta \in E^\star,\ |\delta| \ge 1, \ \beta = \alpha\delta, \text{ and } \delta_1 \notin F \\
	\emptyset & \text{otherwise.}
	\end{cases}
	\]
\end{lemma}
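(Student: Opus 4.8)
The plan is to unwind the definition of a generalised cylinder and run a short case analysis on how $\alpha$ sits inside $\beta$. The key observation to record first is that $x \in C(\gamma, G)$ holds exactly when $\gamma$ is an initial subpath of $x$ and, whenever $|x| > |\gamma|$, the $(|\gamma|+1)$-st edge of $x$ does not belong to $G$. With this reformulation, I would first dispose of the case $\beta = \alpha$, where no nonemptiness hypothesis is needed: directly,
\[
C(\alpha, F) \cap C(\alpha, H) = C(\alpha) \setminus \left( \bigcup_{e \in F} C(\alpha e) \cup \bigcup_{e \in H} C(\alpha e)\right) = C(\alpha) \setminus \bigcup_{e \in F \cup H} C(\alpha e) = C(\beta, F \cup H).
\]

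Next I would assume $\alpha \ne \beta$. If $\alpha$ is not an initial subpath of $\beta$, then since $|\alpha| \le |\beta|$, the intersection formula for ordinary cylinders proved just before the lemma forces $C(\alpha) \cap C(\beta) = \emptyset$, hence $C(\alpha, F) \cap C(\beta, H) = \emptyset$; this falls under the ``otherwise'' clause. Otherwise I can write $\beta = \alpha\delta$ with $\delta \in E^\star$, $|\delta| \ge 1$, and since $s(\delta_1) = r(\alpha)$ we have $\delta_1 \in r(\alpha)E^1$, so asking whether $\delta_1 \in F$ is meaningful. Because every $x \in C(\beta)$ has $\beta = \alpha\delta$ as an initial subpath, the edge $\alpha\delta_1$ is also an initial subpath of $x$, i.e. $C(\beta) \subseteq C(\alpha\delta_1)$. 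Thus if $\delta_1 \in F$, then $C(\beta, H) \subseteq C(\beta) \subseteq C(\alpha\delta_1) \subseteq \bigcup_{e \in F} C(\alpha e)$, which is disjoint from $C(\alpha, F)$, giving $\emptyset$ again.

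Finally I would treat the remaining case $\beta = \alpha\delta$ with $|\delta| \ge 1$ and $\delta_1 \notin F$, and show $C(\alpha, F) \cap C(\beta, H) = C(\beta, H)$ by proving $C(\beta, H) \subseteq C(\alpha, F)$. Given $x \in C(\beta, H)$, the path $\beta = \alpha\delta$ is an initial subpath of $x$, so $\alpha$ is one as well and $x \in C(\alpha)$; moreover $|x| \ge |\beta| > |\alpha|$, and the $(|\alpha|+1)$-st edge of $x$ is precisely $\delta_1 \notin F$. Hence $x \notin C(\alpha e)$ for every $e \in F$, so $x \in C(\alpha) \setminus \bigcup_{e \in F} C(\alpha e) = C(\alpha, F)$, which completes that case and exhausts all possibilities.

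I do not expect a genuine obstacle here: the argument is purely combinatorial bookkeeping with initial subpaths, relying only on the cylinder intersection formula already in hand. The one point needing care is verifying that the three displayed cases are exhaustive and mutually exclusive — in particular that the ``otherwise'' clause genuinely absorbs both the case where $\alpha$ is not an initial subpath of $\beta$ and the case $\beta = \alpha\delta$, $|\delta|\ge 1$, $\delta_1 \in F$ — and checking that $\delta_1$ is a legitimate element of $r(\alpha)E^1$ so that the condition $\delta_1 \notin F$ is well posed.
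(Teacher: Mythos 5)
Your proof is correct and follows essentially the same route as the paper's: both unwind the definition of $C(\alpha,F)\cap C(\beta,H)$ and split into the same four cases ($\beta=\alpha$; $\beta=\alpha\delta$ with $\delta_1\notin F$; $\beta=\alpha\delta$ with $\delta_1\in F$, using $C(\beta)\subseteq C(\alpha\delta_1)$; and $\alpha$ not an initial subpath of $\beta$). The only difference is cosmetic: you argue partly element-wise via the membership criterion for $C(\gamma,G)$, whereas the paper manipulates the sets directly, and both handle the exhaustiveness of the cases equally well.
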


\begin{proof}
	By definition of $C(\alpha,F)$ and $C(\beta, H)$, we have
	\begin{equation} \label{int eq}
	C(\alpha , F) \cap C(\beta , H) = C(\alpha) \cap C(\beta) \setminus
	\left(
	\bigcup_{e \in F}C(\alpha e) \cup \bigcup_{e \in H}C(\beta e)
	\right).
	\end{equation}
	If $\beta = \alpha$, the right hand side of (\ref{int eq}) is $C(\beta , F \cup H)$. If $\beta = \alpha \delta$ ($|\delta| \ge 1$) and $\delta_1 \notin F$ then $C(\beta) \cap C(\alpha) = C(\beta)$ does not meet $\bigcup_{e \in F}C(\alpha e)$, so the right hand side of (\ref{int eq}) is $C(\beta , H)$. If $\beta = \alpha \delta$ and $\delta_1 \in F$, then $C(\beta) \cap C(\alpha) = C(\beta) = C(\alpha \delta_1 \dots \delta_{|\delta|}) \subseteq C(\alpha \delta_1) \subseteq \bigcup_{e \in F}C(\alpha e)$, so the right hand side of (\ref{int eq}) is empty. If $\alpha$ is not an initial subpath of $\beta$ then $C(\alpha) \cap C(\beta) = \emptyset$.
\end{proof}

To apply Steinberg's theory from Part \ref{CHAPTER: STEINBERG ALGEBRAS}, it is critical that the induced topology on the boundary path space $\partial E \subseteq E^\star \cup E^\infty$ is locally compact and Hausdorff. We proceed by proving that the topology on the path space $E^\star \cup E^\infty$, generated by the base in (\ref{C-basis}), is locally compact and Hausdorff, and that $\partial E$ is closed in $E^\star \cup E^\infty$. As it were, this base is well-chosen: the basic open sets themselves are compact in the Hausdorff topology that they generate.

The proof of the theorem below is essentially the same as \cite[Theorem 2.1]{webster2014path}, just written slightly differently so that it does not use any assumptions of countability. The main idea is to equip $\mathbb{P}(E^\star)$, i.e., the power set of $E^\star$, with a compact Hausdorff topology, and show that $E^\star \cup E^\infty$ is homeomorphic to a locally compact subspace $\mathbb{S} \subset \mathbb{P}(E^\star)$.
%
\begin{theorem} \label{path space topology}
	The collection  (\ref{C-basis}) of generalised cylinder sets is a base of compact open sets for a locally compact Hausdorff topology on $E^\star \cup E^\infty$.
\end{theorem}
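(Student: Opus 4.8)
The plan is to realise the path space as a topological subspace of a compact Hausdorff ``cube'' and let Tychonoff's theorem do the work, following the idea flagged above. Write $\leq$ for the partial order on $E^\star$ given by ``is an initial subpath of'', and equip $2^{E^\star} = \prod_{\beta \in E^\star}\{0,1\}$ with the product topology, each factor discrete; this space is compact Hausdorff by Tychonoff. Define
\[
\Phi\colon E^\star \cup E^\infty \longrightarrow 2^{E^\star}, \qquad \Phi(x) = \{\beta \in E^\star \mid \beta \leq x\},
\]
and let $\pi_\beta\colon 2^{E^\star}\to\{0,1\}$ be the coordinate maps, so that $\pi_\beta(S)=1 \iff \beta\in S$. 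First I would check that $\Phi$ is injective and identify its image $\mathbb{S}$ as the set of all nonempty, $\leq$-downward-closed chains in $E^\star$: for such a chain $S$ the lengths occurring in $S$ form an initial segment of $\mathbb{N}$ (by downward closure), and $S$ either has a greatest element $\beta$, so $S = \Phi(\beta)$, or is unbounded in length, so the successive length-$n$ members of $S$ nest to an infinite path $p$ with $S = \Phi(p)$; conversely every $\Phi(x)$ is plainly a nonempty downward-closed chain, and $x$ is recovered from it as its greatest element or nested union, which gives injectivity.

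Next I would show $\mathbb{S}$ is locally compact Hausdorff. The point is that ``$S$ is a chain'' and ``$S$ is downward closed'' are \emph{closed} conditions in $2^{E^\star}$: the first is $\bigcap\big(\pi_\alpha^{-1}(\{0\}) \cup \pi_\beta^{-1}(\{0\})\big)$ over all $\leq$-incomparable pairs $\alpha,\beta$, and the second is $\bigcap\big(\pi_\beta^{-1}(\{0\})\cup\pi_\gamma^{-1}(\{1\})\big)$ over all pairs $\gamma\leq\beta$, each an intersection of clopen sets. Hence $\mathbb{S}\cup\{\emptyset\} = \{S : S\text{ is a downward-closed chain}\}$ is closed in $2^{E^\star}$, so compact, and therefore $\mathbb{S} = (\mathbb{S}\cup\{\emptyset\})\setminus\{\emptyset\}$ is an open subspace of a compact Hausdorff space, hence locally compact and Hausdorff. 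Moreover a direct unwinding of the definitions gives
\[
\Phi\big(C(\alpha,F)\big) = \mathbb{S}\cap\pi_\alpha^{-1}(\{1\})\cap\bigcap_{e\in F}\pi_{\alpha e}^{-1}(\{0\}),
\]
and since $\emptyset$ already fails the condition $\pi_\alpha = 1$, the right-hand side equals $(\mathbb{S}\cup\{\emptyset\})$ intersected with a clopen subset of $2^{E^\star}$; being closed in $2^{E^\star}$, it is compact.

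It then remains to see that $\Phi$ is a homeomorphism onto $\mathbb{S}$, after which $E^\star\cup E^\infty$ inherits local compactness and the Hausdorff property, and each $C(\alpha,F)$ is compact by the displayed formula (and in fact clopen, being compact in a Hausdorff space). That $\Phi$ is open is immediate from that same formula, since the $\Phi(C(\alpha,F))$ are open in $\mathbb{S}$ and the $C(\alpha,F)$ form a base (closure under intersection is Lemma~\ref{int lemma 0}); continuity of $\Phi$ reduces to checking that $\Phi^{-1}$ of each subbasic set is open, i.e.\ that $C(\alpha) = \Phi^{-1}\big(\mathbb{S}\cap\pi_\alpha^{-1}(\{1\})\big)$ is clopen. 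It is open by construction, and for closedness one exhibits, for each path $x$ with $\alpha\not\leq x$, a generalised cylinder about $x$ disjoint from $C(\alpha)$: namely $C(s(x))$ when $s(\alpha)\neq s(x)$, and $C(\gamma,\{e\})$ when $\gamma$ is the longest common initial subpath of $\alpha$ and $x$ and $e$ is the edge of $\alpha$ immediately following $\gamma$. The main obstacle is not conceptual but this bookkeeping --- the case analysis here, together with the identification of $\mathbb{S}$ --- while the substance (Tychonoff plus the closedness of the two defining conditions) is short.
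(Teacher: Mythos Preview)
Your proof is correct and follows essentially the same approach as the paper: embed the path space into $\{0,1\}^{E^\star}$ via the ``set of initial subpaths'' map, show that the image together with $\emptyset$ is closed (hence compact) by expressing the defining conditions as intersections of subbasic clopen sets, and then transport the topology back. Your execution is in fact slightly more streamlined than the paper's in one place: you obtain compactness of each $C(\alpha,F)$ directly from the displayed formula for $\Phi(C(\alpha,F))$ as a closed subset of the compact space $\mathbb{S}\cup\{\emptyset\}$, whereas the paper first establishes compactness of $C(v)$ for vertices $v$ and then proceeds by induction on path length.
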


\begin{proof}
Let $\{0,1\}$ have the discrete topology. The product space $\{0,1\}^{E^\star}$ is compact by Tychonoff's Theorem, and Hausdorff because products preserve the Hausdorff property. There is a canonical bijection from $\mathbb{P}(E^\star)$ to $\{0,1\}^{E^\star}$, which transfers a compact Hausdorff topology to $\mathbb{P}(E^\star)$. For the first part of the proof, we work entirely in the space $\mathbb{P}(E^\star)$. The topology on $\mathbb{P}(E^\star)$, by definition, is generated by the base of open sets
\[
\big\{[P,N]  \mid P, N \subseteq_{\rm finite} E^\star\big\},
\]
where we define
\[
[P,N] = \big\{A \in \mathbb{P}(E^\star) \mid P \subseteq A,\ N \subseteq E^\star \setminus A\big\}.
\]
Note that $[P,N] = \emptyset$ if $P \cap N \ne \emptyset$.

	Define the subspace $\mathbb{S} \subset \mathbb{P}(E^\star)$ to be the set of subsets $A \subseteq E^\star$ such that:
	\begin{itemize}
	\item $A \ne \emptyset$ and for all $\alpha \in A$, every initial subpath of $\alpha$ is in $A$;
	\item For every $0 \le n < \infty$, there is at most one path of length $n$ in $X$.
	\end{itemize}
	We claim that $\mathbb{S} \cup \{\emptyset\}$ is closed in $\mathbb{P}(E^\star)$. Suppose $A \in \mathbb{P}(E^\star) \setminus \big(\mathbb{S} \cup \{\emptyset\}\big)$. If $A$ contains two distinct paths $\alpha$ and $\beta$ of the same length, then $\big[\{\alpha, \beta\}, \emptyset\big]$ is open, contains $A$, and does not meet $\mathbb{S} \cup \{\emptyset\}$. If there is some $\alpha \in A$ and an initial subpath $\beta$ of $\alpha$ such that $\beta \notin A$, then $\big[\{\alpha\}, \{\beta\}\big]$ is open, contains $A$, and does not meet $\mathbb{S} \cup \{\emptyset\}$. Failing this, $A \in \mathbb{S} \cup \{\emptyset\}$, which we assumed is false. Therefore $\mathbb{S} \cup \{\emptyset \}$ is closed in $\mathbb{P}(E^\star)$, which implies it is compact.
	
	We now work out what the subspace topology is on $\mathbb{S}$. Let $P,N \subseteq_{\rm finite} E^\star$. If $[P,N] \cap \mathbb{S} \ne \emptyset$ then $P$ contains a unique path $\rho$ of maximal length (because of the way $\mathbb{S}$ is defined) and $[P,N] \cap \mathbb{S} = \big[\{\rho\}, N'\big] \cap \mathbb{S}$ where 
	\[N' = \{\eta \in N \mid \rho \text{ is an initial subpath of } \eta\}.\]
	Therefore, the topology on $\mathbb{S}$ is generated by basic open sets of the form $\big[\{\rho\}, N'\big]\cap \mathbb{S}$ where $\rho \in E^\star$ and $N' \subseteq E^\star$ is a finite set of paths that are proper extensions of $\rho$.
	
	Note that $\mathbb{S} = \bigsqcup_{v \in E^0} \big[\{v\},\emptyset\big]\cap \mathbb{S}$. For each $v \in E^0$, the set $\big[\{v\}, \emptyset\big]$ is closed in $\mathbb{P}(E^\star)$ because $\mathbb{P}(E^\star) \setminus \big[\{v\}, \emptyset\big] = \big[\emptyset, \{v\}\big]$ is open.
	Since $\big[\{v\}, \emptyset\big] \cap \mathbb{S} = \big[\{v \}, \emptyset\big] \cap \big(\mathbb{S} \cup \{\emptyset\}\big)$ and $\mathbb{S} \cup \{\emptyset\}$ is closed in $\mathbb{P}(E^\star)$, we have that $\big[\{v\}, \emptyset\big] \cap \mathbb{S}$ is closed in $\mathbb{P}(E^\star)$, and therefore compact. This proves that $\mathbb{S}$ is locally compact, because it is Hausdorff and every point has a compact neighbourhood.
%
	
	Now we show that $E^\star \cup E^\infty$ is homeomorphic to $\mathbb{S}$. Define the map $\Psi: E^\star \cup E^\infty \to \mathbb{S}$,
	\begin{align*}	\Psi(x) = \{\nu \in E^\star \mid \nu \text{ is an initial subpath of } x\}.\end{align*}
	It is clear that $\Psi$ is a bijection.	Let $\rho \in E^\star$ and let $N' \subseteq E^\star$ be a finite set of paths that properly extend $\rho$. Then
	\begin{align*}
	\Psi^{-1}\big([\{\rho\}, N'] \cap \mathbb{S}\big) = C(\rho) \setminus \bigcup_{\rho \beta \in N'} C(\rho \beta) = \bigcap_{\rho \beta \in N'} C(\rho) \setminus C(\rho\beta).
	\end{align*}
	It is not difficult to see that for each $\rho \beta \in N'$, the set
	\[
	C(\rho ) \setminus C(\rho\beta) = C\big(\rho, \{\beta_1\}\big) \cup C\big(\rho \beta_1, \{\beta_2\}\big) \cup \cdots \cup C\big(\rho \beta_{|\beta|-1}, \{\beta_{|\beta|}\}\big)
	\]
	is open. Therefore $\Psi^{-1}\big([\{\rho\}, N']\cap \mathbb{S}\big)$ is open in $E^\star \cup E^\infty$. Consequently, $\Psi$ is continuous. If $\alpha \in E^\star$ and $F \subseteq_{\rm finite} r(\alpha)E^1$, then $C(\alpha, F)$ is mapped to an open set in $\mathbb{S}$:
	\[
	\Psi\big(C(\alpha, F)\big) = \big[\{\alpha\}, N'\big]\cap \mathbb{S}
	\]
	where $N' = \{\alpha e \mid e \in F \}$. It follows that $\Psi$ is a homeomorphism and $E^\star \cup E^\infty$ is Hausdorff.

	Since we showed that $[\{v\}, \emptyset] \cap \mathbb{S}$ is compact, it follows that $C(v) = \Psi^{-1}([\{v\}, \emptyset] \cap \mathbb{S})$ is compact, for all $v \in E^0$. To show that $C(\alpha)$ is compact for all $\alpha \in E^\star$, we proceed by induction on the length of $\alpha$. If $e \in E^1$, then $C(s(e)) \setminus C(e) = C(s(e), \{e\})$
	is a basic open set, so $C(e)$ is closed in $C(s(e))$, hence compact.
	Assume $C(\alpha)$ is compact for any $\alpha \in E^\star$ with $|\alpha| = n$. If $\mu \in E^\star$ has $|\mu| = n+1$ then let $\mu' = \mu_1 \mu_2 \dots \mu_n$. We have that 
	$C(\mu') \setminus C(\mu) = C(\mu' , \{\mu_{n+1}\})$
	is a basic open set, so $C(\mu)$ is closed in $C(\mu')$, hence compact. By induction, $C(\alpha)$ is compact for arbitrary $\alpha \in E^\star$. Finally, if $F \subseteq_{\rm finite} r(\alpha)E^1$ then $C(\alpha) \setminus~C(\alpha,F) = \bigcup_{e \in F}C(\alpha e)$ is open, so $C(\alpha , F)$ is compact.
\end{proof}

Recall that a topological space is called \textit{second-countable} if it has a countable base, \textit{first-countable} if every point has a countable neighbourhood base, and \textit{$\sigma$-compact} if it is a countable union of compact subsets.

\begin{theorem} \label{counting}
The path space $E^\star \cup E^\infty$ is:
	\begin{enumerate}[\rm (1)]
		\item \label{count 1} second-countable if and only if $E$ is a countable graph;
		\item \label{count 2} first-countable if and only if $E$ is a row-countable graph;
		\item \label{count 3} $\sigma$-compact if and only if $E^0$ is countable.
	\end{enumerate}
\end{theorem}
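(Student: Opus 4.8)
The plan is to reduce all three equivalences to the \emph{local} structure of the path space, together with two facts already available from Theorem~\ref{path space topology}: that $\{C(\alpha,F) \mid \alpha \in E^\star,\ F \subseteq_{\rm finite} r(\alpha)E^1\}$ is a base for the topology, and that each $C(v)$ (for $v \in E^0$) is compact. I would first record that $E^0$, viewed as a subspace of $E^\star \cup E^\infty$, is \emph{discrete and closed}: it is discrete because $C(v)$ is open with $C(v) \cap E^0 = \{v\}$, and closed because for any path $x$ of length $\ge 1$ the open set $C(x_1) = C(x_1,\emptyset)$ contains $x$ and misses $E^0$. I would also note the cover $E^\star \cup E^\infty = \bigcup_{v \in E^0} C(v)$.

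Next I would analyse neighbourhood bases. For an infinite path $p$, the nested cylinders $C(p_1 \cdots p_n)$ ($n \ge 0$) form a countable neighbourhood base (any basic open set containing $p$ contains one of them, by Lemma~\ref{int lemma 0}), so every infinite path has a countable neighbourhood base. For a finite path $\alpha$, Lemma~\ref{int lemma 0} shows the sets $C(\alpha,F)$ with $F \subseteq_{\rm finite} r(\alpha)E^1$ form a neighbourhood base at $\alpha$, and since $\alpha e \in C(\alpha,F)$ iff $e \notin F$, one gets $C(\alpha,F') \subseteq C(\alpha,F)$ exactly when $F \subseteq F'$. Hence $\alpha$ has a countable neighbourhood base iff the finite subsets of $r(\alpha)E^1$ admit a countable cofinal family under inclusion, i.e.\ iff $r(\alpha)E^1$ is countable. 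Since each vertex $v$ is the range of the length-$0$ path $v$, this yields (\ref{count 2}): the space is first-countable iff $r(\alpha)E^1$ is countable for every $\alpha \in E^\star$, iff $vE^1$ is countable for every $v \in E^0$, iff $E$ is row-countable.

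For (\ref{count 1}): if $E$ is countable then $E^\star$ is countable, so the base from (\ref{C-basis}) is countable and the space is second-countable. Conversely, second-countability passes to the discrete subspace $E^0$, which forces $E^0$ to be countable, and it implies first-countability, hence by (\ref{count 2}) that $E$ is row-countable; then $E^1 = \bigsqcup_{v \in E^0} vE^1$ is countable, so $E$ is countable. For (\ref{count 3}): if $E^0$ is countable then $E^\star \cup E^\infty = \bigcup_{v \in E^0} C(v)$ is a countable union of compact sets. Conversely, if the space is $\sigma$-compact then so is its closed subspace $E^0$; each compact piece is a compact subspace of a discrete space, hence finite, so $E^0$ is countable.

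The only step needing real care is the neighbourhood-base computation at a finite path $\alpha$ whose range is an infinite emitter: checking that $\{C(\alpha,F)\}_F$ is genuinely a neighbourhood base, and that no countable subfamily of it is cofinal when $r(\alpha)E^1$ is uncountable. Everything else is routine bookkeeping built on Theorem~\ref{path space topology} and Lemma~\ref{int lemma 0}.
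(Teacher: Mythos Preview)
Your proof is correct. The forward directions of all three parts, and the neighbourhood-base analysis underlying (\ref{count 2}), are essentially the same as in the paper. Where you genuinely diverge is in the converses of (\ref{count 1}) and (\ref{count 3}): you first isolate the observation that $E^0$ sits inside $E^\star \cup E^\infty$ as a closed discrete subspace, and then let the relevant countability property (second-countability, $\sigma$-compactness) pass to that subspace. The paper instead argues each converse directly, without invoking $E^0$ as a subspace: for (\ref{count 1}) it exhibits an uncountable pairwise-disjoint family of open sets (the $C(v)$ or $C(e)$), and for (\ref{count 3}) it covers each compact piece by finitely many $C(v)$ and uses that distinct $C(v)$'s are disjoint. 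You also route the converse of (\ref{count 1}) through (\ref{count 2}) to get $E^1$ countable, whereas the paper handles $E^1$ independently. Your packaging is arguably cleaner---one topological lemma about $E^0$ does double duty---while the paper's arguments are slightly more self-contained in each item.
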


\begin{proof}
	(\ref{count 1}) If $E$ is a countable graph (i.e., $E^0 \cup E^1$ is countable) then $E^\star$ is countable. The base of open sets (\ref{C-basis}) is countable too, because there are only countably many pairs $(\alpha, F)$ where $\alpha \in E^\star$ and $F\subseteq_{\rm finite}r(\alpha)E^1$. This proves the topology is second-countable. Conversely, if one of $E^0$ or $E^1$ is uncountable, then one of $\{C(v) \mid v \in E^0\}$ or $\{C(e) \mid e \in E^1 \}$ is an uncountable set of pairwise disjoint open sets, so $E^\star \cup E^\infty$ is not second-countable.
	
	(\ref{count 2}) Notice that the following sets are neighbourhood bases at $\alpha \in E^\star$ and $p \in E^\infty$ respectively:
	\begin{align*}
	&\mathcal{N}_\alpha = \big\{C(\alpha , F)\mid F \subseteq_{\rm finite} r(\alpha)E^1 \big\}, && \mathcal{N}_p =	\big\{C(p_1\ldots p_m)\mid m \ge 1\big\}.
	\end{align*}
	Regardless of the graph, $\mathcal{N}_p$ is countable for every $p \in E^\infty$. If a finite path $\alpha \in E^\star$ has the property that $r(\alpha)E^1$ is countable, then $\mathcal{N}_\alpha$ is countable, because there are only countably many finite subsets $F$ of $r(\alpha)E^1$. So, for every row-countable graph $E$, the path space $E^\star \cup E^\infty$ is first-countable.  Conversely, suppose there exists $v \in E^0$ such that $vE^1$ is uncountable. Towards a contradiction, assume $v$ has a countable neighbourhood base $\mathcal{B}_v = \{B_1, B_2, \dots,\}$. By replacing $B_n$, for all $n \ge 1$, with a set of the form $C(v,F_n)\subseteq B_n$, where $F_n \subseteq_{\rm finite} vE^1$, we have a countable neighbourhood base for $v$ of the form $\mathcal{C}_v = \{C(v, F_1), C(v,F_2), \dots\}$. Since $\bigcup_{n = 1}^\infty F_n$ is countable, one can choose $e \in vE^1 \setminus \bigcup_{n = 1}^\infty F_n$. Then every neighbourhood of $v$ contains $e$, which is absurd, because the space is Hausdorff. Therefore $E^\star \cup E^\infty$ is first-countable if and only if $E$ is row-countable.
	
	(\ref{count 3}) If $E^0$ is countable then the path space is $\sigma$-compact, because $E^\star \cup E^\infty = \bigcup_{v \in E^0}C(v)$ and $C(v)$ is compact for every $v \in E^0$, by Theorem \ref{path space topology}.  For the converse, suppose $E^\star \cup E^\infty$ is $\sigma$-compact. Then there is a sequence of compact subsets $(K_n)_1^\infty$ such that $E^\star \cup E^\infty = \bigcup_{n = 1}^\infty K_n$. Each $K_n$ is compact, so it can be covered by a finite subcover of $\{C(v) \mid v \in E^0 \}$, implying that there is a countable set $S \subseteq E^0$ such that $E^\star \cup E^\infty = \bigcup_{v \in S} C(v)$. But this implies $S = E^0$ because $C(v)$ and $C(w)$ are disjoint unless $v = w$.
\end{proof}

We now prove an easy fact that forms a bridge to the next section, where we shall construct a groupoid with unit space $\partial E = E^\infty \cup \big\{\alpha \in E^\star \mid r(\alpha) \in E^0_{\rm sing}\big\}$.

\begin{proposition} \label{lem-closed}
	The boundary path space $\partial E$ is closed in $E^\star \cup E^\infty$.
\end{proposition}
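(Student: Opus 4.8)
The plan is to show that the complement $(E^\star \cup E^\infty) \setminus \partial E$ is open by exhibiting, for each path in the complement, a basic open neighbourhood that misses $\partial E$. Observe that the complement consists precisely of the finite paths $\alpha \in E^\star$ whose range $r(\alpha)$ is a \emph{regular} vertex; that is, $(E^\star \cup E^\infty)\setminus \partial E = \{\alpha \in E^\star \mid r(\alpha) \in E^0_{\rm reg}\}$. Indeed, every infinite path lies in $\partial E$, and a finite path lies in $\partial E$ exactly when its range is singular.

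Now fix such an $\alpha$, so $v := r(\alpha)$ is regular, meaning $s^{-1}(v) = vE^1$ is finite and nonempty; write $vE^1 = F$. I claim $C(\alpha, F)$ is a neighbourhood of $\alpha$ contained in the complement of $\partial E$. First, $\alpha \in C(\alpha, F)$: we have $\alpha \in C(\alpha)$, and $\alpha$ does not lie in any $C(\alpha e)$ for $e \in F$ since $|\alpha e| > |\alpha|$. Since $F$ is finite, $C(\alpha, F)$ is a legitimate basic open set from the base (\ref{C-basis}). Second, I must check $C(\alpha, F) \cap \partial E = \emptyset$. Suppose $x \in C(\alpha, F)$, so $x = \alpha y$ with $r(\alpha) = s(y)$ and $x \notin C(\alpha e)$ for every $e \in F = vE^1$. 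If $x$ had positive length beyond $\alpha$, i.e. $y$ were not the vertex $v$, then the first edge of $y$ would be some $e \in vE^1 = F$, forcing $x \in C(\alpha e)$, a contradiction. Hence $y = v$ and $x = \alpha$, which is a single finite path with regular range, so $x \notin \partial E$. This shows $C(\alpha, F) \subseteq (E^\star \cup E^\infty)\setminus \partial E$, and therefore the complement of $\partial E$ is open.

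This argument has no real obstacle; the only point requiring a moment's care is the bookkeeping that a regular vertex has $vE^1$ finite \emph{and} nonempty, so that $C(\alpha, F)$ with $F = vE^1$ is both a valid basic open set (finiteness) and actually shrinks $C(\alpha)$ down to the singleton $\{\alpha\}$ (so that no boundary path sneaks in). One could alternatively phrase the whole thing by noting that when $r(\alpha)$ is regular, $C(\alpha, vE^1) = \{\alpha\}$ is open, so every such $\alpha$ is an isolated point of the complement — but the neighbourhood description above is the cleanest route, and it reuses only Theorem \ref{path space topology} and the definitions.
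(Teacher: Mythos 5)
Your proof is correct and follows essentially the same route as the paper: the paper also observes that for $\mu$ in the complement, $r(\mu)$ is regular and $C(\mu, r(\mu)E^1) = \{\mu\}$ is open, so the complement consists of isolated points. Your version merely spells out the same computation in more detail.
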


\begin{proof}
	The complement of $\partial E$ consists of isolated points. Indeed, if $\mu \in (E^\star \cup E^\infty) \setminus \partial E$, then $r(\mu)$ is a regular vertex, and $C(\mu , r(\mu)E^1) = \{\mu\}$ is open in $E^\star \cup E^\infty$.
\end{proof}

An immediate consequence of Theorem \ref{path space topology} and Proposition \ref{lem-closed} is that $\partial E$ is a locally compact Hausdorff space with the base of compact open sets: \index{$Z(\alpha)$, $Z(\alpha, F)$}
\begin{align*} \label{Z-base boundary}
&Z(\alpha, F) = C(\alpha,F) \cap \partial E; & &\alpha \in E^\star,\ F \subseteq_{\rm finite} r(\alpha)E^1.
\end{align*}
For $\alpha \in E^\star$, we define $Z(\alpha) = Z(\alpha, \emptyset)$, which is the same as $Z(\alpha) = C(\alpha) \cap \partial E$. As it were, the sets $Z(\alpha, F)$ are very rarely empty. In particular, $Z(\alpha)\ne \emptyset$ for all $\alpha \in E^\star$; in other words, every finite path can be extended to a boundary path.

\begin{lemma} \label{non-empty}
	Let $\alpha \in E^\star$ and let $F \subseteq_{\rm finite} r(\alpha)E^1$. Then $Z(\alpha, F) = \emptyset$ if and only if $r(\alpha)$ is a regular vertex and $F =  r(\alpha)E^1$.
\end{lemma}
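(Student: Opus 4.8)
The plan is to prove both implications directly, taking the second one by contraposition, and to work purely from the explicit descriptions $Z(\alpha,F) = C(\alpha,F)\cap\partial E$ and $C(\alpha,F) = C(\alpha)\setminus\bigcup_{e\in F}C(\alpha e)$. In particular I will not invoke the remark (made just before the lemma) that $Z(\alpha)\neq\emptyset$, since that is the special case $F = \emptyset$ of the statement being proved. The one structural fact to keep in mind throughout is that $C(\alpha)$ always contains $\alpha$ itself (take the zero-length tail $r(\alpha)$), while $\alpha\in\partial E$ holds precisely when $r(\alpha)\in E^0_{\rm sing}$; this dichotomy drives the argument in both directions.

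For the ``if'' direction, assume $r(\alpha)$ is regular and $F = r(\alpha)E^1$ (note this is a legitimate finite set, as regular means $s^{-1}(r(\alpha))$ is finite and nonempty). Suppose toward a contradiction that some $x\in Z(\alpha,F)\subseteq C(\alpha)\cap\partial E$. Write $x = \alpha y$ with $s(y) = r(\alpha)$. Since $r(\alpha)$ is regular it is not singular, so $\alpha\notin\partial E$; hence $y$ is not the zero-length path and $y = e y'$ for some $e\in r(\alpha)E^1 = F$. Then $x\in C(\alpha e)\subseteq\bigcup_{f\in F}C(\alpha f)$, contradicting $x\in C(\alpha,F)$. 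So $Z(\alpha,F)=\emptyset$.

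For the ``only if'' direction I prove the contrapositive: if it is false that $r(\alpha)$ is regular and $F = r(\alpha)E^1$, then $Z(\alpha,F)\neq\emptyset$. Case one: $r(\alpha)$ is singular. Then $\alpha\in\partial E$, $\alpha\in C(\alpha)$, and $\alpha\notin C(\alpha e)$ for any $e$ (a length comparison), so $\alpha\in C(\alpha,F)\cap\partial E = Z(\alpha,F)$. Case two: $r(\alpha)$ is regular but $F\subsetneq r(\alpha)E^1$. Choose an edge $e\in r(\alpha)E^1\setminus F$ and construct a boundary path $x$ with initial subpath $\alpha e$: start from $\alpha e$, and repeatedly extend by choosing, whenever the current range vertex is regular, one of the edges it emits (the set is finite and nonempty); if a singular vertex is ever reached, stop, and the finite path built so far lies in $\partial E$; if the process never stops, it produces an infinite path, which also lies in $\partial E$. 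In either outcome $x\in\partial E$ has $\alpha e$ as an initial subpath, so its $(|\alpha|+1)$-st edge is $e\notin F$; consequently $x\notin C(\alpha f)$ for every $f\in F$, whence $x\in C(\alpha,F)\cap\partial E = Z(\alpha,F)$.

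The only step with any real content is the inductive construction of a boundary path extending a prescribed finite path; it uses the axiom of choice for graphs of unrestricted cardinality but is otherwise entirely routine. No genuine obstacle is expected — the care needed is just bookkeeping: remembering that $\alpha$ itself sits in $C(\alpha)$ and belongs to $\partial E$ exactly when $r(\alpha)$ is singular, and that the presence of a single edge outside $F$ immediately after $\alpha$ is enough to place a path in $C(\alpha,F)$.
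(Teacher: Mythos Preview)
Your proof is correct and takes essentially the same approach as the paper: both directions match the paper's argument, including the key step of inductively extending $\alpha e$ (for $e\in r(\alpha)E^1\setminus F$) to a boundary path. Your version is slightly more explicit in places (e.g., stopping the extension at any singular vertex rather than only at sinks, and spelling out why $\alpha\notin\partial E$ when $r(\alpha)$ is regular), but the structure and ideas are the same.
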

\begin{proof}
	$(\Rightarrow)$ Assume $Z(\alpha, F) = \emptyset$. If $r(\alpha)$ were a singular vertex then it would imply $\alpha \in Z(\alpha,F)$. Therefore $r(\alpha)$ is regular, so $r(\alpha)E^1 \ne \emptyset$. Towards a contradiction, assume $F$ is a proper subset of $r(\alpha)E^1$. Then there exists some $x_1 \in r(\alpha)E^1 \setminus F$. Assume that we have a path $x_1 x_2 \dots x_n \in r(\alpha)E^\star$. If $r(x_n)$ is a sink, let $x = x_1 \dots x_n$. Otherwise, let $x_{n+1} \in r(x_n)E^1$. Inductively, this constructs $x \in r(\alpha)\partial E$ such that $\alpha x \in Z(\alpha, F)$. Since this is a contradiction, it proves $F = r(\alpha)E^1$.
	
	$(\Leftarrow)$ If $r(\alpha)$ is regular, then $Z(\alpha) = \bigcup_{e \in r(\alpha) E^1}Z(\alpha e)$, so $Z(\alpha, r(\alpha)E^1) = \emptyset$.
\end{proof}

\begin{theorem} \label{counting counting}
	The boundary path space $\partial E$ is:
	\begin{enumerate}[\rm (1)]
		\item second-countable if and only if $E$ is a countable graph,
		\item first-countable if and only if $E$ is a row-countable graph, and
		\item $\sigma$-compact if and only if $E^0$ is countable.
	\end{enumerate}
\end{theorem}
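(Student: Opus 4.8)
The plan is to deduce everything from Theorem~\ref{counting} together with the fact (Proposition~\ref{lem-closed}) that $\partial E$ is a \emph{closed} subspace of $E^\star \cup E^\infty$, while using Lemma~\ref{non-empty} to keep track of which sets $Z(\alpha)$ are nonempty. For the three ``if'' directions: second-countability and first-countability are hereditary to arbitrary subspaces, so Theorem~\ref{counting}~(\ref{count 1}) and (\ref{count 2}) immediately give that $\partial E$ is second-countable when $E$ is countable and first-countable when $E$ is row-countable. For $\sigma$-compactness I would note that it passes to closed subspaces (if $X = \bigcup_n K_n$ with each $K_n$ compact and $C \subseteq X$ is closed, then $C = \bigcup_n (C \cap K_n)$ is a countable union of compact sets), or even more directly that $\partial E = \bigcup_{v \in E^0} Z(v)$ with each $Z(v) = C(v) \cap \partial E$ a closed subset of the compact set $C(v)$, hence compact; so if $E^0$ is countable then $\partial E$ is $\sigma$-compact.

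For the ``only if'' directions I would re-run the witnesses from the proof of Theorem~\ref{counting}, checking that they survive restriction to $\partial E$. For (1): if $E^0$ (respectively $E^1$) is uncountable, then $\{Z(v) \mid v \in E^0\}$ (respectively $\{Z(e) \mid e \in E^1\}$) is an uncountable family of pairwise disjoint subsets of $\partial E$, each open in $\partial E$ and each nonempty by Lemma~\ref{non-empty} (apply it with $F = \emptyset$; a regular vertex has $vE^1$ finite and nonempty, so $Z(v,\emptyset)$ and $Z(e,\emptyset)$ can never be empty), so $\partial E$ is not second-countable. For (3): if $\partial E = \bigcup_n K_n$ with each $K_n$ compact, then each $K_n$ is covered by finitely many members of the open cover $\{Z(v) \mid v \in E^0\}$, so $\partial E = \bigcup_{v \in S} Z(v)$ for some countable $S \subseteq E^0$; since $Z(v) \cap Z(w) \subseteq C(v) \cap C(w) = \emptyset$ for $v \ne w$ and each $Z(v)$ is nonempty, we must have $S = E^0$, so $E^0$ is countable.

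The one place that needs genuine care --- and the main (minor) obstacle --- is the ``only if'' direction of first-countability, because the point at which first-countability fails and its ``bad companion'' must both be checked to lie in $\partial E$. If $E$ is not row-countable, pick $v \in E^0$ with $vE^1$ uncountable; then $v$ is an infinite emitter, hence singular, so $v \in \partial E$, and I would aim to show first-countability fails at $v$. Assuming $v$ has a countable neighbourhood base in $\partial E$, I would shrink its members to basic open sets of the form $Z(v, F_n)$ with $F_n \subseteq_{\rm finite} vE^1$ (a basic open set of $\partial E$ containing the zero-length path $v$ must be of this shape), choose $e \in vE^1 \setminus \bigcup_n F_n$, and then --- since $e$ itself need not be a boundary path --- invoke Lemma~\ref{non-empty} to pick a boundary path $x \in Z(e)$. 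As $x$ extends $e$ and $e \notin F_n$, we get $x \in Z(v, F_n)$ for every $n$, so $x$ belongs to every neighbourhood of $v$ in $\partial E$; since $x \ne v$ (different lengths) and $\partial E$ is Hausdorff, this is a contradiction. Hence $\partial E$ is first-countable if and only if $E$ is row-countable, completing the proof.
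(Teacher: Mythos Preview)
Your proposal is correct and takes essentially the same approach as the paper, whose proof is simply the one-liner ``Together with Lemma~\ref{non-empty}, the proof is almost identical to the relevant parts of Theorem~\ref{counting}.'' You have faithfully unpacked that one-liner, and your handling of the first-countability ``only if'' direction---observing that an uncountable emitter $v$ lies in $\partial E$, and replacing the edge $e$ by a boundary path $x \in Z(e)$ via Lemma~\ref{non-empty}---is exactly the adaptation the paper is gesturing at.
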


\begin{proof}
	Together with Lemma \ref{non-empty}, the proof is almost identical to the relevant parts of Theorem \ref{counting}.
\end{proof}

\subsection{The boundary path groupoid} \label{graph grpd}

In this section, we define the boundary path groupoid of a graph (see \cite[Example 2.1]{clark2015equivalent}) and investigate some of its algebraic and topological properties. Throughout, let $E = (E^0, E^1, r, s)$ be an arbitrary graph.

Define the \textit{one-sided shift map} $\sigma: \partial E \setminus E^0 \to \partial E$ as follows: \index{$\sigma$}
\[
\sigma(x) = 
\begin{cases}
r(x) &\text{ if } x \in E^\star \cap \partial E \text{ and } |x|=1 \\
x_2 \dots x_{|x|} &\text{ if } x \in E^\star \cap \partial E \text{ and } |x| \ge 2\\
x_2 x_3 \dots &\text{ if } x \in E^\infty
\end{cases}
\]
The $n$-fold composition $\sigma^n$ is defined on paths of length $\ge n$ and we understand that $\sigma^0: \partial E \to \partial E$ is the identity map.	

\begin{definition}
	Let $k$ be an integer and let $x, y\in \partial E$. We say that $x$ and $y$ are \textbf{tail equivalent with lag $k$}, written $x \sim_k y$, \index{$\sim_k$} if there exists some $n \ge \max\{0,k\}$ such that 
	\[
	\sigma^n(x) = \sigma^{n - k}(y).
	\]
	If an integer $k$ exists such that $x \sim_k y$, we say that $x$ and $y$ are \textit{tail equivalent}, and write $x \sim y$. 
\end{definition}

An equivalent definition is that $x \sim_k y$ if there exists $(\alpha, \beta) \in E^\star\times_r E^\star$ and $z \in r(\alpha)\partial E$, such that $x = \alpha z$, $y = \beta z$, and $|\alpha| - |\beta| = k$. Something that is potentially counter-intuitive about these relations is that the lag is not necessarily unique: it is possible to have $x \sim_k y$ and $x \sim_\ell y$ even when $k \ne \ell$. 
It is straightforward to prove from the definition that for all $x, y, z \in \partial E$:
\begin{gather*}
x \sim_0 x ,\\
x \sim_k y \implies y \sim_{-k} x, \\
x \sim_k y \text{ and } y \sim_\ell z \implies x \sim_{k + \ell}z,\\
x \sim_k y \implies x, y \in  E^\star \text{ or } x, y \in E^\infty.
\end{gather*}
This shows that $\sim$ is an equivalence relation on $\partial E$ that respects the partition between finite and infinite paths.
\begin{definition}
	The \textbf{boundary path groupoid} of a graph $E$ is \index{$\G_E$}
	\begin{align*}
	\G_E &= \big\{(x, k, y)\mid x, y \in \partial E ,\ x \sim_k y\big\}\\\
	&= \big\{(\alpha x, |\alpha| - |\beta|, \beta x)\mid (\alpha, \beta) \in E^\star\times_r E^\star, x \in r(\alpha)\partial E\big\}
	\end{align*}
	where a morphism $(x, k, y) \in \G_E$ has {domain} $y$ and {codomain} $x$. The composition of morphisms and their inverses are defined by the formulae:
	\begin{align*}
	&(x, k , y)(y, l,z)
	= (x, k + l, z), && (x, k, y)^{-1}=(y, -k, x).
	\end{align*}
\end{definition}
The unit space is $\G_E^{(0)} = \{(x, 0,x) \mid x \in \partial E \}$, which we silently identify with $\partial E$ (see Remark \ref{identify}). The orbits in $\partial E$ are tail equivalence classes.


\begin{example} \label{rosetwo}
	Consider this graph, called the \textit{rose with two petals}:
	\[
	R_2 \quad = \quad \xymatrix{
		\bullet_v \ar@(ul,dl)_e \ar@(ur, dr)^f
	}
	\]
	A standard diagonal argument proves that $\partial R_2$ is an uncountable set. There are uncountably many orbits in $\partial R_2$, but the topology on $\partial R_2$ is second-countable and even metrisable. In fact, it can be shown that $\partial R_2$ is homeomorphic to the Cantor set $\{0,1\}^\mathbb{N}$. 
\end{example}

A boundary path $p \in \partial E$ is called \textit{eventually periodic} if it is of the form $p = \mu \epsilon \epsilon \ldots \in E^\infty$ where $\mu, \epsilon \in E^\star$ and $\epsilon$ is a closed path of positive length. The following result is \cite[Proposition 4.2]{steinberg2018chain} except there appears to be a clash between our definitions of cycles and closed paths. We also prove it slightly more formally.

\begin{proposition}
	\label{GE isotropy}
	If $E$ is a graph and $p \in \partial E$, then the isotropy group at $p$ is:
	\begin{enumerate}[\rm (1)]
		\item\label{GE isotropy1}
		infinite cyclic if $p$ is eventually periodic;
		\item \label{GE isotropy2}
		trivial if $p$ is not eventually periodic.
	\end{enumerate}
\end{proposition}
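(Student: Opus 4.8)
The plan is to identify the isotropy group $p\G_E p$ with a subgroup of $(\ZZ,+)$ and then determine exactly when it is nonzero. A morphism in $p\G_E p$ is a triple $(p,k,p)$ with $p\sim_k p$, and $(p,k,p)(p,\ell,p)=(p,k+\ell,p)$, so the map $(p,k,p)\mapsto k$ is an isomorphism onto $H_p:=\{k\in\ZZ \mid p\sim_k p\}$; that $H_p$ is a subgroup of $\ZZ$ follows from reflexivity $p\sim_0 p$, the implication $p\sim_k p\Rightarrow p\sim_{-k}p$, and additivity of lags, all noted before the definition of $\G_E$. Since every subgroup of $\ZZ$ is either trivial or infinite cyclic, the proposition reduces to the single assertion that $H_p\ne\{0\}$ if and only if $p$ is eventually periodic.

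First I would dispose of the case $p\in E^\star$. Here $p$ is not eventually periodic (eventually periodic paths lie in $E^\infty$ by definition), and $H_p=\{0\}$ because the paths $\sigma^0(p),\sigma^1(p),\dots,\sigma^{|p|}(p)$ have strictly decreasing lengths, hence are pairwise distinct, so $\sigma^n(p)=\sigma^{n-k}(p)$ forces $k=0$. So from now on one may assume $p\in E^\infty$.

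For the forward implication, if $p=\mu\epsilon\epsilon\cdots$ with $\mu,\epsilon\in E^\star$ and $\epsilon$ a closed path of length $d\ge 1$, then $\sigma^{|\mu|}(p)=\epsilon\epsilon\cdots=\sigma^{|\mu|+d}(p)$, so with $n=|\mu|+d\ge d=\max\{0,d\}$ one gets $p\sim_d p$; thus $d\in H_p\setminus\{0\}$ and $H_p$ is infinite cyclic. For the converse, suppose $0\ne k\in H_p$; after replacing $k$ by $-k$ if needed, assume $k>0$. Pick $n\ge k$ with $\sigma^n(p)=\sigma^{n-k}(p)$ and set $q=\sigma^{n-k}(p)\in E^\infty$, so that $\sigma^k(q)=\sigma^n(p)=q$. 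Writing $q=q_1q_2\cdots$, the equation $\sigma^k(q)=q$ says $q_{i+k}=q_i$ for all $i\ge 1$, whence $q=\tau\tau\tau\cdots$ with $\tau=q_1\cdots q_k$, and $\tau$ is a closed path of length $k\ge 1$ since $r(\tau)=r(q_k)=s(q_{k+1})=s(q_1)=s(\tau)$. Therefore $p=(p_1\cdots p_{n-k})\,\tau\tau\tau\cdots$, with $p_1\cdots p_{n-k}$ read as the vertex $s(p)$ when $n=k$, so $p$ is eventually periodic.

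The argument is elementary once the reduction $p\G_E p\cong H_p\le\ZZ$ is in place; the only points requiring a little care are the bookkeeping in the definition of $\sim_k$ — the constraint $n\ge\max\{0,k\}$, and the fact that the lag witnessing $p\sim_k p$ need not be unique — and handling the degenerate cases where $p$ is a finite path or one of the prefixes involved is empty. I do not anticipate any genuine obstacle.
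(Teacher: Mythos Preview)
Your proof is correct and close in spirit to the paper's, but the organization differs in a way worth noting. You first identify $p\G_E p$ with the subgroup $H_p\le\ZZ$ and then invoke the classification of subgroups of $\ZZ$, so that the entire proposition reduces to the single equivalence ``$H_p\ne\{0\}$ iff $p$ is eventually periodic''. The paper instead handles the two cases separately: in case~(1) it fixes a \emph{minimal} period $\epsilon$ and argues directly that every lag $k$ must be a multiple of $|\epsilon|$, thereby identifying the explicit generator $(p,|\epsilon|,p)$; in case~(2) it uses the $(\alpha,\beta)$-decomposition $p=\alpha x=\beta x$ rather than the shift map. Your route is a bit slicker, since you never need minimality of the period or the exact generator; the paper's route gives slightly more information (the generator) at the cost of the extra divisibility argument. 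Both arguments are elementary and neither has a gap.
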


\begin{proof}
	(\ref{GE isotropy1}) Assume $p = \mu \epsilon \epsilon \ldots \in E^\infty$ where $\mu, \epsilon \in E^\star$, $r(\mu) = s(\epsilon) = r(\epsilon)$, and assume $\epsilon$ is minimal in the sense that it has no initial subpath $\delta$ such that $\epsilon  = \delta^n$ for some $n > 1$. Let $(p,k,p) \in {p{(\G_E)}p}$ and suppose $k \ge 0$. Then $p \sim_k p$ implies that for all sufficiently large $n \ge 0$, we have $\sigma^{|\mu|+ n|\epsilon|+k}(p) = \sigma^{|\mu|+n|\epsilon|}(p)$. This yields:
	\[
	\sigma^{|\mu|+n|\epsilon|+k}(p) =   \sigma^k(\epsilon \epsilon \dots)  =  \sigma^{|\mu|+n|\epsilon|}(p) = \epsilon \epsilon \dots.
	\]
	Let $m = k \mod |\epsilon|$. Then $0 \le m < |\epsilon|$ and
	\[
	\sigma^k(\epsilon\epsilon \dots) = \sigma^m(\epsilon \epsilon \dots) = \epsilon_{m+1} \dots \epsilon_{|\epsilon|}\epsilon \epsilon \ldots = \epsilon_1 \dots \epsilon_m \epsilon \epsilon \dots.
	\]
	Since $\epsilon$ is minimal, this implies $m = 0$, so $k \mid |\epsilon|$. On the other hand, if $k < 0$ then $(p,-k,p) = (p,k,p)^{-1} \in {p{(\G_E)}p}$ and the same argument establishes $k \mid |\epsilon|$. The conclusion is that ${p{(\G_E)}p}$ is the infinite cyclic group generated by $(p,|\epsilon|,p)$.
	
	(\ref{GE isotropy2}) Let $(p,k,p) \in {p{(\G_E)}p}$. Then $p \sim_k p$ implies $p = \alpha x = \beta x$ for some $(\alpha, \beta) \in E^\star \times_r E^\star$ and $x \in r(\alpha) \partial E$, with $|\alpha| - |\beta| = k$. If $p$ is finite, this implies $\alpha = \beta$, so $k = 0$. That is, the isotropy group at $p$ is trivial. On the other hand, suppose $p$ is infinite and not eventually periodic. If $|\alpha|<|\beta|$, then $\beta = \alpha \beta'$ for some $\beta' \in E^\star$. But then $p = \alpha x = \beta x = \alpha \beta' x$, so $x = \beta'  x = \beta' \beta' x = \beta'\beta'\beta'\dots$, and this proves $p$ is eventually periodic, a contradiction. Similarly, assuming $|\beta| < |\alpha|$ reaches the same contradiction. Therefore, $|\alpha|= |\beta|$ and $k = 0$, implying that the isotropy group at $p$ is trivial.
\end{proof}

%

The next step is to define a topology on $\G_E$. Let $(\alpha, \beta) \in E^\star \times_r E^\star$, and let $F \subseteq_{\rm finite} r(\alpha)E^1$. Define the sets: \index{$\Z(\alpha, \beta)$, $\Z(\alpha, \beta, F)$}
\begin{align*} \label{basic sets def}
&\Z(\alpha, \beta) = \big\{(\alpha x, |\alpha| - |\beta|, \beta x) \mid x \in r(\alpha)\partial E \big\};&& \notag
\Z(\alpha, \beta, F) = \Z(\alpha, \beta) \setminus \bigcup_{e \in F} \Z(\alpha e, \beta e).
\end{align*}
Obviously, $\Z(\alpha, \beta) = \Z(\alpha, \beta, \emptyset)$. Next we present a pair of technical lemmas (generalising \cite[Lemma 2.5]{kumjian1997graphs}) which prove that the collection of sets of the form $\Z(\alpha, \beta, F)$ is closed under pairwise intersections, so it can serve as a base for a topology on $\G_E$.

\begin{lemma} \label{int lemma} 
	Let $(\alpha, \beta), (\gamma, \delta) \in E^\star \times_r E^\star$. Then
	\begin{align*}
	\Z(\alpha, \beta) \cap \Z(\gamma, \delta) =
	\begin{cases}
	\Z(\alpha, \beta) & \text{if } \exists\ \kappa \in E^\star,\ \alpha = \gamma\kappa,\ \beta = \delta\kappa\\
	\Z(\gamma, \delta) & \text{if } \exists\ \kappa \in E^\star,\ \gamma = \alpha\kappa,\ \delta = \beta\kappa\\
	\emptyset &\text{otherwise}
	\end{cases}
	\end{align*}
\end{lemma}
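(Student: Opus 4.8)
The plan is to dispatch the two ``positive'' cases by a direct inclusion argument, and then to show that whenever the intersection is nonempty one of those two cases must occur; the only real work is a cancellation-of-paths argument with attention to length-zero paths.

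\textbf{Step 1 (the two inclusion cases).} Suppose first that there is $\kappa \in E^\star$ with $\alpha = \gamma\kappa$ and $\beta = \delta\kappa$. I claim $\Z(\alpha,\beta) \subseteq \Z(\gamma,\delta)$, which forces $\Z(\alpha,\beta)\cap\Z(\gamma,\delta) = \Z(\alpha,\beta)$. A typical element of $\Z(\alpha,\beta)$ is $(\alpha x,\, |\alpha|-|\beta|,\, \beta x)$ with $x \in r(\alpha)\partial E$; since $r(\alpha) = r(\kappa)$ we may write $\alpha x = \gamma(\kappa x)$ and $\beta x = \delta(\kappa x)$, where $\kappa x \in r(\gamma)\partial E$ because $s(\kappa x) = s(\kappa) = r(\gamma)$ and prepending a finite path to a boundary path yields a boundary path. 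Moreover $|\alpha|-|\beta| = |\gamma|-|\delta|$, as concatenating the same $\kappa$ on the right preserves the length difference. Hence the element lies in $\Z(\gamma,\delta)$. The mirror-image hypothesis $\gamma = \alpha\kappa$, $\delta = \beta\kappa$ gives $\Z(\gamma,\delta) \subseteq \Z(\alpha,\beta)$ by the identical computation, so there the intersection equals $\Z(\gamma,\delta)$.

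\textbf{Step 2 (nonempty intersection implies one of the cases).} Assume $\Z(\alpha,\beta)\cap\Z(\gamma,\delta)$ contains a morphism $(p,k,q)$. By definition there are $x\in r(\alpha)\partial E$ and $y\in r(\gamma)\partial E$ with $p = \alpha x = \gamma y$, $q = \beta x = \delta y$, and $k = |\alpha|-|\beta| = |\gamma|-|\delta|$. Without loss of generality $|\alpha|\le|\gamma|$ (otherwise swap the two pairs, which just swaps the two cases of the statement). Since $\alpha$ and $\gamma$ are both initial subpaths of $p$ with $|\alpha|\le|\gamma|$, the path $\alpha$ is an initial subpath of $\gamma$: write $\gamma = \alpha\kappa$, $|\kappa| = |\gamma|-|\alpha|\ge 0$. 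The equality of length differences forces $|\delta|-|\beta| = |\kappa|\ge 0$, and since $\beta,\delta$ are both initial subpaths of $q$ with $|\beta|\le|\delta|$ we get $\delta = \beta\mu$ with $|\mu| = |\kappa|$. Cancelling $\alpha$ in $\alpha x = \gamma y = \alpha\kappa y$ gives $x = \kappa y$, and cancelling $\beta$ in $\beta x = \delta y = \beta\mu y$ gives $x = \mu y$; hence $\kappa y = \mu y$. As $|\kappa| = |\mu|$, either both have positive length and are then literally the first $|\kappa|$ edges of $x$ (so $\kappa = \mu$), or both have length zero, in which case $\kappa = r(\alpha)$ and $\mu = r(\beta)$ which coincide because $(\alpha,\beta)\in E^\star\times_r E^\star$. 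Either way $\kappa = \mu$, so $\gamma = \alpha\kappa$ and $\delta = \beta\kappa$ --- the second case of the statement.

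\textbf{Conclusion.} If the first case holds, Step 1 gives intersection $\Z(\alpha,\beta)$; if the second holds, intersection $\Z(\gamma,\delta)$; and if neither holds, Step 2 shows the intersection cannot be nonempty, hence it is $\emptyset$. The main (and essentially only) delicate point is the left-cancellation of an initial subpath from an equality of paths, together with the degenerate case where $\kappa$ or $\mu$ is a vertex; the identification conventions for vertices and the hypothesis $r(\alpha) = r(\beta)$ handle this cleanly.
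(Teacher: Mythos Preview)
Your proof is correct and follows essentially the same approach as the paper: assume the intersection is nonempty, use a without-loss-of-generality length comparison to extract a common extension $\kappa$, and note that the positive cases give the stated inclusions. You are more explicit than the paper (which declares the inclusions ``clear from the definitions'' and compresses your $\kappa=\mu$ argument into a single ``similarly''), and your careful treatment of the length-zero case is a nice touch, but the underlying idea is identical.
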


\begin{proof} 
	We prove that when the intersection of the two sets is nonempty, then it must be one of the first two cases in the piecewise expression.
	To this end, let $(\alpha x,|\alpha|-|\beta|,\beta x) = (\gamma x', |\gamma|-|\delta|, \delta x') \in \Z(\alpha, \beta) \cap \Z(\gamma,\delta)$, where $x \in r(\alpha)\partial E$ and $x' \in r(\gamma)\partial E$. Assume $|\gamma| \le |\alpha|$, which implies $|\delta| \le |\beta|$; if not, rearrange. Since $\alpha x = \gamma x'$, it must be that $\alpha = \gamma \kappa$ where $\kappa$ is the initial subpath of $x'$ of length $|\alpha|-|\gamma|$. Similarly, $\beta = \delta \kappa$. So we are in the first case (or the second case, if a rearrangement took place).
	In the first two cases in the piecewise expression, it is clear from the definitions what the intersection of $\Z(\alpha, \beta)$ and $\Z(\gamma, \delta)$ must be.
\end{proof}

\begin{lemma} \label{int lemma 2}
	Suppose $(\alpha, \beta), (\gamma, \delta) \in E^\star \times_r E^\star$, $F \subseteq_{\rm finite} r(\alpha)E^1$, and $H\subseteq_{\rm finite} r(\gamma)E^1$.
	Then
	\begin{equation*}
	\Z(\alpha, \beta, F) \cap \Z(\gamma, \delta, H)
	= \begin{cases}
	\Z(\alpha, \beta, F \cup H) &\!\!\text{if } \alpha = \gamma, \beta = \delta\\
	\Z(\alpha, \beta, F) & \!\!\text{if } \exists\ \kappa \in E^\star,\  |\kappa| \ge 1,\ \alpha = \gamma\kappa,\ \beta = \delta\kappa,\  \kappa_1 \notin H \\
	\Z(\gamma, \delta, H) & \!\!\text{if } \exists\ \kappa \in E^\star,\ |\kappa|\ge1,\ \gamma = \alpha\kappa,\ \delta = \beta\kappa,\ \kappa_1 \notin F \\
	\emptyset & \!\!\text{otherwise}
	\end{cases}
	\end{equation*}
\end{lemma}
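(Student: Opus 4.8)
The plan is to follow the template of the proof of Lemma~\ref{int lemma 0}, using Lemma~\ref{int lemma} to dispose of the ``un\-punctured'' part of the intersection and then tracking how the two finite puncture sets $F$ and $H$ interact. Expanding the definitions of the generalised cylinders gives
\[
\Z(\alpha,\beta,F)\cap\Z(\gamma,\delta,H) = \bigl(\Z(\alpha,\beta)\cap\Z(\gamma,\delta)\bigr) \setminus \Bigl(\bigcup_{e\in F}\Z(\alpha e,\beta e)\ \cup\ \bigcup_{e\in H}\Z(\gamma e,\delta e)\Bigr).
\]
By Lemma~\ref{int lemma}, the set $\Z(\alpha,\beta)\cap\Z(\gamma,\delta)$ is empty --- and then so is the whole intersection, which is the ``otherwise'' case --- unless $\alpha=\gamma\kappa$, $\beta=\delta\kappa$ for some $\kappa\in E^\star$, or $\gamma=\alpha\kappa$, $\delta=\beta\kappa$ for some $\kappa\in E^\star$. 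Up to interchanging the roles of the two sets I would assume the former, and then split according to whether $|\kappa|=0$ or $|\kappa|\ge 1$.

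If $|\kappa|=0$, then $\alpha=\gamma$ and $\beta=\delta$, the un\-punctured part is all of $\Z(\alpha,\beta)$, the two subtracted unions combine into $\bigcup_{e\in F\cup H}\Z(\alpha e,\beta e)$, and the intersection is $\Z(\alpha,\beta,F\cup H)$, the first case. If $|\kappa|\ge 1$, then $\Z(\alpha,\beta)\subseteq\Z(\gamma,\delta)$, so the un\-punctured part equals $\Z(\alpha,\beta)$ and all of the subtraction happens inside $\Z(\alpha,\beta)$. Since $\Z(\alpha e,\beta e)\subseteq\Z(\alpha,\beta)$ for every edge $e$, the $F$-union contributes exactly the ordinary puncture; the issue is the $H$-union.

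The step I expect to be the main obstacle is the computation, for $e\in H$, of $\Z(\alpha,\beta)\cap\Z(\gamma e,\delta e)$ via Lemma~\ref{int lemma}. Writing $\kappa=\kappa_1\kappa'$ with $\kappa_1\in r(\gamma)E^1$, we have $\alpha=\gamma\kappa_1\kappa'$ and $\beta=\delta\kappa_1\kappa'$; so if $e=\kappa_1$ then $\Z(\alpha,\beta)\subseteq\Z(\gamma e,\delta e)$, while if $e\ne\kappa_1$ then neither of $\gamma e,\alpha$ is an initial subpath of the other (they already differ at position $|\gamma|+1$), whence Lemma~\ref{int lemma} forces $\Z(\alpha,\beta)\cap\Z(\gamma e,\delta e)=\emptyset$. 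Consequently, if $\kappa_1\notin H$ the $H$-union meets $\Z(\alpha,\beta)$ trivially and the intersection is $\Z(\alpha,\beta,F)$ --- the second case --- whereas if $\kappa_1\in H$ the $H$-union covers all of $\Z(\alpha,\beta)$ and the intersection is empty. The third case then follows from the second by the symmetry exchanging $(\alpha,\beta,F)$ with $(\gamma,\delta,H)$. Collecting the remaining possibilities --- $\Z(\alpha,\beta)\cap\Z(\gamma,\delta)=\emptyset$, and the nested cases in which the first edge of $\kappa$ lies in the ``wrong'' puncture --- accounts for the ``otherwise'' case; this bookkeeping also confirms that the listed alternatives are mutually exclusive, since the two nested cases with $|\kappa|\ge1$ cannot both hold (that would force $|\kappa|=0$).
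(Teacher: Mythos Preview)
Your proposal is correct and follows essentially the same approach as the paper: you expand the definition into the set-theoretic identity, invoke Lemma~\ref{int lemma} for the unpunctured intersection, and then in the nested case compute $\Z(\alpha,\beta)\cap\Z(\gamma e,\delta e)$ for each $e\in H$ via Lemma~\ref{int lemma}, exactly as the paper does. The only cosmetic difference is that you handle the third case by an explicit appeal to symmetry, whereas the paper lists it as a separate case and then remarks that it is symmetric to the second---the content is identical.
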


\begin{proof} We make a calculation and then proceed by cases:
	\begin{align} \label{hugeeq}
	\Z(\alpha, \beta, F) \cap \Z(\gamma, \delta, H) 
	&= \left[
	\Z(\alpha, \beta) \setminus \bigcup_{e \in F} \Z(\alpha e, \beta e)
	\right]
	\bigcap
	\left[
	\Z(\gamma, \delta) \setminus \bigcup_{e \in H} \Z(\gamma e, \delta e)
	\right]
	\\ \notag
	&= \left[
	\Z(\alpha, \beta) \cap \Z(\gamma, \delta)
	\right]
	{ \setminus}
	\left[
	\bigcup_{e \in F} \Z(\alpha e, \beta e) \cup \bigcup_{e \in H} \Z(\gamma e, \delta e)
	\right].
	\end{align}
	\textit{Case 1: } If $\alpha = \gamma$ and $\beta = \delta$, equation (\ref{hugeeq}) yields $\Z(\alpha, \beta, F) \cap \Z(\gamma, \delta, H) = \Z(\alpha, \beta, F \cup H)$.
	
	\textit{Case 2: } If there exists $\kappa \in E^\star \setminus E^0$ such that $\alpha = \gamma \kappa$ and $\beta = \delta \kappa$ then after applying Lemma \ref{int lemma}, the right hand side of (\ref{hugeeq}) becomes
	\[
	\Z(\alpha, \beta) \setminus
	\left[
	\bigcup_{e \in F} \Z(\alpha e, \beta e) \cup \bigcup_{e \in H}\Z(\gamma e, \delta e)
	\right].
	\]
	Moreover, $\Z(\alpha, \beta) \cap \Z(\gamma e , \delta e) = \emptyset$ for all $e \in H$, provided $ e \ne \kappa_1$. If $e = \kappa_1$ then $\Z(\alpha, \beta) \cap \Z(\gamma e, \delta e) = \Z(\alpha, \beta)$. Therefore (\ref{hugeeq}) becomes $\Z(\alpha, \beta, F)$ if $\kappa_1 \notin H$ and $\emptyset$ if $\kappa_1 \in H$.
	
	\textit{Case 3: } If there exists $\kappa \in E^\star\setminus E^0$ such that $\gamma = \alpha \kappa$ and $\delta = \beta \kappa$ then the situation is symmetric to the second case.
	
	\textit{Case 4: } Otherwise, $\Z(\alpha, \beta) \cap \Z(\gamma, \delta) = \emptyset$, by Lemma  \ref{int lemma}.
\end{proof}

From now on, we assume $\G_E$ has the topology generated by all the sets: 
\begin{align} \label{Z-base}
\Z(\alpha, \beta, F);&& (\alpha, \beta) \in E^\star\times_r E^\star,\ F 	\subseteq_{\rm finite} r(\alpha)E^1.
\end{align}

Some of our references give a different base for the topology on $\G_E$, but all the different bases that we know of contain the sets $\Z(\alpha, \beta, F)$. There are advantages to working with a base that is not too large, which is why we have chosen to focus on this one.

Let $E$ be a graph and consider $\mathbb{Z}$ with the discrete topology. The map
\begin{align*}
\theta: \G_E \to \mathbb{Z},&& (x, k, y) \mapsto k,
\end{align*}
is a continuous groupoid homomorphism. In fact, it is a degree map giving $\G_E$ the structure of a $\ZZ$-graded groupoid. Some parts of this lemma are reminiscent of \cite[Proposition 2.6]{kumjian1997graphs}.

\begin{lemma} \label{grp top}
	Let $E$ be a graph.
	\begin{enumerate}[\rm(1)]
		\item \label{grp top 1}
		The topology on $\G_E$ is Hausdorff.
		\item \label{grp top 2}
		$\dom: \G_E \to \partial E$ is a local homeomorphism.
		\item \label{grp top 3}
		If $(\alpha, \beta) \in E^\star\times_r E^\star$ and $F \subseteq_{\rm finite} r(\alpha)E^1$, then $\Z(\alpha, \beta, F)$ is compact.
	\end{enumerate}
\end{lemma}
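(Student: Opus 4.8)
The plan is to prove (2) first, since (1) and (3) follow from it with little extra work. For (2), I would base everything on the claim that, given $(\alpha,\beta)\in E^\star\times_r E^\star$ and $F\subseteq_{\rm finite}r(\alpha)E^1$, the map
\[
\phi\colon Z\big(r(\alpha),F\big)\longrightarrow \Z(\alpha,\beta,F),\qquad \phi(x)=\big(\alpha x,\ |\alpha|-|\beta|,\ \beta x\big),
\]
is a homeomorphism; here $Z\big(r(\alpha),F\big)=C\big(r(\alpha),F\big)\cap\partial E$ is the basic open subset of $\partial E$ attached to the length-zero path $r(\alpha)$, and if $r(\alpha)$ is regular with $F=r(\alpha)E^1$ then both sides are empty by Lemma \ref{non-empty} and there is nothing to check. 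That $\phi$ is a bijection is immediate from the definition of $\Z(\alpha,\beta,F)$; for continuity and openness I would match up basic sets using Lemma \ref{int lemma 2}: the intersection of any $\Z(\gamma,\delta,H)$ with $\Z(\alpha,\beta,F)$ is empty or of the form $\Z(\alpha\kappa,\beta\kappa,K)$ with $\kappa_1\notin F$, whose $\phi$-preimage is the basic set $Z(\kappa,K)\subseteq\partial E$, and conversely $\phi$ sends a basic subset $Z(\mu,H)\cap Z\big(r(\alpha),F\big)$ of its domain onto $\Z(\alpha\mu,\beta\mu,H)$ or onto $\emptyset$. Composing, $\dom\circ\phi$ is the map $x\mapsto\beta x$, and by the analogous bookkeeping — this time with Lemma \ref{int lemma 0} — that map is a homeomorphism of $Z\big(r(\alpha),F\big)$ onto the basic open set $Z(\beta,F)\subseteq\partial E$. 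Hence $\dom|_{\Z(\alpha,\beta,F)}$, being the composite of $\phi^{-1}$ with $x\mapsto\beta x$, is a homeomorphism onto an open subset of $\partial E$. Since the sets $\Z(\alpha,\beta,F)$ form a base for $\G_E$ (Lemmas \ref{int lemma} and \ref{int lemma 2}) and cover it (each morphism lies in some $\Z(\alpha,\beta)$), it follows that $\dom$ is a local homeomorphism, which is (2).

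Part (3) is then immediate: the homeomorphism $\phi$ identifies $\Z(\alpha,\beta,F)$ with $Z\big(r(\alpha),F\big)=C\big(r(\alpha),F\big)\cap\partial E$, which is a closed subset (Proposition \ref{lem-closed}) of the compact set $C\big(r(\alpha),F\big)$ (Theorem \ref{path space topology}), hence compact; therefore $\Z(\alpha,\beta,F)$ is compact.

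For (1), I would first record that inversion permutes the base: $\inv\big(\Z(\alpha,\beta,F)\big)=\Z(\beta,\alpha,F)$, directly from the definitions (note $r(\alpha)=r(\beta)$). Being an involution that carries basic open sets to basic open sets, $\inv$ is a homeomorphism; consequently $\cod=\dom\circ\inv$ is continuous, and $\theta\colon\G_E\to\ZZ$ is continuous since each fibre $\theta^{-1}(k)=\bigcup\{\Z(\alpha,\beta,F):|\alpha|-|\beta|=k\}$ is open. Hence
\[
\Phi\colon\G_E\longrightarrow\partial E\times\ZZ\times\partial E,\qquad \Phi=(\cod,\theta,\dom),
\]
is continuous, and it is visibly injective, as $\Phi(x,k,y)=(x,k,y)$. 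Its codomain is Hausdorff, because $\partial E$ is Hausdorff (Theorem \ref{path space topology} and Proposition \ref{lem-closed}) and $\ZZ$ is discrete; and any space that admits a continuous injection into a Hausdorff space is Hausdorff. Therefore $\G_E$ is Hausdorff.

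The step I expect to be the main obstacle is the case analysis inside (2): one must verify, via the intersection formulas of Lemmas \ref{int lemma 0} and \ref{int lemma 2}, that $\phi$ and the map $x\mapsto\beta x$ do send basic open sets to basic open sets in both directions, keeping careful track of the interplay between the forbidden-edge sets $F$ and $H$ and the prefixes $\kappa$ and $\mu$. Everything else is routine bookkeeping or a direct appeal to results established earlier.
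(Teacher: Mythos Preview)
Your proof is correct, but it proceeds in a different order and with different tools than the paper. The paper proves (1) first, by a direct separation-of-points argument using $\theta$ and Lemma \ref{int lemma 2}; then for (2) it defines $h_{\alpha,\beta}\colon Z(\beta)\to\Z(\alpha,\beta)$, checks continuity, and invokes the closed-map lemma (a continuous map from a compact space to a Hausdorff space is closed) to conclude $h_{\alpha,\beta}$ is a homeomorphism---this step \emph{uses} part (1). You instead establish (2) first by directly matching basic open sets via Lemmas \ref{int lemma 0} and \ref{int lemma 2}, avoiding any appeal to Hausdorffness; this costs more case analysis but removes the logical dependency on (1). Your argument for (1), pulling back Hausdorffness along the continuous injection $(\cod,\theta,\dom)\colon\G_E\to\partial E\times\ZZ\times\partial E$, is slicker than the paper's point-separation, and it is only available \emph{after} (2) because you need $\dom$ (and hence $\cod$) continuous. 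Both routes are valid; the paper's is shorter for (2) once (1) is in hand, while yours makes the dependencies cleaner and yields a more conceptual proof of (1). One small wording point: in your description of the intersections, the condition ``$\kappa_1\notin F$'' should be read as vacuous when $|\kappa|=0$, since Lemma \ref{int lemma 2} also produces $\Z(\alpha,\beta,F\cup H)$ and $\Z(\alpha,\beta,F)$ as possible outcomes.
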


\begin{proof}
	(\ref{grp top 1}) Take $(x,k,y) \ne (w,\ell,z)$ in $\G_E$. If $k \ne \ell$ then $\theta^{-1}(k)$ and $\theta^{-1}(\ell)$ are disjoint open sets separating the two points. Otherwise, either $x \ne w$ or $y \ne z$. If $w \ne x$ then either: $w$ and $x$ must differ on some initial segment, or one must be an initial subpath of the other. Using Lemma \ref{int lemma 2}, it is not difficult to separate the two points by disjoint open sets. If $y \ne z$, the same reasoning applies.
	
	(\ref{grp top 2})	For $(\alpha, \beta) \in E^\star \times_r E^\star$, define
	\begin{align*}
	h_{\alpha, \beta}: Z(\beta) \to \Z(\alpha, \beta), && \beta x \mapsto  (\alpha x, |\alpha|-|\beta|, \beta x).
	\end{align*}
	Clearly, $h_{\alpha, \beta}$ is a bijection. By Lemma \ref{int lemma 2}, the basic open sets contained in $\Z(\alpha, \beta)$ are all of the form $\Z(\alpha\kappa,\beta \kappa, F')$ where $\kappa \in r(\alpha)E^\star$ and $F' \subseteq_{\rm finite} r(\kappa)E^1$. Clearly
	\[h_{\alpha, \beta}^{-1}\big(\Z(\alpha\kappa,\beta \kappa, F')\big) = Z(\beta\kappa, F')
	\]
	is open in $Z(\beta)$, so $h_{\alpha, \beta}$ is continuous. A continuous map from a compact space to a Hausdorff space is a closed map, so $h_{\alpha, \beta}$ is a closed map. Therefore $h_{\alpha, \beta}$ is a homeomorphism. This proves that $\dom|_{\Z(\alpha, \beta)}$ is a homeomorphism onto its image (because $\dom|_{\Z(\alpha, \beta)}^{-1} = h_{\alpha, \beta}$).
	
	(\ref{grp top 3}) According to item (\ref{grp top 2}), $\dom$ restricts to a homeomorphism $\Z(\alpha, \beta, F)\approx Z(\beta, F)$, and $Z(\beta, F)$ is compact by Theorem \ref{path space topology}.
\end{proof}

Since $\Z(\alpha, \beta, F) \approx Z(\beta, F)$, Lemma \ref{non-empty} implies that $\Z(\alpha, \beta, F) = \emptyset$ if and only if $r(\alpha)$ is a regular vertex and $F = r(\alpha)E^1$.

\begin{remark}
	The groupoid $\G_E$ admits continuous maps
	\begin{align*}
	&\cod: (x,k,y)\mapsto x, & &\theta: (x,k,y)\mapsto k,& &\dom: (x,k,y) \mapsto y,
	\end{align*}
	so it is tempting to think that the topology on $\G_E$ coincides with the relative topology that it gets from being a subset of the product space $\partial E \times \ZZ \times \partial E$. However, this is not the case: the topology on $\G_E$ is much finer than the relative topology from $\partial E \times \ZZ \times \partial E$.
\end{remark}

The main theorem that follows is not new, and it has been in use for some time. Indeed, it is implied by \cite[Lemma 2.1]{renault2018uniqueness}, although not in a trivial way (see also \cite[Theorem 3.5]{paterson2002graph} and \cite[Theorem 3.16]{yeend2007groupoid}). However, this is the first self-contained proof that we know of that applies to ordinary directed graphs, and does not require the graph to be countable. 

\begin{theorem} \label{weeks}
	Let $E$ be a graph. The groupoid $\G_E$ is a Hausdorff ample groupoid with the base of compact open bisections given in (\ref{Z-base}).
\end{theorem}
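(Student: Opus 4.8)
The plan is to verify the three requirements in the definition of an ample groupoid: that $\G_E$ is a topological groupoid, that its unit space $\G_E^{(0)} = \partial E$ is Hausdorff, and that it has a base of compact open bisections. The second point is already known, since $\partial E$ is locally compact Hausdorff by Theorem \ref{path space topology} and Proposition \ref{lem-closed}. Lemma \ref{grp top} supplies three further ingredients: $\G_E$ is Hausdorff (\ref{grp top 1}), $\dom$ is a local homeomorphism (\ref{grp top 2}), and every $\Z(\alpha,\beta,F)$ is compact (\ref{grp top 3}). What remains is (a) the continuity of $\inv$ and $\mult$, so that $\G_E$ is genuinely a topological groupoid, and (b) the fact that each $\Z(\alpha,\beta,F)$ is a bisection. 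Granting these, $\G_E$ is a topological groupoid with Hausdorff unit space and a base of compact open bisections, which is precisely the assertion (and it is in particular \'etale).

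I would treat inversion and the bisection property first, as they are quick. A direct computation from the definitions, using $r(\alpha) = r(\beta)$, gives $\inv\big(\Z(\alpha,\beta,F)\big) = \Z(\beta,\alpha,F)$; since the sets $\Z(\alpha,\beta,F)$ form a base (Lemmas \ref{int lemma} and \ref{int lemma 2}), this shows $\inv$ is continuous, and being an involution it is a homeomorphism. Next, $\dom\big(\Z(\alpha,\beta,F)\big) = Z(\beta,F)$ and $\cod\big(\Z(\alpha,\beta,F)\big) = Z(\alpha,F)$, which are open in $\partial E$. By Lemma \ref{grp top}(\ref{grp top 2}), $\dom$ restricted to $\Z(\alpha,\beta,F)$ is a homeomorphism onto $Z(\beta,F)$; composing with the homeomorphism $\inv$ shows $\cod$ restricted to $\Z(\alpha,\beta,F)$ is a homeomorphism onto $Z(\alpha,F)$. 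Hence each $\Z(\alpha,\beta,F)$ is an open bisection, and by Lemma \ref{grp top}(\ref{grp top 3}) a compact one; so $\G_E$ has a base of compact open bisections.

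The main obstacle is the continuity of $\mult: \G_E^{(2)} \to \G_E$. I would argue directly: fix a composable pair $(g,h)$ with $g = (x,k,y)$, $h = (y,\ell,z)$, and a basic open neighbourhood $\Z(\mu,\nu,G)$ of $gh = (x,k+\ell,z)$, and seek basic neighbourhoods $U \ni g$, $V \ni h$ with $\mult\big((U\times V)\cap\G_E^{(2)}\big) \subseteq \Z(\mu,\nu,G)$. The crucial manoeuvre is to pick representatives synchronised along the common path $y$: write $x = \alpha w$, $y = \beta w$, $z = \delta w$ with a single $w \in r(\alpha)\partial E$, so that $g \in \Z(\alpha,\beta)$, $h \in \Z(\beta,\delta)$, and $gh \in \Z(\alpha,\delta)$; one may lengthen $\alpha,\beta,\delta$ (shrinking these sets) until $\mu$ is an initial subpath of $\alpha$ and $\nu$ of $\delta$, whence $\Z(\alpha,\delta) \subseteq \Z(\mu,\nu)$. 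A short case split on whether $|\alpha| > |\mu|$ or $\alpha = \mu$ then shows that putting the (finitely many, suitably translated) edges of $G$ into the exclusion set of $U = \Z(\alpha,\beta,G')$ or $V = \Z(\beta,\delta,G'')$ does the job: any composable pair in $U \times V$ has the form $g' = (\alpha u, |\alpha|-|\beta|, \beta u)$, $h' = (\beta u, |\beta|-|\delta|, \delta u)$ for a common $u$ — composability forces the middle paths to coincide — so $g'h' = (\alpha u, |\alpha|-|\delta|, \delta u)$ lies in $\Z(\mu,\nu,G)$ by construction. The reasoning is elementary, but the bookkeeping with the finite exclusion sets $F$, $G$, $G'$, $G''$ is the one genuinely fiddly point.

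Assembling everything: $\G_E$ is a topological groupoid, its unit space $\partial E$ is Hausdorff, and (\ref{Z-base}) is a base of compact open bisections, so $\G_E$ is ample, as claimed.
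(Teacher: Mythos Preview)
Your proposal is correct and follows essentially the same route as the paper. The one organisational difference is in the continuity of $\mult$: the paper explicitly splits into the cases where $x,y,z$ are all finite (using the neighbourhood base $\{\Z(x,z,F)\}$) or all infinite (using $\{\Z(x_1\dots x_{n+t},z_1\dots z_n)\}$), whereas you unify both cases via the common-tail representation $x=\alpha w$, $y=\beta w$, $z=\delta w$ and then case-split on $|\alpha|>|\mu|$ versus $\alpha=\mu$. This is the same argument with different bookkeeping---your packaging is slightly more uniform, the paper's is slightly more explicit about the two qualitatively different neighbourhood bases---and both yield $\mult\big((\Z(\alpha,\beta,G')\times\Z(\beta,\delta,G''))\cap\G_E^{(2)}\big)\subseteq\Z(\mu,\nu,G)$ via the observation that composability forces the middle paths to coincide.
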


\begin{proof}
	The most technical part that remains is showing that the composition map $\mult$ is continuous. If $x, z \in E^\star \cap \partial E$ are tail equivalent finite paths, then $(x, |x|-|z|, z)$ has a neighbourhood base of open sets,
	$
	\mathcal{N}_{(x,|x|-|z|,z)} = \{\Z(x, z, F) \mid F \subseteq_{\rm finite} r(x)E^1\}
	$.
	If $x, z \in E^\infty$ are tail equivalent infinite paths, with lag $t$, then there exists $N \ge 0$ such that $\sigma^{N+t}(x) = \sigma^{N}(z)$. Consequently $(x, t, z)$ has a neighbourhood base of open sets, 
	$\mathcal{N}_{(x,t,z)} = \{\Z(x_1 \dots x_{n+t}, z_1 \dots z_n) \mid n > N\}$.
	
	Now suppose $U$ is an open set in $\G_E$ containing a product of two morphisms $(x, k+\ell, z) = (x,k,y)(y,\ell,z)$. It must be that $x,y,z$ are all finite paths or they are all infinite paths. If $x,y,z$ are finite paths, then they must have $r(x) = r(y) = r(z)$ and $U$ must contain some $\Z(x,z,F) \in \mathcal{N}_{(x,|x|-|z|,z)}$. Then $\big((x, k, y),(y,\ell,z)\big)$ is contained in the open set $\big(\Z(x,y,F) \times \Z(y,z,F)\big) \cap \G_E^{(2)}$ which is mapped bijectively by $\mult$ into $Z(x,z,F) \subseteq U$. Otherwise $x,y,z$ are all infinite paths, and there must exist $n$ large enough that $\sigma^{n+k+\ell}(x) = \sigma^{n+\ell}(y) = \sigma^{n}(z)$. Making $n$ even larger if necessary, we can assume $U$ contains some $\Z(x_1 \ldots x_{n+k+\ell}, z_1 \ldots z_{n}) \in \mathcal{N}_{(x,k+ \ell,z)}$. Define:
	\begin{align*}
	x' = x_1 \ldots x_{n+k+\ell}, &&
	y' = y_1 \ldots y_{n+\ell}, &&
	z' = z_1 \ldots z_{n}.
	\end{align*}
	Then $\big((x,k,y),(y,\ell,z)\big)$ is contained in the open set $\big(\Z(x',y') \times \Z(y', z')\big) \cap \G_E^{(2)}$, which is mapped bijectively by $\mult$ into $Z(x', z') \subseteq U$. Since $(x, k+\ell, z) = (x,k,y)(y,\ell,z)$ was an arbitrary product in $U$, this shows that $\mult^{-1}(U)$ is open in $\G_E^{(2)}$, so $\mult$ is continuous.
	It is much easier to show that the inversion map $\inv$ is continuous, because $\inv$ puts $Z(\alpha, \beta,F)$ in bijection with $Z(\beta, \alpha, F)$. We have proved $\G_E$ is a topological groupoid.
	In Lemma \ref{grp top}~(\ref{grp top 2}), it is shown that $\dom$ is a local homeomorphism. Therefore, $\G_E$ is an \'etale groupoid. The remaining facts from Lemma \ref{grp top} establish that $\G_E$ is a Hausdorff ample groupoid and that the base described in $(\ref{Z-base})$ consists of compact open bisections.
\end{proof}

\section{The Leavitt path algebra of a graph} \label{CHAPTER: LEAVITT PATH ALGEBRAS}

In \S\ref{LPA intro}, we define the Leavitt path algebra of a graph. We define it in terms of its universal property, and then describe how it can be realised as the quotient of a path algebra. Path algebras are, in some sense, the definitive examples of $\ZZ$-graded algebras, and the $\ZZ$-grading survives in their Leavitt path algebra quotients. In \S\ref{uniq}, we prove the Graded Uniqueness Theorem for Leavitt path algebras. In \S\ref{steinberg model}, we prove the cornerstone result: the Leavitt path algebra of a graph is isomorphic to the Steinberg algebra of its boundary path groupoid. Through this lens, we rederive some fundamentals of Leavitt path algebras, and classify finite-dimensional Leavitt path algebras. In \S\ref{steinberg uniq}, we prove the Graded and Cuntz-Krieger Uniqueness Theorems for Steinberg algebras and use them to prove the Cuntz-Krieger Uniqueness Theorem for Leavitt path algebras.

\begin{remark}
Historically, the theory of Leavitt path algebras was developed for the case when $R$ is a field, and $E$ is a row-finite countable graph. Later, the methods were improved and $R$ could be any unital commutative ring if $E$ is a countable graph \cite{abrams2008leavitt,tomforde2011leavitt}. Alternatively, $E$ could be an arbitrary graph if $R$ is a field \cite{LPAbook,goodearl2009leavitt}. The proofs of some key results, including the fact that the relations on $L_R(E)$ do not collapse the algebra to zero (\cite[Lemma 1.5]{goodearl2009leavitt} and \cite[Proposition 3.4]{tomforde2011leavitt}) and the Graded Uniqueness Theorem (\cite[Proposition 3.6]{goodearl2009leavitt} and \cite[Theorem 5.3]{tomforde2011leavitt}), have not yet been recorded for the case where simultaneously $E$ is uncountable and $R$ is not a field. Here, we fix this and complete the picture.
\end{remark}

\subsection{Introducing Leavitt path algebras} \label{LPA intro}

Let $E = (E^0, E^1, r,s)$ be a graph. We introduce the set of formal symbols \index{$(E^1)^*$}
$
(E^1)^* = \{e^*\mid e \in E^1\}
$
and call the elements of $(E^1)^*$ \textit{ghost edges}. For clarity, we will sometimes refer to the elements of $E^1$ as \textit{real edges}. If $\alpha = \alpha_1 \dots \alpha_{|\alpha|} \in E^\star$ is a finite path of positive length, we define $\alpha^*$ to be the sequence $\alpha_{|\alpha|}^* \dots \alpha_1^*$, and call it a \textit{ghost path}. We also define $v^* = v$ for every $v \in E^0$.

\begin{definition}\cite{tomforde2011leavitt} \label{E-family}
	Let $E$ be a graph and let $A$ be a ring. Assume $\{v, e, e^*\mid v \in E^0, e \in E^1 \}$ is a subset of $A$; in other words, there is a function $E^0 \sqcup E^1 \sqcup (E^1)^* \to A$ whose image inherits the notation of its domain. Then $\{v, e, e^*\mid v \in E^0, e \in E^1 \}\subset A$ is called a \textbf{Leavitt $E$-family} if the following conditions are satisfied:
	\begin{enumerate}[(V)]
		\item \label{V}
		$v^2 = v$ and $vw = 0$ for all $v, w \in E^0$, $v \ne w$;
	\end{enumerate}
	\begin{enumerate}[(E1)]
		\item \label{E1}
		$s(e) e = e r(e) = e$ for all $e \in E^1$;
		\item \label{E2}
		$e^*s(e) = r(e)e^* = e^*$ for all $e \in E^1$;
	\end{enumerate}
	\begin{enumerate}[(CK1)]
		\item \label{CK1}
		$e^* e = r(e)$ and $e^*f = 0$ for all $e, f \in E^1$, $e \ne f$;
		\item \label{CK2}
		$v = \sum_{e \in vE^1} e e^*$ for all $v \in E^0_{\rm reg}$.
	\end{enumerate}
\end{definition}
The interpretation of (\hyperref[V]{V}) is that $\{v \in A \mid v \in E^0 \}$ is a set of pairwise orthogonal idempotents. The relations (\hyperref[CK1]{CK1}) and (\hyperref[CK2]{CK2}) are called the Cuntz-Krieger relations, and they originate from operator theory. The relevant interpretation, at least in that setting, is that vertices are represented by projections, and edges are represented by partial isometries with mutually orthogonal ranges.

In any algebra $A$ containing a Leavitt $E$-family $\{v, e, e^*\mid v \in E^0, e \in E^1 \}$, one can consider paths $\mu =  \mu_1 \dots \mu_{|\mu|}$ and ghost paths $\mu^* = \mu_{|\mu|}^* \dots \mu_1^*$ as elements of $A$ in the obvious way: products of their constituent real edges and ghost edges respectively. The following lemma is straightforward to prove using the relations (\hyperref[E1]{E1}), (\hyperref[E2]{E2}), and (\hyperref[CK1]{CK1}). It is so fundamental that we will usually use the result without referring to it.

\begin{lemma}\label{product of monomials}
	If $A$ is an $R$-algebra generated by a Leavitt $E$-family $\{v,e,e^* \mid v \in E^0, e \in E^1\}$, the elements of $A$ obey the rule:
	\[
	(r\mu \nu^*)(r'\gamma \lambda^*)
	= \begin{cases} 
	(rr')\mu \kappa^* \lambda^* & \text{if }\gamma\text{ is an initial subpath of } \nu\text{, with }\nu = \gamma \kappa\\
	(rr')\mu \kappa \lambda^* & \text{if }\nu\text{ is an initial subpath of } \gamma\text{, with }\gamma = \nu\kappa \\
	0 & \text{otherwise}
	\end{cases}
	\]
	for all $r, r' \in R$ and all $\mu, \nu, \gamma, \lambda \in E^\star$, with $r(\mu) = r(\nu)$ and $r(\gamma) = r(\lambda)$.
\end{lemma}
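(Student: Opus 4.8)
The plan is to push all scalars out and reduce the statement to a single product of ghost edges and real edges. By $R$-bilinearity and associativity of the product in $A$ we have $(r\mu\nu^*)(r'\gamma\lambda^*) = (rr')\,\mu\,(\nu^*\gamma)\,\lambda^*$, so everything comes down to computing $\nu^*\gamma$ for $\nu,\gamma \in E^\star$. I claim that
\[
\nu^*\gamma = \begin{cases}
\kappa^* & \text{if } \gamma \text{ is an initial subpath of } \nu \text{ with } \nu = \gamma\kappa,\\
\kappa & \text{if } \nu \text{ is an initial subpath of } \gamma \text{ with } \gamma = \nu\kappa,\\
0 & \text{otherwise.}
\end{cases}
\]
In the two cases that overlap (namely $\nu = \gamma$), the path $\kappa$ is a vertex and $\kappa^* = \kappa$, so there is no ambiguity; and once the claim is established, the Lemma follows by substituting back into $(rr')\mu(\nu^*\gamma)\lambda^*$ and using (\hyperref[E1]{E1})--(\hyperref[E2]{E2}) together with the hypotheses $r(\mu)=r(\nu)$ and $r(\gamma)=r(\lambda)$ to see that the words $\mu\kappa^*\lambda^*$ and $\mu\kappa\lambda^*$ are composable (hence the indicated expressions are genuine paths/ghost paths).

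To prove the claim I would induct on $\min\{|\nu|,|\gamma|\}$. A preliminary observation, used throughout, is that (\hyperref[E2]{E2}) gives $\nu^*s(\nu) = \nu^*$ and (\hyperref[E1]{E1}) gives $s(\gamma)\gamma = \gamma$, so $\nu^*\gamma = \nu^*\big(s(\nu)s(\gamma)\big)\gamma$; hence if $s(\nu)\ne s(\gamma)$ then $\nu^*\gamma = 0$ by (\hyperref[V]{V}), and this situation lies among the ``otherwise'' cases. For the base case $\min\{|\nu|,|\gamma|\}=0$: if $\nu = s(\nu)$ is a vertex and $s(\nu)=s(\gamma)$, then $\nu^*\gamma = s(\gamma)\gamma = \gamma$, matching the second case with $\kappa = \gamma$; the case where $\gamma$ is a vertex is symmetric, using $\nu^* s(\nu) = \nu^*$.

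For the inductive step, suppose $|\nu|,|\gamma|\ge 1$ and $s(\nu)=s(\gamma)$, and write $\nu = \nu_1\nu'$, $\gamma = \gamma_1\gamma'$ with $\nu_1,\gamma_1 \in s(\nu)E^1$. Then $\nu^*\gamma = \nu'^*(\nu_1^*\gamma_1)\gamma'$. If $\nu_1 \ne \gamma_1$, relation (\hyperref[CK1]{CK1}) gives $\nu_1^*\gamma_1 = 0$, so $\nu^*\gamma = 0$, and $\nu_1\ne\gamma_1$ is again an ``otherwise'' case. If $\nu_1 = \gamma_1 =: e$, then (\hyperref[CK1]{CK1}) gives $\nu_1^*\gamma_1 = r(e)$; since $r(e) = s(\nu')$ (or $\nu'$ is the vertex $r(e)$), and likewise for $\gamma'$, we get $\nu^*\gamma = \nu'^* r(e)\gamma' = \nu'^*\gamma'$, with $\min\{|\nu'|,|\gamma'|\} = \min\{|\nu|,|\gamma|\}-1$, so the inductive hypothesis applies to $\nu'^*\gamma'$. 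Finally, $\gamma'$ is an initial subpath of $\nu'$ (with quotient $\kappa$) exactly when $\gamma = e\gamma'$ is an initial subpath of $\nu = e\nu'$ (with the same quotient $\kappa$), and symmetrically in the other case, so the three cases for $(\nu',\gamma')$ correspond to the three cases for $(\nu,\gamma)$. This closes the induction and proves the Lemma.

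The computation is elementary; the only part requiring care is the bookkeeping around zero-length paths (vertices) in the base case and the identity $\nu^*s(\nu) = \nu^*$, and checking that the ``initial subpath'' conditions transfer cleanly between $(\nu,\gamma)$ and $(\nu',\gamma')$ in the inductive step. I do not expect a substantial obstacle.
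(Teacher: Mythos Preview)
Your proof is correct and is precisely the straightforward verification the paper has in mind: the paper does not write out a proof at all, merely remarking that the lemma ``is straightforward to prove using the relations (\hyperref[E1]{E1}), (\hyperref[E2]{E2}), and (\hyperref[CK1]{CK1}).'' Your reduction to computing $\nu^*\gamma$ and the induction on $\min\{|\nu|,|\gamma|\}$ is the natural way to carry this out.
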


\begin{corollary}
	\label{monomials} Every $R$-algebra generated by a Leavitt $E$-family is generated, as an abelian group, by the set $\{r\alpha\beta^*\mid r \in R,(\alpha, \beta) \in E^\star\times_r E^\star\}$.
\end{corollary}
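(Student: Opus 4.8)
The plan is to derive Corollary~\ref{monomials} directly from Lemma~\ref{product of monomials}, reducing everything to the observation that the generators $\{v,e,e^* \mid v \in E^0, e \in E^1\}$ are themselves monomials of the form $r\alpha\beta^*$, and that the set of such monomials is closed under products up to sign and scalar. First I would recall that any $R$-algebra $A$ generated by a Leavitt $E$-family is spanned, as an $R$-module (equivalently, generated as an abelian group together with the $R$-action), by all finite products of the generators $v, e, e^*$. Each individual generator sits inside the claimed spanning set: $v = 1\cdot v v^*$ (taking $\alpha = \beta = v$), $e = 1 \cdot e\, r(e)^*$ (taking $\alpha = e$, $\beta = r(e)$), and $e^* = 1 \cdot s(e)\, e^*$ (taking $\alpha = s(e)$, $\beta = e$); in each case $(\alpha,\beta) \in E^\star \times_r E^\star$ since the ranges match.

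Next I would argue that the $R$-span of $M := \{r\alpha\beta^* \mid r \in R,\ (\alpha,\beta) \in E^\star \times_r E^\star\}$ is closed under multiplication. This is exactly the content of Lemma~\ref{product of monomials}: the product $(r\mu\nu^*)(r'\gamma\lambda^*)$ is again of the form $(rr')\,(\text{path})(\text{ghost path})^*$ in the two nonzero cases, and is $0$ otherwise — so in all cases it lies in $M \cup \{0\} \subseteq \Span_R M$. Hence $\Span_R M$ is an $R$-subalgebra of $A$ containing all the generators $v, e, e^*$, and therefore $\Span_R M = A$. Finally, since $\Span_R M$ as an abelian group is generated by $M$ together with negatives (and $R$-multiples are already absorbed into the scalar $r$), this shows $A$ is generated as an abelian group by $\{r\alpha\beta^* \mid r \in R,\ (\alpha,\beta) \in E^\star \times_r E^\star\}$, which is the assertion.

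There is essentially no obstacle here: the corollary is a formal consequence of the multiplication rule already established in Lemma~\ref{product of monomials}. The only point requiring a small amount of care is the bookkeeping needed to express the bare generators $v$, $e$, and $e^*$ as elements $r\alpha\beta^*$ with a genuine pair $(\alpha,\beta) \in E^\star \times_r E^\star$ — in particular remembering the convention that vertices are zero-length paths with $r(v)=s(v)=v$ and that $v^* = v$, so that $vv^*$, $e\,r(e)^*$, and $s(e)\,e^*$ all parse correctly as monomials of the prescribed shape. Once that is noted, the closure-under-products step and the conclusion are immediate.
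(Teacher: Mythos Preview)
Your approach is correct and essentially the same as the paper's: both invoke Lemma~\ref{product of monomials} to show that any word in the generators collapses to a single monomial $\alpha\beta^*$ (or zero), with the paper additionally noting that $\alpha\beta^* = 0$ unless $r(\alpha) = r(\beta)$ via (\hyperref[V]{V}), (\hyperref[E1]{E1}), (\hyperref[E2]{E2}). One small slip in your bookkeeping: you write $e^* = s(e)\,e^*$ with $\alpha = s(e)$, $\beta = e$, but then $r(\alpha) = s(e) \ne r(e) = r(\beta)$ in general (indeed $s(e)e^* = 0$ when $s(e)\ne r(e)$); the correct identification is $e^* = r(e)\,e^*$ with $\alpha = r(e)$, $\beta = e$.
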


\begin{proof}
	By Lemma \ref{product of monomials}, every word in the generators $\{v,e,e^* \mid v \in E^0, e \in E^1 \}$ reduces to an expression of the form $\alpha\beta^*$ where $\alpha, \beta \in E^\star$. Moreover, $\alpha \beta^* = 0$ unless $r(\alpha) = r(\beta)$, by (\hyperref[V]{V}), (\hyperref[E1]{E1}), and (\hyperref[E2]{E2}).
\end{proof}

Let $B$ be an $R$-algebra generated by a Leavitt $E$-family $\{v,e,e^* \mid v \in E^0, e \in E^1\}$. We say that $B$ is \textit{universal} (for Leavitt $E$-families) if every $R$-algebra ${A}$ containing a Leavitt $E$-family $\{a_v, b_e, c_{e^*}\mid v \in E^0, e \in E^1 \}$ admits a unique $R$-algebra homomorphism $\pi: B \to {A}$ such that $\pi(v) = a_v$, $\pi(e) = b_e$, and $\pi(e^*) = c_{e^*}$ for every $v \in E^0$ and $e \in E^1$. The universal property determines $B$ up to isomorphism.

\begin{definition}\label{LPA def} \index{$L_R(E)$}
	Let $E$ be a graph. The \textbf{Leavitt path algebra} of $E$ with coefficients in $R$, denoted by $L_R(E)$, is the universal $R$-algebra generated by a Leavitt $E$-family.
\end{definition}

Technically, $L_R(E)$ is an isomorphism class in the category of $R$-algebras. If $B$ is a specific $R$-algebra having the universal property for Leavitt $E$-families, then $B$ is a \textit{model} of $L_R(E)$. However, it is customary and natural to refer to $L_R(E)$ as if it were a specific model with the standard generators $\{v,e,e^* \mid v \in E^0, e \in E^1\}$. Every element $x \in L_R(E)$, so to speak, is a finite sum of the form $x = \sum r_i \alpha_i \beta_i^*$ where $r_i \in R$ and $(\alpha, \beta) \in E^\star \times_r E^\star$ for all $i$. Such an expression for $x$ is not necessarily unique, owing to the (\hyperref[CK2]{CK2}) relation. If we have reason to consider a different model of $L_R(E)$, say another $R$-algebra $B$, then we would write $L_R(E) \cong B$.

\begin{examples}\cite[\S1.3]{LPAbook} \label{graph examples} Sometimes $L_R(E)$ can be recognised as a more familiar algebra. Four fundamental examples of Leavitt path algebras are:
	\begin{enumerate}[(a)]
		\item 	The \textit{finite line graph} with $n$ vertices is the graph pictured below:
		\begin{align*}
		A_n \quad &= \xymatrix{&\bullet^{v_1} \ar[r]^{e_1} & \bullet^{v_2} \ar[r]^{e_2} & \bullet^{v_3} \ar@{..}[r] & \bullet^{v_{n-1}} \ar[r]^{e_{n-1}} & \bullet^{v_n} }
		\end{align*}
		It turns out that $L_R(A_n) \cong M_n(R)$, the \textbf{matrix algebra} of $n \times n$ matrices over $R$. Explicitly, the set of standard matrix units $\{E_{i,j}\mid 1 \le i,j \le n\} \subset M_n(R)$ contains a Leavitt $E$-family $\{a_v, b_{e}, c_{e^*} \mid v \in A_n^0, e \in A_n^1\}$, where:
		\begin{align*}a_{v_i} = E_{i,i},& & b_{e_j} = E_{j,j+1}, && c_{e_j^*} = E_{j+1,j},
		&& 1 \le i \le n,\ 1 \le j \le n-1.
		\end{align*}
		\item 		\label{rose with n petals graph}
		The \textit{rose with $n$ petals} is the graph pictured below (see also Example \ref{rosetwo}):
		\begin{align*}
		R_n \quad &=  \xymatrix{ &{\bullet^v} \ar@(ur,dr) ^{e_1} \ar@(u,r) ^{e_2} \ar@(ul,ur) ^{e_3} \ar@{.} @(l,u) \ar@{.} @(dr,dl)
			\ar@(r,d) ^{e_n} \ar@{}[l] ^{\ldots} }
		\end{align*}
		
		The Leavitt path algebra $L_R(R_n)$ is isomorphic to the \textbf{Leavitt algebra} $L_{n,R}$,
		discovered by W. G. Leavitt in \cite[\S3]{leavitt1962module}. It is from this example that the Leavitt path algebras get their name.
		\item The \textit{rose with 1 petal},
		\begin{align*}
		R_1 \quad &= \xymatrix{&{^v\bullet} \ar@(ur,dr)^e}
		\end{align*}
		gives rise to the algebra of \textbf{Laurent polynomials} $R[x,x^{-1}]$.
		\item The \textit{Toeplitz graph},
		\begin{align*}
		T \quad &= \xymatrix{ &{^u\bullet} \ar@(ul,dl)_e \ar[r]^f & \bullet^v }
		\end{align*}
		gives rise to the \textbf{Toeplitz $R$-algebra}, which has the presentation $R\langle x,y \mid xy = 1 \rangle$. The isomorphism $R\langle x, y  \mid xy = 1 \rangle \to L_K(T)$ maps $x \mapsto e^* + f^*$ and $y \mapsto e + f$.
	\end{enumerate}
\end{examples}

As an alternative to Definition \ref{LPA def}, it is popular to define the Leavitt path algebra of a graph as a certain quotient of a path algebra. The path algebra of a graph (also called the quiver algebra of a quiver) is an older concept, familiar to a wider audience of algebraists and representation theorists. We have defined $L_R(E)$ by its universal property, so we look towards path algebras to provide a model of $L_R(E)$, thereby proving that $L_R(E)$ exists.

Let $E = (E^0, E^1, r, s)$ be a graph. The \textit{path algebra} of $E$ with coefficients in $R$ is the free $R$-algebra generated by $E^0 \sqcup E^1$, modulo the ideal generated by the relations (\hyperref[V]{V}) and (\hyperref[E1]{E1}).
The \textit{extended graph} of $E$ is defined as $\widehat{E}= (E^0, E^1 \sqcup(E^1)^*, r', s')$, \index{$\widehat{E}$} where $r'$ and $s'$ are extensions of $r$ and $s$, respectively:
\begin{align*}
r'(e) = r(e) \text{ for all } e \in E^1; &&r'(e^*) = s(e) \text{ for all } e^* \in (E^1)^* \\
s'(e) = s(e) \text{ for all } e \in E^1; &&s'(e^*) = r(e) \text{ for all } e^* \in (E^1)^*.
\end{align*}
In other words, $\widehat E$ is formed from $E$ by adding a new edge $e^*$ for each edge $e$, such that $e^*$ has the opposite direction to $e$. The path algebra $R\widehat E$ can be characterised as the free $R$-algebra generated by $E^0 \sqcup E^1 \sqcup (E^1)^*$, subject to the relations (\hyperref[V]{V}), (\hyperref[E1]{E1}), and (\hyperref[E2]{E2}).
Let $\mathcal A$ be the quotient of $R\widehat{E}$ by the ideal generated by the relations (\hyperref[CK1]{CK1}) and (\hyperref[CK2]{CK2}). By virtue of its construction, $\mathcal A$ has the universal property for Leavitt $E$-families, and consequently $\mathcal A \cong L_R(E)$. The path algebra model is useful for proving the following fact.

\begin{proposition}
	The Leavitt path algebra $L_R(E) = \bigoplus_{n \in \ZZ}L_R(E)_n$ is a $\mathbb{Z}$-graded algebra, where the homogeneous components are: 
	\[
	L_R(E)_n = \Span_R
	\left\{
	\mu \nu^*\mid(\mu, \nu) \in E^\star\times_r E^\star, |\mu| - |\nu| = n
	\right\}.
	\]
\end{proposition}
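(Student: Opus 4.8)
The plan is to use the path algebra model $\mathcal{A} \cong L_R(E)$ constructed just above the statement. The free $R$-algebra on $E^0 \sqcup E^1 \sqcup (E^1)^*$ carries an obvious $\ZZ$-grading in which each $v \in E^0$ has degree $0$, each real edge $e \in E^1$ has degree $1$, and each ghost edge $e^* \in (E^1)^*$ has degree $-1$; this is well-defined because the free algebra has a basis of words in these symbols and we may declare the degree of a word to be the sum of the degrees of its letters. First I would observe that the relations (\hyperref[V]{V}), (\hyperref[E1]{E1}), and (\hyperref[E2]{E2}) defining $R\widehat E$ are all homogeneous: $v^2 - v$ and $vw$ are degree $0$; $s(e)e - e$, $er(e) - e$ are degree $1$; and $e^*s(e) - e^*$, $r(e)e^* - e^*$ are degree $-1$. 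Hence the ideal they generate is a graded ideal, and $R\widehat E$ inherits a $\ZZ$-grading.

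Next I would check that the Cuntz-Krieger relations are homogeneous as well. The relation (\hyperref[CK1]{CK1}), namely $e^*e - r(e)$ and $e^*f$ for $e \ne f$, is homogeneous of degree $0$ since $e^*e$ has degree $-1 + 1 = 0$. The relation (\hyperref[CK2]{CK2}), $v - \sum_{e \in vE^1}ee^*$, is also homogeneous of degree $0$, since each $ee^*$ has degree $1 + (-1) = 0$ and $v$ has degree $0$; note that the sum is finite precisely because $v$ is regular. Therefore the ideal of $R\widehat E$ generated by (\hyperref[CK1]{CK1}) and (\hyperref[CK2]{CK2}) is graded, so the quotient $\mathcal A \cong L_R(E)$ is a $\ZZ$-graded algebra. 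The induced grading sends a monomial $\mu\nu^*$ (with $\mu = \mu_1\cdots\mu_{|\mu|}$ and $\nu^* = \nu_{|\nu|}^*\cdots\nu_1^*$) to degree $|\mu| - |\nu|$.

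Finally I would identify the homogeneous components explicitly. By Corollary \ref{monomials}, $L_R(E)$ is spanned as an abelian group (in fact as an $R$-module) by the monomials $\mu\nu^*$ with $(\mu,\nu) \in E^\star \times_r E^\star$, and each such monomial is homogeneous of degree $|\mu| - |\nu|$. Grouping these spanning elements by degree gives $L_R(E)_n = \Span_R\{\mu\nu^* \mid (\mu,\nu) \in E^\star\times_r E^\star,\ |\mu| - |\nu| = n\}$, and the direct sum decomposition $L_R(E) = \bigoplus_{n \in \ZZ}L_R(E)_n$ follows from the fact that it is a grading. The main (and really the only) subtlety is making sure the verification of homogeneity covers the Cuntz-Krieger relation (\hyperref[CK2]{CK2}) correctly, since it is the one relation that mixes a degree-$0$ generator with a sum of degree-$0$ products $ee^*$ rather than being manifestly a relation among letters of equal degree; once one notes $\deg(ee^*) = 0 = \deg(v)$ this is immediate. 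No genuine obstacle is expected.
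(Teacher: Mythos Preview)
Your proposal is correct and follows essentially the same argument as the paper: grade the free algebra by declaring $\deg(v)=0$, $\deg(e)=1$, $\deg(e^*)=-1$, observe that all the defining relations are homogeneous so the grading descends through the successive quotients, and read off the degree of $\mu\nu^*$. Your explicit appeal to Corollary~\ref{monomials} to pin down the span description of $L_R(E)_n$ is a small elaboration the paper leaves implicit, but otherwise the two proofs are the same.
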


\begin{proof}
	Naturally, the free $R$-algebra $R\langle E^0 \cup E^1 \cup (E^1)^*\rangle$ is $\mathbb{Z}$-graded by setting $\mathrm{deg}(v) = 0$ for all $v \in E^0$, and $\mathrm{deg}(e) = 1$, $\mathrm{deg}(e^*) = -1$ for all $e \in E^1$. Extending the degree map (in the only possible way) yields $\mathrm{deg}(a_1 \dots a_n) = \sum_{i = 1}^n \mathrm{deg}(a_n)$ for any word $a_1 \dots a_n \in R\langle E^0 \cup E^1 \cup (E^1)^*\rangle $. The relations (\hyperref[V]{V}), (\hyperref[E1]{E1}), and (\hyperref[E2]{E2}) are all homogeneous with respect to the grading on $R\langle E^0 \cup E^1 \cup (E^1)^*\rangle$, so they generate a graded ideal, and the quotient $R \widehat E$ is $\mathbb{Z}$-graded. Similarly, relations (\hyperref[CK1]{CK1}) and (\hyperref[CK2]{CK2}) are homogeneous with respect to the grading on $R\widehat E$, so they generate a graded ideal, and the quotient $L_R(E)$ is $\ZZ$-graded. The word $\mu\nu^*$ has degree $|\mu| - |\nu|$ in $R\langle E^0 \cup E^1 \cup (E^1)^*\rangle$, which gives the expression for the homogeneous components of $L_R(E)$.
\end{proof}

\subsection{Uniqueness theorems for Leavitt path algebras} \label{uniq}
Research on graph algebras has made extensive use of two main kinds of uniqueness theorems: the Cuntz-Krieger uniqueness theorems, and the graded uniqueness theorems. (In the analytic setting, graded uniqueness theorems are replaced by gauge invariant uniqueness theorems.) These theorems give sufficient conditions for a homomorphism to be injective, so they are very useful for establishing isomorphisms between a graph algebra and another algebra that comes from somewhere else. They are also very useful for studying structural properties like primeness and simplicity. Appropriate versions of these theorems have been proved not just for Leavitt path algebras but also (and we refer to \cite{clark2017cohn,clark2017kumjian,raeburn2005graph,renault2018uniqueness}) for graph $C^*$-algebras, as well as Cohn path algebras, higher-rank graph algebras, and even algebras of topological higher-rank graphs. 

This section provides a brief account of the uniqueness theorems for Leavitt path algebras. For the Graded Uniqueness Theorem, we adhere to Tomforde's proof from \cite{tomforde2011leavitt}.

\begin{lemma}\cite[Lemma 5.1]{tomforde2011leavitt} \label{ideal generation}
	Let $I$ be a graded ideal of $L_R(E)$, where $E$ is a graph. Then $I$ is generated as an ideal by its $0$-component $I_0 = I \cap L_R(E)_0$.
\end{lemma}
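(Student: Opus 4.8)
The plan is to reduce at once to homogeneous elements and then write each such element explicitly as an $L_R(E)$-combination of elements of $I_0$. Since $I$ is a graded ideal, every $x\in I$ is a finite sum $x=\sum_n x_n$ with $x_n\in I\cap L_R(E)_n=I_n$, and the inclusion $\langle I_0\rangle\subseteq I$ of the two-sided ideal generated by $I_0$ is trivial; so it is enough to prove $I_n\subseteq\langle I_0\rangle$ for each $n\in\ZZ$. The case $n=0$ is immediate, and the cases $n>0$ and $n<0$ are mirror images under $\mu\nu^*\leftrightarrow\nu\mu^*$, so I would give the details for $n\ge 1$ and merely indicate the symmetric argument for $n\le -1$.

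For $n\ge 1$, take $x\in I_n$ and, using the description of the homogeneous components, write $x=\sum_i r_i\,\mu_i\nu_i^*$ with $(\mu_i,\nu_i)\in E^\star\times_r E^\star$ and $|\mu_i|-|\nu_i|=n$; in particular $|\mu_i|\ge n$, so $\mu_i$ has a well-defined initial subpath $\lambda_i$ of length $n$, say $\mu_i=\lambda_i\mu_i'$. Let $\Lambda$ be the finite set of the $\lambda_i$. The arithmetic input is the identity $\lambda^*\lambda'=r(\lambda)$ if $\lambda=\lambda'$ and $\lambda^*\lambda'=0$ otherwise, valid for any two paths of equal length, obtained by repeated application of (\hyperref[CK1]{CK1}) together with (\hyperref[E1]{E1}) and (\hyperref[V]{V}); this is a special case of Lemma \ref{product of monomials}. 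From it, $\lambda\lambda^*(\mu_i\nu_i^*)=\mu_i\nu_i^*$ when $\lambda=\lambda_i$ and equals $0$ otherwise, so $\sum_{\lambda\in\Lambda}\lambda\lambda^* x=x$. On the other hand, for each fixed $\lambda\in\Lambda$ the element $\lambda^* x$ lies in $I$ (since $I$ is a left ideal) and is homogeneous of degree $-n+n=0$, hence $\lambda^* x\in I\cap L_R(E)_0=I_0$. Therefore $x=\sum_{\lambda\in\Lambda}\lambda\,(\lambda^* x)\in L_R(E)\cdot I_0\subseteq\langle I_0\rangle$. For $n\le -1$ one argues symmetrically on the right: write $x=\sum_i r_i\mu_i\nu_i^*$ with $|\nu_i|\ge -n$, let $\eta_i$ be the length-$(-n)$ initial subpath of $\nu_i$, observe that each $x\eta$ (for $\eta$ among the $\eta_i$) lies in $I_0$, and check that $\sum_\eta (x\eta)\,\eta^*=x$.

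I do not expect a real obstacle; the one point to watch is that the argument must never invoke (\hyperref[CK2]{CK2}) or the regular/singular dichotomy, and it avoids them precisely because we cut $\mu_i$ (resp.\ $\nu_i$) down to a segment of length $|n|$ rather than trying to resolve a vertex into the edges it emits — so nothing whatsoever is assumed about $E$ or $R$, and the proof is valid for graphs of arbitrary cardinality over an arbitrary commutative ring. The only mildly fiddly step is the bookkeeping behind $\sum_{\lambda\in\Lambda}\lambda\lambda^* x=x$: one verifies that $\Lambda$ is finite, that $\lambda^*\lambda_i$ vanishes unless $\lambda=\lambda_i$, and that $r(\lambda_i)$ acts as a left identity on the tail $\mu_i'\nu_i^*$, so that each original monomial $\mu_i\nu_i^*=\lambda_i\mu_i'\nu_i^*$ is recovered exactly once.
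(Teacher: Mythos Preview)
Your proof is correct and follows essentially the same approach as the paper's: for $x\in I_k$ with $k>0$, the paper writes $x=\sum_i \alpha_i x_i$ with distinct $\alpha_i\in E^\star$ of length $k$ and $x_i\in L_R(E)_0$, then observes $x_j=\alpha_j^*x\in I_0$---which is exactly your decomposition via initial length-$n$ subpaths $\lambda$ and your observation that $\lambda^*x\in I_0$. Your version is slightly more explicit about the reconstruction identity $\sum_{\lambda\in\Lambda}\lambda\lambda^*x=x$, but the argument is the same.
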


\begin{proof}
	Since $I$ is a graded ideal, $I = \sum_{k \in \ZZ} I_k$, where $I_k = I \cap L_R(E)_k$. Let $k>0$ and $x \in I_k$. By Corollary \ref{monomials}, we can write $x = \sum_{i = 1}^n \alpha_i x_i$ where each $x_i \in L_R(E)_0$, and each $\alpha_i \in E^\star$ is distinct with $|\alpha_i| = k$. Then for $1 \le j \le n$, we have $x_j = \alpha_j^*\left(\sum_{i = 1}^n \alpha_i x_i\right) = \alpha_j^* x \in I_0$. So, $I_k$ is spanned by elements of the form $\alpha_j x_j$ where $\alpha_j \in L_R(E)_k$ and $x_j \in I_0$.	That is, $I_k = L_R(E)_k I_0$.
	Similarly, if $y \in I_{-k}$ then we can write $y = \sum_{i = 1}^m y_i \beta_i^*$ where each $y_i \in L_R(E)_0$, and each $\beta_i  \in E^\star$ is distinct with $|\beta_i| = k$. Then for $1 \le j \le n$, we have $y_j = \left(
	\sum_{i = 1}^n y_i \beta_i^*
	\right) \beta_j = y \beta_j \in I_0$. Therefore $I_{-k}$, is spanned by elements of the form $y_j \beta_j$ where $\beta_j \in L_R(E)_{-k}$ and $y_j \in I_0$. That is, $I_{-k} = I_0 L_R(E)_{-k}$. Since $I = \sum_{n \in \mathbb{Z}}I_k$, this shows $I$ is the ideal generated by $I_0$.
\end{proof}

The next lemma is a slight variation of the Reduction Theorem \cite[Theorem 2.2.11]{LPAbook}. The lemma needs the assumption that $rv \in L_R(E)$ is nonzero for every $r \in R\setminus \{0\}$ and $v \in E^0$. In fact, this is always true, but we shall only prove it later.

\begin{lemma}\cite[Lemma 5.2]{tomforde2011leavitt}.\label{reduction}
	Let $E$ be an arbitrary graph. Assume $rv \in L_R(E)$ is nonzero for every $r \in R \setminus \{0\}$ and $v \in E^0$. If $x \in L_R(E)_0$ is nonzero, then there exists $(\alpha, \beta) \in E^\star\times_r E^\star$ and $s \in R \setminus \{0\}$, such that $\alpha^* x \beta = s r(\alpha)$.
\end{lemma}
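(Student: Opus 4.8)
The plan is to reduce an arbitrary nonzero $x \in L_R(E)_0$ to a scalar multiple of a vertex by left-and-right multiplication with a ghost path and a path, exploiting the fact that degree-$0$ elements are spanned by monomials $\mu\nu^*$ with $|\mu| = |\nu|$. First I would write $x = \sum_{i=1}^n r_i \mu_i \nu_i^*$ with $r_i \in R \setminus \{0\}$, $(\mu_i, \nu_i) \in E^\star \times_r E^\star$, and $|\mu_i| = |\nu_i|$, using Corollary \ref{monomials}. Fix a term, say the one indexed by $i=1$, with $\mu_1, \nu_1$ of common length $m$ and $r(\mu_1) = r(\nu_1) =: w$. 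Multiplying on the left by $\mu_1^*$ and on the right by $\nu_1$ and using Lemma \ref{product of monomials}, the term $r_1 \mu_1 \nu_1^*$ contributes $r_1 w$, while every other term $r_i \mu_i \nu_i^*$ contributes $r_i (\mu_1^* \mu_i)(\nu_i^* \nu_1)$, which by Lemma \ref{product of monomials} is either $0$ or of the form $r_i \gamma \delta^*$ with $(\gamma,\delta) \in E^\star \times_r E^\star$ and $|\gamma| = |\delta|$ — in particular still in $L_R(E)_0$. So $\mu_1^* x \nu_1 = r_1 w + (\text{a degree-}0\text{ element})$; this doesn't immediately finish things, so the real work is an induction on the ``complexity'' of such an expression.

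The key step is a cleaning-up induction. I would argue that if $0 \ne y \in w L_R(E)_0 w$ for some vertex $w$ (meaning $y = wyw$, which we can always arrange after the first step by replacing $y$ with $\mu_1^* x \nu_1$ and noting $w y w = y$), then there is a path $\gamma$ with $s(\gamma) = w$ and a scalar $s \in R \setminus \{0\}$ with $\gamma^* y \gamma = s\, r(\gamma)$. Write $y = \sum_{j} r_j \gamma_j \delta_j^*$ in ``reduced'' form at $w$, i.e., all $\gamma_j, \delta_j$ start at $w$ and have equal length. If every $\gamma_j = \delta_j = w$ then $y = (\sum_j r_j) w$ and, provided $\sum_j r_j \ne 0$, we are done with $\gamma = w$; the scalar is nonzero precisely because $rv$ is assumed nonzero for all $r \ne 0$, so an identically-zero coefficient sum would force $y = 0$. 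Otherwise some term is a genuine path-times-ghost-path of positive length; pick an edge $e \in w E^1$ appearing as the first letter of some $\gamma_j$ or $\delta_j$, and pass from $y$ to $e^* y e$. Using Lemma \ref{product of monomials} together with relation (\hyperref[CK1]{CK1}) (so $e^* e = r(e)$, $e^* f = 0$ for $f \ne e$), each monomial $r_j \gamma_j \delta_j^*$ either dies or becomes $r_j \gamma_j' (\delta_j')^*$ where $\gamma_j', \delta_j'$ are the tails of $\gamma_j, \delta_j$ after deleting the leading $e$ — so the maximum length of the paths strictly decreases, and $e^* y e \in r(e) L_R(E)_0 r(e)$. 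One must check $e^* y e \ne 0$: this is where I would use the (\hyperref[CK2]{CK2}) relation or a direct argument that the monomials surviving after the multiplication cannot all cancel — concretely, choose $e$ to be the first edge of a path $\gamma_{j_0}$ of maximal length, so that the term $r_{j_0}\gamma_{j_0}\delta_{j_0}^*$ does survive and has a longer path than any term it could cancel against after re-reduction, hence the sum is nonzero. Iterating, the path length drops to zero, and concatenating all the edges used gives the desired $\gamma$; then $\alpha = \mu_1 \gamma$ and $\beta = \nu_1 \gamma$ work, with $\alpha^* x \beta = \gamma^*(\mu_1^* x \nu_1)\gamma = s\, r(\gamma) = s\, r(\alpha)$.

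The main obstacle is the nonvanishing claim at each step: after multiplying by $e^*$ on the left and $e$ on the right, one has to be sure the surviving monomials do not all collapse to zero. The assumption that $rv \neq 0$ for all $r \in R\setminus\{0\}$, $v \in E^0$ handles the very last step (the scalar-times-vertex case), but at intermediate stages cancellations among monomials $r_j\gamma_j'(\delta_j')^*$ could in principle occur. The way I would handle this is to always pick the edge $e$ as the leading edge of a term whose path component $\gamma_{j_0}$ has \emph{maximal length} among all terms; then after reduction that term retains the strictly longest path, and by the (essentially linear-independence) structure coming from Lemma \ref{product of monomials} — distinct reduced monomials $\gamma\delta^*$ with distinct $(\gamma,\delta)$ are, modulo the yet-unproven basis fact, never equal unless forced by (\hyperref[CK2]{CK2}), and (\hyperref[CK2]{CK2}) only relates a vertex to a sum over \emph{all} its emitted edges — it cannot be cancelled. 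This is precisely the subtle point, and I would phrase the induction so it only relies on facts already available at this stage of the paper, deferring the full linear-independence statement (which is proved later) to where it is genuinely needed.
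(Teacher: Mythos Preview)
Your overall strategy---reduce by multiplying with a ghost path on the left and a path on the right, inducting on the lengths of the monomials---is the same as the paper's. But the nonvanishing step, which you correctly identify as the crux, has a genuine gap that the maximal-length heuristic does not close.

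Here is a concrete failure. Take a vertex $v$ emitting three loops $e,f,g$, and let $x = v - ee^* \in L_R(E)_0$. By (\hyperref[CK2]{CK2}) this equals $ff^* + gg^*$, so $x\ne 0$. In the representation $x = v - ee^*$ the only positive-length term is $ee^*$, so your rule selects the edge $e$; but $e^* x e = e^* v e - e^* e e^* e = v - v = 0$. The vertex term $v$ catches up to the reduced $ee^*$ term and cancels it---after reduction they have the \emph{same} length, not strictly different lengths, so the ``strictly longer path'' claim is false. More generally, whenever a regular vertex appears as a term alongside some but not all of its $ee^*$'s, your edge choice can annihilate everything. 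The problem is representation-dependent: Corollary~\ref{monomials} does not give a canonical form, and the working edge (here $f$ or $g$) need not appear in the expression you start from.

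The paper's proof avoids this by treating vertex terms explicitly before reducing. If some vertex term $s_j v_j$ has $v_j$ singular, one finishes immediately ($v_j x v_j = s_j v_j$ for a sink, or $e^* x e = s_j r(e)$ for an infinite emitter using an edge $e$ not appearing among the first letters of the $\alpha_i$). If all vertex terms sit at regular vertices, one first \emph{expands} them via (\hyperref[CK2]{CK2}) so that every monomial has positive length, then groups the resulting sum by its pair of first edges $(e_1,f_1)$---allowing $e_1\neq f_1$, which your same-edge reduction $e^*\,\cdot\,e$ forbids. Since $x\ne 0$, at least one such group is nonzero, and $e_1^* x f_1$ lands in $\mathcal{M}_{n-1}$ for the induction. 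The key insight you are missing is that one does not try to name a specific edge that works; one argues that \emph{some} pair must work because the groups partition~$x$.
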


\begin{proof}
	The set $\mathcal{M}_n = \Span_R\{ \alpha \beta^* \mid 1 \le |\alpha| = |\beta| \le n \}$ is an $R$-submodule of $L_R(E)_0$, and indeed $L_R(E)_0 = \bigcup_{n = 0}^\infty \mathcal{M}_n$. The strategy is to prove inductively that for all $n \ge0$ the claim holds: for all $0 \ne x \in \mathcal{M}_n$ there exists $(\alpha, \beta) \in E^\star \times_r E^\star$ and $s \in R\setminus \{0\}$ such that $\alpha^* x \beta = sr(\alpha)$. The base case is $n = 0$. If $x \in \mathcal{M}_0$ then $x$ is a linear combination of vertices. Say $x = \sum_i r_i v_i$ with the $v_i$ being distinct vertices and the $r_i\in R\setminus \{0\}$. Then $v_1 x v_1 = r_1 v_1$ proves the claim. Now assume the claim holds for $n-1$. Let $0 \ne x \in \mathcal{M}_n$. We can write
	\begin{equation} \label{x}
	x = \sum_{i = 1}^p r_i \alpha_i \beta_i^* + \sum_{j = 1}^q s_j v_j
	\end{equation}
	where for all $1 \le i \le p$ and all $1 \le j \le q$: $r_i, s_j \in R\setminus \{0\}$, $(\alpha_i, \beta_i) \in E^\star \times_r E^\star$ with $1 \le |\alpha_i| = |\beta_i| \le n$, and $v_j \in E^0$. Further assume that all the $(\alpha_i, \beta_i)$ are distinct and all the $v_j$ are distinct. In the first case, if $v_j$ is a sink for some $1 \le j \le q$, then $v_j x v_j = s_j v_j$ proves the claim. In the second case, if $v_j$ is an infinite emitter for some $1 \le j \le q$, then there is an edge $e \in v_j E^1 \setminus \{(\alpha_{1})_1, \dots, (\alpha_p)_1\}$ and $e^*xe = s_j r(e)$ proves the claim. Otherwise, in the third case, every $v_j$ is a regular vertex. Applying (\hyperref[CK2]{CK2}), it is possible to expand $v_j = \sum_{e \in v_jE^1} ee^*$ for all $1 \le j \le q$. Then (\ref{x}) can be rewritten as   
	\begin{equation} \label{x2}
	x = \sum_{i = 1}^p t_i e_i\mu_i \nu_i^*f_i^*
	\end{equation}
	where $t_i \in R\setminus \{0\}$, $e_i, f_i \in E^1$, and $(e_i\mu_i, f_i\nu_i) \in E^\star \times_r E^\star$ for all $i \le 1 \le p$. It is safe to assume that
	\[\sum_{\substack{1 \le j \le p\\ e_j = e_1, f_j = f_1}}
	t_je_j\mu_j \nu_j^*f_j^*
	= e_1\Bigg(\sum_{\substack{1 \le j \le p\\ e_j = e_1, f_j = f_1}}
	t_j\mu_j \nu_j^*\Bigg)f_1^* \ne 0,
	\]
	otherwise it could just be removed from the sum in (\ref{x2}). Then, define
	\[
	x' = \sum_{\substack{1 \le j \le p\\ e_j = e_1, f_j = f_1}}
	t_j \mu_j \nu_j^*,
	\]
	noting that $0\ne x'\in \mathcal{M}_{n-1}$. By the inductive assumption there exists $(\alpha, \beta) \in E^\star \times_r E^\star$ and $s \in R \setminus \{0\}$ such that $\alpha^* x' \beta = sr(\alpha)$. Clearly $x' = e_1^* x f_1$, so $\alpha^* x' \beta = \alpha^* e_1^* x f_1 \beta = sr(\alpha)$. By assumption, $sr(\alpha) \ne 0$; this implies $e_1 \alpha$ and $f_1 \beta$ are legitimate paths with the same range. The claim is now proved for $n$, and by mathematical induction it holds for all $n \ge 0$.
\end{proof}

Combining these lemmas proves the Graded Uniqueness Theorem for Leavitt path algebras. This generalises both \cite[Theorem 5.3]{tomforde2011leavitt} and \cite[Theorem 3.2]{goodearl2009leavitt} by removing any restrictions on the cardinality of $E$, and by not requiring $R$ to be a field. However, we emphasise that this is essentially Tomforde's proof with the insight that countability is not required.

\begin{theorem}[Graded Uniqueness Theorem for Leavitt path algebras] \label{gradeduniqueness} $\phantom{t}$
	
	Let $E$ be a graph, and $R$ a unital commutative ring. If $A$ is a $\mathbb{Z}$-graded ring and $\pi: L_R(E) \to A$ is a graded homomorphism with the property that $\pi(rv) \ne 0$ for every $v \in E^0$ and every $r \in R\setminus\{0\}$, then $\pi$ is injective.
\end{theorem}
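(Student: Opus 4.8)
The plan is to deduce the theorem quickly from Lemma \ref{ideal generation} and Lemma \ref{reduction}. First I would observe that $\ker\pi$ is a \emph{graded} ideal of $L_R(E)$: if $x = \sum_{n} x_n$ with $x_n \in L_R(E)_n$ lies in $\ker\pi$, then $0 = \pi(x) = \sum_n \pi(x_n)$, and since $\pi$ is graded this is the decomposition of $0$ into homogeneous components of $A$, forcing $\pi(x_n) = 0$ for every $n$, i.e.\ each $x_n \in \ker\pi$. By Lemma \ref{ideal generation}, $\ker\pi$ is therefore generated as an ideal by its $0$-component $(\ker\pi)_0 = \ker\pi \cap L_R(E)_0$; in particular, if $\ker\pi \ne 0$ then $(\ker\pi)_0 \ne 0$.

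Next I would note that the standing hypothesis needed to invoke Lemma \ref{reduction} is automatically satisfied here: that lemma requires $rv \ne 0$ in $L_R(E)$ for all $r \in R\setminus\{0\}$ and $v \in E^0$, but this is immediate, since $\pi(rv) \ne 0$ by assumption forces $rv \ne 0$.

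Now suppose, towards a contradiction, that $\pi$ is not injective. Then there is some $0 \ne x \in (\ker\pi)_0 \subseteq L_R(E)_0$, and Lemma \ref{reduction} produces $(\alpha,\beta) \in E^\star\times_r E^\star$ and $s \in R\setminus\{0\}$ with $\alpha^* x \beta = s\, r(\alpha)$ in $L_R(E)$. Applying the ring homomorphism $\pi$ and using $\pi(x) = 0$ gives $\pi(s\, r(\alpha)) = \pi(\alpha^*)\,\pi(x)\,\pi(\beta) = 0$. But $s\, r(\alpha)$ is exactly an element of the form $rv$ with $r = s \ne 0$ and $v = r(\alpha) \in E^0$, so the hypothesis on $\pi$ yields $\pi(s\, r(\alpha)) \ne 0$ --- a contradiction. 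Hence $\ker\pi = 0$ and $\pi$ is injective.

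All the real work is already contained in Lemmas \ref{ideal generation} and \ref{reduction}, so there is no substantial obstacle left; the task is one of assembly. The only points that warrant a moment's care are that $\pi$ is assumed merely to be a graded ring homomorphism rather than an $R$-algebra map --- which is harmless, because gradedness of the kernel only uses that $\pi$ preserves homogeneous components, and because the output $s\, r(\alpha)$ of the Reduction Lemma matches verbatim the shape $rv$ appearing in the hypothesis --- and that one must recognise the nondegeneracy condition on $\pi$ as precisely the input required to legitimately apply Lemma \ref{reduction}.
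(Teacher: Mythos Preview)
Your proof is correct and follows essentially the same approach as the paper: both use that $\ker\pi$ is graded, invoke Lemma~\ref{ideal generation} to reduce to the $0$-component, and then apply Lemma~\ref{reduction} to derive a contradiction from a nonzero element of $(\ker\pi)_0$. Your version is slightly more detailed in spelling out why the kernel is graded and why the hypothesis of Lemma~\ref{reduction} is met, but the argument is the same.
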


\begin{proof}
	The first observation is that $rv \ne 0$ (because $\pi(rv) \ne 0$) for every $v \in E^0$ and $r \in R \setminus \{0\}$. The second observation is that $\ker  \pi $ is a graded ideal, because $\pi$ is a graded homomorphism. Suppose $x \in (\ker \pi)_0 = \ker \pi  \cap L_R(E)_0$. If $x \ne 0$, then by Lemma \ref{reduction}, there exists $(\alpha, \beta) \in E^\star\times_r E^\star$ and $s \in R\setminus\{0\}$, such that $\alpha^*x \beta = sr(\alpha)$. Then $\pi(sr(\alpha)) = \pi(\alpha^*)\pi(x)\pi(\beta) = 0$, which is a contradiction. Therefore $x = 0$, so $(\ker \pi)_0 = 0$. Lemma \ref{ideal generation} proves that $\ker \pi$ is generated as an ideal by $(\ker\pi)_0 = 0$; consequently, $\ker \pi = 0$, so $\pi$ is injective.
\end{proof}

\begin{corollary} \label{vertex in ideal}
	For every nonzero graded ideal $I$ of $L_R(E)$, there exists $r \in R \setminus \{0\}$ and $v \in E^0$ such that $rv \in I$.
\end{corollary}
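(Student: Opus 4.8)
The plan is to obtain this as an immediate consequence of the Graded Uniqueness Theorem (Theorem \ref{gradeduniqueness}), applied to the quotient map modulo $I$.

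Let $I$ be a nonzero graded ideal of $L_R(E)$. Since $I$ is graded, the quotient $A = L_R(E)/I$ is again a $\mathbb{Z}$-graded ring, with $n$-component $(L_R(E)_n + I)/I$, and the canonical surjection $\pi \colon L_R(E) \to A$ is a graded homomorphism. If it were the case that $I$ contained no element of the form $rv$ with $r \in R \setminus \{0\}$ and $v \in E^0$, then $\pi(rv) = rv + I \ne 0$ for every such $r$ and $v$; Theorem \ref{gradeduniqueness} would then force $\pi$ to be injective, giving $I = \ker \pi = 0$, contrary to assumption. Hence $I$ must contain some $rv$ with $r \ne 0$ and $v \in E^0$.

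I expect no real obstacle here beyond the routine verification that the quotient of a $\mathbb{Z}$-graded ring by a graded ideal is $\mathbb{Z}$-graded and that the projection is a graded homomorphism. It is worth noting that there is also a more hands-on route: by Lemma \ref{ideal generation}, a nonzero graded ideal $I$ has $I_0 = I \cap L_R(E)_0 \ne 0$, so one may pick $0 \ne x \in I_0$ and apply Lemma \ref{reduction} to produce $(\alpha, \beta) \in E^\star \times_r E^\star$ and $s \in R \setminus \{0\}$ with $\alpha^* x \beta = s\, r(\alpha) \in I$, an element of $I$ of the desired shape. However, that argument invokes the standing hypothesis of Lemma \ref{reduction} that $rv \ne 0$ for every $r \ne 0$ and every $v$, which has not yet been established at this point in the paper; the quotient argument sidesteps this, since the proof of Theorem \ref{gradeduniqueness} does not presuppose that fact.
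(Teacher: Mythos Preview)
Your argument via the quotient map is correct and is precisely the intended route: the paper states this result as an immediate corollary of Theorem~\ref{gradeduniqueness} and gives no separate proof, remarking only that ``all of the uniqueness theorems have a corollary of this sort.'' Your commentary on the alternative route through Lemmas~\ref{ideal generation} and~\ref{reduction} is also accurate, including the observation that the hypothesis $rv \ne 0$ in Lemma~\ref{reduction} is not yet available at this point in the paper, whereas the quotient argument needs no such input.
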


In fact, all of the uniqueness theorems have a corollary of this sort. We will not always write it so explicitly. The Cuntz-Krieger Uniqueness Theorem is similar in spirit to the Graded Uniqueness Theorem. We do not require the homomorphism to be graded, this time, but pay the price of an extra condition on the graph, called Condition (L).

\begin{definition}
	A graph $E$ satisfies \textbf{Condition (L)} \label{L} if every cycle has an exit.
\end{definition}

Note that $E$ satisfies Condition (\hyperref[L]{L}) if and only if every closed path has an exit; this is fairly intuitive and it is proved in \cite[Lemma 2.5]{abrams2005leavitt}. Combining \cite[Theorem 6.5]{tomforde2011leavitt} and
\cite[Theorem 3.6]{goodearl2009leavitt} (see also \cite[Theorem 2.2.16]{LPAbook}) produces a version of the Cuntz-Krieger Uniqueness Theorem for Leavitt path algebras.

\begin{theorem} \label{early-CK-uniqueness}
	Let $E$ be a graph satisfying Condition (\hyperref[L]{L}) and let $R$ be a unital commutative ring, such that either $E$ is countable or $R$ is a field.
	If $A$ is a ring and $\psi: L_R(E) \to A$ is a homomorphism with the property that $\psi(rv) \ne 0$ for every $v \in E^0$ and every $r \in R \setminus \{0\}$, then $\psi$ is injective.
\end{theorem}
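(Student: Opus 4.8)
The plan is to deduce the statement from a general form of the Reduction Theorem for Leavitt path algebras. The starting observation is that $\ker\psi$ is a two-sided ideal of $L_R(E)$, and that the hypothesis $\psi(rv)\neq 0$ forces $rv\neq 0$ in $L_R(E)$ for all $v\in E^0$ and $r\in R\setminus\{0\}$, so that Lemma \ref{reduction} is available to us. It therefore suffices to prove the following: every nonzero ideal $I\subseteq L_R(E)$ contains an element of the form $sv$ with $s\in R\setminus\{0\}$ and $v\in E^0$. Granting this, such an $sv$ satisfies $\psi(sv)\neq 0$, so $sv\notin\ker\psi$; hence if $\ker\psi$ were nonzero, the claim applied to $I=\ker\psi$ would produce an element $sv\in\ker\psi$ with $\psi(sv)\neq 0$, a contradiction. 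Thus $\ker\psi=0$ and $\psi$ is injective. (This is the non-graded analogue of the pattern behind Corollary \ref{vertex in ideal}.)

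The core is the following Reduction Theorem, of which Lemma \ref{reduction} is the degree-$0$ special case (it is \cite[Theorem 2.2.11]{LPAbook}): for every nonzero $x\in L_R(E)$ there exist $\mu,\nu\in E^\star$ with $r(\mu)=r(\nu)$, and either a scalar $s\in R\setminus\{0\}$ with $\mu^*x\nu=s\,r(\mu)$, or else $\mu^*x\nu$ is a nonzero element of the corner $r(\mu)L_R(E)r(\mu)$ that is a polynomial in a cycle $c$ without an exit based at $r(\mu)$ and in its ghost path $c^*$. I would prove this in two stages. First, by the cancellation manoeuvres already used in the proof of Lemma \ref{ideal generation} (left-multiplying by $\alpha^*$ for a suitably chosen initial path $\alpha$, right-multiplying by $\beta$ for a suitably chosen $\beta$), one reduces an arbitrary nonzero $x=\sum_i r_i\alpha_i\beta_i^*$ to a nonzero element $y$ lying in a single corner $vL_R(E)v$; concretely, one selects a term whose $\beta_i$ is of maximal length and a term whose $\alpha_i$ is of maximal length, and uses them to annihilate, one piece of the support at a time, all but a controlled part of $x$. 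Second, one analyses $y$ inside $vL_R(E)v$: if $v$ does not lie on any cycle without an exit, one combines the structure of this corner with Lemma \ref{reduction} applied to the degree-$0$ part of a further reduction of $y$ to arrive at a scalar multiple of a vertex; if $v$ does lie on a cycle without an exit, that corner is (Morita equivalent to) a corner of a Laurent polynomial ring and one lands in the second alternative.

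Finally, since $E$ satisfies Condition (\hyperref[L]{L}), it has no cycle without an exit, so the second alternative of the Reduction Theorem is vacuous. Hence for any $0\neq x\in I$ we obtain $\mu^*x\nu=s\,r(\mu)\in I$ with $s\neq 0$, which is the required element, completing the argument.

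I expect the main obstacle to be the second stage of the Reduction Theorem: passing from a nonzero element of a corner $vL_R(E)v$ down to a scalar multiple of a vertex when $v$ avoids every no-exit cycle. It is precisely here that the standing hypothesis ``$E$ is countable or $R$ is a field'' is used — in the countable case one prunes the (possibly infinite) relevant subgraph by an inductive, K\"onig-type argument, while over a field one can instead argue with dimensions over $R$ and linear independence — and it is the lack of a uniform way to carry this out for an arbitrary graph over an arbitrary ring that makes the fully general Cuntz--Krieger uniqueness theorem require the groupoid methods developed later in \S\ref{steinberg uniq}. Within the present section, then, the cleanest route is to observe that for countable $E$ the statement is \cite[Theorem 6.5]{tomforde2011leavitt} and for $R$ a field it is \cite[Theorem 3.6]{goodearl2009leavitt}, the sketch above indicating the shape of those proofs.
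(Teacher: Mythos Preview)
Your proposal is correct in outline and, in fact, tracks exactly what the paper does with this theorem: the paper does not give a self-contained proof of Theorem~\ref{early-CK-uniqueness} at all, but simply records it as the combination of \cite[Theorem~6.5]{tomforde2011leavitt} and \cite[Theorem~3.6]{goodearl2009leavitt}, remarks that the field case follows from the Reduction Theorem \cite[Theorem~2.2.11]{LPAbook}, and defers the fully general statement to the groupoid machinery in \S\ref{steinberg uniq}. Your final paragraph lands in precisely the same place.

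Where your sketch and the paper's eventual proof diverge is in method. Your route is the classical algebraic one: invoke the full Reduction Theorem (not just its degree-$0$ version, Lemma~\ref{reduction}) to show that any nonzero ideal contains some $sv$, then use Condition~(\hyperref[L]{L}) to rule out the ``polynomial in a no-exit cycle'' alternative. The paper's proof of the unrestricted version instead passes through the Steinberg model: it shows $\G_E$ is effective under Condition~(\hyperref[L]{L}) (Proposition~\ref{effective remark}), then applies the Cuntz--Krieger Uniqueness Theorem for Steinberg algebras (Corollary~\ref{CK uniqueness for Steinberg}) to the composite $\psi\circ\pi^{-1}$. The groupoid route buys generality---no hypothesis on $E$ or $R$---at the cost of the machinery of Parts~\ref{CHAPTER: STEINBERG ALGEBRAS} and~\ref{CHAPTER: GRAPHS AND GROUPOIDS}; your route is more direct but, as you correctly diagnose, the second stage of the Reduction Theorem is where the ``countable or field'' hypothesis bites, and this is exactly why the paper treats Theorem~\ref{early-CK-uniqueness} as a placeholder rather than a destination.

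One small inaccuracy: your description of the first reduction stage as ``the cancellation manoeuvres already used in the proof of Lemma~\ref{ideal generation}'' is a bit loose---that lemma only handles homogeneous elements, whereas reducing an arbitrary $x$ to a corner requires a more delicate choice of $\mu,\nu$ that interacts with Condition~(\hyperref[L]{L}) itself (exits are used to shorten the support). This does not affect the correctness of citing the references, but if you were to flesh out the sketch it would need attention.
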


This theorem can be proved for a field $R= \mathbb{K}$, using the Reduction Theorem \cite[Theorem 2.2.11]{LPAbook}. However, we shall prove it later using groupoid methods instead. In doing so, we remove the awkward restrictions on $E$ and $R$.

\subsection{The Steinberg algebra model} \label{steinberg model}

Here, we prove the existence of a Steinberg algebra model for Leavitt path algebras, and use it to prove some fundamental facts.

\begin{theorem}\cite{clark2015equivalent} \label{steinberg-leavitt}
	Let $E$ be a graph and $R$ a unital commutative ring. Then $L_R(E)$ and $A_R(\G_E)$ are isomorphic as $\ZZ$-graded $R$-algebras.
\end{theorem}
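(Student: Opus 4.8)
The plan is to exhibit an explicit Leavitt $E$-family inside $A_R(\G_E)$, use the universal property of $L_R(E)$ to obtain a graded $R$-algebra homomorphism $\pi\colon L_R(E)\to A_R(\G_E)$, prove surjectivity by a short computation with the base of $\G_E$, and prove injectivity by invoking the Graded Uniqueness Theorem. For the family, I would set $P_v=\bm{1}_{Z(v)}$ for $v\in E^0$, $S_e=\bm{1}_{Z(e,\,r(e))}$ for $e\in E^1$, and $S_{e^*}=\bm{1}_{Z(r(e),\,e)}$; these all lie in $A_R(\G_E)$ because, by Theorem~\ref{weeks}, $\G_E$ is a Hausdorff ample groupoid and the $Z(\alpha,\beta,F)$ are compact open bisections.

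The verification that $\{P_v,S_e,S_{e^*}\}$ satisfies (V), (E1), (E2), (CK1), (CK2) rests on Lemma~\ref{char conv}(\ref{char conv 2}), which turns each relation into an identity of compact open bisections. The key computational input is a product rule for $\G_E$ analogous to Lemma~\ref{product of monomials}: by the same manipulation of initial subpaths as in Lemma~\ref{int lemma}, one checks that $Z(\alpha,\beta)\,Z(\gamma,\delta)$ equals $Z(\alpha\kappa,\delta)$ when $\gamma=\beta\kappa$, equals $Z(\alpha,\delta\kappa)$ when $\beta=\gamma\kappa$, and is empty otherwise. Relations (V), (E1), (E2), (CK1) then drop out immediately; for instance $S_{e^*}S_e=\bm{1}_{Z(r(e),e)Z(e,r(e))}=\bm{1}_{Z(r(e))}=P_{r(e)}$ and $S_{e^*}S_f=\bm{1}_{\emptyset}=0$ for $e\neq f$. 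For (CK2) I would use the observation that a boundary path whose source is a \emph{regular} vertex $v$ must begin with an edge, so that $Z(v)=\bigsqcup_{e\in vE^1}Z(e)$ is a \emph{finite} disjoint union; hence $P_v=\sum_{e\in vE^1}\bm{1}_{Z(e)}=\sum_{e\in vE^1}S_eS_{e^*}$.

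By the universal property (Definition~\ref{LPA def}) there is a unique $R$-algebra homomorphism $\pi\colon L_R(E)\to A_R(\G_E)$ with $\pi(v)=P_v$, $\pi(e)=S_e$, $\pi(e^*)=S_{e^*}$, and the product rule gives $\pi(\mu\nu^*)=\bm{1}_{Z(\mu,\nu)}$ for all $(\mu,\nu)\in E^\star\times_r E^\star$. Since $Z(\mu,\nu)\subseteq\theta^{-1}(|\mu|-|\nu|)=(\G_E)_{|\mu|-|\nu|}$, Proposition~\ref{grading} shows $\pi(L_R(E)_n)\subseteq A_R(\G_E)_n$, so $\pi$ is graded. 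For surjectivity, the $Z(\alpha,\beta,F)$ form a base of compact open bisections (Theorem~\ref{weeks}) closed under finite intersections (Lemma~\ref{int lemma 2}), so Corollary~\ref{cos span} gives $A_R(\G_E)=\Span_R\{\bm{1}_{Z(\alpha,\beta,F)}\}$; and since the sets $Z(\alpha e,\beta e)$ ($e\in F$) are pairwise disjoint subsets of $Z(\alpha,\beta)$, we have $\bm{1}_{Z(\alpha,\beta,F)}=\bm{1}_{Z(\alpha,\beta)}-\sum_{e\in F}\bm{1}_{Z(\alpha e,\beta e)}=\pi\bigl(\alpha\beta^*-\sum_{e\in F}\alpha ee^*\beta^*\bigr)\in\im\pi$.

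It remains to prove $\pi$ is injective, and this is exactly what the Graded Uniqueness Theorem (Theorem~\ref{gradeduniqueness}) is for: $A_R(\G_E)$ is $\ZZ$-graded, $\pi$ is a graded homomorphism, and $\pi(rv)=r\bm{1}_{Z(v)}\neq 0$ for every $v\in E^0$ and $r\in R\setminus\{0\}$, since $Z(v)\neq\emptyset$ by Lemma~\ref{non-empty}. Hence $\pi$ is injective, and therefore a graded $R$-algebra isomorphism $L_R(E)\cong A_R(\G_E)$. I expect the only genuine obstacle to be the bookkeeping in the product rule for the $Z(\alpha,\beta)$ and, within it, the finite edge-decomposition of $Z(v)$ at regular vertices that drives the verification of (CK2); the usual delicate point — that the defining relations do not collapse $L_R(E)$ — comes for free here, because it is encoded in the hypothesis $\pi(rv)\neq 0$ needed by the Graded Uniqueness Theorem, which is transparent on the groupoid side.
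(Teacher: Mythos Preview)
Your proposal is correct and follows essentially the same approach as the paper: both construct the same Leavitt $E$-family in $A_R(\G_E)$, verify the relations via the identity $\bm{1}_A*\bm{1}_B=\bm{1}_{AB}$, obtain $\pi$ from the universal property, check it is graded, establish surjectivity via the formula $\bm{1}_{\Z(\mu,\nu,F)}=\pi(\mu\nu^*-\sum_{e\in F}\mu ee^*\nu^*)$, and establish injectivity via the Graded Uniqueness Theorem using $Z(v)\neq\emptyset$. The only cosmetic difference is that you spell out the product rule $\Z(\alpha,\beta)\Z(\gamma,\delta)$ more explicitly than the paper does.
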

\begin{proof}
	For $v \in E^0$ and $e \in E^1$, define
	\begin{align*}
	&a_v = \bm{1}_{Z(v)}, &&
	b_e = \bm{1}_{\Z(e,r(e))}, &&
	b_{e^*} = \bm{1}_{\Z(r(e),e)}.
	\end{align*}
	We can routinely validate that $\{a_v, b_e, b_e^*\mid v \in E^0, e \in E^1\}$ is a Leavitt $E$-family. For all $e, f \in E^1$, $v,w \in E^0$, and $u \in E^0_{\rm reg}$:
	\begin{gather*} 
	a_v a_w = \bm{1}_{Z(v)}\bm{1}_{Z(w)} = \bm{1}_{Z(v)\cap Z(w)} = \delta_{v,w}
	\bm{1}_{Z(v)}, \tag{V} \displaybreak[0] \\ 
	a_{s(e)}b_ea_{r(e)} = \bm{1}_{Z(s(e))\Z(e,r(e))Z(r(e))} = 
	\bm{1}_{\Z(e,r(e))} = b_e, \tag{E1} \displaybreak[0]
	\\	
	a_{r(e)}b_{e^*}a_{s(e)} = \bm{1}_{Z(r(e))\Z(r(e),e)Z(s(e))} = 
	\bm{1}_{\Z(r(e), e)} = b_{e^*} \tag{E2}, \displaybreak[0] \\
	b_{e^*}b_f = \bm{1}_{\Z(r(e),e)\Z(f,r(f))} = 
	\delta_{e,f}	\bm{1}_{Z(r(e))} = \delta_{e,f} a_{r(e)} \tag{CK1}, \displaybreak[0] \\
	\bm{1}_{Z(u)} = \bm{1}_{\bigsqcup_{e \in uE^1}Z(e)} = \sum_{e \in uE^1}\bm{1}_{Z(e)} = \sum_{e \in uE^1}b_e b_{e^*}. \tag{CK2}
	\end{gather*}
	By the universal property of Leavitt path algebras, there is a unique homomorphism of $R$-algebras $\pi:~ L_R(E) \to A_R(\G_E)$ \label{pi}  such that 
	\begin{align*}
	&\pi(v) = a_v, &&\pi(e) = b_e, &&\pi(e^*) = b_{e^*},
	\end{align*}
	for all $v \in E^0$ and $e \in E^1$. Evidently $\pi$ is a graded homomorphism. The Graded Uniqueness Theorem for Leavitt path algebras implies $\pi$ is injective.
	For a path $\mu \in E^\star$, if we define
	$b_\mu = b_{\mu_1}\dots b_{\mu_{|\mu|}}$ and 
	$b_{\mu^*} = b_{\mu_{|\mu|}^*}\dots b_{\mu_1^*}$
	then it turns out that
	$b_\mu = \bm{1}_{\Z(\mu, r(\mu))}$ and
	$b_{\mu^*} = \bm{1}_{\Z(r(\mu), \mu)}$.
	Moreover, if $\nu \in E^\star$ is another path with $r(\mu) = r(\nu)$, then $b_\mu b_\nu^* = \bm{1}_{\Z(\mu, \nu)}$.
	If $F \subseteq_{\rm finite} r(\mu)E^1$, this yields
	\begin{equation} \label{correspondence formula}
	\bm{1}_{\Z(\mu, \nu, F)} = \bm{1}_{\Z(\mu, \nu)} - \sum_{e \in F} \bm{1}_{\Z(\mu e, \nu e)} = b_{\mu}b_{\nu^*} - \sum_{e \in F} b_{\mu e}b_{{e}^*\nu^*} = \pi\Big(\mu \nu^* - \sum_{e \in F} \mu e e^* \nu^*\Big).
	\end{equation}
	Therefore, $\bm{1}_{\Z(\mu, \nu, F)}$ is in the image of $\pi$. Corollary \ref{cos span} implies that $A_R(\G)$ is generated by functions of the form (\ref{correspondence formula}). We conclude that $\pi$ is surjective. Therefore, $\pi$ is an isomorphism.
\end{proof}

In the following, we generalise \cite[Propositions 3.4 \& 4.9]{tomforde2011leavitt} and \cite[Lemmas 1.5 \& 1.6]{goodearl2009leavitt} by removing restrictions on the graph and the base ring.

\begin{corollary} \label{basic facts}
	Let $E$ be a graph and $R$ a unital commutative ring. Then
	\begin{enumerate}[\rm (1)]
		\item \label{bf 1}
		$L_R(E)$ has homogeneous local units, and it has a unit if and only if $E^0$ is finite;
		\item \label{bf 2}
		The set $\{\mu, \mu^* \in L_R(E)\mid \mu \in E^\star\}$ is $R$-linearly independent in $L_R(E)$;
		\item \label{bf 3}
		For every $v \in E^0$ and $r \in R \setminus \{0\}$, $rv \ne 0$.
		\item \label{bf 5}
		If $r \mapsto \overline{r}$ is an involution on $R$, then there exists a unique involution $L_R(E) \to L_R(E)$ such that $r\mu\nu^* \mapsto \overline{r}\nu\mu^*$ for every $r \in R$ and $(\mu, \nu) \in E^\star \times_r E^\star$.
	\end{enumerate}
\end{corollary}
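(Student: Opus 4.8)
The plan is to obtain all four items by transporting structure through the $\ZZ$-graded $R$-algebra isomorphism $\pi \colon L_R(E) \xrightarrow{\ \sim\ } A_R(\G_E)$ of Theorem~\ref{steinberg-leavitt} and invoking the Steinberg-algebra machinery of Part~\ref{CHAPTER: STEINBERG ALGEBRAS}. Recall from the proof of that theorem that $\pi$ is $R$-linear with $\pi(v) = \bm{1}_{Z(v)}$ and $\pi(r\mu\nu^*) = r\,\bm{1}_{\Z(\mu,\nu)}$ for $r \in R$ and $(\mu,\nu) \in E^\star \times_r E^\star$.

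\emph{Items (1) and (3).} The groupoid $\G_E$ is a $\ZZ$-graded ample groupoid (Theorem~\ref{weeks}, with degree map $\theta$), so $A_R(\G_E)$ has homogeneous local units by Proposition~\ref{grading}, and hence so does $L_R(E)$ since $\pi$ is a graded isomorphism. By Proposition~\ref{units}, $A_R(\G_E)$ is unital if and only if $\G_E^{(0)} = \partial E$ is compact; but $\partial E = \bigsqcup_{v \in E^0} Z(v)$ is a disjoint union of compact open sets, each nonempty by Lemma~\ref{non-empty}, so $\partial E$ is compact exactly when $E^0$ is finite. For item (3), $\pi(rv) = r\,\bm{1}_{Z(v)}$ takes the value $r$ somewhere on the nonempty set $Z(v)$, hence is nonzero when $r \neq 0$; as $\pi$ is injective, $rv \neq 0$. (This is precisely the fact that made the Graded Uniqueness Theorem applicable inside the proof of Theorem~\ref{steinberg-leavitt}.)

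\emph{Item (2).} Since $\pi$ is an isomorphism, it is enough to show that $\{\bm{1}_{\Z(\mu, r(\mu))},\ \bm{1}_{\Z(r(\mu),\mu)} \mid \mu \in E^\star\}$ is $R$-linearly independent in $A_R(\G_E)$. Split a putative finite relation into its $\theta$-homogeneous components: in degree $n>0$ only the sets $\Z(\mu,r(\mu))$ with $|\mu|=n$ appear, in degree $n<0$ only the sets $\Z(r(\mu),\mu)$ with $|\mu|=-n$, and in degree $0$ only the sets $Z(v)$. Within each component, distinct basic sets are disjoint by Lemma~\ref{int lemma} (a common morphism would make one index pair an extension of the other by a path of length $0$, forcing equality) and each is nonempty by Lemma~\ref{non-empty}; evaluating the homogeneous relation at a point of each such set kills the corresponding coefficient.

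\emph{Item (4).} Proposition~\ref{involution proposition} gives $A_R(\G_E)$ the involution $\sigma \colon f \mapsto \overline{f \circ \inv}$ associated to $\overline{\phantom{x}}$ on $R$. Put $\tau = \pi^{-1} \circ \sigma \circ \pi$; this is an involution of $L_R(E)$ because $\pi$ is an algebra isomorphism and $\sigma$ is additive, anti-multiplicative and of order two. Since $\Z(\mu,\nu)^{-1} = \Z(\nu,\mu)$ we get $\sigma\bigl(r\,\bm{1}_{\Z(\mu,\nu)}\bigr) = \overline{r}\,\bm{1}_{\Z(\nu,\mu)} = \pi(\overline{r}\,\nu\mu^*)$, whence $\tau(r\mu\nu^*) = \overline{r}\,\nu\mu^*$. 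Uniqueness is automatic: the elements $r\mu\nu^*$ span $L_R(E)$ as an abelian group (Corollary~\ref{monomials}), so an additive map is pinned down by its values on them. I expect no genuine obstacle here; the points needing care are the nonemptiness input of Lemma~\ref{non-empty} in (1), the degree splitting together with Lemma~\ref{int lemma} in (2), and — the one real pitfall — the fact that the expression $r\mu\nu^*$ for an element of $L_R(E)$ is not unique (because of (CK2)), which is why the involution in (4) should be \emph{defined} through $\pi$ and $\sigma$ rather than by the formula, and why (2) is phrased on the Steinberg side where no such ambiguity exists.
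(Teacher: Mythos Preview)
Your proof is correct and follows essentially the same route as the paper: all four items are read off from the isomorphism $L_R(E) \cong A_R(\G_E)$ together with Propositions~\ref{units}, \ref{grading}, \ref{involution proposition}, Lemma~\ref{non-empty}, and the disjointness of the basic sets. The only cosmetic difference is that for uniqueness in (\ref{bf 5}) the paper appeals to the universal property of $L_R(E)$, whereas you use the additive spanning by monomials from Corollary~\ref{monomials}; your version is arguably cleaner since it sidesteps the question of whether the involution is $R$-linear (it is only $R$-semilinear when $\overline{\phantom{x}}$ is nontrivial).
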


\begin{proof} 
	(\ref{bf 1})  From Proposition \ref{units}, $L_R(E)$ has homogeneous local units, and it has a unit if and only if $\partial E$ is compact. Since $\partial E = \bigsqcup_{v \in E^0}Z(v)$, and each $Z(v)$ is compact and open, it is clear that $\partial E$ is compact if and only if $E^0$ is finite.
	
	(\ref{bf 2}) Since $L_R(E) = \bigoplus_{n \in \ZZ} L_R(E)_n$, it suffices to show that $\{\mu  \mid \mu \in E^\star, |\mu| = n \}$ and $\{\mu^*  \mid \mu \in E^\star, |\mu| = n \}$ are linearly independent in $L_R(E)$, for every $n \in \ZZ$. Equivalently, $\{\bm{1}_{\Z(\mu, r(\mu))} \mid \mu \in E^\star, |\mu| = n \}$ and $\{\bm{1}_{\Z(r(\mu), \mu)} \mid \mu \in E^\star, |\mu| = n \}$ are linearly independent in $A_R(\G_E)$, for every $n \in \ZZ$. This is clearly true, since $\Z(\mu, r(\mu)), \Z(\nu, r(\nu)) \ne \emptyset$ and $\Z(\mu, r(\mu)) \cap \Z(\nu, r(\nu)) = \emptyset$ for every $\mu, \nu \in E^\star$ such that $\mu \ne \nu$ and $|\mu| = |\nu|$.
	
	(\ref{bf 3}) This follows directly from (\ref{bf 2}), or just the fact that $Z(v) \ne \emptyset$ for all $v \in E^0$.

	(\ref{bf 5}) The existence follows from Proposition \ref{involution proposition}. The uniqueness follows from the universal property of $L_R(E)$.
\end{proof}

Item (\ref{bf 3}) in Corollary \ref{basic facts} is entirely disarmed by the Steinberg algebra model. It was noticed in the early years of Leavitt path algebras that a nontrivial proof was needed for Corollary \ref{basic facts}~(\ref{bf 3}). The first proofs were written, separately, by Goodearl \cite{goodearl2009leavitt} and Tomforde \cite{tomforde2011leavitt} and they involved a representation of $L_R(E)$ on a free $R$-module of infinite rank $\aleph \ge \card(E^0 \sqcup E^1)$. Here is another result from the early years of Leavitt path algebras.

\begin{proposition}\cite[Proposition 3.5]{abrams2007finite}
	If $E$ is a graph and $\mathbb{K}$ a field, then $L_\mathbb{K}(E)$ is finite-dimensional if and only if $E$ is acyclic and $E^0 \cup E^1$ is finite. In this case, if $v_1, \dots, v_t$ are the sinks and $n(v_i) = |\{\alpha \in E^\star \mid r(\alpha) = v_i\}|$, then
	\[
	L_\mathbb{K}(E) \cong \bigoplus_{i = 1}^t M_{n(v_i)}(\mathbb{K}).
	\]
\end{proposition}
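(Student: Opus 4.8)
The plan is to transport the problem to the Steinberg algebra side via Theorem~\ref{steinberg-leavitt} and then invoke Proposition~\ref{finite dim st}. Since $L_{\mathbb{K}}(E)\cong A_{\mathbb{K}}(\G_E)$ as $\mathbb{K}$-algebras, $L_{\mathbb{K}}(E)$ is finite-dimensional if and only if $A_{\mathbb{K}}(\G_E)$ is, which by Proposition~\ref{finite dim st} holds exactly when $\G_E$ is finite (finiteness of an \'etale groupoid forces discreteness automatically, as used in the proof of Proposition~\ref{finite dim st} and justified by Theorem~\ref{weeks}, so discreteness is not a separate condition to track). Thus the first task is to show that $\G_E$ is finite if and only if $E$ is acyclic with $E^0\cup E^1$ finite.

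For the forward implication among graphs: if $E$ is acyclic and $E^0\cup E^1$ is finite, then path lengths are uniformly bounded, so $E^\star$ is finite; in a finite graph there are no infinite emitters, so $E^\infty=\emptyset$ and the singular vertices are exactly the sinks, whence $\partial E\subseteq E^\star$ is finite. Since $\G_E\subseteq\partial E\times\mathbb{Z}\times\partial E$ and, for each pair $(x,y)$, the admissible lags are bounded by $\max(|x|,|y|)$, the groupoid $\G_E$ is finite. Conversely, suppose $\G_E$ is finite, so $\partial E=\G_E^{(0)}$ is finite. The families $\{Z(v)\mid v\in E^0\}$ and $\{Z(e)\mid e\in E^1\}$ consist of pairwise disjoint nonempty subsets of $\partial E$ (nonemptiness by Lemma~\ref{non-empty}), forcing $E^0$ and $E^1$ to be finite. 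If $E$ contained a cycle $c$, then $p=ccc\cdots\in\partial E$ would be eventually periodic, so by Proposition~\ref{GE isotropy} its isotropy group $p\,\G_E\,p$ would be infinite cyclic, contradicting finiteness of $\G_E$; hence $E$ is acyclic.

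It remains to compute the structure when $E$ is finite and acyclic. Proposition~\ref{finite dim st} gives $A_{\mathbb{K}}(\G_E)\cong\bigoplus_i M_{\mathcal{O}_i}(\mathbb{K}\Gamma_i)$, where the $\mathcal{O}_i$ are the orbits of $\G_E$ on $\partial E$ and the $\Gamma_i$ are the corresponding isotropy groups. As $E$ is acyclic, no boundary path is eventually periodic, so every $\Gamma_i$ is trivial by Proposition~\ref{GE isotropy} and $\mathbb{K}\Gamma_i=\mathbb{K}$. By the discussion above $\partial E=\{\alpha\in E^\star\mid r(\alpha)\text{ is a sink}\}$, and two such paths $\alpha,\beta$ are tail equivalent if and only if $r(\alpha)=r(\beta)$: if $r(\alpha)=r(\beta)=v$, taking the trivial path at $v$ as a common tail shows $\alpha\sim_{|\alpha|-|\beta|}\beta$, while conversely any common tail of $\alpha$ and $\beta$ must end at both $r(\alpha)$ and $r(\beta)$. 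Hence the orbits are indexed by the sinks $v_1,\dots,v_t$ with $\mathcal{O}_i=\{\alpha\in E^\star\mid r(\alpha)=v_i\}$, of cardinality $n(v_i)$, and we conclude $L_{\mathbb{K}}(E)\cong A_{\mathbb{K}}(\G_E)\cong\bigoplus_{i=1}^t M_{n(v_i)}(\mathbb{K})$.

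The main obstacle is the orbit computation: identifying tail equivalence of finite boundary paths with sharing a terminal sink, where one must use the reformulation $x\sim_k y\iff x=\alpha z,\ y=\beta z$ (since the shift $\sigma$ is undefined on vertices), together with the observation that finiteness of $E$ eliminates infinite emitters so that ``singular'' collapses to ``sink''. Everything else is routine translation through the two cited structural results.
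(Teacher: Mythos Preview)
Your proof is correct and follows essentially the same approach as the paper: transport to $A_{\mathbb{K}}(\G_E)$ via Theorem~\ref{steinberg-leavitt}, apply Proposition~\ref{finite dim st}, use Proposition~\ref{GE isotropy} for acyclicity and triviality of isotropy, and count orbits by terminal sink. Your write-up is somewhat more explicit than the paper's (you spell out the orbit identification and cite Lemma~\ref{non-empty} for nonemptiness of the $Z(\alpha)$), but the argument is the same.
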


\begin{proof}
	From Proposition \ref{finite dim st} we have that $L_\mathbb{K}(E)$ is finite-dimensional if and only if $\G_E$ is finite and discrete. If $E$ had a cycle $c$, then the isotropy group based at $ccc \ldots \in \partial E$ would be infinite. If either $E^0$ or $E^1$ were infinite, then $\partial E$ would be infinite, because $\partial E = \bigsqcup_{v \in E^0}Z(v) = E^0_{\rm sing} \sqcup \big(\bigsqcup_{e \in E^1} Z(e)\big)$. Thus, $\G_E$ is finite only if $E$ is acyclic and $E^0 \cup E^1$ is finite. Conversely, if $E$ is acyclic and $E^0 \cup E^1$ is finite, then there are no infinite paths, and only finitely many finite paths, so $\G_E$ is finite and discrete. To prove the final sentence, note that there are $t$ orbits of sizes $n(v_1), \dots, n(v_t)$, all with trivial isotropy groups. The structure of $L_\mathbb{K}(E)$ is now apparent from Proposition \ref{finite dim st}.
\end{proof}

\subsection{Uniqueness theorems for Steinberg algebras} \label{steinberg uniq}

Steinberg algebras also support a Cuntz-Krieger Uniqueness Theorem and a Graded Uniqueness Theorem. These were first investigated in \cite{brown2014simplicity} and later improved in \cite{clark2015uniqueness} and \cite{steinberg2016simplicity}.
One can think of the Cuntz-Krieger Uniqueness Theorems as saying that a certain property of a graph, namely Condition (\hyperref[L]{L}), or a certain property of an ample groupoid, namely \textit{effectiveness}, forces a homomorphism to be injective -- provided it does not annihilate any scalar multiples of a local unit. This is interesting as a first example of how a Leavitt path algebra theorem translates into the more general setting of Steinberg algebras.

Briefly, this is the order of events in this section. First, we prove the Graded Uniqueness Theorem for Steinberg algebras of graded ample groupoids. Any groupoid can be graded by the trivial group, and this simple trick obtains the Cuntz-Krieger Uniqueness Theorem for Steinberg algebras. We then use the Cuntz-Krieger Uniqueness Theorem for Steinberg algebras to prove the Cuntz-Krieger Uniqueness Theorem for Leavitt path algebras.

\begin{definitions} \label{effective def}
	An \'etale groupoid is
	\begin{enumerate}[(1)]
		\item \textbf{effective} if $\Iso(\G)^\circ = \G^{(0)}$, where $^\circ$ \index{$^\circ$} denotes the interior in $\G$;
		\item \textbf{topologically principal} if $\{x \in \G^{(0)} \mid {x\G x} = \{x\} \}$ is dense in $\G^{(0)}$.
	\end{enumerate}
\end{definitions}

Recall that a groupoid is called principal if the isotropy group at every unit is trivial. Being topologically principal amounts to having a dense set of units with trivial isotropy groups. Obviously, principal implies topologically principal.
Effective does not imply topologically principal, with counterexamples in \cite[Examples 6.3 and 6.4]{brown2014simplicity}, and topologically principal does not imply effective, with counterexamples in \cite[\S5.1]{clark2018simplicity}.
For a deeper understanding of effective groupoids, the upcoming lemma is essential. We state and prove the lemma for more general groupoids than just ample groupoids, mainly because there was an error in its original proof and this is an opportunity to correct it.

First, some topological comments are needed. Sets with compact closure are called \textit{precompact}.
A locally compact, Hausdorff \'etale groupoid $\G$ need not have a base of compact open bisections, but it does have a base of precompact open bisections \cite{brown2014simplicity}. Indeed, $\G$ has a base of open bisections. Since it is locally compact and Hausdorff, $\G$ has a base of open bisections, each of which is contained in a (necessarily closed) compact set, and thus has compact closure. 


\begin{lemma} \cite[Lemma 3.1]{brown2014simplicity} \label{effective lem}
	Let $\G$ be a locally compact Hausdorff \'etale groupoid. Then the following are equivalent:
	\begin{enumerate}[\rm (1)]
		\item \label{elem 1} $\Iso(\G)\setminus \G^{(0)}$ has empty interior in $\G$;
		\item \label{elem 2} $\G$ is effective;
		\item \label{elem 3} Every nonempty open bisection $B \subseteq \G \setminus \G^{(0)}$ contains a morphism $g \notin \Iso(\G)$;
		\item \label{elem 4} For every compact set $K \subseteq \G \setminus \G^{(0)}$ and every nonempty open $U \subseteq \G^{(0)}$, there exists an open subset $V \subseteq U$ such that $VKV = \emptyset$.
	\end{enumerate}
\end{lemma}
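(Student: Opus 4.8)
The plan is to prove the equivalences in a cycle, say $(\ref{elem 1}) \Rightarrow (\ref{elem 3}) \Rightarrow (\ref{elem 4}) \Rightarrow (\ref{elem 2}) \Rightarrow (\ref{elem 1})$, exploiting the observation preceding the lemma that $\G$ has a base of precompact open bisections. The implication $(\ref{elem 2}) \Rightarrow (\ref{elem 1})$ is essentially immediate: if $\Iso(\G)^\circ = \G^{(0)}$ then $\Iso(\G) \setminus \G^{(0)}$ can have no interior point $g$, since an open neighbourhood of $g$ sitting inside $\Iso(\G)$ would put $g$ in $\Iso(\G)^\circ = \G^{(0)}$, contradicting $g \notin \G^{(0)}$. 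Conversely, $(\ref{elem 1}) \Rightarrow (\ref{elem 2})$: one always has $\G^{(0)} \subseteq \Iso(\G)^\circ$ because $\G^{(0)}$ is open in an \'etale groupoid; for the reverse inclusion, if $g \in \Iso(\G)^\circ$ but $g \notin \G^{(0)}$, then $\Iso(\G)^\circ \setminus \G^{(0)}$ is a nonempty open subset of $\Iso(\G) \setminus \G^{(0)}$, contradicting $(\ref{elem 1})$. So $(\ref{elem 1}) \Leftrightarrow (\ref{elem 2})$ is painless and I would fold it into the cycle wherever convenient.

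For $(\ref{elem 1}) \Rightarrow (\ref{elem 3})$: let $B \subseteq \G \setminus \G^{(0)}$ be a nonempty open bisection. If every $g \in B$ were in $\Iso(\G)$, then $B \subseteq \Iso(\G) \setminus \G^{(0)}$ would be a nonempty open set inside $\Iso(\G) \setminus \G^{(0)}$, contradicting $(\ref{elem 1})$. For $(\ref{elem 3}) \Rightarrow (\ref{elem 4})$: this is the heart of the argument. Given a compact $K \subseteq \G \setminus \G^{(0)}$ and nonempty open $U \subseteq \G^{(0)}$, I would cover $K$ by finitely many precompact open bisections $B_1, \dots, B_n$, each disjoint from $\G^{(0)}$ (possible since $K$ is compact and avoids the clopen-in-$\G$ slice... actually $\G^{(0)}$ need only be open, but $K$ compact in the Hausdorff space $\G \setminus \G^{(0)}$ still lets us shrink bisections to avoid $\G^{(0)}$). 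For each $i$, the set $\{x \in \G^{(0)} : xB_ix \ne \emptyset\}$ is, via the homeomorphisms $\dom|_{B_i}$ and $\cod|_{B_i}$, the (locally closed) locus where $\dom$ and $\cod$ agree on $B_i$; using $(\ref{elem 3})$ applied to suitable sub-bisections of $B_i$, one shows this locus has empty interior, so its complement is dense open. Intersecting $U$ with these $n$ dense open sets yields a nonempty open $V_0 \subseteq U$ meeting no $xB_ix$; then a small open $V \subseteq V_0$ with $VKV = \emptyset$ follows, since $VKV \subseteq \bigcup_i VB_iV$ and $VB_iV = \emptyset$ once $V$ avoids the agreement locus of $B_i$. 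Finally $(\ref{elem 4}) \Rightarrow (\ref{elem 2})$: suppose $g \in \Iso(\G)^\circ \setminus \G^{(0)}$; pick a precompact open bisection $B \ni g$ with $\overline{B} \subseteq \Iso(\G)$ and $\overline{B} \cap \G^{(0)} = \emptyset$, set $K = \overline{B}$ and $U = \dom(B)$; then $(\ref{elem 4})$ gives open $V \subseteq U$ with $VKV = \emptyset$, but every $x \in V$ has an element of $B \cap \Iso(\G)$ above it, namely the unique $b \in B$ with $\dom(b) = x$, which satisfies $\cod(b) = x$ too (since $b \in \Iso(\G)$), so $b = xbx \in VKV$ — contradiction.

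The main obstacle I anticipate is the precise bookkeeping in $(\ref{elem 3}) \Rightarrow (\ref{elem 4})$, specifically the claim that the ``agreement locus'' $A_i = \{x \in \dom(B_i) : \exists\, b \in B_i,\ \dom(b) = \cod(b) = x\}$ has empty interior in $\G^{(0)}$. The clean way is: if $\Int(A_i) \ne \emptyset$, pick an open bisection $B' \subseteq B_i$ with $\dom(B')$ a nonempty open subset of $\Int(A_i)$; then by the description of $A_i$ every $b \in B'$ lies in $\Iso(\G)$, contradicting $(\ref{elem 3})$ applied to $B'$ (noting $B' \subseteq B_i \subseteq \G \setminus \G^{(0)}$). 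This is where the original proof in \cite{brown2014simplicity} apparently erred, so I would be careful to justify that shrinking $B_i$ to $B'$ with prescribed open image in $\dom(B_i)$ is legitimate — it is, because $\dom|_{B_i}$ is a homeomorphism onto an open subset of $\G^{(0)}$, so preimages of open sets are open sub-bisections. I would also need a remark at the start handling the non-ample case cleanly: since $\G$ is only assumed locally compact Hausdorff \'etale, I use precompact (not compact) open bisections throughout and invoke that $\G^{(0)}$ is open but perhaps not closed, so sets like $K \subseteq \G \setminus \G^{(0)}$ compact should be handled by noting $\G \setminus \G^{(0)}$ is open, hence locally compact Hausdorff in its own right, and within it $K$ still has a finite cover by open bisections avoiding $\G^{(0)}$.
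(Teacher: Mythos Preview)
Your argument for $(\ref{elem 3}) \Rightarrow (\ref{elem 4})$ contains a genuine gap. The claim ``$VB_iV = \emptyset$ once $V$ avoids the agreement locus of $B_i$'' is false. Writing $\phi_i = \cod \circ (\dom|_{B_i})^{-1} : \dom(B_i) \to \cod(B_i)$, the agreement locus $A_i$ is the fixed-point set of $\phi_i$, whereas $VB_iV = \emptyset$ is the condition $\phi_i(V \cap \dom(B_i)) \cap V = \emptyset$. Avoiding fixed points does not prevent $\phi_i$ from sending one point of $V$ to a \emph{different} point of $V$. For a concrete counterexample, take $\G$ to be the transitive principal groupoid on three points $\{a,b,c\}$ and let $B$ be the graph of the $3$-cycle $a \mapsto b \mapsto c \mapsto a$. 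Then $A = \emptyset$, yet $V_0 = \{a,b,c\}$ gives $V_0 B V_0 = B \ne \emptyset$. So your density/empty-interior reduction does not produce the required $V$; at best it tells you where to start looking.

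The paper's proof of $(\ref{elem 3}) \Rightarrow (\ref{elem 4})$ avoids this trap by a different, constructive mechanism. For a single open bisection $B \subseteq \G \setminus \G^{(0)}$ and nonempty open $U$: if $UBU = \emptyset$ take $V = U$; otherwise apply $(\ref{elem 3})$ to the nonempty open bisection $UBU$ to find $g$ with $\dom(g) \ne \cod(g)$, separate these two units by disjoint open $W, W' \subseteq U$, and set $V = W \cap \cod(BW')$. A short computation using that $B$ is a bisection then gives $VBV = \emptyset$. One then handles the compact $K$ by covering it with finitely many open bisections $B_1, \dots, B_n$ and shrinking $U \supseteq V_1 \supseteq \cdots \supseteq V_n$ inductively, applying the single-bisection step at each stage. (Incidentally, your worry that $\G^{(0)}$ might not be closed is misplaced: $\G$ is Hausdorff, so $\G^{(0)}$ is clopen, which also simplifies your bookkeeping about $K$ and $\overline{B}$ in the other implications.)
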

\begin{proof}
	$(\ref{elem 1}) \Rightarrow (\ref{elem 2})$ Since $\G$ is \'etale and Hausdorff, $\G^{(0)}$ is clopen in $\G$, so $\G^{(0)} \subseteq \Iso(\G)^\circ$. Now assume $(\Iso(\G) \setminus \G^{(0)})^\circ = \emptyset$.  If $S \subseteq \Iso(\G)$ is open, then $S$ is a disjoint union of two open sets: $S \cap \G^{(0)}$ and $S \cap (\G \setminus \G^{(0)})$. But $S \cap (\G \setminus \G^{(0)}) \subseteq \big(\Iso(\G) \setminus \G^{(0)}\big)^\circ = \emptyset$, so $S \subseteq \G^{(0)}$. This shows $\Iso(\G) = \G^{(0)}$, which means $\G$ is effective.
	
	$(\ref{elem 2}) \Rightarrow (\ref{elem 3})$ Suppose $\G$ is effective. If $B \subseteq \G \setminus \G^{(0)}$ is an open bisection, then $B \subseteq \Iso(\G)$ implies $B \subseteq \Iso(\G)^\circ = \G^{(0)}$ and therefore $B = \emptyset$.
	
	$(\ref{elem 3}) \Rightarrow (\ref{elem 1})$ If there are no nonempty open bisections contained in $\Iso(\G) \setminus \G^{(0)}$, then there are no nonempty open subsets of $\Iso(\G) \setminus \G^{(0)}$, and therefore $\Iso(\G) \setminus \G^{(0)}$ has empty interior.
	
	$(\ref{elem 3}) \Rightarrow (\ref{elem 4})$ 
	We begin by proving a claim: if $B \subseteq \G \setminus \G^{(0)}$ is an open bisection and $U \subseteq \G^{(0)}$ is open and nonempty, then there exists a nonempty open subset $V \subseteq U$ such that $VBV = \emptyset$. If $UBU = \emptyset$, then set $U = V$ and we are done. Otherwise, $UBU\subseteq B \subseteq \G \setminus \G^{(0)}$ is a nonempty open bisection. Applying (\ref{elem 3}), there exists some $g \in UBU$ with $\dom(g) \ne \cod(g)$. Naturally, $\dom(g), \cod(g) \in U$. By the Hausdorff property, there exist disjoint open sets $W, W' \subseteq U$ with $\cod(g) \in W$ and $\dom(g) \in W'$. Set $V = W \cap \cod(BW')$.  Then $\cod(g) \in V$, so $V$ is nonempty, and
	\[
	VB = \big(W \cap \cod(BW')\big) B = WB \cap \cod(BW')B = WB \cap BW'.
	\]
	The last equality uses the fact that $B$ is a bisection, so $\cod(BW')B = BW'$. Therefore,
	\[
	VBV = (WB \cap BW')V \subseteq (BW')V \subseteq (BW')W = \emptyset,
	\]
	because $W'W = W' \cap W = \emptyset$. This proves the claim.
	
	
	Now, let $K \subseteq \G \setminus \G^{(0)}$ be a compact set, and let $U \subseteq \G^{(0)}$ be open and nonempty. We set out to construct a nonempty open subset $V \subseteq U$ such that $VKV = \emptyset$. The set $K$, being compact, can be covered by finitely many open bisections: $K \subseteq B_1\cup \dots\cup B_n$. The claim in the previous paragraph proves the existence of a nonempty open set $V_1 \subseteq U$, such that $V_1B_1V_1 = \emptyset$. Similarly, there is a nonempty open $V_2 \subseteq V_1$ such that $V_2B_2V_2 = \emptyset$. Inductively, this produces a chain of  open sets 
	$
	\emptyset \ne V_n \subseteq V_{n-1} \subseteq \dots \subseteq V_1 \subseteq U
	$
	such that $V_i B_i V_i = \emptyset$ for $1 \le i \le n$. Setting $V = V_n$, we have
	\[
	V K V \subseteq V(B_1 \cup \dots \cup B_n)V \subseteq V_1 B_1 V_1 \cup \dots \cup V_n B_n V_n = \emptyset.
	\]
	
	$(\ref{elem 4}) \Rightarrow (\ref{elem 3})$ Suppose (\ref{elem 3}) does not hold, so there is a nonempty open bisection $B_0 \subseteq \G\setminus \G^{(0)}$ with $B_0 \subseteq \Iso(\G)$. By shrinking it if necessary, we can assume $B_0$ is precompact. Let $K_0 = \overline{B_0}$, the closure of $B_0$. As $\Iso(\G)$ is closed in $\G$, we have that $K_0 \subseteq \Iso(\G)$. Let $U_0 = \cod(B_0)$ and take any $\emptyset \ne V \subseteq U_0$. Since $K_0 \subseteq \Iso(\G)$, it follows that $V K_0 = K_0 V \ne \emptyset$, so $VK_0V \ne \emptyset$. Therefore (\ref{elem 4}) does not hold, because there is no $V \subseteq U_0$ such that $VK_0V = \emptyset$.
\end{proof}

\begin{remark}
	The original proof of the ``$(\ref{elem 3}) \Rightarrow (\ref{elem 4})$'' part of \cite[Lemma 3.1]{brown2014simplicity}, does not appear to be correct. There are examples for which the set $V$ defined in the proof is empty.
	Fortunately, this problem is resolved by defining $V$ inductively, as we have done in the proof of Lemma \ref{effective lem}.
\end{remark}

\begin{lemma} \cite[Proposition 3.6 (i)]{renault2008cartan} \label{eff top princ}
	If a Hausdorff \'etale groupoid $\G$ is topologically principal, then it is effective.
\end{lemma}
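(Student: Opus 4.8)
The plan is to deduce effectiveness from the characterization in Lemma \ref{effective lem}, specifically using the equivalence $(\ref{elem 2}) \Leftrightarrow (\ref{elem 3})$: it suffices to show that every nonempty open bisection $B \subseteq \G \setminus \G^{(0)}$ contains a morphism $g \notin \Iso(\G)$. So suppose, for contradiction, that there is a nonempty open bisection $B \subseteq \G \setminus \G^{(0)}$ with $B \subseteq \Iso(\G)$.

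The key step is to exploit density of the units with trivial isotropy. Let $U = \dom(B)$, which is a nonempty open subset of $\G^{(0)}$ since $\dom$ is an open map (as $\G$ is \'etale). By topological principality, there exists $x \in U$ with $x\G x = \{x\}$. Since $x \in \dom(B)$, there is a (necessarily unique, as $B$ is a bisection) morphism $g \in B$ with $\dom(g) = x$. Because $B \subseteq \Iso(\G)$, we have $g \in x\G x$, so $\cod(g) = \dom(g) = x$ and hence $g \in x\G x = \{x\}$, forcing $g = 1_x \in \G^{(0)}$. This contradicts $B \subseteq \G \setminus \G^{(0)}$.

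I do not anticipate a serious obstacle here; the main point to get right is the bookkeeping that $\dom(B)$ is genuinely open (invoking that $\dom$ is an open map for \'etale groupoids, as noted just after the definition of \'etale groupoid) and that $B \subseteq \Iso(\G)$ combined with $g$ lying over a trivial-isotropy unit forces $g$ to be a unit. One could alternatively phrase the argument directly without citing Lemma \ref{effective lem}, showing $\Iso(\G) \setminus \G^{(0)}$ has empty interior: any nonempty open $W \subseteq \Iso(\G) \setminus \G^{(0)}$ may be shrunk to an open bisection (as $\G$ is \'etale, hence has a base of open bisections), and then the same density argument applies. Either way the proof is short.

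\begin{proof}
	By Lemma \ref{effective lem}, it suffices to verify condition (\ref{elem 3}): every nonempty open bisection $B \subseteq \G \setminus \G^{(0)}$ contains a morphism not in $\Iso(\G)$. Suppose to the contrary that $B \subseteq \G \setminus \G^{(0)}$ is a nonempty open bisection with $B \subseteq \Iso(\G)$. Since $\G$ is \'etale, $\dom$ is an open map, so $U = \dom(B)$ is a nonempty open subset of $\G^{(0)}$. As $\G$ is topologically principal, the set $\{x \in \G^{(0)} \mid x\G x = \{x\}\}$ is dense in $\G^{(0)}$, so there exists $x \in U$ with $x\G x = \{x\}$. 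Because $x \in \dom(B)$ and $B$ is a bisection, there is a unique $g \in B$ with $\dom(g) = x$. Since $B \subseteq \Iso(\G)$, we have $\cod(g) = \dom(g) = x$, so $g \in x\G x = \{x\}$, whence $g = 1_x \in \G^{(0)}$. This contradicts $B \subseteq \G \setminus \G^{(0)}$. Therefore no such $B$ exists, and $\G$ is effective.
\end{proof}
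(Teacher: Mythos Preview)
Your proof is correct and follows essentially the same approach as the paper: both show that an open bisection contained in $\Iso(\G)\setminus\G^{(0)}$ would have $\dom(B)$ open but disjoint from the dense set of trivial-isotropy units, yielding a contradiction. One small caveat: Lemma~\ref{effective lem} is stated for \emph{locally compact} Hausdorff \'etale groupoids, whereas the present lemma assumes only Hausdorff \'etale; the paper handles this by observing that the implications $(\ref{elem 3})\Rightarrow(\ref{elem 1})\Rightarrow(\ref{elem 2})$ require only the Hausdorff \'etale hypothesis, and you should note the same.
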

\begin{proof}
	Suppose $\G$ is topologically principal: the set $D = \{x \in \G^{(0)} \mid {^x\G^x} = \{x\}\}$ is dense in $\G^{(0)}$.
	If $U \subseteq \Iso(\G)\setminus \G^{(0)}$ is an open bisection (i.e., open in $\G$) then $\dom(U)$ is an open subset of $\G^{(0)} \setminus D$, but $D$ is dense in $\G^{(0)}$, so $\dom(U) = \emptyset$, which implies $U = \emptyset$. This proves $\Iso(\G) \setminus \G^{(0)}$ has empty interior, which implies $\G$ is effective (noting that the proof of $(\ref{elem 1}) \Rightarrow (\ref{elem 2})$ in Lemma \ref{effective lem} only requires $\G$ to be Hausdorff and \'etale).
\end{proof}

The following result is an analogue of \cite[Corollary 2.2.13]{LPAbook}, and it is just an alternative way of presenting some content from \cite{clark2015uniqueness} and \cite{steinberg2016simplicity}.

\begin{proposition} \label{st-reduction}
	Let $\G$ be a $\Gamma$-graded Hausdorff ample groupoid such that $\G_\ep$ is effective. Given a nonzero homogeneous element $h \in A_R(\G)_\gamma$, there exists $C \in B^{\rm co}_{\gamma^{-1}}(\G)$, nonempty $V \in \mathcal{B}(\G^{(0)})$, and nonzero $r \in R$ such that $\bm{1}_C * h * \bm{1}_V = r \bm{1}_V$.
\end{proposition}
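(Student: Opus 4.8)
The plan is to follow the same skeleton as the Reduction Theorem for Leavitt path algebras (Lemma \ref{reduction}), but transported to the groupoid side. By Lemma \ref{mut disj}, since $\G$ is a $\Gamma$-graded Hausdorff ample groupoid, we can write the homogeneous element $h \in A_R(\G)_\gamma$ as a finite sum $h = \sum_{i=1}^n r_i \bm{1}_{B_i}$ with $r_i \in R \setminus \{0\}$ and $B_1,\dots,B_n \in B^{\rm co}_\gamma(\G)$ mutually disjoint. First I would pick a point to localise at: choose $x_0 \in \cod(B_1)$. The idea is to pre-multiply by $\bm{1}_{B_1^{-1}}$ (note $B_1^{-1} \in B^{\rm co}_{\gamma^{-1}}(\G)$ by the remarks on graded bisections) which will have the effect of ``rotating'' $B_1$ onto the unit space: $\bm{1}_{B_1^{-1}} * h = \sum_i r_i \bm{1}_{B_1^{-1}B_i}$, and $B_1^{-1}B_1 = \dom(B_1) \subseteq \G^{(0)}$, so the $i=1$ term is supported on the unit space. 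The remaining terms $B_1^{-1}B_i$ ($i \ge 2$) are compact open bisections in the $\varepsilon$-component (since $\gamma^{-1}\gamma = \varepsilon$), and they are disjoint from $\dom(B_1)$ wherever they leave the unit space — but some of them could still meet $\G^{(0)}$, which is exactly the obstacle the effectiveness of $\G_\varepsilon$ is there to handle.

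The next step is to shrink the unit neighbourhood. Let $K = \bigcup_{i \ge 2}\big((B_1^{-1}B_i)\setminus \G^{(0)}\big)$; this is a compact subset of $\G_\varepsilon \setminus \G^{(0)}$ (each $B_1^{-1}B_i$ is compact, and removing the clopen $\G^{(0)}$ keeps it compact in the Hausdorff groupoid). Apply Lemma \ref{effective lem}~(\ref{elem 4}) to $\G_\varepsilon$ — which is a locally compact Hausdorff \'etale groupoid and is effective by hypothesis — with this $K$ and with $U = \dom(B_1)$: there is a nonempty open $V_0 \subseteq \dom(B_1)$ with $V_0 K V_0 = \emptyset$. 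Then shrink further to a nonempty compact open $V \in \mathcal{B}(\G^{(0)})$ with $V \subseteq V_0$, using that $\G^{(0)}$ is locally compact, Hausdorff and totally disconnected (Proposition \ref{tot disc}). Now compute $\bm{1}_{B_1^{-1}} * h * \bm{1}_V$: the $i=1$ term contributes $r_1 \bm{1}_{B_1^{-1}B_1 \cap V} = r_1\bm{1}_V$ since $V \subseteq \dom(B_1) = B_1^{-1}B_1$; for $i \ge 2$, the support of $\bm{1}_V * \bm{1}_{B_1^{-1}B_i} * \bm{1}_V$ lies in $V(B_1^{-1}B_i)V$, and by the choice of $V$ this intersects neither $K$ (the off-diagonal part) nor, because $V \subseteq \dom(B_1)$ is disjoint from $\cod(B_1^{-1}B_i)\cap\G^{(0)}$ whenever $B_1^{-1}B_i$ meets $\G^{(0)}$ away from $\dom B_1$... here I need to be slightly careful: I should arrange from the start, by replacing $B_1$ with $B_1 \setminus \bigcup_{i\ge 2} B_i$ if necessary (possible since the $B_i$ are disjoint, so actually $B_1$ is already disjoint from $B_i$), that the diagonal parts $(B_1^{-1}B_i)\cap\G^{(0)}$ are controlled — and indeed disjointness of $B_1,\dots,B_n$ forces $\dom(B_1) \cap \dom(B_i) = \emptyset$ when $B_1, B_i$ have the same image under $\dom$... this bookkeeping is the fiddly part. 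Once it is checked that all $i\ge 2$ terms vanish after sandwiching by $\bm{1}_V$, we get $\bm{1}_{B_1^{-1}} * h * \bm{1}_V = r_1 \bm{1}_V$, so setting $C = B_1^{-1}$ and $r = r_1$ finishes the proof.

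The main obstacle I anticipate is precisely the bookkeeping around the diagonal parts of the products $B_1^{-1}B_i$: effectiveness (via Lemma \ref{effective lem}) kills the off-diagonal contributions on a small enough $V$, but the parts of $B_1^{-1}B_i$ lying inside $\G^{(0)}$ are not controlled by effectiveness at all — they must be handled by a direct disjointness argument using that $B_1,\dots,B_n$ are mutually disjoint (so that $\dom|_{B_1}$ injective plus disjointness gives $\dom(B_1)\cap\dom(B_i)$ can be separated, or one restricts $V$ to avoid $\dom(B_1\cap B_i)=\dom(\emptyset)=\emptyset$ trivially). Getting this interaction clean — perhaps by first intersecting $V_0$ with the complement of $\bigcup_{i\ge2}\cod(B_1^{-1}B_i \cap (\G^{(0)}\setminus\dom B_1))$, which is open since that set is compact hence closed — is where the care is needed, but it is routine once set up correctly.
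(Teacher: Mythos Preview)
Your approach is essentially the paper's, and your worry about ``diagonal parts'' of $B_1^{-1}B_i$ for $i\ge 2$ is a phantom. The clean observation you are missing is that the sets $B_1^{-1}B_1,\dots,B_1^{-1}B_n$ are themselves \emph{mutually disjoint}: if $x\in B_1$, $y\in B_i$ are composable and $x^{-1}y\in B_1^{-1}B_j$, then $y=x(x^{-1}y)\in B_1B_1^{-1}B_j=\cod(B_1)B_j\subseteq B_j$, so $i=j$ by disjointness of the $B_i$. Now $B_1^{-1}B_1=\dom(B_1)\subseteq\G^{(0)}$, and for every $i$ we have $\cod(B_1^{-1}B_i)\subseteq\cod(B_1^{-1})=\dom(B_1)=B_1^{-1}B_1$; hence any unit lying in $B_1^{-1}B_i$ would equal its own codomain and therefore lie in $B_1^{-1}B_1$ as well, contradicting disjointness when $i\ge 2$. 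So $B_1^{-1}B_i\cap\G^{(0)}=\emptyset$ for $i\ge 2$, and you may simply take $K=\bigcup_{i\ge 2}B_1^{-1}B_i\subseteq\G_\varepsilon\setminus\G^{(0)}$ with no further bookkeeping.

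One small correction at the end: with only $C=B_1^{-1}$ you get $\bm{1}_C*h*\bm{1}_V$ supported in $KV$, not $VKV$, which Lemma~\ref{effective lem}~(\ref{elem 4}) does not control. You already multiply by $\bm{1}_V$ on the left in your analysis of the $i\ge 2$ terms, so just carry that through: set $C=VB_1^{-1}\in B^{\rm co}_{\gamma^{-1}}(\G)$, exactly as the paper does, and then $\bm{1}_C*h*\bm{1}_V=\bm{1}_V*(\bm{1}_{B_1^{-1}}*h)*\bm{1}_V=r_1\bm{1}_V$ since $V(B_1^{-1}B_1)V=V$ and $V(B_1^{-1}B_i)V\subseteq VKV=\emptyset$ for $i\ge 2$.
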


\begin{proof} \textit{Step 1} \cite[Lemma 3.1]{clark2015uniqueness}: We show that there exists $B \in B_{\gamma^{-1}}^{\rm co}(\G)$ such that the function $f = \bm{1}_{B} *h$ is $\ep$-homogeneous and its support has nonempty intersection with $\G^{(0)}$. Applying Lemma \ref{mut disj}, we can write $h = \sum_{i = 1}^n r_i \bm{1}_{D_i}$, where $r_1, \dots, r_n \in R \setminus \{0\}$ and $D_1, \dots, D_n \in B_*^{\rm co}(\G)$ are mutually disjoint. Since the $D_i$ are disjoint and the $r_i$ are nonzero, we can assume each $D_i \subseteq \G_\gamma$. Let $B = D_1^{-1}$ and define $f = \bm{1}_B*h$. Then
	\[
	f = \bm{1}_{B}*h = \sum_{i = 1}^n r_i \bm{1}_B * \bm{1}_{D_i} = \sum_{i = 1}^n r_i \bm{1}_{BD_i} = r_1 \bm{1}_{BB^{-1}} + \sum_{i = 2}^n r_i \bm{1}_{BD_i} \in A_R(\G)_\ep.
	\]
	Note that $BD_1, \dots, BD_n \in B^{\rm co}_\ep(\G)$ are mutually disjoint. Indeed, if $x \in B$ and $y \in D_i$ are composable, then $xy \in BD_j$ implies $y = x^{-1}xy \in B^{-1}B D_j = \dom(B)D_j \subseteq D_j$. But $y \in D_i \cap D_j$ implies $i = j$ because $D_1, \dots, D_n$ are disjoint. To show that $(\supp f) \cap \G^{(0)} \ne \emptyset$, let $x \in B$. Then $xx^{-1} \in BD_i$ if and only if $i = 1$. Consequently, $f(xx^{-1}) = r_1 \ne 0$, so $xx^{-1} \in (\supp f) \cap \G^{(0)}$.
	
	\textit{Step 2} \cite{clark2015uniqueness, steinberg2016simplicity}: We show that there exists $V \in \mathcal{B}(\G^{(0)})$ such that $\bm{1}_V * f * \bm{1}_V = r_1\bm{1}_{V}$, where $f$ is from Step 1. The set $K = (\supp f) \setminus BB^{-1} = BD_2 \cup \dots \cup BD_n$ is a compact subset of $\G_\ep \setminus \G^{(0)}$. Since $\G_\ep$ is effective, Lemma \ref{effective lem}~(\ref{elem 4}) proves that a nonempty open set $V \subseteq BB^{-1} = \cod(B)$ exists such that $VKV = \emptyset$. By shrinking if necessary, we can assume $V$ is compact. This yields
	\[
	\bm{1}_V * f * \bm{1}_V = r_1 \bm{1}_{V(BB^{-1})V} + \sum_{i = 2}^n{r_i} \bm{1}_{V(BD_i)V} = r_1 \bm{1}_V.
	\]
	For completion: set $C = VB$ and $r = r_1$. Then $C \in B^{\rm co}_{\gamma^{-1}}(\G)$, $V \in \mathcal{B}(\G^{(0)})$ is nonempty, $r \in R$ is nonzero, and
	$
	\bm{1}_C * h * \bm{1}_V = \bm{1}_V * \bm{1}_B * h * \bm{1}_V = \bm{1}_V * f * \bm{1}_V = r \bm{1}_V.
	$
\end{proof}

%

We are now in a position to prove the Graded Uniqueness Theorem for Steinberg algebras.

\begin{theorem}[Graded Uniqueness Theorem for Steinberg algebras]
	\cite[Theorem 3.4]{clark2015uniqueness}
	\label{graded uniqueness for Steinberg}
	
	Let $\G$ be a $\Gamma$-graded Hausdorff ample groupoid such that $\G_\varepsilon$ is effective. If $A$ is a $\Gamma$-graded ring and $\phi: A_R(\G) \to A$ is a graded homomorphism with the property that $\phi(r\bm{1}_V) \ne 0$ for every nonempty $V \in \mathcal{B}(\G^{(0)})$ and every $r \in R \setminus \{0\}$, then $\phi$ is injective.
\end{theorem}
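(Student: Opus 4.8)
The strategy is the standard one for uniqueness theorems: show that $\ker\phi$ is a graded ideal, and then show that a nonzero graded ideal must contain some $r\bm{1}_V$ with $r \neq 0$ and $V \in \mathcal{B}(\G^{(0)})$ nonempty, contradicting the hypothesis on $\phi$. The grading of $\ker\phi$ is immediate because $\phi$ is a graded homomorphism between $\Gamma$-graded rings. So the crux is the second part, and this is exactly what Proposition \ref{st-reduction} is built to deliver.

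First I would pick any nonzero $x \in \ker\phi$. Since $\ker\phi$ is graded, we may write $x = \sum_{\gamma} x_\gamma$ with each $x_\gamma \in (\ker\phi)_\gamma = \ker\phi \cap A_R(\G)_\gamma$, and since $x \neq 0$ at least one homogeneous component, say $h := x_\gamma$, is a nonzero element of $A_R(\G)_\gamma \cap \ker\phi$. Now apply Proposition \ref{st-reduction} to $h$: there exist $C \in B^{\rm co}_{\gamma^{-1}}(\G)$, a nonempty $V \in \mathcal{B}(\G^{(0)})$, and a nonzero $r \in R$ with $\bm{1}_C * h * \bm{1}_V = r\bm{1}_V$. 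Applying $\phi$, which is a ring homomorphism, gives
\[
\phi(r\bm{1}_V) = \phi(\bm{1}_C)\,\phi(h)\,\phi(\bm{1}_V) = 0,
\]
because $\phi(h) = 0$. This contradicts the hypothesis that $\phi(r\bm{1}_V) \neq 0$ for every nonempty $V \in \mathcal{B}(\G^{(0)})$ and every $r \in R \setminus \{0\}$. Hence $\ker\phi$ has no nonzero elements, i.e. $\phi$ is injective.

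The only subtlety worth flagging is the reduction to a single homogeneous component: one must be careful that a nonzero element of a graded ideal really does have a nonzero homogeneous component lying in the ideal — but this is exactly the definition of a graded ideal ($I \subseteq \sum_\gamma I \cap A_\gamma$, equivalently $I = \bigoplus_\gamma (I \cap A_\gamma)$), so there is no real obstacle here. The genuine work has already been done in Proposition \ref{st-reduction}, whose proof in turn rests on Lemma \ref{mut disj} (writing a homogeneous element as a finite $R$-combination of disjoint homogeneous compact open bisections) and on the characterization of effectiveness in Lemma \ref{effective lem}~(\ref{elem 4}). So the proof of the theorem itself is short: grading of the kernel, pass to a homogeneous component, invoke Proposition \ref{st-reduction}, apply $\phi$, contradiction.
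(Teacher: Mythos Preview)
Your proof is correct and follows essentially the same approach as the paper: observe that $\ker\phi$ is a graded ideal, take a nonzero homogeneous element $h$ in it, apply Proposition \ref{st-reduction} to produce $r\bm{1}_V$, and derive a contradiction from $\phi(r\bm{1}_V)=\phi(\bm{1}_C)\phi(h)\phi(\bm{1}_V)=0$. The only cosmetic difference is that the paper shows directly that each $(\ker\phi)_\gamma=0$ rather than starting from an arbitrary nonzero element and passing to a component, but this is the same argument.
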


\begin{proof}
	The kernel of $\phi$ is a graded ideal. Let $h \in (\ker \phi)_\gamma$. If $h \ne 0$ then, according to Proposition \ref{st-reduction}, there exists a compact open bisection $C \subseteq \G_{\gamma^{-1}}$ and a nonempty compact open set $V \subseteq \G^{(0)}$ such that $\bm{1}_C* h * \bm{1}_V = r \bm{1}_V$ for some $r \ne 0$. Then $\phi(r\bm{1}_V) = \phi(\bm{1}_C) \phi(h) \phi(\bm{1}_V) = 0$, which contradicts the assumption about $\phi$. Therefore $h = 0$, so $(\ker \phi)_\gamma = 0$. Since this is true for every $\gamma\in \Gamma$, $\ker \phi = \bigoplus _{\gamma \in \Gamma} (\ker \phi)_\gamma = 0$.
\end{proof}


\begin{remark}
	If $\G = \G_E$ is the groupoid of a graph $E$, then
	\[
	{\G}_0 = \bigcup\left\{\Z(\alpha, \beta)\mid (\alpha, \beta )\in E^\star\times_r E^\star, |\alpha|=|\beta|\right\}
	\]
	so $\Iso({\G}_0)  = \Iso(\G_0)^\circ =  \G^{(0)}$, which shows that $\G$ satisfies the hypotheses of Theorem \ref{graded uniqueness for Steinberg}. The Graded Uniqueness Theorem for Steinberg algebras is a generalisation of the Graded Uniqueness Theorem for Leavitt path algebras, notwithstanding the fact that the latter theorem is usually called upon to prove that all Leavitt path algebras are Steinberg algebras.
\end{remark}

Any groupoid can be graded by the trivial group $\{\ep\}$. With this observation, we immediately obtain the Cuntz-Krieger Uniqueness Theorem for Steinberg algebras \cite[Theorem 3.2]{clark2015uniqueness} .

\begin{corollary}[Cuntz-Krieger Uniqueness Theorem for Steinberg algebras]
	\label{CK uniqueness for Steinberg}
	$\phantom{t}$	
	
	Let $\G$ be an effective Hausdorff ample groupoid. If $A$ is a ring and $\phi: A_R(\G) \to A$ is a homomorphism with the property that $\phi(r\bm{1}_V) \ne 0$ for every nonempty $V \in \mathcal{B}(\G^{(0)})$ and every $r \in R \setminus \{0\}$, then $\phi$ is injective.
\end{corollary}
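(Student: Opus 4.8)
The plan is to deduce this corollary directly from the Graded Uniqueness Theorem for Steinberg algebras (Theorem \ref{graded uniqueness for Steinberg}) by the "trivial grading trick" mentioned just before the statement. First I would observe that any groupoid $\G$ becomes a $\Gamma$-graded groupoid for the trivial group $\Gamma = \{\ep\}$: the required clopen partition is the single piece $\G = \G_\ep$, and the condition $\G_\gamma\G_\delta \subseteq \G_{\gamma\delta}$ holds vacuously. With this grading, the $\ep$-component of $\G$ is all of $\G$, so the hypothesis "$\G_\ep$ is effective" in Theorem \ref{graded uniqueness for Steinberg} becomes exactly "$\G$ is effective", which is what Corollary \ref{CK uniqueness for Steinberg} assumes.

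Next I would check that the algebraic side matches up. Since $\G$ is ample and Hausdorff, Proposition \ref{grading} shows $A_R(\G)$ is $\Gamma$-graded; for $\Gamma = \{\ep\}$ this is the trivial grading with $A_R(\G)_\ep = A_R(\G)$. Likewise, if $A$ is an arbitrary ring, it is trivially $\{\ep\}$-graded by $A_\ep = A$, and any ring homomorphism $\phi: A_R(\G) \to A$ is automatically a graded homomorphism for these trivial gradings since $\phi(A_R(\G)_\ep) = \phi(A_R(\G)) \subseteq A = A_\ep$. The hypothesis on $\phi$ — that $\phi(r\bm{1}_V) \ne 0$ for every nonempty $V \in \mathcal{B}(\G^{(0)})$ and every $r \in R\setminus\{0\}$ — is word-for-word the hypothesis of Theorem \ref{graded uniqueness for Steinberg} in the trivially graded case.

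Having lined up all the hypotheses, the conclusion is immediate: Theorem \ref{graded uniqueness for Steinberg} applied to the trivially graded groupoid $\G$, the trivially graded ring $A$, and the (automatically graded) homomorphism $\phi$ yields that $\phi$ is injective. So the proof is essentially a one-line appeal to the already-proven graded version, preceded by the bookkeeping remarks above. There is no real obstacle here; the only thing to be careful about is making the trivial-grading identifications explicit enough that the reader sees each hypothesis of the graded theorem is genuinely satisfied. Concretely I would write:

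\begin{proof}
	Regard $\G$ as a $\Gamma$-graded groupoid for the trivial group $\Gamma = \{\ep\}$, via the (clopen) partition $\G = \G_\ep$; the compatibility $\G_\gamma \G_\delta \subseteq \G_{\gamma\delta}$ is vacuous. Then $\G_\ep = \G$ is effective by hypothesis. By Proposition \ref{grading}, $A_R(\G)$ carries the corresponding $\{\ep\}$-grading, which is simply the trivial grading $A_R(\G) = A_R(\G)_\ep$. Similarly, view $A$ as a $\{\ep\}$-graded ring with $A = A_\ep$; then every ring homomorphism $\phi: A_R(\G) \to A$ is a graded homomorphism, since $\phi(A_R(\G)_\ep) \subseteq A_\ep$. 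The hypothesis that $\phi(r\bm{1}_V) \ne 0$ for every nonempty $V \in \mathcal{B}(\G^{(0)})$ and every $r \in R \setminus \{0\}$ is exactly the hypothesis of Theorem \ref{graded uniqueness for Steinberg}. Applying that theorem, we conclude that $\phi$ is injective.
\end{proof}
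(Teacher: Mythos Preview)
Your proposal is correct and takes exactly the same approach as the paper: the paper states immediately before the corollary that any groupoid can be graded by the trivial group $\{\ep\}$, and with this observation the Cuntz-Krieger theorem is obtained at once from Theorem~\ref{graded uniqueness for Steinberg}. You have simply made the bookkeeping explicit.
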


We now show how  Condition (\hyperref[L]{L}) translates to the groupoid setting.

\begin{proposition} \label{effective remark}
	If $E$ is a graph, then $\G_E$ is effective if and only if $\G_E$ is topologically principal, if and only if $E$ satisfies Condition (\hyperref[L]{L}).
\end{proposition}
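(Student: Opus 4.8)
The plan is to prove the chain of equivalences by showing that Condition~(\hyperref[L]{L}) fails $\iff$ $\G_E$ is not topologically principal $\Rightarrow$ $\G_E$ is not effective, together with the already-available implication (effective $\Leftarrow$ topologically principal) from Lemma~\ref{eff top princ}. Since Lemma~\ref{eff top princ} gives ``topologically principal $\Rightarrow$ effective'' for free (because $\G_E$ is Hausdorff \'etale by Theorem~\ref{weeks}), it suffices to prove (a) ``$E$ satisfies (\hyperref[L]{L}) $\Rightarrow$ $\G_E$ is topologically principal'' and (b) ``$\G_E$ is effective $\Rightarrow$ $E$ satisfies (\hyperref[L]{L})''. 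The key structural input is Proposition~\ref{GE isotropy}: the isotropy group at $p\in\partial E$ is nontrivial exactly when $p$ is eventually periodic.

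For (a), assume $E$ satisfies Condition~(\hyperref[L]{L}). By Proposition~\ref{GE isotropy}, the set $\{p\in\partial E \mid {p(\G_E)p}=\{p\}\}$ is precisely the set of boundary paths that are \emph{not} eventually periodic. I would show this set is dense by checking it meets every nonempty basic open set $Z(\alpha,F)$ of $\partial E$. Given such a set, by Lemma~\ref{non-empty} it is nonempty and we can pick a boundary path $\alpha z \in Z(\alpha,F)$; the task is to produce \emph{some} non-eventually-periodic boundary path in $Z(\alpha,F)$. If $r(\alpha)$ is singular, $\alpha$ itself is in $Z(\alpha,F)$ and (being finite) is not eventually periodic. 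Otherwise $r(\alpha)$ is regular, so we may extend $\alpha$ by an edge $e_1\in r(\alpha)E^1\setminus F$, and then keep extending; Condition~(\hyperref[L]{L}) lets us make non-periodic choices --- concretely, whenever we are about to close a cycle, an exit exists, so we can steer the path to avoid becoming eventually periodic (or run into a sink, which also gives a finite, hence non-eventually-periodic, boundary path). Building such a path inductively produces an element of $Z(\alpha,F)$ with trivial isotropy, proving density.

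For (b), I argue contrapositively: suppose $E$ fails Condition~(\hyperref[L]{L}), so there is a cycle $c=c_1\cdots c_m$ with \emph{no} exit. Then every vertex $s(c_i)$ emits exactly one edge, namely $c_i$, and no vertex on $c$ is singular. I would exhibit a nonempty open bisection inside $\Iso(\G_E)\setminus\G_E^{(0)}$, which by Lemma~\ref{effective lem}~((\ref{elem 1})$\Leftrightarrow$(\ref{elem 2})) shows $\G_E$ is not effective. The natural candidate is $\Z(c, s(c))$: if $x\in r(c)\partial E = s(c)\partial E$, then because $c$ has no exit, the only boundary path starting at $s(c)$ is $ccc\cdots$, so $\Z(c,s(c)) = \{(ccc\cdots,\ m,\ ccc\cdots)\}$, a single morphism with equal domain and codomain and nonzero degree $m$ --- hence a nonempty open (indeed compact open, by Lemma~\ref{grp top}) bisection contained in $\Iso(\G_E)\setminus\G_E^{(0)}$. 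This forces $(\Iso(\G_E)\setminus\G_E^{(0)})^\circ\neq\emptyset$, so $\G_E$ is not effective.

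Combining: (b) gives ``$\G_E$ effective $\Rightarrow$ (\hyperref[L]{L})''; (a) gives ``(\hyperref[L]{L}) $\Rightarrow$ topologically principal''; Lemma~\ref{eff top princ} gives ``topologically principal $\Rightarrow$ effective''. These three implications close the cycle, proving all three properties equivalent. I expect the main obstacle to be the density argument in (a) --- specifically, making precise the inductive construction of a non-eventually-periodic extension of an arbitrary finite path in a graph satisfying Condition~(\hyperref[L]{L}), handling the interplay between running into sinks, running into infinite emitters, and the possibility that the graph is small (e.g. when the only way forward forces one onto a cycle, one must invoke the exit to escape). The rest is essentially bookkeeping with the basic open sets $Z(\alpha,F)$ and $\Z(\alpha,\beta,F)$ and the isotropy computation already done in Proposition~\ref{GE isotropy}.
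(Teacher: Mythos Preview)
Your proposal is correct and follows essentially the same route as the paper: you prove (\hyperref[L]{L}) $\Rightarrow$ topologically principal by showing every nonempty basic open set contains a non-eventually-periodic boundary path, invoke Lemma~\ref{eff top princ} for topologically principal $\Rightarrow$ effective, and prove not-(\hyperref[L]{L}) $\Rightarrow$ not effective by exhibiting an open singleton in $\Iso(\G_E)\setminus\G_E^{(0)}$ coming from an exitless cycle. The paper's write-up is terser (it asserts the density without spelling out the inductive construction you describe, and uses $\Z(cc,c)$ rather than your $\Z(c,s(c))$, but these are the same singleton $\{(ccc\cdots,|c|,ccc\cdots)\}$); your elaboration of the density argument in (a) is exactly the detail the paper leaves to the reader.
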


\begin{proof}
	\cite{steinberg2018prime} Assume that $E$ satisfies Condition (\hyperref[L]{L}), so that every closed path has an exit. Then every basic open set in $\partial E$ contains a path that is not eventually periodic. Such paths have trivial isotropy groups in $\G$, by Proposition \ref{GE isotropy}, so $\G^{(0)}$ has a dense subset with trivial isotropy. This implies $\G$ is topologically principal, hence effective, by Lemma \ref{eff top princ}.
	On the other hand, if $E$ does not satisfy Condition (\hyperref[L]{L}), then there exists a cycle $c$ without an exit, and $\G_E$ is not effective because there is an open set: $
	\Z(cc, |c|,c) = \{(ccc \dots, |c|, ccc \dots)\} \subseteq \Iso(\G) \setminus \G^{(0)}$.
\end{proof}

Having proved the Cuntz-Krieger Uniqueness Theorem for Steinberg algebras, we can prove the Cuntz-Krieger Uniqueness Theorem for Leavitt path algebras (see Theorem \ref{early-CK-uniqueness}), once and for all, in its full generality.

\begin{theorem}[Cuntz-Krieger Uniqueness Theorem for Leavitt path algebras] $\phantom{t}$
	
	Let $E$ be a graph satisfying Condition (\hyperref[L]{L}) and let $R$ be a unital commutative ring.
	If $A$ is a ring and $\psi: L_R(E) \to A$ is a homomorphism with the property that $\psi(rv) \ne 0$ for every $v \in E^0$ and every $r \in R \setminus \{0\}$, then $\psi$ is injective.
\end{theorem}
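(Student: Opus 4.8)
The plan is to reduce the Cuntz-Krieger Uniqueness Theorem for Leavitt path algebras to the Cuntz-Krieger Uniqueness Theorem for Steinberg algebras (Corollary \ref{CK uniqueness for Steinberg}) via the Steinberg model isomorphism $\pi: L_R(E) \xrightarrow{\sim} A_R(\G_E)$ from Theorem \ref{steinberg-leavitt}. The point is that all three ingredients needed to invoke Corollary \ref{CK uniqueness for Steinberg} are now available: $\G_E$ is a Hausdorff ample groupoid (Theorem \ref{weeks}), it is effective because $E$ satisfies Condition (\hyperref[L]{L}) (Proposition \ref{effective remark}), and the hypothesis $\psi(rv) \ne 0$ for all $v \in E^0$, $r \in R \setminus \{0\}$ will translate into the nonvanishing condition $\psi \circ \pi^{-1}(r\bm{1}_V) \ne 0$ for all nonempty $V \in \mathcal{B}(\G_E^{(0)})$ and $r \in R \setminus \{0\}$.

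First I would set $\phi = \psi \circ \pi^{-1}: A_R(\G_E) \to A$, which is a ring homomorphism. Then $\psi$ is injective if and only if $\phi$ is injective, since $\pi$ is an isomorphism. It remains to verify the hypothesis of Corollary \ref{CK uniqueness for Steinberg} for $\phi$, namely that $\phi(r\bm{1}_V) \ne 0$ for every nonempty $V \in \mathcal{B}(\G_E^{(0)})$ and every $r \in R \setminus \{0\}$. Recall $\G_E^{(0)}$ is identified with $\partial E$, and from Theorem \ref{weeks} the sets $Z(\alpha, F)$ form a base of compact open bisections, so every nonempty $V \in \mathcal{B}(\G_E^{(0)})$ is a finite union of basic compact open sets; in particular $V$ contains some nonempty $Z(\alpha)$ (one can refine: every nonempty compact open subset of $\partial E$ contains a set of the form $Z(\alpha)$, since the $Z(\alpha)$ together with the $Z(\alpha,F)$ form a base and $Z(\alpha,F) \supseteq Z(\alpha e)$ for any $e \in r(\alpha)E^1 \setminus F$ when $r(\alpha)$ is not a sink, while $Z(\alpha, F) = \{\alpha\} = Z(\alpha)$ when $r(\alpha)$ is singular and $F$ exhausts—here I would just observe that any nonempty $V$ contains $Z(\alpha)$ for some $\alpha \in E^\star$).

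The key computation is that $\bm{1}_{Z(\alpha)} = \pi(\alpha \alpha^*)$: indeed, from the proof of Theorem \ref{steinberg-leavitt}, $b_\alpha = \bm{1}_{\Z(\alpha, r(\alpha))}$ and $b_{\alpha^*} = \bm{1}_{\Z(r(\alpha), \alpha)}$, so $b_\alpha b_{\alpha^*} = \bm{1}_{\Z(\alpha, \alpha)} = \bm{1}_{Z(\alpha)}$, whence $\pi(\alpha\alpha^*) = \bm{1}_{Z(\alpha)}$. Therefore, writing $v = s(\alpha)$, we get $\alpha^* (r\,\pi^{-1}(\bm{1}_V)) \alpha = r\,\alpha^*\pi^{-1}(\bm{1}_V)\alpha$; more directly, since $Z(\alpha) \subseteq V$ implies $\bm{1}_{Z(\alpha)} * \bm{1}_V = \bm{1}_{Z(\alpha)}$, we have $\pi^{-1}(\bm{1}_V) \cdot \alpha\alpha^* = \alpha\alpha^*$ and hence $\alpha^* \pi^{-1}(r\bm{1}_V)\alpha = r\,\alpha^*\alpha\alpha^*\alpha \cdot (\ldots)$—cleaner: $\alpha^*\pi^{-1}(\bm{1}_V)\alpha = \alpha^*(\alpha\alpha^* \pi^{-1}(\bm1_V))\alpha = \alpha^*\alpha\alpha^*\alpha = r(\alpha)$ using $\alpha^*\alpha = r(\alpha)$ and $\pi^{-1}(\bm1_V)\alpha\alpha^* = \alpha\alpha^*$. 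Thus $\alpha^*\pi^{-1}(r\bm{1}_V)\alpha = r\,r(\alpha) \ne 0$ in $L_R(E)$ by Corollary \ref{basic facts}~(\ref{bf 3}). Applying $\psi$: $\psi(\alpha^*)\,\phi(r\bm{1}_V)\,\psi(\alpha) = \psi(r\,r(\alpha)) \ne 0$ by the hypothesis on $\psi$ (with $v = r(\alpha)$), so $\phi(r\bm{1}_V) \ne 0$. By Corollary \ref{CK uniqueness for Steinberg}, $\phi$ is injective, hence $\psi = \phi \circ \pi$ is injective. The main obstacle, such as it is, is bookkeeping: making the passage from an arbitrary nonempty $V \in \mathcal{B}(\G_E^{(0)})$ down to a single $Z(\alpha) \subseteq V$ airtight, and tracking the isomorphism $\pi$ carefully enough that the relation $\alpha^* \pi^{-1}(\bm{1}_V)\alpha = r(\alpha)$ is transparent — everything else is a direct appeal to results already proved.
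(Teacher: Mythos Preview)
Your proof is correct and follows the same overall strategy as the paper: transport $\psi$ through the isomorphism $\pi$ to get $\phi = \psi\circ\pi^{-1}:A_R(\G_E)\to A$, verify the nonvanishing hypothesis of Corollary~\ref{CK uniqueness for Steinberg}, and conclude. The difference lies in how the nonvanishing of $\phi(r\bm{1}_V)$ is checked. The paper picks a basic set $Z(\mu,F)\subseteq V$, pulls $r\bm{1}_{Z(\mu,F)}$ back to $r\mu\mu^* - r\sum_{e\in F}\mu ee^*\mu^*$, and then invokes Lemma~\ref{reduction} to produce $(\alpha,\beta)$ with $\alpha^*(\cdot)\beta = s\,r(\alpha)$. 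You instead refine one step further to a full cylinder $Z(\alpha)\subseteq V$ (which always exists, since any nonempty $Z(\mu,F)$ contains $Z(\mu e)$ for $e\in r(\mu)E^1\setminus F$, or equals $Z(\mu)$ when $r(\mu)$ is a sink), and then the computation $\alpha^*\pi^{-1}(r\bm{1}_V)\alpha = r\,r(\alpha)$ is immediate from $\alpha\alpha^*\pi^{-1}(\bm{1}_V)=\alpha\alpha^*$ and $\alpha^*\alpha = r(\alpha)$. Your route is a genuine streamlining: it bypasses Lemma~\ref{reduction} entirely. One small point: your ``cleaner'' line inserts $\alpha\alpha^*$ via $\alpha^* = \alpha^*\alpha\alpha^*$ on the left, so the identity you actually use is $\alpha\alpha^*\pi^{-1}(\bm{1}_V)=\alpha\alpha^*$ (not the right-sided version you cite); both hold since $\bm{1}_{Z(\alpha)}$ and $\bm{1}_V$ commute, but it is worth stating the correct one.
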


\begin{proof}
	First of all, suppose $r \in R \setminus \{0\}$, $\mu \in E^\star$, and $F$ is a finite proper subset of $r(\mu)E^1$. Let $x = r\mu\mu^* - r\sum_{e \in F} \mu ee^* \mu^*$. Then $0 \ne x \in L_R(E)_0$, so Lemma \ref{reduction} yields $(\alpha, \beta) \in E^\star \times_r E^\star$, $v \in E^0$, and $s \in R\setminus \{0\}$ such that $\alpha^*x \beta = sv$. This implies that $\psi(\alpha^*)\psi(x)\psi(\beta) = \psi(sv) \ne 0$, so $\psi(x) \ne 0$.
	
	By Proposition \ref{effective remark}, the groupoid $\G_E$ is effective. Let $\phi: A_R(\G_E) \to A$ be the map $\phi = \psi \circ \pi^{-1}$, where $\pi: L_R(E) \to A_R(\G_E)$ is the isomorphism from Theorem \ref{steinberg-leavitt}. Suppose $V \subseteq \partial E$ is compact and open, and $r \in R \setminus \{0\}$. We can find $\mu \in E^\star$ and $F \subseteq_{\rm finite} r(\mu)E^1$ such that $Z(\mu, F)$ is a nonempty open subset of $V$. Then $Z(\mu,F)V = Z(\mu,F)\cap V = Z(\mu, F)$, so $r \bm{1}_{Z(\mu,F)} = \bm{1}_{Z(\mu,F)}*r\bm{1}_V$. Noting that $\pi^{-1}\left(r\bm{1}_{Z(\mu,F)}\right) = r\mu\mu^* - r \sum_{e \in F}\mu e e^* \mu^*$, the first paragraph proves that $0 \ne \psi \circ \pi^{-1}(r \bm{1}_{Z(\mu,F)}) =  \phi\left(r\bm{1}_{Z(\mu, F)}\right) = \phi\left(\bm{1}_{\Z(\mu,F)}\right)\phi(r\bm{1}_V)$; consequently $\phi(r \bm{1}_V) \ne 0$. Applying Corollary \ref{CK uniqueness for Steinberg}, the map $\phi$ is injective. Conclude that $\psi = \phi \circ \pi$ is injective.
\end{proof}

\section*{Acknowledgements}

I thank Juana S\'anchez Ortega, for her valuable advice and guidance throughout my Masters degree. I also thank Tran Giang Nam for finding an important error in an earlier version of Theorem \ref{path space topology}, and suggesting a way to fix it. (Any errors that remain are my own responsibility.) Finally, I thank the two examiners of my Masters thesis, Pere~Ara and Aidan Sims, who wrote very insightful comments that led to an improvement of this work.

I acknowledge the support of the National Research Foundation of South Africa.

\end{document}